\tikzset{modarrow/.style={->, dashed}}
\tikzset{tensoralgarrow/.style={double, double equal sign distance, -implies}}
\tikzset{algarrow/.style={->, thick}}
\tikzset{taa/.style={double, double equal sign distance, -implies}}
\newcommand{\RR}{\mathbb R}
\newcommand{\ZZ}{\mathbb Z}
\newcommand{\FF}{\mathbb F}
\newcommand{\Ker}{\mathrm{Ker}}
\newcommand{\bD}{\mathbb{D}}
\newcommand{\conn}{\mathbin \#}
\newcommand{\co}{\colon}
\newcommand{\bdy}{\partial}
\newcommand{\lbracket}{[}
\newcommand{\rbracket}{]}
\newcommand{\Hyph}{\text{-}}
\DeclareMathOperator{\Sym}{Sym}
\DeclareMathOperator{\spin}{spin}
\newcommand{\SpinC}{\spin^c}
\DeclareMathOperator{\ind}{ind}
\DeclareMathOperator{\ev}{ev}
\newcommand{\evbig}[1]{(\ev\mathop{\tilde{\times}}\kappa)_{#1}}
\DeclareMathOperator{\br}{br} 
\newcommand{\emb}{{\mathrm{emb}}} 
\newcommand{\Barop}{{\mathrm{Bar}}}
\newcommand\dr{\mathrm{dr}}
\theoremstyle{plain}
\newtheorem{theorem}{Theorem}
\numberwithin{equation}{section}
\newtheorem{proposition}[equation]{Proposition}
\newtheorem{lemma}[equation]{Lemma}
\newtheorem{corollary}[equation]{Corollary}
\newtheorem{convention}[equation]{Convention}
\newtheorem{definition}[equation]{Definition}
\theoremstyle{definition}
\theoremstyle{remark}
\newtheorem{example}[equation]{Example}
\newtheorem{remark}[equation]{Remark}
\newcommand{\HF}{\mathit{HF}}
\newcommand{\HFa}{\widehat {\HF}}
\newcommand{\CFa}{\widehat {\mathit{CF}}}
\newcommand{\x}{\mathbf x}
\newcommand{\y}{\mathbf y}
\newcommand\HH{\mathit{HH}}
\newcommand\Hochschild\HH
\newcommand{\Ainf}{A_\infty}
\newcommand{\Alg}{\mathcal{A}}
\newcommand\Blg{\mathcal{B}}
\newcommand\Clg{\mathcal{C}}
\newcommand{\alphas}{{\boldsymbol{\alpha}}}
\newcommand{\betas}{{\boldsymbol{\beta}}}
\newcommand{\gammas}{{\boldsymbol{\gamma}}}
\newcommand{\rhos}{{\boldsymbol{\rho}}}
\newcommand{\cM}{\mathcal{M}}
\newcommand{\Mod}{\cM}
\newcommand{\tMod}{\widetilde{\cM}}
\newcommand{\Cone}{\mathrm{Cone}}
\newcommand{\ocM}{\widebar{\cM}{}} 
\newcommand{\cMM}{\mathcal{MM}}
\newcommand{\DD}{\textit{DD}}
\newcommand{\DA}{\textit{DA}}
\newcommand{\AAm}{\textit{AA}} 
\newcommand{\CFD}{\mathit{CFD}}
\newcommand{\CFDD}{\mathit{CFDD}}
\newcommand{\CFA}{\mathit{CFA}}
\newcommand{\CFDA}{\mathit{CFDA}}
\newcommand{\CFDAa}{\widehat{\CFDA}}
\newcommand{\CFAA}{\mathit{CFAA}}
\newcommand{\CFAAa}{\widehat{\CFAA}}
\newcommand{\CFDDD}{\mathit{CFDDD}}
\newcommand{\CFDDA}{\mathit{CFDDA}}
\newcommand{\CFDAA}{\mathit{CFDAA}}
\newcommand{\CFAAA}{\mathit{CFAAA}}
\newcommand{\CFDDDa}{\widehat{\CFDDD}}
\newcommand{\CFDDAa}{\widehat{\CFDDA}}
\newcommand{\CFDAAa}{\widehat{\CFDAA}}
\newcommand{\CFAAAa}{\widehat{\CFAAA}}
\newcommand{\CFDa}{\widehat{\CFD}}
\newcommand{\CFDDa}{\widehat{\CFDD}}
\newcommand{\CFAa}{\widehat{\CFA}}
\newcommand{\cZ}{\mathcal{Z}}
\newcommand{\PtdMatchCirc}{\cZ}
\newcommand{\PMC}{\PtdMatchCirc}
\newcommand{\CircPts}{{\mathbf{a}}}
\newcommand{\glue}{\mathbin{\natural}}
\newcommand{\dg}{\textit{dg} }
\newcommand\Id{\mathbb{I}}
\newcommand\Ground{\mathbf k}
\newcommand\Groundl{\mathbf l}
\newcommand\DT{\boxtimes}
\newcommand\Gen{\mathfrak{S}}
\newcommand\Tensor{\mathcal T}
\newcommand\Zmod[1]{\mathbb{Z}/{#1}\mathbb{Z}}
\newcommand{\Field}{{\FF_2}}
\DeclareMathOperator{\nbd}{nbd}
\newcommand{\Heegaard}{\mathcal{H}}
\newcommand{\HD}{\Heegaard}
\renewcommand{\th}{^\text{th}}
\newcommand{\st}{^\text{st}}
\DeclareMathOperator{\Mor}{Mor}
\newcommand\honestalg[3]{\bigl\lbracket
\begin{smallmatrix} #1\@ifempty{#3}{}{&#3} \\ #2 \end{smallmatrix}
\bigr\rbracket}
\newcommand{\lab}[1]{$\scriptstyle #1$}
\newcommand{\lsub}[2]{{}_{#1}#2}
\newcommand{\lsup}[2]{{}^{#1}\mskip-.6\thinmuskip#2}
\newcommand\Conf{\mathrm{Conf}}
\newcommand\oConf{\overline{\Conf}}
\newcommand\CDisk{\mathbb{D}}
\newcommand{\deltas}{{\boldsymbol{\delta}}}
\newcommand\rKh{\widetilde{Kh}}
\newcommand\CCFa{\mathbf{\widehat{CF}}}
\newcommand\CCFAa{\mathbf{\widehat{CFA}}}
\newcommand\CCFDa{\mathbf{\widehat{CFD}}}
\newcommand\CCFDAa{\mathbf{\widehat{CFDA}}}
\newcommand\CCFDDa{\mathbf{\widehat{CFDD}}}
\newcommand\CCFAAa{\mathbf{\widehat{CFAA}}}
\newcommand{\Filt}{\mathcal{F}}
\newcommand{\HFuk}{\mathsf{TFuk}}
\newcommand{\arcz}{\mathbf{z}}
\newcommand\IndI{\mathbb I}
\newcommand\IndJ{\mathbb J}
\renewcommand{\ij}[1]{i\!j_{#1}}
\newcommand\dimSXM{\dim_{SM}}
\newcommand\dimXM{\dim_{XM}}
\newcommand\CrossMatched{\mathcal{XM}}
\newcommand\SimpCrossMatched{\mathcal{SM}}
\newcommand\ChordMatched{\mathcal{N}}
\newcommand\MatchedComplex{{\mathcal C}_{[0]}}
\newcommand\tMatchedComplex{{\mathcal C}_{[t]}}
\newcommand\ChordMatchedComplex{{\mathcal C}_{\mathcal N}}
\newcommand\tsicComplex{{\mathcal C}_{tsic}}
\newcommand\ComplexCross{{\mathcal C}_{[\infty]}}
\newcommand\xis{\boldsymbol{\xi}}
\newread\testin
\def\input@path{{}{draws/}}
\def\mathcenter#1{%
  \vcenter{\hbox{$#1$}}%
}
\DeclareRobustCommand{\widebar}[1]{\overline{#1}{}}
\newcommand\mi@kern[1]{%
  \settowidth\@tempdima{$\mi@obj^{#1}$}
  \kern-\@tempdima
  #1
  \settowidth\@tempdima{$\mi@obj$}
  \kern\@tempdima
}
\newtoks\mi@toksp
\newtoks\mi@toksb
\DeclareRobustCommand{\manyindices}[5]{
  \def\mi@obj{#5}
  \mi@toksp\expandafter{\mi@kern{#2}}
  \mi@toksb\expandafter{\mi@kern{#1}}
  \@mathmeasure4\textstyle{#5_{#1}^{#2}}
  \@mathmeasure6\textstyle{#5_{#3}^{#4}}
  \dimen0-\wd6 \advance\dimen0\wd4
  \@mathmeasure8\textstyle{\hphantom{{}_{#1}^{#2}}#5^{\the\mi@toksp#4}_{\the\mi@toksb#3}}
  \hbox to \dimen0{}{\kern-\dimen0\box8}
}
\begin{document}
\title[Bordered Floer homology and the  branched double cover II]
{Bordered Floer homology and the spectral sequence of a branched
  double cover II: the spectral sequences agree}

\author[Lipshitz]{Robert Lipshitz}
\thanks{RL was supported by NSF Grants DMS-0905796 and DMS-1149800, and a Sloan Research
  Fellowship.}
\address{Department of Mathematics, University of North Carolina\\
  Chapel Hill, NC }
\email{lipshitz@math.columbia.edu}

\author[Ozsv\'ath]{Peter~S.~Ozsv\'ath}
\thanks{PSO was supported by NSF grant number DMS-1247323.}
\address {Department of Mathematics, Princeton University\\ 
Princeton, NJ 08544}
\email {petero@math.princeton.edu}

\author[Thurston]{Dylan~P.~Thurston}
\thanks {DPT was supported by NSF grants number DMS-1358638 and DMS-1507244.}
\address{Department of Mathematics,
         Indiana University,
         Bloomington, IN 47405}
\email{dpthurst@indiana.edu}

\begin{abstract}
  Given a link in the three-sphere, Ozsv{\'a}th and Szab{\'o} 
  showed that there is a spectral sequence starting at the Khovanov
  homology of the link and converging to the Heegaard Floer homology
  of its branched double cover.  The aim of this paper is to
  explicitly calculate this spectral sequence in terms of bordered
  Floer homology.  There are two primary ingredients in this
  computation: an explicit calculation of bimodules associated to Dehn
  twists, and a general pairing theorem for polygons. The previous
  part~\cite{LOT:DCov1} focuses on computing the bimodules; this part
  focuses on the pairing theorem for polygons, in order to prove that
  the spectral sequence constructed in the previous part agrees with
  the one constructed by Ozsv\'ath and Szab\'o.
\end{abstract} 

\maketitle

\tableofcontents
\section{Introduction}

This paper concerns the relationship between Khovanov homology for
links $L$ in the three-sphere and the Heegaard Floer homology groups
of their branched double covers. These two link invariants are related,
as was shown in joint work of Zolt{\'a}n Szab{\'o} and the second
author:

\begin{theorem}\cite[Theorem~1.1]{BrDCov}
  \label{thm:MultiDiagramSpectralSequence}
  For any link $L\subset S^3$ there is a spectral sequence
  with $E_2$-page the reduced Khovanov homology of the mirror of
  $L$ and $E_\infty$-page $\HFa(\Sigma(L))$.
\end{theorem}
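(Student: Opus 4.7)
The plan is to present $L$ by a planar diagram with $n$ crossings, and to organize the $2^n$ complete resolutions of these crossings into a cube-of-resolutions. Each vertex $v \in \{0,1\}^n$ of the cube is a planar unlink on some number $k_v$ of components, whose branched double cover is diffeomorphic to $\#^{k_v-1}(S^1 \times S^2)$. Along each edge of the cube, the two resolutions differ by a local merge or split of circles, and the corresponding branched double covers are related by a zero-framed surgery along an unknot — so the edge fits into a surgery exact triangle in Heegaard Floer homology.

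The goal is then to assemble the complexes $\CFa(\Sigma(L_v))$ into a single iterated mapping cone whose underlying vector space is $\bigoplus_v \CFa(\Sigma(L_v))$, with differential built from the surgery triangle maps on edges together with higher homotopies on $k$-dimensional faces of the cube. Concretely, one fixes a multi-pointed Heegaard diagram in which one set of attaching circles is fixed and the other sets $\betas_v$ correspond to the vertices $v$; the face maps are then defined by counting holomorphic $(k+2)$-gons with $\betas_{v_0}, \dots, \betas_{v_k}$ along consecutive edges. Filtering by cube degree, the associated graded complex is $\bigoplus_v \CFa(\Sigma(L_v))$, and since each vertex manifold is a connected sum of $S^1\times S^2$'s, its hat Heegaard Floer homology is an exterior algebra $V^{\otimes(k_v-1)}$ with $V$ two-dimensional, reproducing the Khovanov cube of resolutions.

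To identify the $d_1$ differentials with Khovanov's merge/comultiplication, one directly computes the chain map induced by a single zero-surgery on an unknot inside $\#^{k}(S^1 \times S^2)$ and matches it with the Frobenius structure on $V \cong H^*(S^1)$. A reduction from $\HFa$ to reduced Khovanov homology requires choosing a basepoint on $L$ and working with the reduced branched double cover complex. The appearance of the \emph{mirror} link reflects the fact that the natural direction of surgery (resolving the overstrand versus understrand) flips when passing from Khovanov's convention to the branched cover. Convergence to $\HFa(\Sigma(L))$ follows by a standard induction on $n$: collapsing the cube one coordinate at a time, each collapse uses the surgery exact triangle to replace a mapping cone of two resolutions by the single unresolved diagram, and after $n$ steps one is left with $\CFa(\Sigma(L))$ itself.

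The main obstacle is the coherence of the higher-dimensional face maps: one must verify that the polygon-counting maps on $k$-dimensional faces of the cube satisfy the hypercube $A_\infty$-relations, so that they really do assemble into a filtered differential. This is precisely a polygon pairing theorem, and its proof amounts to a careful analysis of the ends of one-dimensional moduli spaces of holomorphic $(k+2)$-gons in the multi-diagram — the boundary strata must cancel in pairs, which requires controlling degenerations into lower polygons plus disks as well as splittings into two polygons sharing a common $\beta$-collection. Establishing the analytic foundations for this degeneration analysis, and in particular ruling out boundary degenerations through admissibility of the multi-diagram, is the technical core of the argument.
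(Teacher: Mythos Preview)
This theorem is not proved in the present paper at all: it is quoted verbatim from \cite{BrDCov} and serves only as background. The paper's own contribution (Theorem~\ref{thm:IdentifySpectralSequences}) is to show that a \emph{different} spectral sequence, built combinatorially via bordered Floer bimodules in~\cite{LOT:DCov1}, coincides with the one you are sketching here. So there is no ``paper's own proof'' to compare against; your outline is essentially a summary of the original Ozsv\'ath--Szab\'o argument from~\cite{BrDCov}.

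That said, one terminological point in your last paragraph is worth flagging, because it risks confusing the two papers. The coherence of the higher face maps---that the $(k{+}2)$-gon counts satisfy the hypercube $A_\infty$ relations---is \emph{not} a ``polygon pairing theorem.'' It is simply the $A_\infty$ relation for polygon counts in a single closed Heegaard multi-diagram (Proposition~\ref{prop:closed-Ainf-rel} here, or \cite[Section~4.2]{BrDCov}), which follows from the standard analysis of ends of one-dimensional moduli spaces. The phrase ``pairing theorem for polygons'' in this paper refers to something entirely different: Theorem~\ref{thm:PolygonPairing}, which identifies polygon counts in a glued diagram with a tensor product of bordered invariants. That result is the technical heart of \emph{this} paper and plays no role in the original proof you are sketching.
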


The differentials in the spectral sequence come from counts of
pseudo-holomorphic polygons. As such, they depend on a number of
choices.  Baldwin showed~\cite{Baldwin11:ss} that the filtered
chain homotopy type inducing the spectral sequence (and hence each
page after $E_2$) is a link invariant, see also~\cite{RobertsMultiDiagrams}.

The present work is a sequel to~\cite{LOT:DCov1}. In that paper, we
gave a combinatorially-defined spectral sequence from the
reduced Khovanov homology $\rKh(m(L))$ of the mirror of a link $L$ to
the Heegaard Floer homology $\HFa(\Sigma(L))$ of the double cover of
$S^3$ branched over~$L$.  That spectral sequence was \emph{a priori}
different from the one in \cite{BrDCov}.
The spectral sequence from \cite{LOT:DCov1} arises from
decomposing the branched double cover of a link into a union of
bordered manifolds as in~\cite{LOT1,LOT2}.  Bordered Floer homology
associates a bimodule to (the branched double cover of) each link crossing. The
bimodule associated to a crossing can be interpreted as a mapping cone
of bimodules
corresponding to the (branched double covers of the) two resolutions:

\begin{proposition} 
  \label{prop:TriangleCounting}
  \cite[Theorem~\ref*{DCov1:thm:MappingCone}]{LOT:DCov1}
  Consider the tangle consisting of $k$ horizontal segments lying in a
  plane. We think of this as a tangle in $S^2\times I$.  Its branched
  double cover is a bordered three-manifold representing
  $[0,1]\times \Sigma$. We denote this bordered three-manifold by $\Id$ (as it represents the identity cobordism).
  Next, consider the tangle $\sigma_i$ (respectively
  $\sigma_i^{-1}$) consisting of horizontal segments 
  with exactly one positive (respectively negative)
  crossing, which occurs between the $i\th$ and $(i+1)\st$ strands.
  Let ${\check \sigma}_i$ denote the
  ``anti-braidlike'' resolution of $\sigma_i$ and let $\CFDAa(\sigma_i)$ and $\CFDAa({\check
    \sigma}_i)$ denote the bimodules associated to the branched double covers
  of $\sigma_i$ and ${\check \sigma}_i$, respectively (which are bordered $3$-manifolds).  Then there
  are distinguished bimodule morphisms
  \begin{align*}
    F^-\co \CFDAa(\Id) &\to  \CFDAa({\check \sigma}_i)\\
    F^+\co \CFDAa({\check \sigma}_i) &\to \CFDAa(\Id)
  \end{align*}
  with the property that $\CFDAa(\sigma_i^{-1})$ and $\CFDAa(\sigma_i)$ are
  isomorphic to the mapping cones of the
  bimodule morphisms
  $F^-$ and $F^+$ respectively. (See Figure~\ref{fig:braid-gens}.)
\end{proposition}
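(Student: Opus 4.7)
The plan is to realize the three bordered three-manifolds in question as three members of a surgery triple on a single framed knot, and then to identify the connecting morphisms $F^\pm$ using the explicit Dehn twist bimodule computation developed in \cite{LOT:DCov1}.

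First, I would identify the branched double covers concretely. The trivial tangle consisting of $k$ horizontal segments lifts to the trivial bordered cobordism $[0,1]\times\Sigma$, where $\Sigma$ is the double cover of the disk branched over the $2k$ endpoints. A single positive crossing $\sigma_i$ (respectively $\sigma_i^{-1}$) lifts to the bordered manifold obtained by a positive (respectively negative) Dehn twist of $[0,1]\times\Sigma$ along the simple closed curve $\gamma_i\subset\Sigma$ that doubly covers the straight arc joining the $i\th$ and $(i{+}1)\st$ branch points. The anti-braidlike resolution $\check\sigma_i$, in which the two strands of the crossing are joined across, lifts to the bordered manifold obtained from $[0,1]\times\Sigma$ by compressing along the disk bounded by $\gamma_i$.

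Second, I would place these bordered three-manifolds into the unoriented skein / surgery triple on the framed knot $\gamma_i$: $\Id$, $\check\sigma_i$, and $\sigma_i^{\pm 1}$ correspond respectively to $\infty$-, $0$-, and $\pm 1$-framed surgeries on $\gamma_i$. Applying the surgery exact triangle in bordered Heegaard Floer homology -- either via a direct bordered construction or as a consequence of the usual surgery triangle for $\HFa$ after pairing with arbitrary test bimodules -- then yields an exact triangle of the three $\CFDAa$ bimodules, so that $\CFDAa(\sigma_i^{\pm 1})$ is quasi-isomorphic to the mapping cone of some bimodule morphism between $\CFDAa(\Id)$ and $\CFDAa(\check\sigma_i)$.

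Third, I would pin down the connecting morphisms. Up to homotopy there is essentially a unique nontrivial bimodule morphism in each direction between $\CFDAa(\Id)$ and $\CFDAa(\check\sigma_i)$, so the homotopy class of $F^\pm$ is forced by the surgery triangle. To get a chain-level distinguished representative I would appeal to the explicit combinatorial models for $\CFDAa$ of Dehn twist bimodules computed in \cite{LOT:DCov1}, writing down canonical cochains realizing $F^\pm$ and checking directly that the resulting mapping cones are not merely quasi-isomorphic, but actually isomorphic, to $\CFDAa(\sigma_i^{\pm 1})$. The main obstacle is precisely this final step: selecting a chain-level representative of $F^\pm$ that is canonical enough to make the mapping cone \emph{isomorphism} go through and that is compatible with the natural handle-attachment maps, since it is this rigid, chain-level identification -- not just the homotopy class -- that the pairing theorem for polygons in the sequel will need to compare with the triangle counts defining the Ozsv\'ath--Szab\'o spectral sequence.
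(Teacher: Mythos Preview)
This proposition is not proved in the present paper: it is a citation of \cite[Theorem~\ref*{DCov1:thm:MappingCone}]{LOT:DCov1}, quoted in the introduction as background for the spectral sequence comparison. The present paper never gives a self-contained proof of it, so there is no ``paper's own proof'' to compare your proposal against.

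That said, your outline is a reasonable reconstruction of the argument from \cite{LOT:DCov1}. The identification of the branched double covers with $\infty$-, $0$-, and $\pm 1$-surgery on the curve $\gamma_i$ is correct, and the mapping-cone statement does come from the bordered surgery exact triangle (cf.\ \cite[Theorem~\ref*{DCov1:thm:Dehn-is-MC}]{LOT:DCov1} and Section~\ref{sec:const-bord-tri} here). One point to sharpen: in \cite{LOT:DCov1} the distinguished morphisms $F^\pm$ are not really obtained by first invoking an abstract surgery triangle and then arguing uniqueness of the connecting map. Rather, the authors write down explicit $\DD$-bimodule morphisms $F^\pm_{\DD}$ by hand, check directly that their mapping cones are isomorphic to the explicitly computed $\CFDDa$ of the Dehn twist, and then define $F^\pm$ on $\CFDAa$ by tensoring with $\CFAAa(\Id)$. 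The uniqueness statement you allude to (that these are essentially the only nontrivial morphisms) is established separately and is used later in the present paper, in Lemma~\ref{lem:IdentifyTheMapsInSlices}, to match $F^\pm$ with the holomorphic triangle maps. So the logic is reversed from your sketch: explicit construction first, uniqueness as an auxiliary tool for comparison with triangle counts, rather than uniqueness as the mechanism producing $F^\pm$.
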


\begin{remark}
  If we orient the tangle locally so that the strands are all oriented
  from left to right then the sign of the crossing appearing in the above
  proposition coincides with the sign of the crossing from knot theory.
\end{remark}

\begin{figure}
  \centering
  \includegraphics[scale=.857]{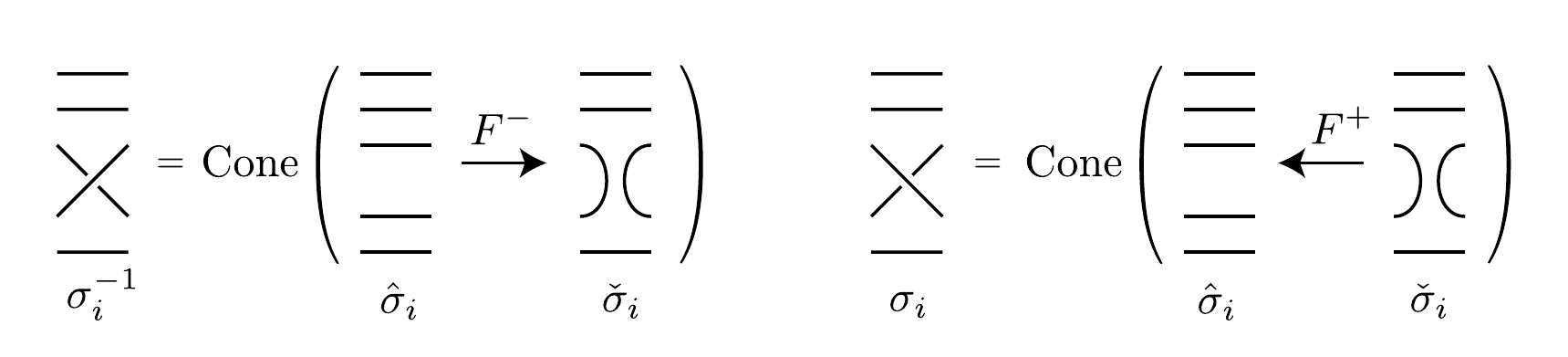}
  \caption{\textbf{Braid generators as mapping cones.}}
  \label{fig:braid-gens}
\end{figure}

Next, the pairing theorem for bordered Floer homology realizes the
Heegaard Floer complex of the branched double cover as a tensor
product of the constituent bimodules. Since each of these is a mapping
cone, the iterated tensor product inherits a filtration by
$\{0,1\}^n$.  The homology of the associated graded complex can be
identified with the chain groups computing reduced Khovanov
homology; see Corollary~\ref{cor:surg-tri-agree}, below.
Thus, one might expect that the
spectral sequence associated to the filtration is related to the 
spectral sequence from
Theorem~\ref{thm:MultiDiagramSpectralSequence}.

Indeed, the aim of the present paper is to identify these two spectral sequences; i.e. to prove:
\begin{theorem}
  \label{thm:IdentifySpectralSequences}
  The spectral sequence coming
  from~\cite[Theorem~\ref*{DCov1:thm:BorderedSpectralSequence}]{LOT:DCov1}
  agrees with the spectral sequence for multi-diagrams
  from~\cite[Theorem 1.1]{BrDCov}
  (Theorem~\ref{thm:MultiDiagramSpectralSequence} above). In fact, the
  filtered chain complexes inducing these two spectral sequences
  are filtered chain homotopy equivalent.
\end{theorem}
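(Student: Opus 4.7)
The plan is to reduce the identification of the two spectral sequences to a pairing theorem for holomorphic polygons, which is the central technical result of the paper. I would begin by making the bordered filtered complex explicit. By Proposition~\ref{prop:TriangleCounting}, each crossing bimodule $\CFDAa(\sigma_i^{\pm 1})$ is a mapping cone of $F^\pm$. Tensoring over the $n$ crossings of a diagram for $L$ (together with the ``cap'' and ``cup'' halves of the standard bridge decomposition) produces a complex
\[
C_{\mathrm{bor}} = \CFAa \DT \bigl(\textstyle\bigotimes_{i=1}^{n} \CFDAa(\sigma_i^{\pm 1})\bigr) \DT \CFDa
\]
which inherits a $\{0,1\}^n$-filtration whose vertex at $I\in\{0,1\}^n$ is the tensor product in which the $i$\th crossing factor has been replaced by the bimodule for its $I_i$-resolution. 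The vertex-by-vertex pairing theorem from \cite{LOT1,LOT2} identifies each vertex with $\CFa(\Sigma(L_I))$, and the length-one connecting maps reduce to triangle counts in a Heegaard triple diagram; this is what identifies the $E_1$ page with the reduced Khovanov complex of $m(L)$, as carried out in \cite{LOT:DCov1}.

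On the multi-diagram side, the Ozsv\'ath--Szab\'o spectral sequence comes from a Heegaard multi-diagram $(\Sigma,\alphas,\{\betas_I\}_{I\in\{0,1\}^n})$ where $\betas_I$ realizes the $I$-resolution. The resulting filtered complex $C_{\mathrm{md}}=\bigoplus_I \CFa(\Sigma,\alphas,\betas_I)$ has differential assembled from counts of pseudo-holomorphic $(k+2)$-gons with boundary on $\alphas,\betas_{I_0},\dots,\betas_{I_k}$ for increasing chains $I_0<\dots<I_k$; the cube filtration again produces a Khovanov $E_1$ page.

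The core step is a \emph{pairing theorem for polygons}: the polygon count for each increasing chain $I_0<\dots<I_k$ in the multi-diagram coincides with the corresponding higher $A_\infty$ composition of the bordered morphisms $F^\pm$ (tensored with identity bimodule morphisms in the unchanged factors) in $C_{\mathrm{bor}}$. My approach is a neck-stretching argument generalizing the proof of the original pairing theorem: cut $\Sigma$ along the $n$ pointed matched circles separating consecutive crossing pieces, and stretch all necks simultaneously. In the degeneration, each holomorphic polygon in $\Sigma$ breaks into a matched collection of generalized holomorphic polygons---one in each bordered piece---with Reeb-chord asymptotics agreeing along each pointed matched circle. The ``identity'' pieces (unchanged between $I_j$ and $I_{j+1}$) contribute $A_\infty$ identities under $\DT$, while the factors that do change supply exactly the higher operations whose $k=1$ terms are the $F^\pm$ of Proposition~\ref{prop:TriangleCounting}. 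I expect the hardest part to be the analytic input: transversality for moduli of polygons with many Lagrangian boundary conditions, simultaneous compactness as several necks are stretched at once, and in particular ruling out spurious boundary configurations (boundary degenerations, height-lowering breakings, sphere bubbles) that would contribute to $D^2$ but have no counterpart in the bordered $A_\infty$ relations.

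Given the polygon pairing theorem, the identification of the filtered complexes is nearly formal. Both $C_{\mathrm{bor}}$ and $C_{\mathrm{md}}$ are organized as twisted complexes over the cube $\{0,1\}^n$ with vertex objects $\CFa(\Sigma(L_I))$; the vertex identifications from the bimodule pairing theorem assemble into a morphism of twisted complexes whose cube components are, by the polygon pairing theorem, precisely matched on the two sides. The assembled map is automatically filtered, and a standard order-by-order filtered homological algebra argument upgrades it to a filtered chain homotopy equivalence, yielding Theorem~\ref{thm:IdentifySpectralSequences}.
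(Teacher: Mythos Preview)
Your high-level strategy is right: the proof hinges on a pairing theorem for polygons, and once that is in hand, the identification of the two filtered complexes is essentially formal. But your proposed proof of the pairing theorem has a genuine gap, and it is the same gap that the bulk of the paper is devoted to closing.

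When you stretch the neck between two bordered pieces, a holomorphic $(n{+}1)$-gon in the glued diagram limits to a pair of $(n{+}1)$-gons with \emph{two} matching conditions: the Reeb chord heights agree, \emph{and} the conformal structures on the base polygons agree (both sides live over the same point of $\Conf(D_{n+1})$). By contrast, in the tensor product $\CCFAa\DT\CCFDa$, the two factors contribute polygons with potentially \emph{different} numbers of corners and unrelated conformal structures; only the Reeb chord asymptotics are matched, and only combinatorially. So neck-stretching alone does not produce the tensor product; you are left with a conformal matching constraint that must be removed. This is not merely ``analytic input'' in the sense of transversality and compactness; it is a structural mismatch.

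The paper's resolution involves ingredients absent from your sketch. First, to even glue an $\IndI$-indexed complex to a $\IndJ$-indexed one, both sides are expanded to $\IndI\times\IndJ$ via \emph{close approximations} (small Hamiltonian pushoffs). After neck-stretching, one introduces a \emph{time-translation} parameter $t$ that shifts the Reeb chord heights relative to the polygon corners; sending $t\to\infty$ decouples the two matching conditions, yielding \emph{cross-matched} two-story buildings in which one story matches conformal structure while the other matches Reeb chords. Then one sends the approximation parameter $\epsilon\to 0$: the ``identity'' stories (your $A_\infty$-identity pieces) are shown to have degree-one forgetful maps to $\Conf$, so they can be discarded, and the conformal matching disappears entirely. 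Only then can time dilation, as in the bigon pairing theorem, finish the job. Your proposal jumps directly from neck-stretching to the tensor product and thus skips the time-translation and $\epsilon\to 0$ steps, which are the genuinely new ideas here. You should also note that the paper works with a specific, carefully constructed multi-diagram that decomposes into slices, and separately verifies (Lemma~\ref{lem:IdentifyTheMapsInSlices}) that the triangle-counting maps in each slice agree with the combinatorial $F^\pm$ from~\cite{LOT:DCov1}.
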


Our strategy of proof involves the following four key steps:
\begin{enumerate}
  \item 
    \label{step:DefinePolygons}
    Generalize the polygon counts which appear in Heegaard Floer homology (or more generally, in Fukaya categories)
    to the bordered context.
  \item 
    \label{step:PairingForPolygons}
    Give a pairing theorem which identifies polygon counts in a closed diagram in terms of pairings of 
    polygon counts appearing in the bordered context.
  \item 
    \label{step:ComputeTriangleMap}
    Construct a triple diagram corresponding to each crossing, and
    identify the corresponding triangle-counting morphism with the
    $\DA$ bimodule morphism appearing in
    Proposition~\ref{prop:TriangleCounting}.
  \item 
    \label{step:GlobalDiagram}
    Draw a Heegaard multi-diagram for the branched double cover with
    the property that the polygon counts in the diagram decompose as
    pairings, in the sense of Step~\ref{step:PairingForPolygons}, of the triangles appearing in
    Step~\ref{step:ComputeTriangleMap}. 
\end{enumerate}

In the above outline, Step~\ref{step:DefinePolygons} is a
routine. Step~\ref{step:ComputeTriangleMap} will follow quickly from a
uniqueness property of the maps appearing in
Proposition~\ref{prop:TriangleCounting}, which was
established in~\cite{LOT:DCov1}. Step~\ref{step:GlobalDiagram} is in fact
the diagram used in~\cite{LOT:DCov1}, which in turn is a stabilization of a
diagram studied by Greene~\cite{Greene:spanning-tree}. 

Thus, the crux of the present paper is the pairing theorem in
Step~\ref{step:PairingForPolygons}.  To this end, it is important to
formulate a well-defined object: holomorphic polygon counts depend on
the precise conformal parameters of the multi-diagram, while pairing
theorems (such as the pairing theorem for closed
manifolds,~\cite[Theorem~\ref*{LOT1:thm:TensorPairing}]{LOT1}) depend
on degenerating these parameters. Thus, we need an object depending on
holomorphic polygon counts which is sufficiently robust to be
invariant under variation of parameters.  This object is provided by
the notion of a {\em $\IndI$-filtered chain complex of attaching circles},
which consists
of a collection of sets of attaching circles $\{\betas^i\}$ together
with a suitable collection of chains
$\eta^{i_1<i_2}\in\CFa(\betas^{i_1},\betas^{i_2},z)$
(Definition~\ref{def:ChainComplex}). 
(This is a special case of a notion of twisted complexes
which play a prominent role in the theory of Fukaya categories~\cite{SeidelBook}; see also Remark~\ref{rem:ChainComplexIsTypeD}.)
The key point is that this data
makes $\bigoplus_{i\in\IndI} \CFa(\alphas,\betas^i,z)$ into a filtered
chain 
complex (Definition~\ref{def:AssociatedComplex}):

\begin{proposition}
  \label{intro:ChainComplexesOfAttachingCircles}
  Given a finite partially ordered set $\IndI$, an $\IndI$-filtered chain complex of attaching circles $\{\betas^i\}_{i\in\IndI}$, and one more set of attaching circles
  $\alphas$, there is a naturally associated $\IndI$-filtered complex denoted
  $\CFa(\alphas,\{\betas^i\}_{i\in\IndI},z)$ whose associated graded complex is
  \[\bigoplus_{i\in\IndI}\CFa(\alphas,\betas^i,z). \]
\end{proposition}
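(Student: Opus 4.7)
The plan is to define a differential on $\bigoplus_{i\in\IndI}\CFa(\alphas,\betas^i,z)$ built from holomorphic polygon counts with corners weighted by the chains $\eta^{i_1<i_2}$. For a generator $\x$ lying in $\CFa(\alphas,\betas^{i_0},z)$, and for each ascending chain $i_0<i_1<\cdots<i_k$ in $\IndI$, we count holomorphic $(k+2)$-gons with boundary on the Heegaard tori associated to $\alphas,\betas^{i_0},\betas^{i_1},\ldots,\betas^{i_k}$ in cyclic order, with input corner $\x$, successive corners along the $\beta$-sides drawn from generators appearing in $\eta^{i_0<i_1},\ldots,\eta^{i_{k-1}<i_k}$ (weighted by their coefficients in the $\eta$'s), and output corner on the last $\alphas\cap\betas^{i_k}$ side. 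Summing these contributions over all ascending chains beginning at $i_0$ and avoiding the basepoint $z$ defines $\partial$. The $k=0$ term recovers the ordinary Heegaard Floer differential on each $\CFa(\alphas,\betas^{i_0},z)$.

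To verify $\partial^2=0$, I would examine codimension-one ends of the one-parameter families of polygon moduli spaces that appear in the composition. These ends come in two flavors. Degenerations along an $\alphas$-cut split a $(k+2)$-gon into two polygons meeting along an $\alphas$-edge, and these assemble into the square of the honest Heegaard Floer differential on each summand, cancelling in pairs. Degenerations along a $\beta$-cut split off a polygon whose boundary lies entirely on $\beta$-circles $\betas^{i_a},\ldots,\betas^{i_b}$, leaving a shorter polygon involving $\alphas$ together with the remaining $\beta$-circles. The defining hypothesis of an $\IndI$-filtered chain complex of attaching circles (Definition~\ref{def:ChainComplex}) is precisely the Maurer--Cartan / twisted complex relation asserting that these pure-$\beta$ polygon contributions, paired with the data of the $\eta$'s, sum to zero; plugging this into the expansion of $\partial^2$ kills exactly the $\beta$-cut ends. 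The two cancellations together give $\partial^2=0$.

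Finally, defining the $\IndI$-filtration by placing $\CFa(\alphas,\betas^i,z)$ at level $i$, every contribution to $\partial$ either preserves level (the $k=0$ case) or strictly increases it from $i_0$ to some $i_k>i_0$, so $\partial$ is filtered and the associated graded retains only the $k=0$ pieces, producing $\bigoplus_{i\in\IndI}\CFa(\alphas,\betas^i,z)$ with its standard differential. The main obstacle is the second step: carefully enumerating the boundary strata of the polygon moduli spaces and matching the $\beta$-side degenerations to the defining relation of Definition~\ref{def:ChainComplex}, including signs/area bookkeeping forced by avoiding $z$. One must also set up a suitable weak-admissibility notion for the multi-diagram $(\Sigma,\alphas,\{\betas^i\}_{i\in\IndI},z)$ so that the sums defining $\partial$ are finite, which needs some care since $\IndI$ is only partially ordered rather than a single chain; this will presumably mean imposing admissibility on each totally ordered sub-chain of $\IndI$.
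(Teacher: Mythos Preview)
Your overall strategy matches the paper's, and your remarks on the filtration, the associated graded, and admissibility on each totally ordered chain are all correct. But your accounting of the $\alpha$-side degenerations is wrong, and this is the crux of the argument.

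You write that degenerations along an $\alpha$-cut ``assemble into the square of the honest Heegaard Floer differential on each summand, cancelling in pairs.'' This is not what happens. When a $(k{+}2)$-gon (for an ascending chain $i_0<\dots<i_k$) degenerates along an arc through the $\alpha$-edge, the two pieces are an $(\ell{+}2)$-gon and a $(k{-}\ell{+}2)$-gon, each still carrying an $\alpha$-edge, for some $0\le\ell\le k$. These are precisely the terms
\[
D^{i_\ell<i_k}\circ D^{i_0<i_\ell}\quad(\text{for }0<\ell<k),\qquad \partial\circ D^{i_0<i_k},\qquad D^{i_0<i_k}\circ\partial,
\]
i.e., \emph{all} of $\partial^2$, not just the diagonal $\partial_i^2$. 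They do not cancel among themselves; their vanishing is exactly what you are trying to prove.

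The correct logic (this is how the paper does it in Proposition~\ref{prop:ChainComplexesOfAttachingCircles}) is: the $A_\infty$ relation for polygon counts says that the sum over \emph{all} codimension-one degenerations is zero. The $\beta$-cut degenerations contribute terms of the form
\[
m\bigl(\eta^{i_{k-1}<i_k},\dots,m(\eta^{i_{b-1}<i_b},\dots,\eta^{i_{a}<i_{a+1}}),\dots,\eta^{i_0<i_1},\x\bigr),
\]
and these vanish by the defining compatibility relation~\eqref{eq:Compatibility} of a chain complex of attaching circles. Subtracting, the remaining $\alpha$-cut degenerations sum to zero, and those are exactly $d(D^{i_0<i_k})+\sum_{i_0<j<i_k}D^{j<i_k}\circ D^{i_0<j}$. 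So the Maurer--Cartan relation does not by itself kill one family of ends while another family cancels independently; rather, it is the mechanism by which the $\alpha$-cuts (the terms of $\partial^2$) are shown to vanish via the $A_\infty$ relation.
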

(This is reformulated and proved as
Proposition~\ref{prop:ChainComplexesOfAttachingCircles}, below.)

Now, the spectral sequence from
Theorem~\ref{thm:MultiDiagramSpectralSequence} can be thought of as
coming from a gluing statement for complexes of attaching circles, as follows:

\begin{proposition}
  \label{intro:GlueChainComplexes}
  Fix surfaces-with-boundary $\Sigma_1$ and $\Sigma_2$ with common boundary~$C$, and let
  $\{\betas^i\}_{i\in\IndI}$ be a chain complex of attaching circles in
  $\Sigma_1$, filtered by a partially ordered set $\IndI$, and let
  $\{\gammas^j\}_{j\in\IndJ}$ be a chain complex of attaching circles in
  $\Sigma_2$ filtered by a partially ordered set $\IndJ$.  Then these 
  chain complexes of attaching circles can be glued to form a natural
  $\IndI\times\IndJ$-filtered chain complex of attaching circles 
  $\betas\conn\gammas$ in $\Sigma_1\cup_C \Sigma_2$.
\end{proposition}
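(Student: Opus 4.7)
The plan is to realize the gluing as a bilinear pairing of twisted complexes, built from the polygon pairing theorem of Step~\ref{step:PairingForPolygons}. Conceptually, a chain complex of attaching circles is a twisted complex in a polygon-type $A_\infty$ category associated to the surface (cf.\ Remark~\ref{rem:ChainComplexIsTypeD}), and the polygon pairing theorem says exactly that these categories are compatible with boundary connect sum in a bilinear $A_\infty$ sense; twisted complexes then pair in the standard way.

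Concretely, for each $(i,j)\in\IndI\times\IndJ$, I would set
\[ (\betas\conn\gammas)^{(i,j)} := \betas^i \cup \gammas^j \subset \Sigma_1\cup_C\Sigma_2. \]
Generators of $\CFa(\betas^{i_1}\cup\gammas^{j_1},\betas^{i_2}\cup\gammas^{j_2},z)$ decompose as matched pairs of generators in $\Sigma_1$ and $\Sigma_2$ sharing a common tuple of intersection points along $C$, so chains in the glued complex are naturally identified with pairings of chains on either side. I would then define $\eta^{(i_1,j_1)<(i_2,j_2)}$ by summing pairings, in the sense supplied by Step~\ref{step:PairingForPolygons}, of the chains $\eta^{i_1<\cdots<i_2}$ along chains in $\IndI$ against the chains $\eta^{j_1<\cdots<j_2}$ along chains in $\IndJ$.

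To verify that this yields a chain complex of attaching circles in the sense of Definition~\ref{def:ChainComplex}, I would substitute the above definition into the twisted-complex structure equation
\[ \sum_{(i_1,j_1)<(k_1,\ell_1)<\cdots<(k_r,\ell_r)=(i_2,j_2)} \mu^r\bigl(\eta^{(i_1,j_1)<(k_1,\ell_1)},\ldots,\eta^{(k_{r-1},\ell_{r-1})<(i_2,j_2)}\bigr) = 0 \]
and apply the polygon pairing theorem to rewrite each polygon count $\mu^r$ in $\Sigma_1\cup_C\Sigma_2$ as a pairing of polygon counts in $\Sigma_1$ and $\Sigma_2$. After reindexing as a double sum over chains in $\IndI$ and chains in $\IndJ$, the expression becomes a pairing of the (already vanishing) structure equations for $\{\betas^i\}$ and $\{\gammas^j\}$, and hence vanishes.

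The main obstacle is combinatorial bookkeeping, namely matching the two sides of the previous identity. One must track how a saturated chain in $\IndI\times\IndJ$ projects onto chains in $\IndI$ and $\IndJ$, including ``identity'' steps where only one coordinate increases, and verify that the multiplicities from this reindexing match those produced by the polygon pairing theorem. Once Steps~\ref{step:DefinePolygons} and~\ref{step:PairingForPolygons} are set up to handle filtrations and arbitrary-arity polygons, this reduces to a formal identity; essentially all the analytic content is hidden in the pairing theorem, and what remains is an exercise in the algebra of twisted complexes.
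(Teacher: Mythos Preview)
Your proposal has a fundamental circularity and a misreading of the geometric setup.

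First, the circularity. You want to invoke the polygon pairing theorem (Step~\ref{step:PairingForPolygons}, i.e., Theorem~\ref{intro:PairingForPolygons}) to define the glued chain complex and verify its structure equation. But look at the right-hand side of that theorem: it is $\CFa(\alphas_1\cup\alphas_2,\{\betas^i\conn\gammas^j\},z)$, which already presupposes the glued chain complex of attaching circles as input. In the paper's logical order, Proposition~\ref{intro:GlueChainComplexes} (made precise as Definition~\ref{def:ConnectedSum} and Proposition~\ref{prop:GlueChainComplexes}) is a prerequisite for even stating the pairing theorem, and is proved in Section~\ref{sec:Complexes}, well before the pairing theorem in Section~\ref{sec:PairingTheorem}. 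So the pairing theorem cannot be used here.

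Second, the construction you sketch does not work as stated. You set $(\betas\conn\gammas)^{(i,j)}=\betas^i\cup\gammas^j$, but then for $(i,j_0)<(i,j_1)$ the $\Sigma_1$-curves coincide and $\CFa(\betas^i,\betas^i,z)$ is undefined. This is exactly why the paper first replaces the $\betas^i$ and $\gammas^j$ by \emph{close approximations} $\betas^{i\times j}$ and $\gammas^{i\times j}$ (Definitions~\ref{def:Approximation} and~\ref{def:CloseApproximation}, Proposition~\ref{prop:CloseApproximations}). The glued chain $\omega^{(i_0,j_0)<(i_1,j_1)}$ is then defined by hand (Equation~\eqref{eq:DefineProductChain}) using the top generators $\Theta$ of the approximating pairs, and is nonzero only when $i_0=i_1$ or $j_0=j_1$. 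There is no ``matching along $C$'': the $\betas$ and $\gammas$ are closed circles disjoint from the boundary, so generators in the glued surface are simply tensor products via the K\"unneth isomorphism~\eqref{eq:Kunneth}, not matched pairs sharing boundary data.

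Third, the actual proof of the structure equation is a direct analysis of holomorphic polygons in the connect sum, with no pairing theorem in sight. Since the $\betas$ and $\gammas$ are disjoint from $\partial\Sigma_i$, polygons with only $\betas\cup\gammas$ boundary split as a fibered product over $\Conf(D_{n+1})$ (Lemma~\ref{lem:FiberedProduct}). The argument then uses the dimension formula for approximations (Lemma~\ref{lem:ApproximateDimension}) and the closeness condition~\ref{cl:Polygons} to show that almost all terms in Equation~\eqref{eq:Compatibility} vanish; the surviving terms are organized into five claims, with the key cancellations coming from the degree-one statement of Lemma~\ref{lem:FiberedProductDegreeOne} and the triangle computation in Lemma~\ref{lem:FiberedProductTriangles}. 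The analytic content is in these lemmas about fibered products and close approximations, not in the bordered pairing machinery.
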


The construction of the $\IndI\times\IndJ$-filtered chain complex is given in Definition~\ref{def:ConnectedSum} below; 
a more precise version of Proposition~\ref{intro:GlueChainComplexes}
is given as Proposition~\ref{prop:GlueChainComplexes}.

Simplified versions of Step~\ref{step:DefinePolygons} are provided by 
the following two bordered analogues of Proposition~\ref{intro:ChainComplexesOfAttachingCircles}. 

\begin{theorem}
  \label{intro:ChainComplexToCFA-CFD}
  Let $\Sigma$ be a surface-with-boundary and $\{\betas^i\}_{i\in\IndI}$ an
  $\IndI$-filtered chain complex of attaching circles in $\Sigma$.  Let $\alphas$ be a collection of
  $\alpha$-arcs and $\alpha$-circles so that each $(\Sigma,\alphas,\betas^i)$
  is a bordered Heegaard diagram. Then there is a
  naturally associated $\IndI$-filtered $\Ainf$-module
  $\CCFAa(\alphas,\allowbreak\{\betas^i\}_{i\in\IndI},
  \allowbreak z)$ (respectively
  $\IndI$-filtered type $D$ structure
  $\CCFDa(\alphas,\{\betas^i\}_{i\in\IndI},z)$)
whose associated graded object is
  $\bigoplus_{i\in\IndI}\CFAa(\Sigma,\alphas,\betas^i,z)$
  (respectively $\bigoplus_{i\in\IndI}\CFDa(\Sigma,\alphas,\betas^i,z)$).
\end{theorem}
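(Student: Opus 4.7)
The plan is to imitate the closed-surface construction of Proposition~\ref{intro:ChainComplexesOfAttachingCircles} in the bordered setting, using the bordered polygon counts of Step~\ref{step:DefinePolygons}. As underlying $\FF_2$-module I would take
\[
\bigoplus_{i\in\IndI} \CFAa(\Sigma,\alphas,\betas^i,z),
\]
and define the $\Ainf$-operations by counting pseudo-holomorphic embedded $(k+2)$-gons in $\Sigma$ with boundary on $\alphas\cup\betas^{i_0}\cup\dots\cup\betas^{i_k}$ for chains $i_0<i_1<\dots<i_k$ in $\IndI$, with the $\betas^{i_{j-1}}\betas^{i_j}$-corner labeled by the given chain $\eta^{i_{j-1}<i_j}$. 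Reeb chords on the $\alphas$-side are recorded as algebra inputs exactly as for $\CFAa$, so each such count contributes a term to an operation $m_{k+\ell+1}$ that reads $\ell$ algebra elements and produces a component increasing the $\IndI$-label from $i_0$ to $i_k$. This is the bordered analog of the standard twisted-complex construction in a Fukaya category.

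Verifying the $\Ainf$-module (respectively type $D$) structure equation is the main work, and it is done in the usual way by enumerating ends of one-dimensional moduli spaces of such polygons. The codimension-one degenerations fall into three families: (a) breakings along an interior $\betas^i$, yielding two polygons joined at a shared intersection point; (b) the standard bordered boundary degenerations on the $\alphas$-side, which produce Reeb-chord collisions and reproduce the algebra relations in $\CFAa$; and (c) degenerations that split off a bigon on a $\betas^{i_{j-1}}\betas^{i_j}$-edge, consuming the input chain $\eta^{i_{j-1}<i_j}$ and replacing it by $\partial\eta^{i_{j-1}<i_j}$ or by a composition $\mu_2(\eta^{i_{j-1}<k},\eta^{k<i_j})$. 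Family (a) is paired off with family (c) precisely by the closure axiom of a chain complex of attaching circles, namely $\partial \eta^{i<j}+\sum_{i<k<j}\mu_2(\eta^{i<k},\eta^{k<j})=0$. Family (b) gives the bordered $\Ainf$-relation of~\cite{LOT1}. Summing all three families yields the desired identity.

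The $\IndI$-filtration is automatic from the construction: only polygons whose $\betas$-labels form a non-decreasing chain in $\IndI$ contribute, and the label-preserving operations are exactly the standard $\CFAa$-operations on each summand. Hence the associated graded module is $\bigoplus_{i\in\IndI}\CFAa(\Sigma,\alphas,\betas^i,z)$ as claimed. The type $D$ case is formally identical, with $\CFAa$-polygons replaced by the corresponding $\CFDa$-polygons; here the algebra elements appear on the output rather than the input, but the degeneration argument is the same.

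The main obstacle will be setting up the polygon moduli spaces in the bordered multi-diagram and analyzing the codimension-one boundary strata cleanly—in particular, the strata in family (c) that combine interior bubbling with boundary Reeb-chord breaking. This is essentially the content of Step~\ref{step:DefinePolygons} of the outline; while conceptually a straightforward extension of the constructions in~\cite{LOT1}, the bookkeeping of algebra inputs together with multiple $\eta$-insertions in a single degeneration requires care. Once the bordered polygon framework is in place, the construction of $\CCFAa(\alphas,\{\betas^i\}_{i\in\IndI},z)$ and $\CCFDa(\alphas,\{\betas^i\}_{i\in\IndI},z)$ and the verification of their filtered structure follow directly.
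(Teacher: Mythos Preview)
Your overall strategy is correct and is essentially the paper's: define maps $F^{i<i'}$ by summing bordered polygon counts over all chains $i=i_0<\dots<i_n=i'$ with inputs $\eta^{i_{j-1}<i_j}$ (this is Definition~\ref{def:CCFA}), and then verify the filtered $\Ainf$-module relation as a formal consequence of the bordered polygon $\Ainf$ relation (Proposition~\ref{prop:AinftyRelation}) together with the compatibility condition for a chain complex of attaching circles (Proposition~\ref{prop:DefCCFAa}); the type $D$ case is parallel (Proposition~\ref{prop:DefCCFDa}).

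However, there is a genuine error in your boundary-stratum analysis that would make the cancellation fail as written. The ``closure axiom'' you invoke,
\[
\partial \eta^{i<j}+\sum_{i<k<j}\mu_2(\eta^{i<k},\eta^{k<j})=0,
\]
is \emph{not} the compatibility condition for a chain complex of attaching circles. The actual condition (Equation~\eqref{eq:Compatibility}) is the full $\Ainf$ Maurer--Cartan equation
\[
\sum_{n\geq 1}\ \sum_{i=i_0<\dots<i_n=j} m_n(\eta^{i_{n-1}<i_n},\dots,\eta^{i_0<i_1})=0,
\]
involving all higher polygon operations among the Heegaard tori, not only $m_1$ and $m_2$. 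Correspondingly, your family~(c) is incomplete: the codimension-one degenerations on the $\betas$-side are not limited to bigons bubbling off a single edge, but include an arbitrary $(q{-}p{+}2)$-gon with all edges on $\beta$-curves splitting off, absorbing the consecutive inputs $\eta^{i_{p-1}<i_p},\dots,\eta^{i_{q-1}<i_q}$ and producing $m_{q-p+1}(\eta^{i_{q-1}<i_q},\dots,\eta^{i_{p-1}<i_p})$ (the middle sum in Proposition~\ref{prop:AinftyRelation}). With only the dg-level identity you wrote, these higher bubbles are unaccounted for and the terms do not cancel. Once you replace your closure axiom and your family~(c) by their full $\Ainf$ versions, the argument goes through exactly as in the paper.
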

(This is proved as Propositions~\ref{prop:DefCCFAa} and~\ref{prop:DefCCFDa}.)

The tensor product over the bordered algebra $\Alg(\PMC)$ of an $\IndI$-filtered $\Ainf$-module and a
$\IndJ$-filtered type $D$ structure is naturally an $\IndI\times\IndJ$-filtered
chain complex; see for instance~\cite[Section~\ref*{DCov1:sec:FilteredComplexes}]{LOT:DCov1} and Lemma~\ref{lem:FilteredTensorProduct} below.
A simplified version of the pairing theorem for polygons needed in
Step~\ref{step:PairingForPolygons} can be stated as follows.

\begin{theorem}
  \label{intro:PairingForPolygons}
  Let $\Sigma_1$ and $\Sigma_2$ be surfaces, and
  $\alphas^i\subset \Sigma_i$ be $\alpha$-curves which intersect the
  boundary of $\Sigma_i$ in a pointed matched circle $\PMC_i=\partial
  \Sigma_i\cap\alphas_i$, with
  $\PMC_1=-\PMC_2$. Let $\alphas=\alphas_1\cup \alphas_2\subset \Sigma=\Sigma_1\cup_\bdy\Sigma_2$.
  Fix an $\IndI$-filtered chain complex of attaching circles $\{\betas^i\}_{i\in\IndI}$
  in $\Sigma_1$ and a $\IndJ$-filtered chain complex of attaching circles  $\{\gammas^j\}_{j\in\IndJ}$
  in $\Sigma_2$. We can form the 
  $\IndI\times\IndJ$-filtered chain complex
  of attaching circles $\betas\conn\gammas$, as in Proposition~\ref{intro:GlueChainComplexes}. 
  Then there is a homotopy equivalence of $\IndI\times\IndJ$-filtered chain complexes
  \[\CFa(\alphas_1\cup
  \alphas_2,\{\betas^i\conn\gammas^j\}_{(i,j)\in\IndI \times\IndJ},z)
  \simeq
  \CCFAa(\alphas_1,\{\betas^i\}_{i\in\IndI})
  \DT 
  \CCFDa(\alphas_2,\{\gammas^j\}_{j\in\IndJ}).\]
\end{theorem}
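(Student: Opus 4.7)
The plan is to follow the template of the pairing theorem for closed three-manifolds from bordered Floer homology, adapted to the polygon setting by a neck-stretching (``time dilation'') argument, and then to promote the resulting homotopy equivalence to one of $\IndI\times\IndJ$-filtered chain complexes. By Theorem~\ref{intro:ChainComplexToCFA-CFD}, the bordered data $\CCFAa(\alphas_1,\{\betas^i\}_{i\in\IndI})$ and $\CCFDa(\alphas_2,\{\gammas^j\}_{j\in\IndJ})$ are already defined as filtered $\Ainf$-module and type $D$ structure, respectively, and Lemma~\ref{lem:FilteredTensorProduct} gives their box tensor product the structure of an $\IndI\times\IndJ$-filtered chain complex. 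The goal is to identify that box tensor product with the filtered complex built from the glued multi-diagram $\betas\conn\gammas$ of Proposition~\ref{intro:GlueChainComplexes}.

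The central geometric step is to analyze moduli of holomorphic polygons in a family of Heegaard multi-diagrams obtained by inserting a neck of length $t$ along $\partial\Sigma_1=-\partial\Sigma_2$, and to take $t\to\infty$. In this limit, a holomorphic polygon with boundary on $\alphas_1\cup\alphas_2$ and on some subset of the $\betas^i\conn\gammas^j$'s degenerates into a matched pair: a holomorphic polygon in $\Sigma_1$ with boundary on $\alphas_1$ and certain $\betas^i$'s, together with a holomorphic polygon in $\Sigma_2$ with boundary on $\alphas_2$ and certain $\gammas^j$'s, with matching constraints along the common cylindrical end given by a sequence of Reeb chords. This is exactly the same degeneration picture as in the proof of the closed pairing theorem of~\cite{LOT1}, but enriched with the additional corners coming from the chain elements $\eta^{i_1<i_2}$ that cut the boundary of the polygon into segments lying on different $\betas^i$'s (or $\gammas^j$'s). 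The new ingredient is purely combinatorial: the labeled tree structure dictating which $\beta$'s and $\gamma$'s appear as boundary conditions splits compatibly across the neck, so each matched pair corresponds precisely to one term in a composition of the $\Ainf$-operations on $\CCFAa(\alphas_1,\{\betas^i\})$ with the $\delta^1$ iterates of $\CCFDa(\alphas_2,\{\gammas^j\})$, and summing over matchings recovers the differential of the box tensor product.

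Given this identification at $t=\infty$, the same $t$-parameter family yields a chain homotopy equivalence between the $t=0$ and $t=\infty$ models: standard continuation map arguments produce maps in each direction whose compositions are chain homotopic to the identity via a moduli space parametrized by pairs of neck lengths. The map preserves the $\IndI\times\IndJ$-filtration because neck-stretching does not change the filtration index of a polygon (which is determined by the set of $\betas^i\conn\gammas^j$ appearing on its boundary, and this set projects equivariantly to the filtration data on each side). Specializing this equivalence to each associated graded piece recovers the ordinary pairing theorem of~\cite{LOT1}, providing both a consistency check and the base case.

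The main obstacle I anticipate is the compactness and gluing analysis for polygons with multiple boundary conditions decorated by chains: one must enumerate all codimension-one degenerations (splittings of the domain polygon into subpolygons, bubbling of Reeb chords at east infinity, breaking of an $\eta^{i_1<i_2}$ into a composition of chains, and combinations thereof) and verify that they pair up with the terms arising from the chain-complex-of-attaching-circles relations (Definition~\ref{def:ChainComplex}) and the $\Ainf$- and $D$-structure relations on the two sides. The bookkeeping is intricate but structurally parallel to the bigon case, so once the moduli spaces are set up carefully in Step~\ref{step:DefinePolygons} and the degeneration picture at the neck is established, the filtered chain homotopy equivalence follows by the same formal argument that underlies the closed pairing theorem.
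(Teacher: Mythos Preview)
Your proposal has a genuine gap: it conflates several distinct deformations and misses the central difficulty. After stretching the neck between $\Sigma_1$ and $\Sigma_2$, you do \emph{not} directly obtain the terms in the box tensor product. What you obtain is pairs $(u_1,u_2)$ of holomorphic $(n+1)$-gons, one in each $\Sigma_i$, with boundary on the \emph{close approximations} $\betas^{i_k\times j_k}$ and $\gammas^{i_k\times j_k}$ (not on the original $\betas^{i}$'s and $\gammas^{j}$'s), and with a matching condition on \emph{both} the conformal structures of the underlying polygons \emph{and} the Reeb chord heights (the map $\evbig{i}$ in the paper). By contrast, in the box tensor product the $\Sigma_1$ side counts $(n_1+1)$-gons on the original $\betas^{i_k}$'s and the $\Sigma_2$ side counts $(n_2+1)$-gons on the original $\gammas^{j_k}$'s---polygons with generally different numbers of sides and no constraint relating their conformal moduli. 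Bridging this gap is the real content of the theorem.

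The paper accomplishes this with three additional steps that your outline omits. First, a \emph{time translation} (not time dilation) parameter $t$ is introduced, shifting the Reeb chord heights relative to the polygon corners; sending $t\to\infty$ decouples the chord-height matching from the conformal-structure matching, yielding two-story ``cross-matched'' buildings $(u_1*u_2,v_1*v_2)$ where $u_2,v_1$ are provincial. Second, the close-approximation parameter $\epsilon$ is sent to $0$; a delicate dimension count (Proposition~\ref{prop:SendEpsilonToZero}, relying on Lemmas~\ref{lem:ApproximateDimension} and~\ref{lem:FiberedProductDegreeOne}) forces the provincial pieces $u_2$ and $v_1$ to be trivial, reducing to pairs of polygons on the original curves with only a chord-height matching and unconstrained conformal moduli. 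Only \emph{then} does the time-dilation argument from~\cite{LOT1} apply. You have not addressed the close approximations at all, nor how polygons on the $\IndI\times\IndJ$-indexed approximated curves become polygons on the $\IndI$-indexed and $\IndJ$-indexed original curves; this is precisely where the connected-sum construction of Proposition~\ref{intro:GlueChainComplexes} interacts nontrivially with the analysis.
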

(This is proved as Theorem~\ref{thm:PolygonPairing}, in Section~\ref{sec:PairingTheorem}.)

We prove Theorem~\ref{intro:PairingForPolygons} by applying ``time dilation'', the deformation
used in~\cite{LOT1} to prove the pairing theorem for three-manifolds.
As a preliminary step, though, we will need to translate time,
as in the proof of the self-pairing theorem~\cite[Theorem~\ref*{LOT2:thm:Hochschild}]{LOT2}.

The generalizations of Theorems~\ref{intro:ChainComplexToCFA-CFD}
and~\ref{intro:PairingForPolygons} to the bimodule case
(Theorem~\ref{thm:PolygonPairingDA} below), which we need in
Step~\ref{step:PairingForPolygons}, is a fairly routine extension.

\subsection{Organization}

In
Section~\ref{sec:filtered-alg}, we remind the reader of the definition of
the $\IndI$-filtered algebraic objects which will appear throughout:
complexes, $\Ainf$-modules, type $D$ structures, and bimodules. We
also explain how the tensor products of filtered objects give filtered
complexes.  Section~\ref{sec:Complexes} concerns
pseudo-holomorphic polygons in Heegaard multi-diagrams.  In that
section, we explain the definition of chain complexes of attaching
circles, showing how these can be used to construct filtered chain
complexes (Proposition~\ref{intro:ChainComplexesOfAttachingCircles}),
and how they can be glued
(Proposition~\ref{intro:GlueChainComplexes}).  In
Section~\ref{sec:Polygons}, the constructions are generalized to
bordered multi-diagrams. That section contains the proof of
Theorem~\ref{intro:ChainComplexToCFA-CFD}
(restated as
Propositions~\ref{prop:DefCCFAa} and~\ref{prop:DefCCFDa} in the type
$A$ and $D$ cases, respectively).  
Proposition~\ref{prop:BimoduleMorphism} is an analogous result for bimodules.

Having described all the ingredients in the statement of the pairing
theorem for polygons, in Section~\ref{sec:PairingTheorem} 
we turn to the precise statement and proof of
Theorem~\ref{intro:PairingForPolygons}. (Theorem~\ref{thm:PolygonPairing}
is the simpler, module version and Theorem~\ref{thm:PolygonPairingDA} is the more general,
bimodule version.)  As a consequence of the pairing theorem, we
deduce that the surgery exact sequence implied by bordered Floer
homology~\cite[Section 11.2]{LOT1} agrees with the original surgery sequence from~\cite{OS04:HolDiskProperties}. 
In Section~\ref{subsec:IdentifySS}, we describe
the Heegaard multi-diagram for double covers, and show how it can be
used, in conjunction with the pairing theorem, to prove
Theorem~\ref{thm:IdentifySpectralSequences}.

\subsection{Further remarks}

The present paper is devoted to computing a spectral sequence from
Khovanov homology to the Heegaard Floer homology of a branched double
cover. It is worth pointing out that this spectral sequence has a
number of generalizations to other contexts. For example, Grigsby and
Wehrli have established an analogous spectral sequence starting at
various colored versions of Khovanov homology and converging to knot
homology of the branch locus in various branched covers of $L$,
leading to a proof that these colored Khovanov homology groups detect
the unknot~\cite{GrigsbyWehrli:detects}. Bloom~\cite{Bloom:ss} proved an analogue
of Theorem~\ref{thm:MultiDiagramSpectralSequence} using Seiberg-Witten
theory in place of Heegaard Floer homology. More recently, Kronheimer
and Mrowka~\cite{KronheimerMrowka} have proved an analogue with $\ZZ$
coefficients, converging to a version of instanton link homology,
showing that Khovanov homology detects the unknot.

We have relied in this paper extensively on the machinery of bordered
Floer homology, which gives a technique for computing $\HFa$ for
three-manifolds. Another powerful technique for computing this
invariant is the technique of {\em nice diagrams};
see~\cite{SarkarWang07:ComputingHFhat}. Indeed, although nice diagrams
were originally conceived as a way to compute $\HFa$ for closed
three-manifolds, it has been extended to a tool for computing triangle
maps in~\cite{LMW06:CombinatorialCobordismMaps}, see
also~\cite{Sarkar11:IndexTriangles}. At present, a ``nice'' technique
for directly computing the polygons needed in
Theorem~\ref{thm:MultiDiagramSpectralSequence} has not been
developed. See~\cite[Theorem~11.10]{ManolescuOzsvath:surgery} for an
alternative approach studying these maps.

Ideas from~\cite{AurouxBordered} suggest
another route to the pairing theorem for polygons.

\subsection{Acknowledgements}
The authors thank the Simons Center for Geometry and Physics
for its hospitality while this work was written up. We thank Mohammed
Abouzaid and Fr{\'e}d{\'e}ric Bourgeois for helpful
conversations. Finally, we thank the referees for thorough readings
and carefully considered comments and corrections.


\section{Algebraic preliminaries}\label{sec:filtered-alg}

In this section we recall some notions about filtered objects, partly
to fix notation. Much of the material overlaps with~\cite[Section~\ref*{DCov1:sec:FilteredComplexes}]{LOT:DCov1}; we have also included here some of the discussion
from~\cite[Section~\ref*{LOT2:sec:algebra-modules}]{LOT2}.

We use $\Mor$ to denote the chain complex of morphisms in a \dg
category. For example, in the \dg category of chain complexes (over a ground ring
$\Ground$ of characteristic $2$), $\Mor(C_*,D_*)$ denotes the $\Ground$-module maps from
$C_*$ to $D_*$, with differential given by $d(f) =
f\circ\bdy_C+\bdy_D\circ f$. So, for instance, the cycles in
$\Mor(C_*,D_*)$ are the chain maps. For more details on the categories
we work with, see~\cite[Section 2]{LOT2}.

\begin{definition}
  \label{def:IFilteredCx}
  Let $\IndI$ be a finite, partially ordered set. An
  \emph{$\IndI$-filtered chain complex} is a collection $\{C^i\}_{i\in
    \IndI}$ of chain complexes over $\Field$ together with a
  collection of degree-zero morphisms $D^{i<j}\in \Mor(C^i,C^j[1])$ for each $i,j\in
  I$ with $i<j$, satisfying the compatibility condition
  \[ 
  d(D^{i<k}) = \sum_{\{j\mid i<j<k\}} D^{j<k}\circ D^{i<j}, 
  \]
  where $d$ denotes the differential on the morphism space $\Mor(C^i,C^k[1])$.
\end{definition}

(Here and elsewhere, $[1]$ denotes a grading shift by $1$. In the rest of this paper, the modules and chain complexes considered will be ungraded, but we will keep track of gradings in this background section.)

In particular, the compatibility condition implies that if $i$ and $j$
are consecutive,
then $D^{i<j}$ is a chain map.

This can be reformulated in the following more familiar terms.  Given
$\{C^i, D^{i<j}\}$, we can form the graded vector space $C= \bigoplus_{i\in \IndI} C^i$,
equipped with the degree $-1$ endomorphism $D\co C \to C$ defined by
$D = \sum_i \partial^i +\sum_{i<j} D^{i<j},$ where $\partial^i$ is the
differential on~$C^i$. The
compatibility condition is simply the statement that $D$ is a
differential. The pair
$(C,D)$ is naturally a filtered complex.
The associated graded complex to $C$ is simply $\bigoplus_{i\in \IndI}
C^i$, equipped with the differential $\partial$ which is the sum of
the differentials on the~$C^i$.

This notion has several natural generalizations to the case of
$\Ainf$-modules. The one we will use is the following. (See also
Remark~\ref{rem:filt-mod}.) First, fix an $\Ainf$-algebra $\Alg$ over
a ground ring $\Ground$ of characteristic $2$, and assume that $\Alg$
is \emph{bounded} in the sense that the operations $\mu_i$ on $\Alg$
vanish identically for all sufficiently large $i$.  Recall that the
category of $\Ainf$-modules over $\Alg$ is a \dg
category~\cite[Section 2.2.2]{LOT2}.
\begin{definition}\label{def:filt-Ainf-module}
  Let $\IndI$ be a finite, partially ordered set.
  An \emph{$\IndI$-filtered $\Ainf$-module over $\Alg$} is a collection $\{M^i\}_{i\in \IndI}$ of
  $\Ainf$-modules over $\Alg$, equipped with a preferred morphism
  $F^{i<j}\in \Mor(M^i,M^j[1])$ for each pair $i,j\in I$ with $i<j$, satisfying
  the compatibility condition
  \begin{equation}
    \label{eq:CompatibilityCondition}
  d(F^{i<k})= \sum_{j\mid i<j<k} F^{j<k}\circ F^{i<j},
  \end{equation}
  where $d$ denotes the differential on the morphism space $\Mor(M^i,M^k[1])$.
  Moreover, we say that the filtered module is {\em bounded} 
  if for all sufficiently large $n$, the maps
  \[ m_n\co M^i\otimes
  \overbrace{\Alg\otimes\dots\otimes\Alg}^{n-1}\to
  M^i[2-n]\qquad\text{and}\qquad F^{i<k}_n\co M^i\otimes \overbrace{\Alg\otimes\dots\otimes\Alg}^{n-1}\to M^k[2-n]\] 
  vanish.
\end{definition}

We can form the graded $\Ground$-module $M=\bigoplus_{i\in \IndI} M^i$. The higher products $m_n$ on the $M^i$ and the maps $F^{i<j}$
can be assembled to give a map
$M\otimes \Tensor^*(\Alg[1])\to M[1]$ defined,
for $x_i\in M^i$ and $a_1,\dots,a_n\in \Alg$, by
\begin{equation}\label{eq:filt-amalg}
x_i\otimes a_1\otimes\dots\otimes a_n \mapsto m^i_{n+1}(x_i,a_1,\dots,a_n) +
\sum_{i<j} F^{i<j}_{n+1}(x_i,a_1,\dots,a_n),
\end{equation}
where $m^i$
denotes the $\Ainf$ action on $M^i$.  The
compatibility condition on the $F^{i<j}$ is equivalent to
Formula~(\ref{eq:filt-amalg}) defining an $\Ainf$-module structure.
The boundedness condition ensures that the above constructions define
a map $M \otimes \overline{\Tensor}^*(\Alg[1]) \to M[1]$,
replacing the tensor algebra $\Tensor^*(\Alg)=\bigoplus_{i\geq 0}\Alg^{\otimes i}$ by its completion
$\overline{\Tensor}^*(\Alg)=\prod_{i\geq 0}\Alg^{\otimes i}$.

Recall that a type $D$ structure (a variant of twisted complexes) over
the $\Ainf$-algebra $\Alg$ over $\Ground=\bigoplus_{i=1}^N\Field$ is a
$\Ground$-module $X$ together with a $\Ground$-linear map $\delta^1\co
X\to\Alg[1]\otimes_\Ground X$ so that
\[
\mathcenter{
\begin{tikzpicture}
  \node at (0,0) (top) {};
  \node at (0,-1) (delta1) {$\delta^1$};
  \node at (-1,-4) (mu) {$\mu_1$};
  \node at (-1,-5) (bl) {};
  \node at (0,-5) (bc) {};
  \draw[->] (top) to (delta1);
  \draw[->] (delta1) to (mu);
  \draw[->] (mu) to (bl);
  \draw[->] (delta1) to (bc);
\end{tikzpicture}}
\mathcenter{+}
\mathcenter{
\begin{tikzpicture}
  \node at (0,0) (top) {};
  \node at (0,-1) (delta1) {$\delta^1$};
  \node at (0,-2) (delta2) {$\delta^1$};
  \node at (-1,-4) (mu) {$\mu_2$};
  \node at (-1,-5) (bl) {};
  \node at (0,-5) (bc) {};
  \draw[->] (top) to (delta1);
  \draw[->] (delta1) to (mu);
  \draw[->] (delta2) to (mu);
  \draw[->] (mu) to (bl);
  \draw[->] (delta1) to (delta2);
  \draw[->] (delta2) to (bc);
\end{tikzpicture}}
\mathcenter{+}
\mathcenter{
\begin{tikzpicture}
  \node at (0,0) (top) {};
  \node at (0,-1) (delta1) {$\delta^1$};
  \node at (0,-2) (delta2) {$\delta^1$};
  \node at (0,-3) (delta3) {$\delta^1$};
  \node at (-1,-4) (mu) {$\mu_3$};
  \node at (-1,-5) (bl) {};
  \node at (0,-5) (bc) {};
  \draw[->] (top) to (delta1);
  \draw[->] (delta1) to (mu);
  \draw[->] (delta2) to (mu);
  \draw[->] (delta3) to (mu);
  \draw[->] (mu) to (bl);
  \draw[->] (delta1) to (delta2);
  \draw[->] (delta2) to (delta3);
  \draw[->] (delta3) to (bc);
\end{tikzpicture}}
\mathcenter{+\cdots=0.}
\]
(See~\cite[Definition~\ref*{LOT2:def:TypeD}]{LOT2}.)
Over a \dg algebra only the first two terms occur.
 If $X$
and $Y$ are type $D$ structures, the morphism space $\Mor^D(X,Y)$ is
the space of $\Ground$-linear maps $h^1\co X \to \Alg\otimes Y$, equipped with a
differential consisting of terms each of which applies $\delta^1_X$
some number of times, then $h^1$, and then finally $\delta^1_Y$ a
number of times, and then feeds all the $n$ resulting algebra elements
into the operation $\mu_n$ on $\Alg$. When $\Alg$ is a $\dg$ algebra, this differential can be written
\[
\mathcenter{d(h^1)=}
\mathcenter{
\begin{tikzpicture}
  \node at (0,0) (top) {};
  \node at (0,-2) (delta1) {$h^1$};
  \node at (-1,-4) (mu) {$\mu_1$};
  \node at (-1,-5) (bl) {};
  \node at (0,-5) (bc) {};
  \draw[->] (top) to (delta1);
  \draw[->] (delta1) to (mu);
  \draw[->] (mu) to (bl);
  \draw[->] (delta1) to (bc);
\end{tikzpicture}}
\mathcenter{+}
\mathcenter{
\begin{tikzpicture}
  \node at (0,0) (top) {};
  \node at (0,-1) (delta1) {$\delta^1_X$};
  \node at (0,-2) (delta2) {$h^1$};
  \node at (-1,-4) (mu) {$\mu_2$};
  \node at (-1,-5) (bl) {};
  \node at (0,-5) (bc) {};
  \draw[->] (top) to (delta1);
  \draw[->] (delta1) to (mu);
  \draw[->] (delta2) to (mu);
  \draw[->] (mu) to (bl);
  \draw[->] (delta1) to (delta2);
  \draw[->] (delta2) to (bc);
\end{tikzpicture}}
\mathcenter{+}
\mathcenter{
\begin{tikzpicture}
  \node at (0,0) (top) {};
  \node at (0,-2) (delta1) {$h^1$};
  \node at (0,-3) (delta2) {$\delta^1_Y$};
  \node at (-1,-4) (mu) {$\mu_2$};
  \node at (-1,-5) (bl) {};
  \node at (0,-5) (bc) {};
  \draw[->] (top) to (delta1);
  \draw[->] (delta1) to (mu);
  \draw[->] (delta2) to (mu);
  \draw[->] (mu) to (bl);
  \draw[->] (delta1) to (delta2);
  \draw[->] (delta2) to (bc);
\end{tikzpicture}}.
\]
Suppose for simplicity that $\Alg$ is a $\dg$ algebra. Given
$g^1\in\Mor^{\Alg}(X,Y)$ and $h^1\in\Mor^{\Alg}(Y,Z)$, we can define
their composite $(h^1\circ g^1)$ by the expression
\[ 
\mathcenter{
\begin{tikzpicture}
  \node at (0,0) (top) {};
  \node at (0,-1) (delta1) {$g^1$};
  \node at (0,-2) (delta2) {$h^1$};
  \node at (-1,-3) (mu) {$\mu_2$};
  \node at (-1,-4) (bl) {};
  \node at (0,-4) (bc) {};
  \draw[->] (top) to (delta1);
  \draw[->] (delta1) to (mu);
  \draw[->] (delta2) to (mu);
  \draw[->] (mu) to (bl);
  \draw[->] (delta1) to (delta2);
  \draw[->] (delta2) to (bc);
\end{tikzpicture}}\,\,.
\]

We define a filtered type $D$ structure similarly to a filtered
$\Ainf$-module:
\begin{definition}\label{def:filt-D}
  Fix a \dg algebra $\Alg$, 
  and let $\IndI$ be a finite, partially ordered
  set.  An \emph{$\IndI$-filtered type $D$ structure over $\Alg$} is a
  collection $\{P^i\}_{i\in \IndI}$ of type $D$ structures over $\Alg$,
  equipped with preferred degree-zero morphisms $h^1_{i<j}\co P^i \to \Alg[1]
  \otimes P^j$ (for
  each pair $i,j\in I$ with $i<j$) satisfying the compatibility
  condition~\eqref{eq:CompatibilityCondition}.
\end{definition}

\begin{remark}\label{rem:type-d-ainf}
  Over an $\Ainf$-algebra, type $D$ structures form an
  $\Ainf$-category~\cite[Lemma \ref*{LOT2:lem:D-higher-comp}]{LOT2},
  making this definition more complicated.
  We will not need that level of generality in our present
  considerations.
\end{remark}

Let $M=(\{M_i\}_{i\in\IndI},\{F^{i<i'}\}_{i<i'\in\IndI})$ be an $\IndI$-filtered $\Ainf$-module over $\Alg$
and let $P=(\{P^j\}_{j\in\IndJ},\allowbreak\{h^1_{j<j'}\}_{j<j'\in\IndJ})$ be a $\IndJ$-filtered type $D$ structure over~$\Alg$.
Suppose moreover that $M$ is bounded.
We can form the tensor product
\[ M\DT P = \bigoplus_{i\times j\in\IndI\times\IndJ} M^i\otimes P^j, \]
endowed with a differential
\[ \partial \co M^i\otimes P^j \to M^i\otimes P^j[1] \]
given by
\[ 
\mathcenter{\partial=}
\mathcenter{
\begin{tikzpicture}
  \node at (0,0) (tl) {};
  \node at (1,0) (tr) {};
  \node at (1,-1) (delta) {$\delta_{j}$};
  \node at (0,-2) (m) {$m^i$};
  \node at (0,-3) (bl) {};
  \node at (1,-3) (br) {};
  \draw[modarrow] (tr) to (delta);
  \draw[modarrow] (delta) to (br);
  \draw[modarrow] (tl) to (m);
  \draw[modarrow] (m) to (bl);
  \draw[tensoralgarrow] (delta) to (m);
\end{tikzpicture}
},
\]
where $\delta_j\co P^j \to {\overline{\Tensor}^*}(\Alg[1]) \otimes P^j$
is the map obtained by iterating $\delta^1$ on $P^j$
and
\[m^i\co M_i\otimes\overline{\Tensor}^*(\Alg[1]) \to M_i[1]\]
is the map induced by the $\Ainf$-action.
Define maps $D^{i\times j<i'\times j'}$ by the following expression:
\begin{equation}
  \label{eq:DefineDifferentialOnDT}
  \mathcenter{\displaystyle D^{i\times j<i'\times j'}=
    \sum_{n=0}^\infty
    \sum_{j=j_0<\dots<j_n=j'}
  \mathcenter{
  \begin{tikzpicture}
    \node at (-2,0) (tlblank) {};
    \node at (1,0) (trblank) {};
    \node at (1,-1) (delta1) {$\delta_{j_0}$};
    \node at (1,-2) (f1) {$h^1_{j_0<j_1}$};
    \node at (1,-3) (delta2) {$\delta_{j_{1}}$};
    \node at (1,-4) (rdots) {$\vdots$};
    \node at (1,-5) (delta3) {$\delta_{j_{n-1}}$};
    \node at (1,-6) (f2) {$h^1_{j_{n-1}<j_n}$};
    \node at (1,-7) (delta4) {$\delta_{j_n}$};
    \node at (-2,-8) (m) {$F^{i\leq i'}$};
    \node at (1,-9) (brblank) {};
    \node at (-2,-9) (blblank) {};
    \draw[modarrow] (tlblank) to (m);
    \draw[modarrow] (m) to (blblank);
    \draw[modarrow] (trblank) to (delta1);
    \draw[modarrow] (delta1) to (f1);
    \draw[modarrow] (f1) to (delta2);
    \draw[modarrow] (delta2) to (rdots);
    \draw[modarrow] (rdots) to (delta3);
    \draw[modarrow] (delta3) to (f2);
    \draw[modarrow] (f2) to (delta4);
    \draw[modarrow] (delta4) to (brblank);
    \draw[tensoralgarrow, bend right=10] (delta1) to (m);
    \draw[tensoralgarrow, bend right=8] (delta2) to (m);
    \draw[tensoralgarrow, bend right=5] (delta3) to (m);
    \draw[tensoralgarrow] (delta4) to (m);
    \draw[algarrow, bend right=9] (f1) to (m);
    \draw[algarrow, bend right=2] (f2) to (m);
  \end{tikzpicture}},
}
\end{equation}
with the understanding that 
\[ F^{i\leq i'}
= \left\{\begin{array}{ll}
F^{i<i'} & {\text{if $i<i'$}} \\
m^i & {\text{if $i=i'$}}
\end{array}\right.
\]

We can formulate a boundedness condition on filtered type~$D$ structures
analogous to the one for type~$A$ structures:

\begin{definition}
  \label{def:BoundedTypeD}
  The
  filtered type $D$ structure on~$P$ is called {\em bounded} if for each
  $j\in\IndJ$, sufficiently large iterates of $\delta^1$ on $P_j$
  vanish, i.e., the map $\delta_j$ maps to
  $\Tensor^*(\Alg[1])\otimes P^j\subset
  \overline{\Tensor^*}(\Alg[1])\otimes P^j$. 
\end{definition}

If $M$ is not bounded, but $P$ is, the tensor product $M\DT P$ still makes sense.
In fact, the following is part of~\cite[Lemma~\ref*{DCov1:lem:DT-descends}]{LOT:DCov1}.
\begin{lemma}
  \label{lem:FilteredTensorProduct}
  Let $M$ be an $\IndI$-filtered $\Ainf$-module over $\Alg$
  and let $P$ be a $\IndJ$-filtered type $D$ structure over $\Alg$. Assume
  that one of $M$ or $P$ is bounded, so $M\DT P$ is defined.
  Then, $M\DT P$ is naturally an $\IndI\times \IndJ$-filtered chain complex.
\end{lemma}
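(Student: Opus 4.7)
The plan is to reduce the statement to the usual fact that, given an $\Ainf$-module $\widetilde M$ over $\Alg$ and a type $D$ structure $\widetilde P$ over $\Alg$ (with appropriate boundedness), the box tensor product $\widetilde M \DT \widetilde P$ is a chain complex. The key observation is that an $\IndI$-filtered $\Ainf$-module $M$ is essentially the same data as a genuine $\Ainf$-module $\widetilde M$ on the direct sum $\bigoplus_{i\in\IndI} M^i$ whose structure maps restrict to the given $m^i$ on the diagonal blocks and to the filtration morphisms $F^{i<i'}$ on the off-diagonal blocks, as in formula~\eqref{eq:filt-amalg}. The $\Ainf$-relation for $\widetilde M$ is exactly the union of the $\Ainf$-relations for each $M^i$ together with the compatibility condition~\eqref{eq:CompatibilityCondition} for the $F^{i<i'}$. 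Analogously, the $\IndJ$-filtered type $D$ structure $P$ assembles into a genuine type $D$ structure $\widetilde P$ on $\bigoplus_{j\in\IndJ} P^j$, with $\delta^1_{\widetilde P}$ acting on $P^j$ by $\delta^1_{P^j} + \sum_{j<j'} h^1_{j<j'}$, and the type $D$ structure equation for $\widetilde P$ is equivalent to the individual type $D$ structure equations on each $P^j$ together with~\eqref{eq:CompatibilityCondition} for the $h^1_{j<j'}$. Boundedness of $M$ (resp.\ $P$) is exactly the condition that $\widetilde M$ (resp.\ $\widetilde P$) is bounded in the standard sense.

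Next, I will check by inspection that the map $\partial + \sum_{(i,j)<(i',j')} D^{i\times j<i'\times j'}$ on $\bigoplus_{(i,j)} M^i\otimes P^j$ coincides with the standard tensor-product differential on $\widetilde M \DT \widetilde P$ defined by the usual diagram
\[
\mathcenter{
\begin{tikzpicture}
  \node at (0,0) (tl) {};
  \node at (1,0) (tr) {};
  \node at (1,-1) (delta) {$\delta_{\widetilde P}$};
  \node at (0,-2) (m) {$m_{\widetilde M}$};
  \node at (0,-3) (bl) {};
  \node at (1,-3) (br) {};
  \draw[modarrow] (tr) to (delta);
  \draw[modarrow] (delta) to (br);
  \draw[modarrow] (tl) to (m);
  \draw[modarrow] (m) to (bl);
  \draw[tensoralgarrow] (delta) to (m);
\end{tikzpicture}}.
\]
Indeed, when we expand $\delta_{\widetilde P}$ as an iterated composition of the amalgamated $\delta^1_{\widetilde P} = \sum_{j\leq j'}(\delta^1_{P^j}\text{ or }h^1_{j<j'})$ and partition the terms according to how many ``jumps'' between different filtration levels occur, the terms with no jumps in $\widetilde P$ and no jump in $\widetilde M$ recover the internal differential $\partial$ on each $M^i\otimes P^j$, while the terms recording a specific sequence $j=j_0<\cdots<j_n=j'$ of jumps in $\widetilde P$ and one jump from $i$ to $i'$ in $\widetilde M$ are exactly the summands in~\eqref{eq:DefineDifferentialOnDT}. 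The convention $F^{i\leq i'} = m^i$ for $i=i'$ also accounts for the case in which a jump in $\widetilde P$ occurs but not in $\widetilde M$.

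Granting the identification $\widetilde M\DT \widetilde P = M\DT P$ as bigraded vector spaces and as maps, the fact that the total differential squares to zero follows from the standard verification that $\widetilde M \DT \widetilde P$ is a chain complex (cf.~\cite[Proposition~\ref*{LOT2:prop:AtDDefined}]{LOT2}). To upgrade this to an $\IndI\times\IndJ$-filtered chain complex in the sense of Definition~\ref{def:IFilteredCx}, observe that the summands $D^{i\times j<i'\times j'}$ are precisely the matrix entries of the total differential with respect to the decomposition indexed by $\IndI\times\IndJ$, and these entries are nonzero only when $(i,j)\leq(i',j')$ in the product partial order; the compatibility relation required by Definition~\ref{def:IFilteredCx} is then just the block-by-block restatement of $(\partial + \sum D^{(i,j)<(i',j')})^2=0$.

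The main obstacle, as usual, is ensuring that the infinite sum in~\eqref{eq:DefineDifferentialOnDT} is well-defined. This is where the boundedness hypothesis on $M$ or $P$ enters: boundedness of $\widetilde M$ ensures that for each input there are only finitely many nonzero $m^i_n$'s and $F^{i<i'}_n$'s to apply, while boundedness of $\widetilde P$ caps the total number of iterates of $\delta^1_{\widetilde P}$ that can act nontrivially; under either hypothesis, only finitely many tuples $(n,j_0,\dots,j_n)$ in the sum contribute. Once convergence is assured, the remainder of the argument is a diagrammatic manipulation already carried out (in unfiltered form) in~\cite{LOT2}, so no new work is required beyond recognizing the reduction to that known case.
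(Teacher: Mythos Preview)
Your proposal is correct and follows precisely the approach that the paper itself sets up: the amalgamation of an $\IndI$-filtered $\Ainf$-module into a genuine $\Ainf$-module is exactly Formula~\eqref{eq:filt-amalg}, and the paper notes explicitly that the compatibility condition~\eqref{eq:CompatibilityCondition} is equivalent to this amalgamated object satisfying the $\Ainf$-relation. The paper does not actually prove the lemma here but defers it to~\cite[Lemma~\ref*{DCov1:lem:DT-descends}]{LOT:DCov1} (and the bimodule analogue, Lemma~\ref{lem:FilteredTensorProductDA}, is dismissed as ``immediate from the definitions''), so your writeup is essentially a fleshed-out version of what the paper regards as routine.
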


\subsection{Bimodules}

There are analogous constructions of filtered bimodules, subsuming
both filtered $\Ainf$-modules and filtered type $D$ structures.  Fix
$\Ainf$-algebras $\Alg$ and $\Blg$ over ground rings
$\Ground=\bigoplus_{i=1}^N\Field$ and
$\Groundl=\bigoplus_{i=1}^{N'}\Field$, respectively. Recall that a
\emph{type \DA\ bimodule} over $\Alg$ and $\Blg$ consists of a
$(\Ground,\Groundl)$-bimodule $X$ together with a map $\delta^1\co
X\otimes_\Groundl T^*(\Blg[1])\to (\Alg\otimes_\Ground X)[1]$
satisfying:
\[
\mathcenter{
\begin{tikzpicture}
  \node at (0,0) (top) {};
  \node at (0,-1) (delta1) {$\delta^1$};
  \node at (-1,-4) (mu) {$\mu_1$};
  \node at (-1,-5) (bl) {};
  \node at (0,-5) (bc) {};
  \node at (1,0) (tr) {};
  \draw[taa] (tr) to (delta1);
  \draw[->] (top) to (delta1);
  \draw[->] (delta1) to (mu);
  \draw[->] (mu) to (bl);
  \draw[->] (delta1) to (bc);
\end{tikzpicture}}
\mathcenter{+}
\mathcenter{
\begin{tikzpicture}
  \node at (0,0) (top) {};
  \node at (0,-1) (delta1) {$\delta^1$};
  \node at (0,-2) (delta2) {$\delta^1$};
  \node at (-1,-4) (mu) {$\mu_2$};
  \node at (-1,-5) (bl) {};
  \node at (0,-5) (bc) {};
  \node at (1,0) (tr) {};
  \node at (1.1,0) (trr) {};
  \draw[taa] (tr) to (delta1);
  \draw[taa] (trr) to (delta2);
  \draw[->] (top) to (delta1);
  \draw[->] (delta1) to (mu);
  \draw[->] (delta2) to (mu);
  \draw[->] (mu) to (bl);
  \draw[->] (delta1) to (delta2);
  \draw[->] (delta2) to (bc);
\end{tikzpicture}}
\mathcenter{+}
\mathcenter{
\begin{tikzpicture}
  \node at (0,0) (top) {};
  \node at (0,-1) (delta1) {$\delta^1$};
  \node at (0,-2) (delta2) {$\delta^1$};
  \node at (0,-3) (delta3) {$\delta^1$};
  \node at (-1,-4) (mu) {$\mu_3$};
  \node at (-1,-5) (bl) {};
  \node at (0,-5) (bc) {};
  \node at (1,0) (tr) {};
  \node at (1.1,0) (trr) {};
  \node at (1.2,0) (trrr) {};
  \draw[taa] (tr) to (delta1);
  \draw[taa] (trr) to (delta2);
  \draw[taa] (trrr) to (delta3);
  \draw[->] (top) to (delta1);
  \draw[->] (delta1) to (mu);
  \draw[->] (delta2) to (mu);
  \draw[->] (delta3) to (mu);
  \draw[->] (mu) to (bl);
  \draw[->] (delta1) to (delta2);
  \draw[->] (delta2) to (delta3);
  \draw[->] (delta3) to (bc);
\end{tikzpicture}}
\mathcenter{+\cdots=0.}
\]
See~\cite[Definition~\ref*{LOT2:def:DA-structure}]{LOT2}.  The space
of morphisms between type~\DA\ bimodules $X$ and $Y$ is the graded space of
maps $h^1_k\co X\otimes \Tensor^*(\Blg[1])\to \Alg\otimes Y$.
Restricting to the case that
$\Alg$ is a \dg algebra, this morphism space has differential
\[
\mathcenter{d(h^1)=}
\mathcenter{
\begin{tikzpicture}
  \node at (0,0) (top) {};
  \node at (0,-2) (delta1) {$h^1$};
  \node at (-1,-4) (mu) {$\mu_1$};
  \node at (-1,-5) (bl) {};
  \node at (0,-5) (bc) {};
  \node at (1,0) (tr) {};
  \draw[taa] (tr) to (delta1);
  \draw[->] (top) to (delta1);
  \draw[->] (delta1) to (mu);
  \draw[->] (mu) to (bl);
  \draw[->] (delta1) to (bc);
\end{tikzpicture}}
\mathcenter{+}
\mathcenter{
\begin{tikzpicture}
  \node at (0,0) (top) {};
  \node at (0,-1) (delta1) {$\delta^1$};
  \node at (0,-2) (delta2) {$h^1$};
  \node at (-1,-4) (mu) {$\mu_2$};
  \node at (-1,-5) (bl) {};
  \node at (0,-5) (bc) {};
  \node at (1,0) (tr) {};
  \node at (1.1,0) (trr) {};
  \draw[taa] (tr) to (delta1);
  \draw[taa] (trr) to (delta2);
  \draw[->] (top) to (delta1);
  \draw[->] (delta1) to (mu);
  \draw[->] (delta2) to (mu);
  \draw[->] (mu) to (bl);
  \draw[->] (delta1) to (delta2);
  \draw[->] (delta2) to (bc);
\end{tikzpicture}}
\mathcenter{+}
\mathcenter{
\begin{tikzpicture}
  \node at (0,0) (top) {};
  \node at (0,-2) (delta1) {$h^1$};
  \node at (0,-3) (delta2) {$\delta^1$};
  \node at (-1,-4) (mu) {$\mu_2$};
  \node at (-1,-5) (bl) {};
  \node at (0,-5) (bc) {};
  \node at (1,0) (tr) {};
  \node at (1.1,0) (trr) {};
  \draw[taa] (tr) to (delta1);
  \draw[taa] (trr) to (delta2);
  \draw[->] (top) to (delta1);
  \draw[->] (delta1) to (mu);
  \draw[->] (delta2) to (mu);
  \draw[->] (mu) to (bl);
  \draw[->] (delta1) to (delta2);
  \draw[->] (delta2) to (bc);
\end{tikzpicture}}
.
\]
If $h^1\in\Mor(M,N)$ and $g^1\in\Mor(N, P)$ are morphisms of~\DA~bimodules over $\Alg$ and $\Blg$, and $\Alg$ is a $\dg$ algebra,
we can define their composite by the expression:
\[ 
\mathcenter{(g^1\circ h^1)=}
\mathcenter{
\begin{tikzpicture}
  \node at (0,0) (top) {};
  \node at (0,-1) (delta1) {$h^1$};
  \node at (0,-2) (delta2) {$g^1$};
  \node at (-1,-3) (mu) {$\mu_2$};
  \node at (-1,-4) (bl) {};
  \node at (0,-4) (bc) {};
  \node at (1,0) (tr) {};
  \node at (1.1,0) (trr) {};
  \draw[taa] (tr) to (delta1);
  \draw[taa] (trr) to (delta2);
  \draw[->] (top) to (delta1);
  \draw[->] (delta1) to (mu);
  \draw[->] (delta2) to (mu);
  \draw[->] (mu) to (bl);
  \draw[->] (delta1) to (delta2);
  \draw[->] (delta2) to (bc);
\end{tikzpicture}}
\]

Boundedness for type~\DA~ bimodules is slightly subtle to formulate. Before doing so, we need some more notation. First,
we will assume that our $\DA$ bimodule is defined over $\Alg$ and $\Blg$, where
$\Alg$ and~$\Blg$ are augmented over their ground rings, with
augmentation $\epsilon\co \Alg\to \Ground$ and $\epsilon \co
\Blg \to \Groundl$ and augmentation ideals $\Alg_+$ and~$\Blg_+$.

For each partition of $(1,\dots, i)$ into $m$ subsequences and each
length $m-j$ subsequence of $(1,\dots,m)$, we have a corresponding map
$M \otimes \Blg_+[1]^{\otimes i} \to \Alg[1]^{\otimes j}\otimes M$, defined by applying
$\delta^1$ $m$ times with input $x\in M$ and further algebra inputs
specified by the partition, followed by the map $\Alg[1]^{\otimes m}\to
\Alg[1]^{\otimes j}$ obtained by applying the augmentation map $\epsilon$ to the tensor
factors in the length $m-j$ subsequence. Such a map is called a {\em
  spinal $\DA$ bimodule operation} with $i$ algebra inputs and $j$
outputs.

\begin{definition}\cite[Definition~\ref{LOT2:def:DA-bounded}]{LOT2}
  The filtered type~\DA~ bimodule $M$ is called {\em operationally
    bounded} if for each $x$ there is an $n$ so that all spinal $\DA$
  bimodule operations with module input~$x$, $i$~algebra inputs, and
  $j$~algebra outputs, with $i+j>n$, vanish. It is called {\em left
    bounded} if for each $x\in M$ and each $i$, there is an $n$ so
  that all spinal $\DA$ bimodule operations with module input~$x$,
  $i$~algebra inputs, and $j>n$ algebra outputs vanish.  It
  is
  called {\em right bounded} if for each $x\in M$ and each $j$,
  there is an $n$ so that all spinal $\DA$ bimodule operations with
  module input~$x$, $i > n$ algebra inputs, and $j$~algebra outputs
  vanish.
\end{definition}

We define filtered type \DA\ structures similarly to filtered
$\Ainf$-modules and type $D$ structures:

\begin{definition}\label{def:filt-DA}
  Fix \dg algebras $\Alg$ and $\Blg$, and let $\IndI$ be a finite,
  partially ordered set.  An \emph{$\IndI$-filtered type $\DA$ structure
    over $\Alg$ and $\Blg$} is a collection $\{Q^i\}_{i\in \IndI}$ of type
  $\DA$ structures over $\Alg$ and $\Blg$, equipped with a preferred degree-zero
  morphism $F^{i<j}\in \Mor(P^i,P^j[1])$ for each pair $i,j\in \IndI$ with
  $i<j$, satisfying the compatibility
  condition~\eqref{eq:CompatibilityCondition}.
\end{definition}

The following is the bimodule analogue of
Lemma~\ref{lem:FilteredTensorProduct} (which is
also~\cite[Lemma~\ref*{DCov1:lem:FilteredProduct}]{LOT:DCov1}):

\begin{lemma}
  \label{lem:FilteredTensorProductDA}
  Let $\Alg$, $\Blg$, and $\Clg$ be $\dg$ algebras.
  Let $\lsup{\Alg}M_{\Blg}$ be an $\IndI$-filtered $\DA$-bimodule
  and $\lsup{\Blg}N_{\Clg}$ be a $\IndJ$-filtered type
  $\DA$-bimodule. Assume that either $M$ is right-bounded or $N$ is
  left-bounded, so $\lsup{\Alg}M_{\Blg}\DT_{\Blg} \lsup{\Blg}N_{\Clg}$ is defined.
  Then, $\lsup{\Alg}M_{\Blg}\DT_{\Blg} \lsup{\Blg}N_{\Clg}$ is naturally an $\IndI\times \IndJ$-filtered 
  type $DA$ bimodule.
\end{lemma}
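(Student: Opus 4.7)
The plan is to mimic the strategy used for the module case (Lemma~\ref{lem:FilteredTensorProduct}) in the bimodule setting. The key observation is that the data of an $\IndI$-filtered $\DA$ bimodule $M$ is equivalent to a single $\DA$ bimodule structure $\delta^1_{M^{\mathrm{tot}}}$ on $M^{\mathrm{tot}} = \bigoplus_{i\in \IndI} M^i$, obtained by adding the filtration morphisms to the internal $\DA$ operations: the compatibility condition~\eqref{eq:CompatibilityCondition} is precisely the $\DA$ structure relation for this combined operation on $M^{\mathrm{tot}}$. Similarly, $N^{\mathrm{tot}} = \bigoplus_{j\in \IndJ} N^j$ inherits a $\DA$ bimodule structure from the filtered bimodule $N$. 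This viewpoint is the bimodule analogue of the reformulation of filtered chain complexes via the total differential $D$ discussed after Definition~\ref{def:IFilteredCx}.

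Given this reformulation, since $\IndI$ and $\IndJ$ are finite posets, the boundedness hypothesis on $M$ (or $N$) lifts to the corresponding boundedness on $M^{\mathrm{tot}}$ (or $N^{\mathrm{tot}}$): every spinal operation on $M^{\mathrm{tot}}$ is a composition of at most $|\IndI|-1$ filtration morphisms interleaved with ordinary spinal operations on individual $M^i$, so enlarging the bounding constant by a factor of $|\IndI|$ suffices. Consequently, the ordinary $\DA$ tensor product $\lsup{\Alg}M^{\mathrm{tot}}_{\Blg} \DT_{\Blg} \lsup{\Blg}N^{\mathrm{tot}}_{\Clg}$ is well-defined as an $(\Alg, \Clg)$-$\DA$ bimodule, with underlying bimodule $\bigoplus_{(i,j)\in \IndI\times \IndJ} M^i \DT_{\Blg} N^j$.

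Now decompose the $\DA$ operation on this tensor product according to the $\IndI\times\IndJ$-grading on the underlying bimodule. Since each internal application of a filtration morphism $F^{i<i'}_M$ strictly increases the $\IndI$-index, and similarly each $F^{j<j'}_N$ strictly increases the $\IndJ$-index, a spinal operation with module input in $M^i \DT N^j$ produces output only in summands $M^{i'} \DT N^{j'}$ with $(i,j) \le (i',j')$ in the product partial order. The diagonal components (those with $i=i'$ and $j=j'$) define the $\DA$ bimodule structure on each $M^i \DT_{\Blg} N^j$, while the off-diagonal components define the morphisms $F^{(i,j)<(i',j')}$ out of $M^i \DT_{\Blg} N^j$. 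The compatibility condition~\eqref{eq:CompatibilityCondition} for these morphisms is precisely the $(i,j)$-to-$(i',j')$ graded piece of the $\DA$ structure relation for $M^{\mathrm{tot}} \DT_{\Blg} N^{\mathrm{tot}}$.

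The main obstacle is the bookkeeping around boundedness: one must verify that operational, left, and right boundedness each behave correctly under the passage to $M^{\mathrm{tot}}$ and $N^{\mathrm{tot}}$, so that the hypothesis of the lemma genuinely ensures that $M^{\mathrm{tot}} \DT_{\Blg} N^{\mathrm{tot}}$ is defined in the usual sense. Once this is in place, verifying the $\DA$ bimodule axioms for each $M^i\DT_{\Blg} N^j$ and the compatibility condition for the $F^{(i,j)<(i',j')}$ is formal, being the graded decomposition of the corresponding identities for the total tensor product.
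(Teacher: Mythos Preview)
Your proposal is correct and is essentially an explicit unpacking of what the paper dismisses in one line: the paper's proof reads, in its entirety, ``This is immediate from the definitions. (See also~\cite[Proof of Lemma~\ref*{DCov1:lem:DT-descends}]{LOT:DCov1}.)'' Your reformulation via the total bimodules $M^{\mathrm{tot}}$ and $N^{\mathrm{tot}}$, followed by decomposing the $\DA$ structure relation on $M^{\mathrm{tot}}\DT N^{\mathrm{tot}}$ by $\IndI\times\IndJ$-degree, is exactly the content of ``immediate from the definitions,'' and matches the module-case argument the paper alludes to.

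One small caution on the boundedness bookkeeping you flag: to conclude that $M^{\mathrm{tot}}$ is right-bounded from boundedness of the individual $M^i$, you also need a boundedness hypothesis on the filtration morphisms $F^{i<i'}$ themselves (they contribute algebra inputs and outputs to spinal operations on $M^{\mathrm{tot}}$). The paper is silent on this point, and the cleanest reading of the hypothesis ``$M$ is right-bounded'' is simply that the total bimodule $M^{\mathrm{tot}}$ is right-bounded, in which case no lifting argument is needed at all.
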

\begin{proof}
  This is immediate from the definitions. (See also~\cite[Proof of
  Lemma~\ref*{DCov1:lem:DT-descends}]{LOT:DCov1}.)
\end{proof}

\begin{remark}\label{rem:filt-mod}
  The notion of a filtered $\Ainf$-module in
  Definition~\ref{def:filt-Ainf-module} is somewhat restrictive. For
  example, consider the algebra $\Alg=\Field[x]$ and let $M=\Alg$ as a
  right $\Alg$-module. The two-step filtration $\Filt_0M=M\supset
  \Filt_1M=xM$ is not a filtered module in the sense of
  Definition~\ref{def:filt-Ainf-module}.
\end{remark}

\begin{remark}\label{rem:filt-is-tw}
  Definitions~\ref{def:filt-Ainf-module},~\ref{def:filt-D}
  and~\ref{def:filt-DA} look quite similar. Indeed, all are roughly
  the same as a twisted complex
  \cite{BondalKapranov,Kontsevich,SeidelBook} in the relevant \dg
  category. There are some
  minor differences: unlike \cite[Definition~1]{BondalKapranov}
  and \cite[p.~15]{Kontsevich}, we allow indexing by arbitrary finite, partially
  ordered sets, rather than just by $\ZZ$; unlike~\cite[Section
  (3l)]{SeidelBook} we do view the partial ordering as part of the
  data.
\end{remark}


\section{Chain complexes of  attaching circles in closed surfaces}
\label{sec:Complexes}

Counting pseudo-holomorphic bigons in a symmetric product of a
Heegaard surface gives rise to the differential appearing in Heegaard
Floer homology. Given a collection of Heegaard tori in the surface,
one can generalize these to counts of holomorphic polygons in a natural way.
We review these constructions in Subsection~\ref{subsec:HolCurves},
with a special emphasis on the ``cylindrical reformulation''
from~\cite{Lipshitz06:CylindricalHF}, as that generalizes most readily
to the bordered setting (cf.\ Section~\ref{sec:Polygons}).

Polygon counts can be organized using the notion of ``chain
complexes of attaching circles'' mentioned in the introduction. In
Subsection~\ref{subsec:ChainComplexes} we describe this construction
(Definition~\ref{def:ChainComplex}), and show how it can be used to
construct chain complexes (in the usual sense), as was promised in
Proposition~\ref{intro:ChainComplexesOfAttachingCircles}.

In Subsection~\ref{subsec:MorComplexes}, we explain some of the functorial
properties of these chain complexes of attaching circles. 

In Subsection~\ref{subsec:ConnSum} (see especially
Proposition~\ref{prop:GlueChainComplexes}), we show how to glue an
$\IndI$-filtered chain complex of attaching circles in a Heegaard
surface $\Sigma_1$ to an $\IndJ$-filtered chain complex in $\Sigma_2$,
to give an $\IndI\times \IndJ$-filtered chain complex in the connected
sum $\Sigma_1\conn \Sigma_2$. This construction relies on a preliminary
construction, {\em approximation}, introduced in
Subsection~\ref{subsec:Approximations} (see especially
Definition~\ref{def:Approximation}).  Approximation gives a way of
enhancing a chain complex of attaching circles indexed by a set
$\IndI$ to a chain complex of attaching circles indexed by a set
$\IndI\times\IndJ$. Approximations are constructed in
Proposition~\ref{prop:CloseApproximations}.

In Subsection~\ref{sec:Links}, it is shown that the constructions
given here (specifically, the filtered complex associated to a chain
complex of attaching circles gotten by gluing) generalize the
construction of the filtered complex associated to the branched
double cover of a link from~\cite{BrDCov}.

\subsection{Holomorphic curves in Heegaard multi-diagrams}
\label{subsec:HolCurves}

\begin{definition}
  \label{def:AttachingCircles}
  Let $\Sigma$ be a compact,
  oriented surface without boundary of some genus $g$, equipped with a
  basepoint $z\in \Sigma$.
  A {\em complete set of attaching circles} is a collection 
  $\betas=\{\beta_1,\dots,\beta_g\}$ of homologically independent, pairwise 
  disjoint embedded circles in $\Sigma\setminus z$.
  A {\em pointed Heegaard multi-diagram} is a surface $\Sigma$
  equipped with some number $n$ of complete sets of attaching circles.
\end{definition}

\begin{definition}
  \label{def:AdmissibleAttachingCircles}
  Let $\IndI$ be a finite, partially ordered set.  An {\em $\IndI$-filtered
    admissible collection of attaching circles} is a collection
  $\{\betas^i\}_{i\in \IndI}$ of $g$-tuples of attaching circles
  $\betas^i$ with the property that for each sequence
  $i_1<\dots<i_n$ in $\IndI$, the Heegaard multi-diagram
  $(\Sigma,\betas^{i_1},\dots,\betas^{i_n},z)$ is weakly admissible for all
  $\SpinC$-structures in
  the sense of~\cite[Definition 4.10]{OS04:HolomorphicDisks}.
\end{definition}

We will be primarily interested in the case where $\IndI=\{0,1\}^n$.

Given two pairs of complete sets of attaching circles
$\alphas=\{\alpha_1,\dots,\alpha_g\}$ and
$\betas=\{\beta_1,\dots,\beta_g\}$ so that
$(\Sigma,\alphas,\betas,z)$ is weakly admissible for all
$\SpinC$-structures, let $\CFa(\alphas,\betas,z)$ denote the
Lagrangian intersection Floer complex of the tori
$T_\alpha=\alpha_1\times\cdots\times\alpha_g$ and
$T_\beta=\beta_1\times\cdots\times\beta_g$ in
$\Sym^g(\Sigma\setminus\{z\})$. The complex $\CFa(\alphas,\betas,z)$
is generated by $\Gen(\alphas,\betas)\coloneqq T_\alpha\cap T_\beta$.

For an $\IndI$-filtered admissible collection of attaching circles and any
sequence $i_0<i_1<\dots<i_n$ in $\IndI$, there is a map
\begin{equation}\label{eq:mn-order}
m_{n}\co
\CFa(\betas^{i_{n-1}},\betas^{i_n},z)
\otimes\dots\otimes
\CFa(\betas^{i_0},\betas^{i_1},z)
\to \CFa(\betas^{i_0},\betas^{i_n},z)
\end{equation}
defined by counting pseudo-holomorphic polygons; see, for
instance,~\cite[Section 4.2]{BrDCov}.
In the case where $n=1$, this is the
usual differential on $\CFa(\betas^{i_0},\betas^{i_1},z)$. The maps
$m_n$ are well-known to satisfy $\Ainf$ relations; see, for
instance,~\cite{SeidelBook} or~\cite{FOOO09:1,FOOO09:2}. 

\begin{convention}\label{conv:order}
  We have reversed the order of the arguments to $m_n$ from what is
  standard in the Heegaard Floer literature, so that $m_3$ agrees with
  the standard order for function composition in a category. See also
  Definition~\ref{def:n-m-k} and Remark~\ref{rem:FukABimodule} for
  further justification of this choice. We will use the order from
  Equation~\ref{eq:mn-order} throughout this paper.
\end{convention}

Since we are working in the cylindrical formulation of Heegaard Floer
homology, we spell out these polygon counts a little more. (The cases
of triangles and rectangles were discussed in~\cite[Section 10]{Lipshitz06:CylindricalHF}.)

\begin{figure}
  \centering
    \begin{overpic}[tics=10]{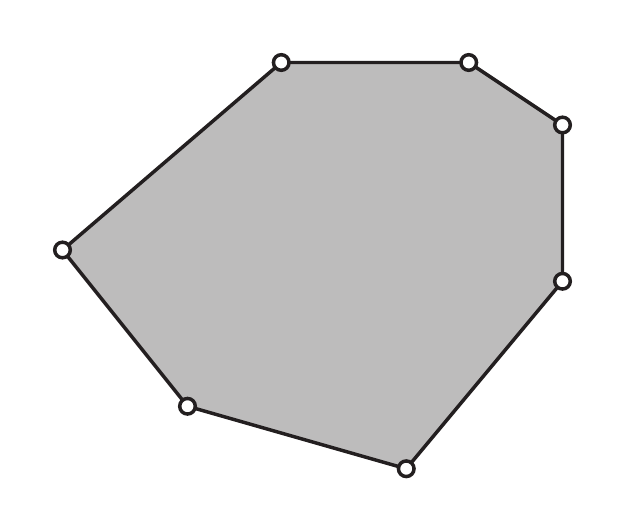}
    \put(92,39){$p_{01}$}
    \put(92,64){$p_{60}$}
    \put(64,4){$p_{12}$}
    \put(25,15){$p_{23}$}
    \put(0,44){$p_{34}$}
    \put(40,78){$p_{45}$}
    \put(70,78){$p_{56}$}
    \put(92,50){$e_{0}$}
    \put(80,25){$e_{1}$}
    \put(41,11){$e_{2}$}
    \put(15,30){$e_{3}$}
    \put(23,62){$e_{4}$}
    \put(58,77){$e_{5}$}
    \put(82,72){$e_{6}$}
  \end{overpic}
  \caption{\textbf{The disk $D_7$.} The labeling of the arcs and
    punctures in the boundary is shown.}
  \label{fig:Dn}
\end{figure}

Let $D_n$ denote a disk with $n$ labeled punctures on its
boundary (a polygon). Label the arcs in $\bdy D_n$ as $e_0,\dots,e_{n-1}$, in
clockwise order, and let $p_{i,i+1}$ denote the puncture between $e_i$
and $e_{i+1}$. (See Figure~\ref{fig:Dn}.) 
Let $\Conf(D_n)$ denote the moduli space of (positively oriented) complex structures
on~$D_n$, up to biholomorphism respecting the labeling of the edges.  (For $n 
\ge 3$,
  $\Conf(D_n)$ is an $(n-3)$-dimensional ball.) The space $\Conf(D_n)$ has a 
  natural Deligne-Mumford
compactification $\oConf(D_n)$, which is diffeomorphic to the
associahedron (see, e.g.,~\cite{Devadoss98:tessellations}). The boundary
$\oConf(D_n)\setminus\Conf(D_n)$ of $\oConf(D_n)$ consists of trees of
polygons.

Points of
$\Conf(D_n)$ are
equivalence classes $[j]$ of complex structures $j$ on
$D_n$. We would like to view elements of $\Conf(D_n)$ as honest
complex structures, instead of equivalence classes, so we explain how
to do so.  There is an infinite-dimensional bundle $\mathcal{E}_n\to
\Conf(D_n)$ of honest complex structures $j$. Since $\Conf(D_n)$ is contractible, this bundle is
necessarily trivial and, in particular, admits a section. Less
trivially, if we replace $\mathcal{E}_n$ with the bundle of 
complex structures together with choices of strip-like ends (in the
sense of~\cite[Section 9(a)]{SeidelBook}), which we still denote
$\mathcal{E}_n$, then the projection map $\mathcal{E}_n\to \Conf(D_n)$ extends to a continuous map
$\overline{\mathcal{E}}_n\to\oConf(D_n)$, where
$\overline{\mathcal{E}}_n$ is a
partial compactification of
$\mathcal{E}_n$ given as follows: a point $p\in\bdy\oConf(D_n)$
corresponds to a tree of disks together with an equivalence class of
complex structures on each of those disks, and the fiber of
$\overline{\mathcal{E}}_n$ over $p$ is the space of complex
structures on those disks inducing the specified equivalence class,
together with a choice of strip-like ends for each of the
disks. Seidel proves that one can choose sections of
$\overline{\mathcal{E}}_n\to\oConf(D_n)$, for each $n$, which agree
at the boundary strata~\cite[Lemma
9.3]{SeidelBook}. For the rest of the paper, we will identify
$\oConf(D_n)$ with its image in $\overline{\mathcal{E}}_n$ under the
chosen section, and view elements of $\Conf(D_n)$ as honest 
complex structures, not equivalence classes of them.

Similarly, fix a family $\omega_j$, $j\in\Conf(D_n)$ of symplectic
forms on $D_n$ with cylindrical ends, so that $\omega_j$ is adjusted
to $j$~\cite[Section 3.3]{BEHWZ03:CompactnessInSFT}, and so that as
$j$ approaches the boundary of $\oConf(D_n)$, the $\omega_j$ split as
in (a very simple case of) symplectic field theory~\cite[Section
3.3]{BEHWZ03:CompactnessInSFT} to the corresponding pairs of
symplectic forms on $D_m\amalg D_{n+2-m}$.

\begin{definition}
  \label{def:AdmissibleJs}
  Fix a symplectic form $\omega_\Sigma$ on $\Sigma$.
  An {\em admissible collection of almost-complex
    structures} consists of
  \begin{itemize}
  \item a choice of $\RR$-invariant almost-complex structure $J$ on
    $\Sigma\times[0,1]\times\RR$ of the kind used to compute Heegaard
    Floer homology (i.e., satisfying~\cite[Conditions
    (J1)--(J5)]{Lipshitz06:CylindricalHF}), and
  \item a choice of a smooth family $\{J_j\}_{j\in\Conf(D_n)}$ of
    almost-complex structures on $\Sigma\times D_n$, for each $n\geq
    3$,
  \end{itemize}
  satisfying the following conditions:
  \begin{enumerate}[label=(J-\arabic*),ref=(J-\arabic*)]
  \item\label{item:J-proj} For each $j\in\Conf(D_n)$, the projection map
    $$\pi_{\CDisk}\co \Sigma\times D_n \to D_n$$
    is $(J_j,j)$-holomorphic.
  \item\label{item:J-fiber} For each $j\in\Conf(D_n)$, the fibers of
    $\pi_{\CDisk}$ are $J_j$-holomorphic.
  \item\label{item:J-tame} Each almost-complex structure $J_j$ is
    adjusted to the split symplectic form $\omega_{\Sigma}\oplus
    \omega_{j}$ on $\Sigma\times D_n$.
  \item\label{item:J-cylindrical} Each almost-complex structure $J_j$
    agrees with $J$ near the punctures of $D_n$, in the sense that each puncture
    has a strip-like neighborhood $U$ in $D_n$ so that $(\Sigma \times
    U,J_j|_{\Sigma \times U})$ is bi-holomorphic to
    $(\Sigma\times[0,1]\times(0,\infty), J)$.
  \item\label{item:J-compatible} Suppose that $\{j_\alpha\}\subset
    \Conf(D_n)$ is a sequence converging to a point $j_\infty\in
    \bdy\oConf(D_n)$. For notational simplicity, suppose $j_\infty$
    lies in the codimension-one boundary, and so corresponds to a
    point
    $(j_{\infty,1},j_{\infty,2})\in \Conf(D_{m+1})\times
    \Conf(D_{n-m+1})$. Then the complex structures $J_{j_\alpha}$
    are required to converge to the complex structure $J_{j_{\infty,1}}\amalg
    J_{j_{\infty,2}}$ on $(\Sigma\times D_{m+1})\amalg (\Sigma\times
    D_{n-m+1})$.

    (Convergence of the $J_{j_\alpha}$ means the following. As
    $\alpha\to\infty$, certain arcs in $D_{m+1}$ collapse. Over
    neighborhoods of these arcs, the complex structures $J_{j_\alpha}$
    should be obtained by inserting longer and longer necks, as
    in~\cite[Section 3.4]{BEHWZ03:CompactnessInSFT}. Outside these
    neighborhoods, we require convergence in the $C^\infty$-topology.)

    We require the analogous compatibility condition for the
    higher-codimension boundary of $\oConf(D_n)$, as well.
  \end{enumerate}
\end{definition}


\begin{definition}\label{def:holo-polygon}
  Let $\{J_j\}_{j\in\Conf(D_n)}$ be an admissible collection of
  almost-complex structures. Given $g$-tuples of attaching circles
  $\betas^0,\dots,\betas^n$ and generators $\x^{i,i+1}\in
  \CFa(\betas^i,\betas^{i+1},z)$ (with the understanding that
  $\betas^{n+1}=\betas^0$), consider surfaces~$S$ with boundary and boundary punctures, and
  proper maps
  \begin{equation}\label{eq:polygon-map}
  u\co (S,\bdy S)\to \bigl(\Sigma\times D_{n+1},(\betas^0\times
  e_0)\cup\dots\cup(\betas^n\times e_n) \bigr)
  \end{equation}
  asymptotic to $\x^{i,i+1}$ at $p_{i,i+1}$. This space decomposes
  into homology classes (compare~\cite[Sections 2 and
  10.1.1]{Lipshitz06:CylindricalHF}); let
  $\pi_2(\x^{n,0},\dots,\x^{0,1})$ denote the set of homology classes
  of such maps. 

  Let $\cM(\x^{n,0},\x^{n-1,n},\dots,\x^{0,1})$ denote the moduli space of pairs $(j,u)$ where $j\in \Conf(D_{n+1})$ and $u$ is a 
  map as in Formula~(\ref{eq:polygon-map}) such that:
  \begin{enumerate}[label=(M-\arabic*),ref=(M-\arabic*),start=0]
  \item The image of $u$ is disjoint from $\{z\}\times D_{n+1}$ (i.e., has
    multiplicity $0$ at $z$).
  \item $u$ is $(i,J_j)$-holomorphic for some complex structure $i$ on
    $S$,
  \item $\pi_\bD\circ u$ is a $g$-fold branched cover, and
  \item $u$ is an embedding.
  \end{enumerate}
  We will often abuse notation and write elements of
  $\cM(\x^{n,0},\x^{n-1,n},\dots,\x^{0,1})$ as maps $u$, but the
  complex structure $j$ is part of the data.

  The space $\cM(\x^{n,0},\dots,\x^{0,1})$ decomposes as a
  disjoint union
  \[
  \cM(\x^{n,0},\x^{n-1,n},\dots,\x^{0,1})=\coprod_{B\in\pi_2(\x^{n,0},\dots,\x^{0,1})}\cM^B(\x^{n,0},\x^{n-1,n},\dots,\x^{0,1}).
  \]
  We will often abbreviate $\cM^B(\x^{n,0},\x^{n-1,n},\dots,\x^{0,1})$
  to $\cM^B$.
 
  For each $B\in\pi_2(\x^{n,0},\dots,\x^{0,1})$ the space
  $\cM^B(\x^{n,0},\dots,\x^{0,1})$ has a well-defined expected
  dimension $\ind(B)+n-2$.
\end{definition}

\begin{proposition}\label{prop:closed-trans}
  Admissible collections of almost-complex structures exist. Moreover,
  with respect to a generic admissible collection of almost-complex structures,
  each of the moduli spaces $\cM^B(\x^{n,0},\dots,\x^{0,1})$ is transversely
  cut out by the $\overline{\bdy}$-operator, and hence is a smooth
  manifold of dimension $\ind(B)+n-2$.
\end{proposition}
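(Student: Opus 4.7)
The proof naturally divides into two parts: existence of admissible collections and transversality for them.

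For existence, the plan is to construct the families $\{J_j\}_{j\in\Conf(D_n)}$ inductively on $n$. The base case $n=3$ is immediate: $\Conf(D_3)$ is a single point, and the space of almost-complex structures on $\Sigma\times D_3$ satisfying conditions~\ref{item:J-proj}--\ref{item:J-cylindrical} is nonempty and contractible (it is the space of sections of a bundle whose fibers are contractible). For $n\geq 4$, condition~\ref{item:J-compatible} prescribes $J_j$ on the codimension-one boundary strata of $\oConf(D_n)$ in terms of families chosen for smaller polygons via the inductive hypothesis, and iteratively on higher-codimension strata. Since $\oConf(D_n)$ is contractible and the pointwise space of admissible $J_j$ is contractible, this prescribed boundary data extends over the interior of $\oConf(D_n)$.

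For transversality, the plan is the standard Sard--Smale argument. Assemble the universal moduli space consisting of triples $(j,u,\{J_j\})$, equipped with its Banach-bundle projection to the space of admissible collections. The crucial input is that, by condition~(M-3), the map $u$ is an embedding, hence somewhere injective. Standard perturbation theory for somewhere injective pseudo-holomorphic curves (as in McDuff--Salamon), adapted to the cylindrical setting as in~\cite{Lipshitz06:CylindricalHF}, shows that at any injective point of $u$ in the interior of $\Sigma\times D_{n+1}$, variations of $J_j$ in the fiber direction produce enough deformations for the linearized $\overline{\partial}$-operator to be surjective. Applying Sard--Smale to the projection then yields that for a Baire-generic choice of admissible collection, each fiber $\cM^B$ is smoothly cut out. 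The dimension $\ind(B)+n-2$ is the sum of the index $\ind(B)$ of the $\overline{\partial}$-operator with $j$ held fixed and $\dim\Conf(D_{n+1})=(n+1)-3=n-2$.

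The main obstacle is achieving transversality while preserving the boundary compatibility~\ref{item:J-compatible}. The resolution is to interleave the existence and transversality steps into a single induction on $n$: for each $n$, first arrange by genericity on the lower associahedra that transversality holds for all boundary-stratum moduli spaces, then perturb only over the interior of $\Conf(D_n)$ (an open subset of $\oConf(D_n)$, so such perturbations respect~\ref{item:J-compatible}) to achieve transversality for the top-dimensional families. This is directly analogous to the construction of consistent perturbation data in Seidel's framework for Fukaya categories~\cite{SeidelBook}, and indeed the closed-surface polygon counts in Heegaard Floer theory are already handled this way in the literature.
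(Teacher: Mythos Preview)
Your proposal is correct and follows essentially the same approach as the paper: the paper's proof simply notes that contractibility of the space of almost-complex structures satisfying~\ref{item:J-proj}--\ref{item:J-cylindrical} solves the extension problem in~\ref{item:J-compatible}, and defers transversality to the argument in~\cite[Section~3]{Lipshitz06:CylindricalHF}. Your version spells out the inductive scheme and the interleaving of existence with transversality more carefully than the paper does, but this is just a more detailed execution of the same outline.
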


\begin{proof}
  The first part of the statement (existence) follows from the
  observation that the space of almost-complex structures satisfying
  Conditions~\ref{item:J-proj}--\ref{item:J-cylindrical} is
  contractible, so the extension problem specified by
  Condition~\ref{item:J-compatible} has a solution. 
  The second part (transversality) follows by a similar argument to
  \cite[Section~3]{Lipshitz06:CylindricalHF}.
\end{proof}

\begin{remark}\label{rem:non-compact}
In general, of course, the moduli spaces $\cM^B(\x^{n,0},\dots,\x^{0,1})$ are 
non-compact, though they admit compactifications 
$\ocM^B(\x^{n,0},\dots,\x^{0,1})$ in terms of trees of holomorphic curves. For 
generic admissible collections of almost-complex structures, if $\ind(B)=-n+2$ 
then $\cM^B(\x^{n,0},\dots,\x^{0,1})$ is a compact $0$-manifold: all broken 
curves in $\ocM^B(\x^{n,0},\dots,\x^{0,1})$ belong to negative expected 
dimension families, and hence by transversality do not occur.
\end{remark}

\begin{definition}
  \label{def:closed-polygon-map}
  Let $(\Sigma,\betas^0,\dots,\betas^n,z)$ be a weakly admissible Heegaard 
  multi-diagram. Define a map
  \[
  m_{n}\co
  \CFa(\betas^{n-1},\betas^{n},z)\otimes
  \CFa(\betas^{n-2},\betas^{n-1},z)
  \otimes\dots\otimes
  \CFa(\betas^{0},\betas^{1},z)
  \to \CFa(\betas^{0},\betas^{n},z)
  \]
  as follows. Choose a generic admissible collection of almost-complex
  structures (as guaranteed by Proposition~\ref{prop:closed-trans}),
  and define
  \[
  m_n(\x^{n-1,n},\dots,\x^{0,1})
  =\!\!\!\!\!\sum_{\x^{0,n}\in\Gen(\betas^0,\betas^n)}\,\,\sum_{\substack{B\in\pi_2(\x^{n,0},\x^{n-1,n},\dots,\x^{0,1})\\\ind(B)=3-n}}\,
  \bigl(\#\cM^B(\x^{n,0},\x^{n-1,n},\dots,\x^{0,1})\bigr)\x^{0,n}.
  \]
  (Here, $\x^{n,0}$ is the generator of $\CFa(\betas^n,\betas^0,z)$
  corresponding to $\x^{0,n}\in\CFa(\betas^0,\betas^n,z)$.) As explained 
  Remark~\ref{rem:non-compact}, each moduli space being counted is finite, and 
  it follows from weak admissibility of the Heegaard multi-diagram that the sum 
  itself is finite.
\end{definition}

Note that when $n=1$ the map $m_n$ is just the differential on
$\CFa(\betas^0,\betas^1,z)$.

\begin{proposition}\label{prop:closed-Ainf-rel}
  Let $\betas^0,\dots,\betas^n$ be $g$-tuples of attaching circles in $\Sigma$. Then the maps $m_n$ satisfy the following $\Ainf$ relation:
  \begin{equation}\label{eq:closed-polys-Ainf}
  \sum_{i=1}^n\sum_{j=1}^{n-i-1} 
  m_{n-j+1}(
  \x^{n-1,n},\dots,\x^{i+j,i+j+1},m_j(\x^{i-1,i},\dots,\x^{i+j-1,i+j}),\x^{i-2,i-1},\dots,
  \x^{0,1})=0.
  \end{equation}
\end{proposition}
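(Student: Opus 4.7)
The plan is to apply the standard Gromov compactness--cobordism argument to one-dimensional moduli spaces of holomorphic polygons and identify the boundary strata with the terms in Equation~(\ref{eq:closed-polys-Ainf}). Fix a generic admissible collection of almost-complex structures as guaranteed by Proposition~\ref{prop:closed-trans}. For each output generator $\x^{0,n}\in\CFa(\betas^0,\betas^n,z)$ and each homology class $B$ with $\ind(B)+n-2=1$, I would consider the Gromov compactification $\ocM^B$ of the one-dimensional moduli space $\cM^B(\x^{n,0},\x^{n-1,n},\dots,\x^{0,1})$. The proposition then follows from identifying $\bdy\ocM^B$ as precisely the set of configurations counted by the terms of Equation~(\ref{eq:closed-polys-Ainf}), after which evenness of $\#\bdy\ocM^B$ yields the relation modulo~$2$.

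The codimension-one boundary of $\ocM^B$ consists of two kinds of strata. The first is \emph{strip-breaking at a puncture $p_{i-1,i}$}: a $J$-holomorphic strip contributing to $m_1$ bubbles off at an end, yielding a decomposition $B=B_1\ast B_2$ in which $B_1$ lies in $\pi_2(\x^{i-1,i},\y^{i-1,i})$ for some intermediate $\y^{i-1,i}\in\Gen(\betas^{i-1},\betas^i)$ and $B_2$ is a polygon class with the $i$-th input replaced by $\y^{i-1,i}$. Summed over the factorization locus, these contribute the $j=1$ terms. The second type is \emph{conformal degeneration of $D_{n+1}$}: the parameter $j\in\Conf(D_{n+1})$ approaches $\bdy\oConf(D_{n+1})$, splitting the polygon along an interior chord into an $(m+1)$-gon and an $(n-m+2)$-gon sharing a single edge. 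By Condition~\ref{item:J-compatible}, the chosen almost-complex structures split accordingly as the chord collapses, so the limiting holomorphic map factors as a pair of polygon maps glued at the common edge. These contribute the $j\geq 2$ terms.

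Next, I would rule out any other potential boundary strata. Sphere bubbles in the $\Sigma$-direction are excluded by Condition~\ref{item:J-fiber} together with the multiplicity-zero constraint at $z$; disk bubbles along an edge $\betas^i\times e_i$ are similarly excluded by the basepoint condition, noting that $[\betas^i]$ is homologically independent away from $z$. The embedded $g$-fold branched cover condition on $u$, together with weak admissibility of the multi-diagram (Definition~\ref{def:AdmissibleAttachingCircles}), prevent energy escape into a closed component and ensure the counts $\#\cM^B$ are finite. Degenerations involving a $2$-gon factor either coincide with the strip-breaking case above or involve constant components excluded by the index. These arguments parallel those in~\cite[Section~10]{Lipshitz06:CylindricalHF} for triangles and rectangles, extending without essential change to higher $n$.

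The principal technical obstacle is the gluing step: showing that each configuration in the list above is realized by exactly one end of $\cM^B$, and conversely that every end arises in this way. In the polygon-splitting case one must use the compatibility condition on the family $\{J_j\}$ at $\bdy\oConf(D_{n+1})$ to glue a pair of rigid polygon solutions back into a one-parameter family parametrized by neck length; together with transversality from Proposition~\ref{prop:closed-trans}, this identifies a neighborhood of the nodal configuration with a half-open interval of smooth configurations. Once this bijection between boundary configurations and terms in Equation~(\ref{eq:closed-polys-Ainf}) is established, the $A_\infty$ relation follows from $\#\bdy\ocM^B\equiv 0\pmod 2$.
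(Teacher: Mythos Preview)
Your approach is essentially the same as the paper's: both consider the ends of the one-dimensional moduli spaces $\cM^B(\x^{n,0},\x^{n-1,n},\dots,\x^{0,1})$, invoke compactness to identify the boundary with broken polygons, and use Condition~\ref{item:J-compatible} to match the pieces with the operations $m_{n-j+1}$ and $m_j$. The paper's proof is terser---it simply says ``this follows in the usual way'' and cites \cite{BEHWZ03:CompactnessInSFT} for the SFT-style compactness---whereas you spell out the strip-breaking versus conformal-degeneration dichotomy, bubble exclusion, and gluing more explicitly; but the underlying argument is the same.
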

\begin{proof}
  This follows in the usual way, by considering the ends of the
  1-dimensional moduli space
  \[
  \bigcup_{\substack{B\in\pi_2(\x^{n,0},\dots,\x^{0,1})\\\ind(B)=4-n}}
  \#\cM^B(\x^{n,0},\x^{n-1,n},\dots,\x^{0,1}).
  \]
  Conditions~\ref{item:J-proj},~\ref{item:J-fiber},~\ref{item:J-tame}
  and~\ref{item:J-cylindrical} guarantee that this moduli space has a
  compactification in terms of broken holomorphic polygons. 
  (See~\cite{BEHWZ03:CompactnessInSFT}
  or~\cite{Abbas14:compactness}. In particular, by Condition~\ref{item:J-compatible}, approaching the boundary of
  $\Conf(D_n)$ has the effect of splitting along a hypersurface, as
  in~\cite[Theorem 10.3]{BEHWZ03:CompactnessInSFT}.)
  Condition~\ref{item:J-compatible} allows us to identify the counts
  of these broken polygons with the counts used to define $m_{n-j+1}$
  and $m_j$.
\end{proof}

We conclude this section by noting that, via the \emph{tautological
correspondence}, the maps $m_n$ can also be defined by counting polygons in the
symmetric product. That is, given a family $J_j$, $j\in\Conf(D_n)$, of
admissible almost-complex structures there is a corresponding family
$\Sym^g(J_j)$, $j\in\Conf(D_n)$ of maps from $D_n$ to the space of
almost-complex structures on the symmetric product
$\Sym^g(\Sigma)$. Given a $J_j$-holomorphic map $u$ as in
Formula~\eqref{eq:polygon-map} there is a corresponding
$\Sym^g(J_j)$-holomorphic map $\phi_u\co D_n\to \Sym^g(\Sigma)$
defined by $\phi_u(x)=(\pi_\Sigma\circ u)\bigl((\pi_\bD\circ
u)^{-1}(x)\bigr)$. The map $\phi_u$ sends the edges of $D_n$ to
$T_{\betas^1},T_{\betas^2},\dots,T_{\betas^n}$, where $T_{\betas^i}$
is the image of $\beta^i_1\times\cdots\times\beta^i_g\subset
\Sigma^{\times g}$ in $\Sym^g(\Sigma)$.
\begin{lemma}\label{lem:tautological}
  The assignment $u\mapsto \phi_u$ gives a bijection between the
  moduli space of polygons as in Definition~\ref{def:holo-polygon} and
  the moduli space of polygons in
  $(\Sym^g(\Sigma),T_{\betas^1},\dots,T_{\betas^n})$ which are
  holomorphic with respect to one of the complex structures
  $\Sym^g(J_j)$, $j\in\Conf(D_n)$. In particular, the maps $m_n$ from
  Definition~\ref{prop:closed-Ainf-rel} agree with the maps defined by
  counting holomorphic polygons in the symmetric product as in,
  e.g.,~\cite[Section 8]{OS04:HolomorphicDisks} or~\cite[Section
  4.2]{BrDCov}.
\end{lemma}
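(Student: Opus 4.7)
The plan is to invoke the standard tautological correspondence, originally established for bigons and extended to triangles and rectangles in \cite[Section 10]{Lipshitz06:CylindricalHF}, and observe that the proof extends verbatim to $n$-gons for any $n \geq 3$. The only genuinely new ingredient beyond what appears in that reference is the fact that the families of complex structures $\{J_j\}_{j \in \Conf(D_n)}$ and $\{\Sym^g(J_j)\}_{j \in \Conf(D_n)}$ are compatible in the appropriate sense, which is built into Condition~\ref{item:J-proj} on admissible collections.

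More concretely, I would proceed in three steps. First, I would check that the assignment $u \mapsto \phi_u$ is well-defined: since $\pi_\bD \circ u \co S \to D_n$ is a $g$-fold branched cover (Condition (M-2)), the preimage of a generic point $x \in D_n$ consists of exactly $g$ points of $S$, and $\phi_u(x) = (\pi_\Sigma \circ u)\bigl((\pi_\bD \circ u)^{-1}(x)\bigr)$ defines a point of $\Sym^g(\Sigma)$. At a branch point of $\pi_\bD \circ u$, the branches collide, but the resulting multi-subset of $\Sigma$ still lies in $\Sym^g(\Sigma)$, so $\phi_u$ extends continuously (in fact smoothly) across the branch locus. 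The image avoids $\{z\} \times D_n$ (condition (M-0)), so $\phi_u$ lands in $\Sym^g(\Sigma \setminus \{z\})$; the boundary of $D_n$ maps to the correct tori by inspection of the boundary conditions on $u$; and the asymptotics at the punctures $p_{i,i+1}$ translate to the Lagrangian intersection generators $\x^{i,i+1}$.

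Second, I would verify that $\phi_u$ is $\Sym^g(J_j)$-holomorphic. This is the most technical point, but it follows directly from Conditions~\ref{item:J-proj} and~\ref{item:J-fiber} (the projection $\pi_\bD$ is $(J_j,j)$-holomorphic and the fibers of $\pi_\bD$ are $J_j$-holomorphic) by the same local argument used for bigons in \cite{Lipshitz06:CylindricalHF}: the almost-complex structure $\Sym^g(J_j)$ on the symmetric product is precisely the one induced by pushing forward $J_j$ via $\pi_\Sigma$ away from the diagonal, and the behavior at branch points is handled by standard removable-singularities and continuity arguments.

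Third, I would produce the inverse map. Given a $\Sym^g(J_j)$-holomorphic polygon $\phi \co D_n \to \Sym^g(\Sigma)$ with the stipulated boundary and asymptotic conditions, form the incidence variety
\[
  S = \{(x, y) \in D_n \times \Sigma \mid y \in \phi(x)\}
\]
viewed as a subset of $\Sigma \times D_n$. A local analysis of $\phi$ near the preimage of the diagonal $\Delta \subset \Sym^g(\Sigma)$ shows that $S$ is naturally a smooth surface, that the inclusion $S \hookrightarrow \Sigma \times D_n$ is an embedding satisfying conditions (M-0)--(M-3), and that $u \mapsto \phi_u$ and $\phi \mapsto (S \hookrightarrow \Sigma \times D_n)$ are mutual inverses. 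The hard part here is the local model at the diagonal, but this is exactly the content of the argument in \cite{Lipshitz06:CylindricalHF} and applies equally to polygons because it is purely local near a puncture of $S$ lying over a single point of $D_n$. Since the correspondence is a diffeomorphism of moduli spaces homology-class by homology-class, the signed counts defining $m_n$ in Definition~\ref{def:closed-polygon-map} agree with those defining the symmetric product polygon maps in \cite[Section 4.2]{BrDCov}, which is the second assertion of the lemma.
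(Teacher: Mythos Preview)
Your proposal is correct and takes essentially the same approach as the paper: the paper's proof is a single sentence stating that the argument is the same as the bigon case~\cite[Proposition~13.6]{Lipshitz06:CylindricalHF} with only notational changes. You have simply unpacked that reference in more detail than the paper does, spelling out the well-definedness, holomorphicity, and inverse construction that are implicit in citing the bigon argument.
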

\begin{proof}
  The proof is the same as the proof for bigons~\cite[Proposition
  13.6]{Lipshitz06:CylindricalHF}, with only notational changes.
\end{proof}

\subsection{Chain complexes of attaching circles: definition}
\label{subsec:ChainComplexes}

\begin{definition}
  \label{def:ChainComplex}
  Let $\IndI$ be a finite, partially ordered set and let $(\Sigma,z)$
  be a pointed closed, oriented surface.  An {\em $\IndI$-filtered
    chain complex of attaching circles} (or simply \emph{chain complex
    of attaching circles}) consists of the following data:
  \begin{itemize}
  \item an admissible collection of attaching circles
    $\{\betas^i\}_{i\in \IndI}$ and
  \item for each pair of elements $i_1<i_2$ in $\IndI$,
    a chain $\eta^{i_1<i_2}\in \CFa(\betas^{i_1},\betas^{i_2},z)$.
  \end{itemize}
  The chains $\eta^{i<j}$ are required to satisfy the following
  compatibility conditions, indexed by pairs $i, j\in \IndI$ with $i<j$:
  \begin{equation}
    \label{eq:Compatibility}
    \sum_{n=1}^\infty \,\,\sum_{i=i_0<i_1<\dots<i_{n-1}<i_n=j} m_{n}(\eta^{i_{n-1}<i_n},\dots,\eta^{i_0<i_1})=0,
  \end{equation}
  where the sum is taken over the sequences $i_1,\dots, i_{n-1}$ and
  $m_n$ denotes the counts of holomorphic $n+1$-gons
  (Definition~\ref{def:closed-polygon-map}).  We will usually suppress
  $\IndI$ and $\Sigma$ from the notation and write a chain complex of
  attaching circles as a triple $(\{\betas^i\}_{i\in
    \IndI},\{\eta^{i_1<i_2}\}_{i_1,i_2\in \IndI},z)$.
\end{definition}

For example, Equation~\eqref{eq:Compatibility} implies that if $i<j$ are
consecutive (i.e.  there is no other $k\in \IndI$ between $i$ and $j$),
then $\eta^{i<j}$ is a cycle.

\begin{remark}
  \label{rem:ChainComplexIsTypeD}
  The attentive reader might notice a similarity between
  Definition~\ref{def:ChainComplex} and
  Definition~\ref{def:IFilteredCx} (say). Indeed, let $\HFuk$ denote
  the full subcategory of the Fukaya category of $\Sym^g(\Sigma\setminus\{z\})$
  generated by Heegaard tori. Then a chain complex of attaching
  circles is just a twisted complex (type $D$ structure) in $\HFuk$
  (modulo the caveats in Remarks~\ref{rem:type-d-ainf}
  and~\ref{rem:filt-is-tw}).
\end{remark}

The following example plays a pivotal role in the proof of the surgery
exact triangle for Heegaard Floer homology~\cite[Theorem~9.1]{OS04:HolDiskProperties}.

\begin{example}
  \label{ex:AttachingCircles}
  Let $\IndI=\{0,1,\infty\}$ with the obvious ordering, and let $\betas^0$, $\betas^1$, and
  $\betas^\infty$ be three sets of attaching circles in a genus one
  surface, with the property that
  $(\Sigma,\betas^0,\betas^1,\betas^\infty,z)$ is a Heegaard triple
  representing ${\overline {\mathbb CP}}^2$ (i.e., with $\beta^0$,
  $\beta^1$, $\beta^\infty$ at slopes $0$, $1$, $\infty$,
  respectively). Choose $\eta^{0,1}$ and
  $\eta^{1,\infty}$ to be cycles representing the non-trivial
  homology class in 
  \[
  \HFa(\betas^0,\betas^1,z)=\HFa(S^3)\cong \Zmod{2}
  \qquad\text{and}\qquad
  \HFa(\betas^1,\betas^\infty,z)=\HFa(S^3)\cong \Zmod{2},
  \]
  respectively.
  We can find a chain $\eta^{0,\infty}$ so that
  $d\eta^{0,\infty}=m_2(\eta^{1,\infty},\eta^{0,1})$.
  The data
  $(\{\betas^0,\betas^1,\betas^\infty\},\allowbreak\{\eta^{0,1},\eta^{1,\infty},\eta^{0,\infty}\})$
  forms a chain complex of attaching circles. (See
  Figure~\ref{fig:ExampleComplex}.)
\end{example}

\begin{figure}
  \centering
  \input{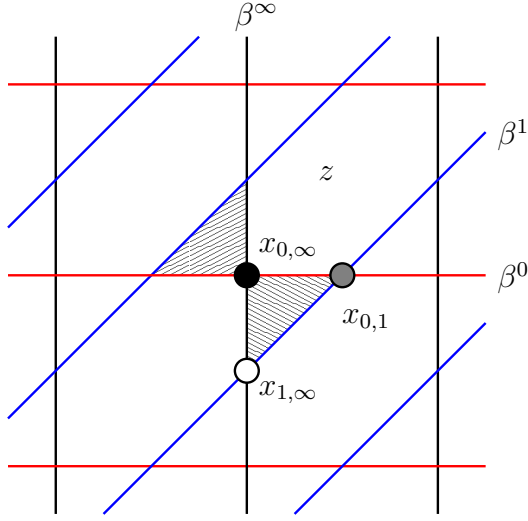}
  \caption{\textbf{Example of a chain complex of attaching circles.}
    The picture takes place in the torus, with three curves
  $\beta^0$, $\beta^1$, and $\beta^\infty$, which pairwise intersect in 
  three points
  $x_{0,\infty}$, $x_{0,1}$, and $x_{1,\infty}$.
  The fact that $m_2(x_{1,\infty},x_{0,1})=0$ is illustrated
  by the two hatched (canceling) triangles. Thus, in this model,
  the chains $\eta^{0,1}=x_{0,1}$, $\eta^{1,\infty}=x_{1,\infty}$
  and $\eta^{0,\infty}=0$ give a chain complex of attaching circles.}
  \label{fig:ExampleComplex}
\end{figure}

Chain complexes of attaching circles can be used to turn sets of attaching
circles into chain complexes, using the following Yoneda embedding.

\begin{definition}
  \label{def:AssociatedComplex}
  Suppose that $(\{\betas^i\}_{i\in \IndI},\{\eta^{i_1<i_2}\}_{i_1,i_2\in \IndI},z)$ is a chain
  complex of attaching circles, and $\alphas$ is an additional set of
  attaching circles with the property that for all 
  sequences $i_1<\dots <i_n$ in $\IndI$, the multi-diagram
  $(\Sigma,\alphas,\betas^{i_1},\dots,\betas^{i_n},z)$ is weakly
  admissible.  We call the collection of chain complexes
  \[
  \{\CFa(\alphas,\betas^i,z)\}_{i\in \IndI}
  \]
  equipped with the morphisms
  $$
  D^{i<j}(\x)=\sum_{i=i_1<\dots<i_n=j}
  m_n(\eta^{i_{n-1}<i_n},\dots,\eta^{i_1<i_2},\x)
  $$
  where the sum is over all subsequences of $\IndI$ starting at $i$ and ending at $j$
  the 
 \emph{Heegaard Floer complex
    associated to $\alphas$ and $(\{\betas^i\}_{i\in \IndI},\{\eta^{i_1<i_2}\}_{i_1,i_2\in \IndI})$}. We
  denote this filtered chain complex by $\CCFa(\alphas,\{\betas^i\}_{i\in\IndI},\{\eta^{i_1<i_2}\}_{i_1,i_2\in \IndI},z)$.
\end{definition} 
The terminology is justified by the following more precise version of Proposition~\ref{intro:ChainComplexesOfAttachingCircles}:
\begin{proposition}
  \label{prop:ChainComplexesOfAttachingCircles} 
  Let $\alphas$ be a set of attaching circles and $(\{\betas^i\}_{i\in
    \IndI},\{\eta^{i_1<i_2}\}_{i_1,i_2\in\IndI},z)$ be a chain complex
  of attaching circles. Suppose that for all
  sequences $i_1<\dots <i_n$ in~$\IndI$, the
  multi-diagram $(\Sigma,\alphas,\betas^{i_1},\dots,\betas^{i_n},z)$
  is weakly admissible. Then the Heegaard Floer complex associated to
  $\alphas$ and $(\{\betas^i\}_{i\in
    \IndI},\{\eta^{i_1<i_2}\}_{i_1,i_2\in\IndI},z)$ is an $\IndI$-filtered chain
  complex in the sense of Definition~\ref{def:IFilteredCx}.
\end{proposition}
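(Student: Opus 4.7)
The plan is to verify the compatibility condition from Definition~\ref{def:IFilteredCx} directly by expanding both sides and matching terms using the $\Ainf$ relations of Proposition~\ref{prop:closed-Ainf-rel} together with the compatibility of the $\eta^{i<j}$ from Equation~\eqref{eq:Compatibility}. Concretely, I need to show that for all $i<k$ in $\IndI$ and all $\x \in \CFa(\alphas,\betas^i,z)$,
\[
m_1 \circ D^{i<k}(\x) + D^{i<k} \circ m_1(\x) = \sum_{i<j<k} D^{j<k} \circ D^{i<j}(\x).
\]

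First, for each sequence $i = i_0 < i_1 < \dots < i_n = k$ in $\IndI$, apply the $\Ainf$ relation of Proposition~\ref{prop:closed-Ainf-rel} to the tuple of inputs $(\eta^{i_{n-1}<i_n}, \dots, \eta^{i_0<i_1}, \x)$. This gives zero, so summing over all such sequences also gives zero. Each term in the resulting expansion has the form $m_q(\cdots, m_p(\text{contiguous block}), \cdots)$, where the inner operation~$m_p$ acts on a contiguous block of inputs. I then sort these terms into two classes according to whether the inner block contains~$\x$ or not.

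If the inner $m_p$ acts on a block that \emph{includes}~$\x$, then that block is a suffix of the input sequence and looks like $(\eta^{i_{p-1}<i_p}, \dots, \eta^{i_0<i_1}, \x)$ for some intermediate index $i_p$. Summing over all sequences and all choices of cut point~$i_p$, the resulting terms exactly reproduce the sum $\sum_{i<j<k} D^{j<k} \circ D^{i<j}(\x)$, by regrouping over the cut value $j = i_p$ and splitting the outer subsequence into the parts above and below~$j$. The two extremal cases where the inner block is either $\{\x\}$ alone (so $p=1$, $m_p = m_1$) or is the entire input (so $q=1$) yield the terms $D^{i<k}(m_1(\x))$ and $m_1(D^{i<k}(\x))$, respectively, which land on the left-hand side of the desired identity.

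If the inner $m_p$ acts on a block consisting only of $\eta$'s, the terms vanish by the compatibility condition~\eqref{eq:Compatibility} on the $\eta^{i<j}$. The hard (or at least most bookkeeping-heavy) step is verifying this cancellation cleanly: one should reorganize the sum by first fixing the ``outer'' subsequence obtained by collapsing the replaced block to a single segment, say $i = l_0 < l_1 < \dots < l_{n-p+1} = k$, together with the position $b$ of the collapsed segment $(l_b, l_{b+1})$, and only then sum over refinements $l_b = s_0 < s_1 < \dots < s_p = l_{b+1}$ of that segment (including $p=1$, which gives the $m_1(\eta^{l_b<l_{b+1}})$ contributions). For each such outer sequence and position $b$, the interior sum equals $\sum_{p \geq 1} \sum_{l_b = s_0 < \dots < s_p = l_{b+1}} m_p(\eta^{s_{p-1}<s_p}, \dots, \eta^{s_0<s_1})$, which is precisely the left-hand side of Equation~\eqref{eq:Compatibility} applied to the pair $l_b < l_{b+1}$, hence zero. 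Weak admissibility of all the relevant multi-diagrams ensures every sum involved is finite, so the argument goes through, completing the proof.
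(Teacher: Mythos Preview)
Your proof is correct and follows essentially the same approach as the paper's: both arguments expand the $\Ainf$ relation (Proposition~\ref{prop:closed-Ainf-rel}) applied to the sequences $(\eta^{i_{n-1}<i_n},\dots,\eta^{i_0<i_1},\x)$, separate the terms according to whether the inner $m_p$ involves $\x$ or only $\eta$'s, and kill the latter using Equation~\eqref{eq:Compatibility}. The paper organizes the bookkeeping by writing down three labeled equations and observing that their sum is the $\Ainf$ relation, whereas you run the argument in the other direction, but the content is the same.
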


\begin{proof}
  As we will see, the structure equation is an easy consequence of the
  associativity formula for counts of holomorphic polygons,
  Proposition~\ref{prop:closed-Ainf-rel}. (Compare
  Figure~\ref{fig:attach-circ-cx}.)
  For any $i<k$ we have
  \begin{multline}\label{eq:terms-1}
  \sum_{i<j<k} D^{j<k}\bigl(D^{i<j}(\x)\bigr) =\\
   \sum_{i=i_0<\cdots<i_m=j<\cdots<i_n=k}m\bigl(\eta^{i_{n-1}<i_n},\dots,\eta^{i_{m+1}<i_m},m(\eta^{i_{m-1}<i_m},\dots,\eta^{i_0<i_1},\x)\bigr)
  \end{multline}
  (omitting the indices on the $m$'s), while
  \begin{multline}\label{eq:terms-2}
    d (D^{i<k})(\x)=\\
   \sum_{i=i_0<\dots<i_n=k}
    m_1(m_n(\eta^{i_{n-1}<i_n},\dots,\eta^{i_0<i_1},\x))+
    m_n(\eta^{i_{n-1}<i_n},\dots,\eta^{i_0<i_1},m_1(\x)).
  \end{multline}
  Equation~\eqref{eq:Compatibility} gives
  \begin{equation}
    \label{eq:terms-3}
    0=\sum_{i=i_0<\dots<i_n=k}\,\,\sum_{1< j\leq \ell< n} m(\eta^{i_{n-1}<i_n},\dots, m(\eta^{i_{\ell-1}<i_\ell},\dots,\eta^{i_{j}<i_{j+1}}),\dots,\eta^{i_0<i_1},\x).
  \end{equation}
  The sum of the right hand sides of Equations~\eqref{eq:terms-1},~\eqref{eq:terms-2} and~\eqref{eq:terms-3} is the left hand side of Equation~\eqref{eq:closed-polys-Ainf}, and hence is equal to zero.
  \begin{figure}
    \centering
    \begin{overpic}[tics=10,height=1.75in]{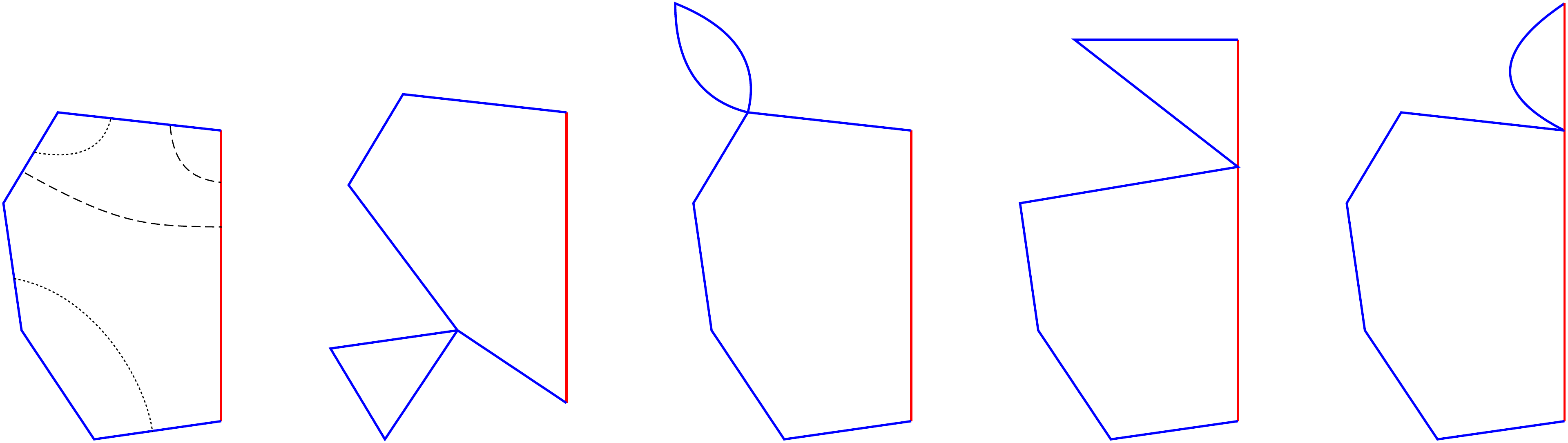}
      \put(15,10){\textcolor{red}{$\alphas$}}
      \put(10,21){\textcolor{blue}{$\betas^5$}}
      \put(10,3){\textcolor{blue}{$\betas^1$}}
      \put(0,2){\textcolor{blue}{$\betas^2$}}
      \put(-2,9){\textcolor{blue}{$\betas^3$}}
      \put(-1,18){\textcolor{blue}{$\betas^4$}}
      \put(8,-3){(a)}
      \put(28,-3){(b)}
      \put(50,-3){(c)}
      \put(72,-3){(d)}
      \put(94,-3){(e)}
    \end{overpic}
    \vspace{.1in}
    \caption{\textbf{Proof of
        Proposition~\ref{prop:ChainComplexesOfAttachingCircles}.} 
      The hexagon illustrated in (a) can be degenerated along one of the dotted lines
      to give (b) or (c). Degenerating along the dashed lines gives one of (d) or (e).
      Degenerations of type (b) and (c) cancel in Formula~\eqref{eq:Compatibility}; 
      those of type (d) give terms of the form $D^{j<k}\circ D^{i<j}$;
      and those of type (e) give terms in $d D^{i<k}$.}
    \label{fig:attach-circ-cx}
  \end{figure}
  Thus,
  \[ 
  d (D^{i<k}) + \sum_{\{j\mid i<j<k\}} D^{j<k}\circ D^{i<j}=0.\qedhere
  \]
\end{proof}

\subsection{Morphisms between chain complexes of attaching circles}
\label{subsec:MorComplexes}
The next step in developing the theory of chain complexes of attaching circles is to verify that the chain complex considered
in Proposition~\ref{prop:ChainComplexesOfAttachingCircles} is
invariant under change of admissible collection of almost-complex
structures and isotopies of the $\alpha$- and $\beta$-circles. The
argument is based on the usual ``continuation maps'' in Floer
homology, and is similar to the proofs in~\cite{RobertsMultiDiagrams}
and~\cite{Baldwin11:ss}; see also~\cite[Section 10(a)]{SeidelBook} in
the more general setting of Fukaya categories. In this section we will
prove invariance under isotopies, leaving invariance under changes of
admissible collection of almost-complex structures as an exercise to
the reader.

So, fix an $\IndI$-filtered admissible collection of attaching circles
$\{\betas^i\}_{i\in \IndI}$ in a pointed surface $(\Sigma,z)$, an element
$k\in \IndI$, and a collection of attaching circles $\gammas^k$
Hamiltonian isotopic to $\betas^k$. Let $\gammas^i=\betas^i$ for
$i\neq k$.  We will assume that the $\gammas^k$ are close enough to
the $\betas^k$ in a sense that will be made precise in two places
below; in practice, by breaking a Hamiltonian isotopy up into a
sequence of smaller isotopies, this closeness assumption can always be
achieved. Fix also an admissible collection of almost-complex
structures, chosen generically in the sense of
Proposition~\ref{prop:closed-trans}, and so that the moduli spaces
described below are transversally cut out. The argument that such a
family of almost-complex structures exists is similar to the (largely
omitted) proof of Proposition~\ref{prop:closed-trans}.

Our first goal is to define maps
\[
f_n\co 
\CFa(\betas^{i_{n-1}},\betas^{i_n},z)
\otimes\dots\otimes
\CFa(\betas^{i_0},\betas^{i_1},z)\to
\CFa(\gammas^{i_0},\gammas^{i_n},z)
\]
satisfying the $\Ainf$-homomorphism relation, i.e., so that
\begin{multline}\label{eq:Ainf-hom}
0=\sum_{1\leq a\leq b\leq m}
f_{m-b+a}\bigl(x_m,x_{m-1},\dots,m_{b-a+1}(x_b,\dots,x_a),\dots,x_1\bigr)\\
+\sum_c\sum_{n_1+\dots+n_c=m}
m_c\bigl(f_{n_1}(x_m,x_{m-1},\dots,x_{m-n_1+1}),\\
f_{n_2}(x_{m-n_1},\dots,x_{m-n_1-n_2+1}),\dots,
f_{n_c}(x_{n_c},\dots,x_1)\bigr)
\end{multline}
for any sequence of sets of attaching circles
$\betas^{i_0},\dots,\betas^{i_m}$ and elements $x_j\in
\CFa(\betas^{i_{j-1}},\betas^{i_{j}},z)$.

\begin{figure}
  \centering
  \begin{overpic}[tics=10]{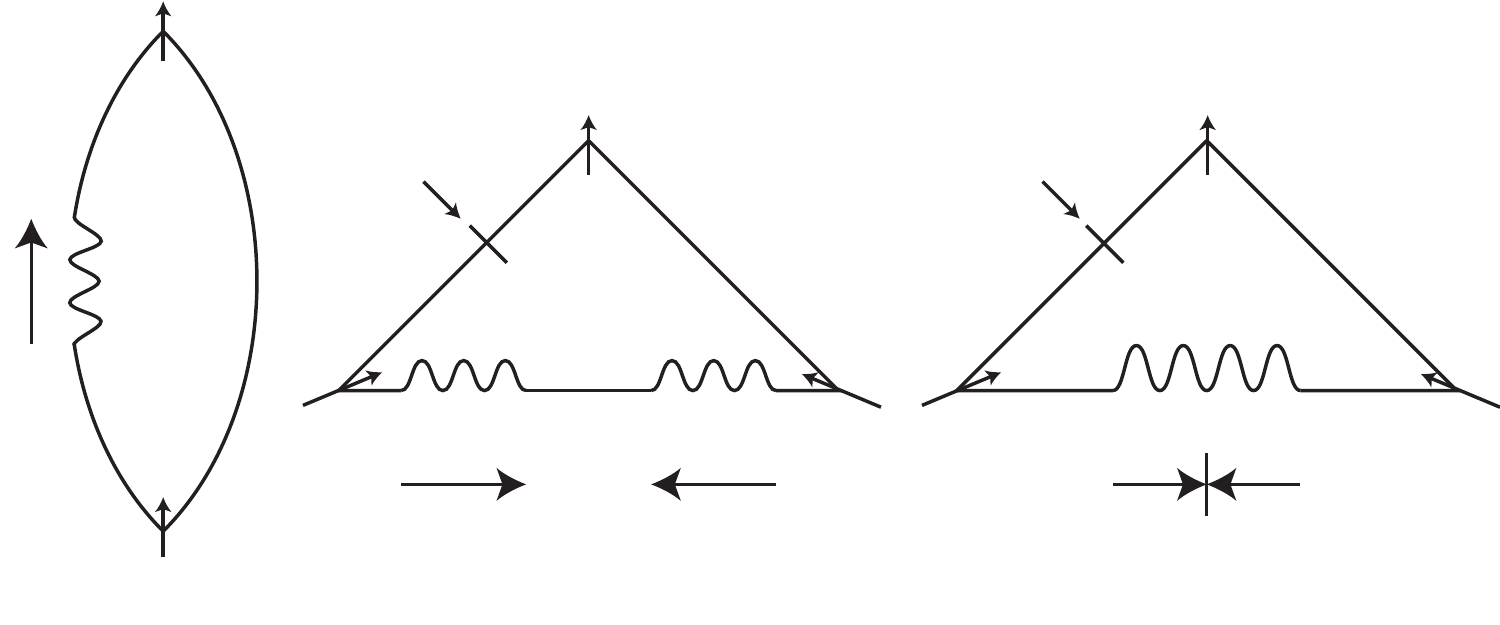}
    \put(9.5,0){(a)}
    \put(38,0){(b)}
    \put(79,0){(c)}
    \put(.5,22){$t$}
    \put(4,8){$\betas^k$}
    \put(18,22){$\betas^i$}
    \put(4,34){$\gammas^k$}
    \put(7,22){$\phi_t(\betas^k)$}
    \put(30,7){$t$}
    \put(48,7){$t$}
    \put(26.5,12){$\phi_t(\betas^k)$}
    \put(44,12){$\phi_t(\betas^k)$}
    \put(38,18){$\gammas^{i_1=k}$}
    \put(48,25){$\betas^{i_0=i}$}
    \put(31,30){$\betas^{i_3=j}$}
    \put(25,22){$\betas^{i_2}$}
    \put(76,7){$t$}
    \put(84,7){$t$}
    \put(72,12){$\phi_t(\betas^k)$}
    \put(81,12){$\phi_t(\betas^k)$}
    \put(89,25){$\betas^{i_0=i}$}
    \put(72,30){$\betas^{i_3=j}$}
    \put(66,22){$\betas^{i_2}$}
  \end{overpic}
  \caption{\textbf{Continuation maps associated to a
      Hamiltonian isotopy.} (a) A polygon defining Case~\ref{item:f3}
    of the map $f_1$. (b) and (c) Examples of polygons defining
    Case~\ref{item:f5} of the map $f_3$ for $k=i_1$. (b) represents to the
    boundary conditions in Formula~(\ref{eq:Ls-1}) while (c)
    represents the boundary conditions in Formula~(\ref{eq:Ls-2}).}
  \label{fig:CxHam}
\end{figure}

The maps $f_n$ are defined as follows:
\begin{enumerate}[label=(f-\arabic*),ref=(f-\arabic*)]
\item\label{item:f1} If $n=1$ and $k\not\in \{i_0,i_1\}$ then $f_1\co
  \CFa(\betas^{i_0},\betas^{i_1},z)\to
  \CFa(\gammas^{i_0},\gammas^{i_1},z)=\CFa(\betas^{i_0},\betas^{i_1},z)$
  is the identity map.
\item\label{item:f2} If $n>1$ and $k\not\in\{i_0,\dots,i_n\}$ then $f_n=0$.
\item\label{item:f3} For any $n\geq 1$, if $k=i_0$ then $f_n$ is defined by counting
  holomorphic $(n+1)$-gons with dynamic boundary conditions along one
  edge. That is, we count holomorphic maps
  \[
  u\co (S,\bdy S)\to ((\Sigma\setminus\{z\})\times P, L_C\cup (\betas^{i_1}\times e_2)\cup\dots\cup
  \betas^{i_n}\times e_{n+1})
  \]
  where $C\in\RR$ is allowed to vary and $L_C$ is given as follows. For each $j\in\Conf(D_n)$, fix
  an identification of a neighborhood of the edge
  $e_1$ in $P$ with $\RR\times [0,\epsilon)$ so that the symplectic
  form $\omega_j$ on $P$ is the pullback of the usual symplectic form
  on $\RR^2$. 
  We require these identifications to be continuous, consistent with
  the cylindrical ends,
  and consistent across strata of
  $\Conf(n+1)$;
  compare~\cite[Sections (9e)--(9i) and (10e)]{SeidelBook}.
  For each $j$, fix a Hamiltonian isotopy $\phi_t$, $t\in[0,1]$, from
  $\betas^k$ to $\gammas^k$, induced by a time-dependent Hamiltonian
  $H_t$. Then
  \begin{multline}\label{eq:LC}
  L_C=\left[\betas^k\times(-\infty,C]\times\{0\}\right]\cup \{(\phi_t(\betas^k),C+t,0)\mid
  t\in[0,1]\}\\
  \cup \left[\gammas^k\times[C+1,\infty)\times\{0\}\right].
  \end{multline}

  The manifold $L_C$ is not Lagrangian with respect to $\omega=\omega_\Sigma\times\omega_j$, but
  rather with respect to the deformed form
  \begin{equation}\label{eq:deformed-form}
    \omega - \left(d (\psi(s)H_t)\right)\wedge dt,
  \end{equation}
  where $\psi\co [0,\epsilon]\to \RR$ is a smooth cut-off function
  taking the value $1$ on a neighborhood of $0$ and $0$ on a
  neighborhood of $\epsilon$.  Assuming that the Hamiltonian isotopy
  $H_t$ is small enough, this deformed form is still symplectic and
  still tames the almost-complex structures $J_j$ under
  consideration. This is one of the two ``close'' requirements we place
  on the $\betas^i$ and $\gammas^i$.
\item\label{item:f4} For any $n\geq 1$, if $k=i_n$ then $f_n$ is defined similarly to
  the previous case, but with dynamic boundary conditions along the
  edge $e_{n+1}$.
\item\label{item:f5} For any $n\geq 2$, if $k=i_j$ for some $0<j<n$ then $f_n$ is
  defined by counting holomorphic $(n+1)$-gons
  \[
  u\co (S,\bdy S)\to ((\Sigma\setminus\{z\})\times P, (\betas^{i_0}\times
  e_1)\cup\dots\cup L'_s \cup\dots\cup \betas^{i_n}\times e_{n+1})
  \]
  for some $s\in(-\infty,1]$
  where the boundary condition $L'_s$ along the edge $e_{j+1}$ is
  given as follows.
  For each $j\in\Conf(D_n)$, fix an identification of a neighborhood of the
  edge $e_{j+1}$ with $\RR\times[0,\epsilon)$, as in
  Case~\ref{item:f3}.
  Then for $s<0$,
  \begin{equation}\label{eq:Ls-1}
    \begin{split}
      L'_s&=\left[\betas^k\times(-\infty,s-1]\times\{0\}\right]\cup \{(\phi_t(\betas^k),s+t-1,0)\mid
      t\in[0,1]\}\\ 
      &\qquad\qquad\cup\left[\gammas^k\times[s,-s]\times\{0\}\right]\cup\{(\phi_{1-t}(\betas^k),-s+t,0)\mid t\in[0,1]\}\\
      &\qquad\qquad\cup \left[\betas^k\times[-s+1,\infty)\times\{0\}\right].
    \end{split}
  \end{equation}
  For $s\in[0,1]$, 
  \begin{multline}\label{eq:Ls-2}
  L'_s=\left[\betas^k\times(-\infty,s-1]\times\{0\}\right]\cup \{(\phi_t(\betas^k),s+t-1,0)\mid
  t\in[0,1-s]\}\\ 
  \cup \{(\phi_{1-t}(\betas^k),-s+t,0)\mid t\in[s,1]\}
  \cup \left[\betas^k\times[1-s,\infty)\times\{0\}\right].
  \end{multline}
  See Figure~\ref{fig:CxHam}. 
  (Again, the submanifolds $L'_s$ are not Lagrangian with respect to
  $\omega$, but rather with respect to a deformed symplectic form as
  in Equation~\eqref{eq:deformed-form}. This is the second of the two
  ``close'' requirement we place on $\betas^i$ and $\gammas^i$.)
\end{enumerate}

\begin{lemma}\label{lem:f-ainf-rel}
  The maps $f_n$ satisfy the $\Ainf$-homomorphism relation~\eqref{eq:Ainf-hom}.
\end{lemma}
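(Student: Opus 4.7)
My plan is to prove the $\Ainf$-homomorphism relation by the standard method: analyze the codimension-one boundary of the one-dimensional moduli spaces whose zero-dimensional strata define $f_n$, and match each component of that boundary to a term in Equation~\eqref{eq:Ainf-hom}. The argument will closely parallel Seidel's treatment of continuation functors in the Fukaya category~\cite[Section~10]{SeidelBook}, transported to the cylindrical framework set up in Section~\ref{subsec:HolCurves}. Cases~\ref{item:f1} and~\ref{item:f2} are immediate, and the relations involving only sequences of indices avoiding $k$ reduce directly to Proposition~\ref{prop:closed-Ainf-rel}; the substance is in Cases~\ref{item:f3}--\ref{item:f5}.

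First I would fix a sequence $i_0 < \cdots < i_m$, generators $x_j \in \CFa(\betas^{i_{j-1}}, \betas^{i_j}, z)$, an output generator, and a homology class $B$ for which the parameterized moduli space $\mathcal{N}^B$ of pairs $(s, u)$ or $(C, u)$ (depending on the case) has expected dimension one. Combining the cylindrical transversality argument of~\cite[Section~3]{Lipshitz06:CylindricalHF} with the dimension count over the extra parameter, and perturbing the admissible family $\{J_j\}$ generically, $\mathcal{N}^B$ will be a smooth one-manifold. Its Gromov--Floer compactification $\overline{\mathcal{N}}^B$ has boundary of three flavors: (i) strip-breaking at one of the punctures; (ii) Deligne--Mumford degenerations in $\oConf(D_{m+1})$ splitting the domain into two smaller polygons; and (iii) parameter-boundary configurations where $s$ or $C$ approaches a limiting value.

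The key step is to identify each stratum with a term of Equation~\eqref{eq:Ainf-hom}. Strip-breaking at an input puncture produces the first-family terms with $a = b$, and strip-breaking at the output gives the $c=1$, $n_1=m$ term $m_1 \circ f_n$. Deligne--Mumford degenerations in which the dynamic edge lies in exactly one of the two pieces, while the other is a standard holomorphic polygon counted by some $m_{b-a+1}$, give exactly the first-family terms $f_{m-b+a}(\ldots, m_{b-a+1}(x_b, \ldots, x_a), \ldots, x_1)$. The remaining contribution is case-dependent: in Case~\ref{item:f3}, sending $C \to +\infty$ makes $L_C$ locally equal $\betas^k$, so the polygon is an ordinary one counted by $m_n$ post-composed with the continuation map $f_1$, while $C \to -\infty$ puts $\gammas^k$ on all of $e_1$, giving $m_n$ with $f_1$ pre-composed on the first input; Case~\ref{item:f4} is symmetric. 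In Case~\ref{item:f5}, the endpoint $s=1$ collapses $L'_s$ to the standard boundary condition $\betas^k \times \RR \times \{0\}$ and contributes $f_1 \circ m_n$, whereas the limit $s \to -\infty$ makes the $\gammas^k$-portion of $L'_s$ arbitrarily long and forces an SFT-style neck-splitting in which the polygon fragments into a tree consisting of an $m_c$-polygon having one entirely-$\gammas^k$ edge together with $c$ smaller $f$-polygons attached at the other edges; this produces exactly the second-family terms $m_c(f_{n_1}(\cdots), \ldots, f_{n_c}(\cdots))$, with the dynamic transition distributed among the factors.

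I expect the main obstacle to be the compactness and gluing analysis for the $s \to -\infty$ limit in Case~\ref{item:f5}, where one must control the neck-splitting carefully enough to identify the limit with a tree of $f$-polygons joined by an $m_c$-polygon and rule out extra broken configurations. A secondary technical point is that the deformed symplectic forms of Equation~\eqref{eq:deformed-form} must still tame the $J_j$ and yield energy bounds adequate for Gromov compactness; this is exactly where the hypothesis that the Hamiltonian isotopy is sufficiently small enters. Once these analytic ingredients are in place, the combinatorial matching of boundary strata to terms of Equation~\eqref{eq:Ainf-hom} is formal, and the vanishing of the signed count of $\partial \overline{\mathcal{N}}^B$ (mod 2) yields the lemma.
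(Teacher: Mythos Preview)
Your overall strategy matches the paper's, and the appeal to Seidel is apt. The gap is in the bookkeeping of boundary strata.

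Your item (ii) asserts that Deligne--Mumford degenerations in which one piece is standard produce exactly the first-sum terms. This is not right: such a degeneration contributes to the first or the second sum according to whether the piece carrying the dynamic boundary is the outer one (containing the output corner) or the inner one. In Case~\ref{item:f3} this is precisely the paper's distinction between breaking $e_1$ below versus above $[C,C+1]$; breaks above put $\gammas^k$ on the outer piece and yield second-sum terms $m_c(\ldots,f_p(x_p,\ldots,x_1))$, which your accounting misses for $1<p<m$. (The limits $C\to\pm\infty$ are not a separate boundary type: they are particular instances of this breaking, at the corners adjacent to $e_1$.) In Case~\ref{item:f5}, a degeneration with the entire $L'_s$-edge in the inner piece likewise gives a second-sum term $m_c(\ldots,f_p(\ldots,x_{j+1},x_j,\ldots),\ldots)$, contrary to your claim about (ii).

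Your description of the $s\to-\infty$ limit in Case~\ref{item:f5} is also off. The symmetry of $L'_s$ forces a three-component limit: exactly two $f$-polygons (one of type~\ref{item:f3} and one of type~\ref{item:f4}) bubble off simultaneously at the two ends of the dynamic edge, leaving a main $m$-polygon with $\gammas^k$ on that edge. This yields only the second-sum terms in which $x_j$ and $x_{j+1}$ feed different $f$'s---not a tree with $c$ attached $f$-polygons, and not ``exactly the second-family terms.'' The remaining second-sum terms come from the DM strata just described. Once you reorganize the boundary as the paper does---by the position of the break relative to the transition region rather than by your strip-breaking/DM/parameter-boundary trichotomy---the matching goes through.
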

\begin{proof}
  There are several cases. The case that $k\not\in\{i_0,\dots,i_m\}$
  is trivial. Next, consider the case that $k=i_0$. One-dimensional
  moduli spaces of polygons as in Case~\ref{item:f3} of the definition
  of $f_m$ have three kinds of ends:
  \begin{itemize}
  \item Breakings of polygons where the edge $e_1$ does not
    break. These correspond to the terms in the first sum in
    Equation~\eqref{eq:Ainf-hom} with $i>1$.
  \item Breakings of polygons where the edge $e_1$ breaks below
    $[C,C+1]$. These correspond to the terms in the first sum in
    Equation~\eqref{eq:Ainf-hom} with $i=1$.
  \item Breakings of polygons where the edge $e_1$ breaks above
    $[C,C+1]$. These correspond to the terms in the second sum in
    Equation~\eqref{eq:Ainf-hom}.
  \end{itemize}
  
  The case that $k=i_m$ is similar to the case that $k=i_0$.

  \begin{figure}
    \centering
    \begin{overpic}[width=\textwidth, tics=10]{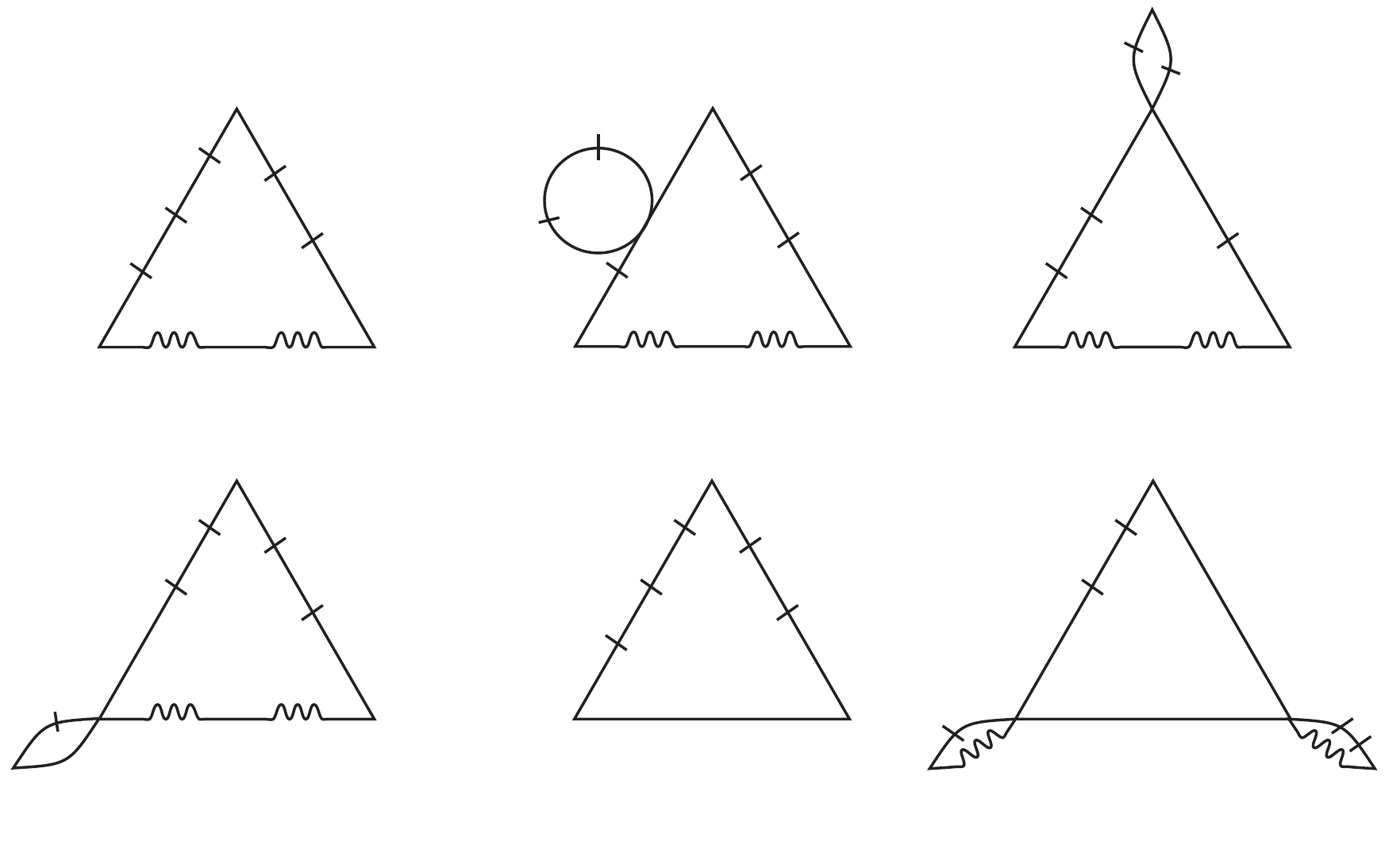}
      \put(3,36){$x_{k+1}$}
      \put(27.5,36.5){$x_{k}$}
      \put(23,34.5){$\betas^k$}
      \put(16,34.5){$\gammas^k$}
      \put(8,34.5){$\betas^k$}
      \put(51,8){$\betas^k$}
      \put(81.5,8){$\gammas^k$}
      \put(-1,6){$x_{k+1}$}
      \put(27.5,10){$x_{k}$}
      \put(37,45){$x_a$}
      \put(42.5,53.5){$x_b$}
      \put(38,36){$x_{k+1}$}
      \put(61.5,36.5){$x_{k}$}
      \put(38,9.5){$x_{k+1}$}
      \put(61.5,10){$x_{k}$}
      \put(70,36){$x_{k+1}$}
      \put(93.5,36.5){$x_{k}$}
      \put(65,5.5){$x_{k+1}$}
      \put(99.6,6){$x_{k}$}
      \put(15.5,30){(a)}
      \put(50,30){(b)}
      \put(81.5,30){(c)}
      \put(15.5,2){(d)}
      \put(50,2){(e)}
      \put(81.5,2){(f)}
    \end{overpic}
    \caption{\textbf{Proof that $f$ satisfies the $\Ainf$ relation.}
      (a) shows an $8$-gon: the output corner and the corners
      corresponding to $x_k$ and $x_{k+1}$ are drawn as corners, and
      the rest as tick marks. (b)--(f) show the ways this moduli space
      of $8$-gons can break. For cases (b) and (d) we have only shown
      one of two cases: the other case is given by reflecting the
      picture horizontally.}
    \label{fig:maps-breaking}
  \end{figure}

  Finally, consider the case that $k=i_j$ for some $0<j<m$. One-dimensional
  moduli spaces of polygons as in Case~\ref{item:f5} of the definition
  of $f_m$ have five kinds of ends, as shown in
  Figure~\ref{fig:maps-breaking}. The ends correspond to terms in
  Equation~\eqref{eq:Ainf-hom}, as follows:
  \begin{enumerate}
  \item Ends of type (b) correspond to terms in the first sum where
    neither $x_k$ nor $x_{k+1}$ is an input to the multiplication $m$.
  \item Ends of type (c) correspond to terms in the second sum where
    $x_k$ and $x_{k+1}$ are inputs to the same map $f$.
  \item Ends of type (d) correspond to terms in the first sum where
    exactly one of $x_k$ and $x_{k+1}$ is an input to the
    multiplication $m$.
  \item Ends of type (e) (which correspond to $s=1$) correspond to
    terms in the first sum where $x_k$ and $x_{k+1}$ are both input to
    the multiplication $m$. (Note that in this case, we must have $m-j+i=1$.)
  \item Ends of type (f) (which correspond to $s=-\infty$) correspond
    to terms in the second sum where $x_k$ and $x_{k+1}$ are inputs to
    different $f$'s. (Note that the symmetry in the definition of
    $L'_s$ forces degenerations to occur at two corners at
    once.)\qedhere
  \end{enumerate}
\end{proof}

Next, we turn to the continuation maps for chain complexes of
attaching circles. With notation as above, suppose that we are given chains
$\eta^{i_1<i_2}$ making
$(\{\betas^i\}_{i\in\IndI},\allowbreak\{\eta^{i<j}\}_{i,j\in\IndI},z)$
into a chain complex of attaching circles. Let
\[
\zeta^{i<j}=\sum_{i=i_0<i_1<\cdots<i_n=j}f_n(\eta^{i_{n-1}<i_n},\eta^{i_{n-2}<i_{n-1}}\dots,\eta^{i_0<i_1}).
\]

\begin{lemma}
  \label{lem:ChangeByIsotopy} The data
  $(\{\gammas^i\}_{i\in\IndI},\{\zeta^{i<j}\}_{i<j\in\IndI},z)$
  forms a chain complex of attaching circles.
\end{lemma}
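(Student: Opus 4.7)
The plan is to recognize this lemma as a concrete instance of the standard formal fact that an $A_\infty$-homomorphism pushes twisted complexes to twisted complexes (cf.\ Remark~\ref{rem:ChainComplexIsTypeD}). Lemma~\ref{lem:f-ainf-rel} has already produced the requisite $A_\infty$-homomorphism $\{f_n\}$, and $\{\zeta^{i<j}\}$ is defined as the image of $\{\eta^{i<j}\}$ under it, so the statement reduces to a purely algebraic manipulation. (Admissibility of $\{\gammas^i\}$ is automatic for sufficiently small isotopies; we already assumed such closeness when defining the $f_n$.)

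Fix $i<j$ and write
\[
T:=\sum_{n\ge 1}\sum_{i=j_0<\cdots<j_n=j} m_n(\zeta^{j_{n-1}<j_n},\dots,\zeta^{j_0<j_1}).
\]
First I would substitute the definition of each $\zeta^{j_{k-1}<j_k}$ and collect the resulting $\eta$-inputs into a single concatenated outer sequence, obtaining
\[
T=\sum_{N\ge 1}\sum_{i=\ell_0<\cdots<\ell_N=j}\;\sum_{c\ge 1}\;\sum_{\substack{n_1,\dots,n_c\ge 1\\ n_1+\cdots+n_c=N}} m_c\bigl(f_{n_c}(\cdots),\dots,f_{n_1}(\cdots)\bigr),
\]
where each $f_{n_p}$ receives the corresponding consecutive block of $\eta^{\ell_{k-1}<\ell_k}$'s. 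For each fixed outer sequence, the innermost two sums are exactly the second summand in the $A_\infty$-homomorphism relation~\eqref{eq:Ainf-hom}; by Lemma~\ref{lem:f-ainf-rel} (working mod~$2$) they equal the first summand, so
\[
T=\sum_{N\ge 1}\sum_{i=\ell_0<\cdots<\ell_N=j}\sum_{1\le a\le b\le N} f_{N-b+a}\bigl(\eta^{\ell_{N-1}<\ell_N},\dots,m_{b-a+1}(\eta^{\ell_{b-1}<\ell_b},\dots,\eta^{\ell_{a-1}<\ell_a}),\dots,\eta^{\ell_0<\ell_1}\bigr).
\]

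Next I would reindex. Given $(\ell_0<\cdots<\ell_N,\,a\le b)$, remove $\ell_a,\dots,\ell_{b-1}$ from the outer sequence to produce a shorter outer sequence $\ell'_0<\cdots<\ell'_M=j$ (with $M=N-(b-a)$), record the insertion position $p=a$ (so that $\ell'_{p-1}=\ell_{a-1}$ and $\ell'_p=\ell_b$), and record the inner subsequence $\ell'_{p-1}=k_0<k_1<\cdots<k_n=\ell'_p$ with $n=b-a+1\ge 1$ that is fed into $m_n$. This is a bijection onto $(\ell'_0<\cdots<\ell'_M,\,p,\,k_0<\cdots<k_n)$, and it rewrites $T$ as
\[
T=\sum_{M\ge 1}\sum_{i=\ell'_0<\cdots<\ell'_M=j}\sum_{p=1}^M f_M\bigl(\eta^{\ell'_{M-1}<\ell'_M},\dots,E_p,\dots,\eta^{\ell'_0<\ell'_1}\bigr),
\]
where
\[
E_p=\sum_{n\ge 1}\sum_{\ell'_{p-1}=k_0<\cdots<k_n=\ell'_p} m_n(\eta^{k_{n-1}<k_n},\dots,\eta^{k_0<k_1}).
\]
But $E_p=0$ by the structure equation~\eqref{eq:Compatibility} applied to the pair $\ell'_{p-1}<\ell'_p$ in the chain complex $(\{\betas^i\},\{\eta^{i<j}\})$, so $T=0$ as required.

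The step requiring the most care is the reindexing bijection above, and in particular checking that the degenerate case $a=b$ (i.e.\ $n=1$) correctly contributes the $m_1(\eta^{\ell'_{p-1}<\ell'_p})$ term to $E_p$, so that $E_p$ is precisely the full left-hand side of Equation~\eqref{eq:Compatibility} for the pair $(\ell'_{p-1},\ell'_p)$. There is no analytic input beyond Lemma~\ref{lem:f-ainf-rel}.
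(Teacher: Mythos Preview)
Your proposal is correct and is exactly the computation the paper has in mind; the paper's own proof reads in full ``This is straightforward from the definitions and Lemma~\ref{lem:f-ainf-rel}.'' You have carefully unwound the standard argument that an $A_\infty$-homomorphism carries twisted complexes to twisted complexes, and your reindexing bijection and the check of the degenerate case $a=b$ are both correct.
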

\begin{proof}
  This is straightforward from the definitions and
  Lemma~\ref{lem:f-ainf-rel}.
\end{proof}

\begin{proposition}
  \label{prop:IsotopiesInduceEquivalences}
  Let
  $(\{\betas^i\}_{i\in\IndI},\{\eta^{i_1<i_2}\}_{i_1,i_2\in\IndI},z)$
  be a chain complex of attaching circles, $\phi_t$ an
  exact Hamiltonian isotopy, and $\{\gammas^i\}_{i\in\IndI}$
  the new collection of attaching circles gotten by letting $\phi_1$
  act on the $k\th$ tuple of attaching circles. Let
  $(\{\gammas^i\}_{i\in\IndI},\{\zeta^{i_1<i_2}\}_{i_1,i_2\in\IndI},z)$
  denote the new chain complex of attaching circles as in
  Lemma~\ref{lem:ChangeByIsotopy}. Then given 
  another $g$-tuple of attaching circles $\alphas$,
  there is a filtered quasi-isomorphism between the associated filtered complexes 
  $\CCFa(\alphas,\allowbreak\{\betas^i\}_{i\in\IndI},\allowbreak\{\eta^{i_1<i_2}\}_{i_1,i_2\in \IndI},\allowbreak z)$
  and 
  $\CCFa(\alphas,\{\gammas^i\}_{i\in\IndI},\allowbreak\{\zeta^{i_1<i_2}\}_{i_1,i_2\in \IndI},\allowbreak z)$
  (as defined in Definition~\ref{def:AssociatedComplex}).
\end{proposition}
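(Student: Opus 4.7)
The plan is to construct an explicit filtered chain map $G$ from $\CCFa(\alphas,\{\betas^i\}_{i\in\IndI},\{\eta^{i<j}\}_{i,j\in\IndI},z)$ to $\CCFa(\alphas,\{\gammas^i\}_{i\in\IndI},\{\zeta^{i<j}\}_{i,j\in\IndI},z)$ by extending the continuation maps $f_n$ from Lemma~\ref{lem:f-ainf-rel} to incorporate an $\alphas$-boundary component, and then to verify that $G$ is a quasi-isomorphism on the associated graded pieces.

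First, I would extend $f_n$ to allow the fixed tuple $\alphas$ on one side. Since $\alphas$ is unaffected by the isotopy, the same polygon-counting construction applied to the sequence $(\alphas,\betas^{i_0},\dots,\betas^{i_n})$ yields maps
\[\tilde f_{n+1}\co \CFa(\betas^{i_{n-1}},\betas^{i_n},z)\otimes\cdots\otimes \CFa(\betas^{i_0},\betas^{i_1},z)\otimes \CFa(\alphas,\betas^{i_0},z)\to \CFa(\alphas,\gammas^{i_n},z),\]
and the proof of Lemma~\ref{lem:f-ainf-rel} adapts to give an $\Ainf$-homomorphism relation for the $\tilde f$'s with this extra $\alphas$-input. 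Define
\[G^{i\leq j}(\x)=\sum_{n\geq 0}\,\sum_{i=i_0<\cdots<i_n=j}\tilde f_{n+1}(\eta^{i_{n-1}<i_n},\dots,\eta^{i_0<i_1},\x)\]
for $i\leq j$ in $\IndI$, and let $G=\sum_{i\leq j} G^{i\leq j}$.

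Next, I would verify that $G$ commutes with the total differentials, i.e., that $G$ is a filtered chain map. This is the combinatorial heart of the proof and follows the same pattern as Proposition~\ref{prop:ChainComplexesOfAttachingCircles}, but with the $\Ainf$-homomorphism relation taking the place of the $\Ainf$ relation. Expanding the $\Ainf$-homomorphism relation for $\tilde f$ on the input $(\eta^{i_{n-1}<i_n},\dots,\eta^{i_0<i_1},\x)$ and summing over sequences $i=i_0<\cdots<i_n=k$, the terms in which a subsequence of consecutive $\eta$'s is contracted by a single $m_b$ cancel in pairs using the compatibility equation~\eqref{eq:Compatibility} for $\{\eta^{i<j}\}$. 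The terms in which the outputs of several $\tilde f$'s are combined by a single $m_c$ regroup, via the defining expression $\zeta^{j<k}=\sum \tilde f_n(\eta^{j_{n-1}<j_n},\dots,\eta^{j_0<j_1})$ of Lemma~\ref{lem:ChangeByIsotopy}, into terms of the form $D'^{j<k}\circ G^{i\leq j}$. Finally, the terms in which $\x$ participates with some initial $\eta$'s in an $m_b$-multiplication are exactly $G^{j\leq k}\circ D^{i<j}$. The resulting identity is the filtered chain map condition.

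Finally, the associated graded of $G$ is $\bigoplus_i G^{i\leq i}=\bigoplus_i \tilde f_1$, which is the identity map for $i\neq k$ and the standard Floer-theoretic continuation map induced by $\phi_t$ when $i=k$; both are quasi-isomorphisms. Hence $G$ induces a filtered quasi-isomorphism between the two filtered complexes. The main obstacle is the combinatorial verification in the previous paragraph: once $\zeta^{j<k}$ is expanded as a sum of $\tilde f$'s applied to $\eta$'s, one must check that the double sum (over sequences for $G^{i\leq j}$ and the sequences inside the expansion of $\zeta^{j<k}$) matches exactly the partitioned sum appearing in the $\Ainf$-homomorphism relation for $\tilde f$, with the orderings of compositions consistent with Convention~\ref{conv:order}.
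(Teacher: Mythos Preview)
Your proposal is correct and follows essentially the same approach as the paper. The paper defines the map $F^{i\leq j}$ by a case-by-case analysis mirroring the cases~\ref{item:f1}--\ref{item:f5} for $f_n$ (identity when $i=j\neq k$; zero when $k\notin[i,j]$; polygons with dynamic boundary $L_C$ or $L'_s$ otherwise), verifies it is a chain map, and concludes via the associated graded being the continuation map. Your uniform packaging as ``extend $f_n$ to $\tilde f_{n+1}$ by prepending $\alphas$'' yields the same polygon counts and the same verification, and in fact your sketch of the chain-map argument is more detailed than the paper's one-line ``straightforward to verify.'' One small correction: in your step (b), the outputs being combined by $m_c$ consist of one $\tilde f$ (the rightmost factor, containing $\x$) and several ordinary $f$'s (without $\alphas$-input); it is the latter that assemble into the $\zeta$'s via $\zeta^{j<j'}=\sum f_n(\eta,\dots,\eta)$, not $\tilde f_n$.
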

\begin{proof}
  The quasi-isomorphism 
  \[
  F\co
  \CCFa(\alphas,\{\betas^i\}_{i\in\IndI},\{\eta^{i<j}\}_{i,j\in \IndI},z)\to
  \CCFa(\alphas,\{\gammas^i\}_{i\in\IndI},\{\zeta^{i<j}\}_{i,j\in \IndI},z)
  \]
  is defined similarly to the map $f$ above. Specifically, for
  $i\leq j\in\IndI$, define
  \[F^{i\leq j}\co\CFa(\alphas,\betas^{i},z)\to
  \CFa(\alphas,\gammas^{j},z)\] by the following cases:
  \begin{enumerate}
  \item If $i=j\neq k$ then $F^{i\leq j}$ is the identity
    map. (Compare~\ref{item:f1}.)
  \item If $i<j<k$ or $k<i<j$ then define $F^{i\leq
      j}=0$. (Compare~\ref{item:f2}.)
  \item If $k=i$ then define $F^{i\leq j}$ by counting holomorphic
    polygons with boundary
    \[
    (\alphas\times e_1)\cup L_C\cup (\gammas^{i_1}\times
    e_3)\cup\cdots \cup (\gammas^{i_n}\times e_{n+2}),
    \]
    where $i<i_1<\cdots<i_n=j$ is a sequence in $\IndI$, asymptotic to
    $\eta^{i_m<i_{m+1}}$ (or equivalently $\zeta^{i_m<i_{m+1}}$) at the corner between $\gammas^{i_m}$ and 
    $\gammas^{i_{m+1}}$. Here, $L_C$ is as in Formula~\eqref{eq:LC}.
    (Compare~\ref{item:f3}.)
  \item If $k=j$ then define $F^{i\leq j}$ similarly to the previous
    case, but with $\betas$ in place of $\gammas$: count polygons with
    boundary
    \[
    (\alphas\times e_1)\cup  (\betas^{i_0}\times
    e_2)\cup\cdots \cup (\betas^{i_n-1}\times e_{n+1}) \cup L_C,
    \]
    asymptotic to $\eta^{i_m<i_{m+1}}$ (or equivalently
    $\zeta^{i_m<i_{m+1}}$) at the corner between $\betas^{i_m}$ and
    $\betas^{i_{m+1}}$. (Compare~\ref{item:f4}.)

    Note that both this and the previous item cover the case $i=j=k$,
    and define the same map in this case: it is the usual Floer
    continuation map associated to the isotopy from $\betas^k$ to
    $\gammas^k$.
  \item If $i<k<j$ then define $F^{i\leq j}$ by counting holomorphic
    polygons with boundary
    \[
    (\alphas\times e_1)\cup (\betas^{i_0}\times e_2)\cup\dots\cup L'_s
    \cup\dots\cup (\betas^{i_n}\times e_{n+2}),
    \]
    where $i=i_0<i_1<\cdots<i_n=j$, asymptotic to $\eta^{i_m<i_{m+1}}$
    at the corner between $\betas^{i_m}$ and $\betas^{i_{m+1}}$. Here,
    $L'_s$ is as defined in Formulas~\eqref{eq:Ls-1}
    and~\eqref{eq:Ls-2}. (Compare~\ref{item:f5}.)
  \end{enumerate}
  It is straightforward to verify that $F$ is a
  chain map. The map of associated graded complexes is the usual
  Floer continuation map, and hence is a quasi-isomorphism. It follows
  that $F$ is a quasi-isomorphism, as well.
\end{proof}

\subsection{Close approximations of attaching circles}
\label{subsec:Approximations}

We would like to  describe the gluing of chain complexes of attaching circles 
which appears in Proposition~\ref{intro:GlueChainComplexes}. Before doing this,
we discuss a preliminary construction which goes into the definition:
approximations to  attaching circles.

\begin{definition}
  \label{def:Approximation}
  Let $\{\betas^i\}_{i\in \IndI}$ be an admissible collection of attaching
  circles, and $\IndJ$ another partially ordered set. 
  We say that an $\IndI\times \IndJ$-filtered admissible collection of
  attaching circles $\{\betas^{i\times j}\}_{i\times j\in \IndI\times \IndJ}$ is an {\em
    approximation} to $\{\betas^i\}_{i\in \IndI}$ if for each $i,j$,
  $\betas^{i\times j}$ is Hamiltonian isotopic to 
  $\betas^i$ where the Hamiltonian is supported in a tubular neighborhood of $\betas^i$.

  An approximation is called \emph{efficient} if for each $j_0<j_1$ in
  $\IndJ$ and each $i\in \IndI$, the differential on
  $\CFa(\betas^{i\times j_0},\betas^{i\times j_1},z)$ vanishes.  In
  particular, it has a unique generator of top degree
    \[\Theta^{i\times j_0<i\times j_1}\in 
    \CFa(\betas^{i\times j_0},\betas^{i\times j_1},z).\]
\end{definition}
It is easy to construct efficient approximations; see, for instance,
Figure~\ref{fig:close-approx}.

Let $\{\betas^i\}_{i\in \IndI}$ be an admissible set of attaching circles
and $\IndJ$ another partially ordered set. Let $\{\betas^{i\times
  j}_\epsilon\}_{i\times j\in \IndI\times \IndJ}$ be a one-parameter family of
efficient approximations, where $\lim_{\epsilon\to 0} \betas^{i\times
  j}_\epsilon=\betas^i$ (in the $C^\infty$ topology). Fix a
sequence $i_0\times j_0<\dots<i_n\times j_n$.
For $a_1\leq\dots\leq a_n$ a non-decreasing sequence, let
$R(a_1, \dots, a_n)$ denote the number of repeated entries, counted
with multiplicity (so that there are $n+1-R(a_1,\dots,a_n)$ distinct entries).
Let $k = R(i_0,\dots,i_n)$, and let $i_{s_0}<\dots<i_{s_{n-k}}$ be the
subsequence with all but the last of
each repeated entry removed.

\begin{convention}\label{conv:ij}
  To shorten notation, we will often write $\ij{k}$ for $i_k\times j_k$.
\end{convention}

Suppose that $\epsilon$ is sufficiently small. Then
given a sequence of $n+1-k$ generators
$\x_0^{i_{s_0}<i_{s_1}},\dots,\x_0^{i_{s_{n-k-1}}<i_{s_{n-k}}}$ 
and $\x_0^{i_{s_0}<i_{s_{n-k}}}$ with
the understanding that 
$\x_0^{i<i'}\in\Gen(\betas^{i},\betas^{i'})$, there
is a canonically associated sequence
of $n+1$ generators
\[\x_\epsilon^{\ij{0}<\ij{1}},\dots,\x_\epsilon^{\ij{n-1}<\ij{n}}~\text{and}~\x_{\epsilon}^{\ij{0}<\ij{n}},\]
with
$\x_\epsilon^{\ij{m}<\ij{m+1}}\in
\Gen(\betas^{i\times j}_{\epsilon},\betas^{i'\times j'}_{\epsilon})$
defined by
\begin{align*}
\x_\epsilon^{\ij{m}<\ij{m+1}}&=
\begin{cases}
  \text{nearest-point to }\x_0^{i_{s_\ell}<i_{s_{\ell+1}}} & \text{if }m=s_\ell\text{ for some }\ell\\
  \Theta^{\ij{m}<\ij{m+1}} & \text{otherwise (i.e., if $i_m=i_{m+1}$)}
\end{cases}\\
\x_\epsilon^{\ij{0}<\ij{n}} &= \text{nearest-point to }\x_0^{i_0<i_n}.
\end{align*}

\begin{definition}
  \label{def:NearEquivalent}
  Two elements $B_1,B_2$ of 
  $\pi_2(\x_\epsilon^{\ij{0}<\ij{n}},\x_\epsilon^{\ij{n-1}<\ij{n}},\dots,\x_\epsilon^{\ij{0}<\ij{1}})$
  are said to be {\em nearly equivalent} if there is a collection of doubly-periodic domains
  for the Heegaard diagrams $(\betas^{\ij{m}},\betas^{\ij{m+1}})$ for various $m$ with $i_m=i_{m+1}$
  which can be added to $B_2$ to get $B_1$. We denote the set of 
  ``near equivalence'' classes of domains by
  \[
  \pi'_2(\x_\epsilon^{\ij{0}<\ij{n}},\x_\epsilon^{\ij{n-1}<\ij{n}},\dots,\x_\epsilon^{\ij{0}<\ij{1}}).
  \]
\end{definition}

\begin{definition}
  \label{def:ApproximatingHomotopyClass}
  There is an obvious map 
  \[
  \phi_\epsilon\co \pi_2(\x_\epsilon^{\ij{0}<\ij{n}},\x_{\epsilon}^{\ij{n-1}<\ij{n}},
  \dots,\x_\epsilon^{\ij{0}<\ij{1}})\to \pi_2(\x_0^{i_{s_0}<i_{s_{n-k}}},\x_0^{i_{s_{n-k}}<i_{s_{n+1-k}}},\dots,\x_0^{i_{s_0}<i_{s_1}})
  \]
  gotten by taking the multiplicities away from the isotopy
  region. More precisely, for each attaching circle $\betas^i_k$ in
  $\betas^i$, choose two basepoints, one on either side of
  $\betas^i_{k}$, but far enough away so as to be disjoint from all
  translates $\betas^i_{\epsilon}$ for all $\epsilon>0$
  sufficiently small. Then $\phi_\epsilon(B_\epsilon)$ is the unique
  domain with the same multiplicities as $B_\epsilon$ at all of these
  points, and at the basepoint $z$. We call $B_\epsilon\in
  \phi_\epsilon^{-1}(B')$ an \emph{approximation to $B'$}.
\end{definition}

\begin{lemma}
  \label{lem:ApproximatingHomotopyClass}
  The map $\phi_\epsilon$ descends to a bijection 
  \[
  \pi_2'(\x_\epsilon^{\ij{0}<\ij{n}},\x_{\epsilon}^{\ij{n-1}<\ij{n}},
  \dots,\x_\epsilon^{\ij{0}<\ij{1}})
  \stackrel{\cong}{\longrightarrow} \pi_2(\x_0^{i_{s_0}<i_{s_{n-k}}},\x_0^{i_{s_{n-k}}<i_{s_{n+1-k}}},\dots,\x_0^{i_{s_0}<i_{s_1}}).
  \]
  Also, two nearly equivalent homotopy classes have the same index
  and, for an appropriate choice of symplectic form, the same energy.
\end{lemma}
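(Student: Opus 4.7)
The plan is to establish the four claims in the lemma in turn: (i) that $\phi_\epsilon$ descends to the quotient by near equivalence, (ii) that the induced map is injective, (iii) that it is surjective, and (iv) that index and (up to choice of symplectic form) energy are preserved under near equivalence.

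For descent, the key observation is that periodic domains for the pair $(\betas^{\ij{m}},\betas^{\ij{m+1}})$ with $i_m = i_{m+1}$ are built from thin strips lying in the tubular neighborhood $U_m$ of $\betas^{i_m}$ in which the Hamiltonian isotopy from $\betas^{\ij{m}}$ to $\betas^{\ij{m+1}}$ is supported. Since the chosen basepoints defining $\phi_\epsilon$ are located outside all such neighborhoods $U_m$ (and outside $z$), these periodic domains have zero multiplicity at every point used to define $\phi_\epsilon(B_\epsilon)$. Hence adding a sum of them to $B_\epsilon$ leaves $\phi_\epsilon(B_\epsilon)$ unchanged, so $\phi_\epsilon$ factors through $\pi_2'$.

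Injectivity will then follow from the general principle that a homotopy class on a multi-diagram is determined by its multiplicities at one point in each connected component of the complement of the curves. Two classes $B_1,B_2$ with $\phi_\epsilon(B_1)=\phi_\epsilon(B_2)$ agree on every component of $\Sigma\setminus\bigl(\bigcup_m\betas^{\ij{m}}\bigr)$ that meets the complement of $\bigcup_m U_m$; their difference $B_1-B_2$ is therefore supported inside $\bigcup_m U_m$, and inside each $U_m$ it must be a linear combination of periodic domains for $(\betas^{\ij{m}},\betas^{\ij{m+1}})$. This is precisely the relation defining $\pi_2'$. For surjectivity, given a class $B'\in\pi_2(\x_0^{i_{s_0}<i_{s_{n-k}}},\dots,\x_0^{i_{s_0}<i_{s_1}})$ on the smaller diagram, we construct a lift $B_\epsilon$ region by region: outside the isotopy neighborhoods $U_m$, the multiplicities of $B_\epsilon$ are copied from those of $B'$, and inside each $U_m$ we fill in the small strip regions using the canonical thin domain connecting $\Theta^{\ij{m}<\ij{m+1}}$ to itself (which exists because the approximation is efficient). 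A direct count of boundary multiplicities shows that the result is an honest homotopy class with the prescribed corners.

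For the final assertion, periodic domains supported in $U_m$ split as sums of ``thin annular'' periodic domains, one for each pair $(\beta^{i_m}_{k,\epsilon_m},\beta^{i_m}_{k,\epsilon_{m+1}})$, and each such annular domain has Maslov index zero (the index of the corresponding bigon pair in a small Hamiltonian perturbation). Additivity of the index under disjoint union then gives index invariance under near equivalence. For energy, since the Hamiltonian isotopy defining the approximation is supported in $U_m$, we may choose a symplectic form on $\Sigma$ whose restriction to $U_m$ has arbitrarily small (in particular, zero) total area; with such a choice the thin periodic domains contribute zero energy, and the two assertions follow. The main subtlety to verify carefully is the surjectivity step, namely that the filling inside each $U_m$ really produces a valid domain with the correct asymptotics at the $\Theta^{\ij{m}<\ij{m+1}}$ corners; this amounts to a straightforward local check in each isotopy strip.
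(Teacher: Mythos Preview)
Your treatment of the bijection (descent, injectivity, surjectivity) is correct and more hands-on than the paper's. The paper simply invokes the standard description
\[
\pi_2 \cong \ZZ \oplus \Ker\Bigl(\bigoplus_{s} \mathrm{Span}([\betas^{i_s<i_{s+1}}]) \to H_1(\Sigma;\ZZ)\Bigr)
\]
from \cite[Proposition~8.2]{OS04:HolomorphicDisks}, applied to both source and target, to see that the extra periodic domains on the approximated side are exactly the doubly-periodic domains for the pairs with $i_m=i_{m+1}$. Your region-by-region argument achieves the same thing more explicitly and is a perfectly good alternative; your surjectivity step via filling with small triangle/strip domains is essentially what is done in the proof of the next lemma (Lemma~\ref{lem:ApproximateDimension}).

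However, your energy argument has a genuine error. You propose to ``choose a symplectic form on $\Sigma$ whose restriction to $U_m$ has arbitrarily small (in particular, zero) total area.'' An area form on a surface is nowhere degenerate, so the area of an open set $U_m$ is strictly positive; you cannot get zero. More to the point, the lemma asks for a single symplectic form making the energies exactly equal, not a limiting argument. The correct observation---which you have the ingredients for but do not use---is that the perturbations $\betas^{i\times j}$ are by definition \emph{Hamiltonian} isotopic to $\betas^i$ with respect to a fixed symplectic form $\omega_\Sigma$, and a doubly-periodic domain bounded by two Hamiltonian-isotopic Lagrangians has zero signed $\omega_\Sigma$-area. (The paper phrases this in terms of Perutz's symplectic form on $\Sym^g(\Sigma)$, which agrees with the product area form away from the diagonal.) So keep the same $\omega_\Sigma$ rather than trying to shrink it. Your index argument---that the thin annular periodic domains have Maslov index zero---is correct.
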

\begin{proof}
  The first part of the statement follows readily from the identification
  \[\pi_2(\x_0^{i_0<i_n},\x_0^{i_{s_{n-k}}<i_{s_{n+1-k}}},\dots,\x_0^{i_{s_0}<i_{s_1}})
  \cong \ZZ\oplus \Ker\Big(\bigoplus_{s=0}^{n-1}{\mathrm{Span}}([\betas^{i_s<i_{s+1}}])\to H_1(\Sigma;\ZZ)\Big)\]
  (see for example~\cite[Proposition~8.2]{OS04:HolomorphicDisks}).  
  The fact that nearly equivalent homotopy classes have the same index
  and energy follows from the fact that
  periodic domains appearing in the equivalence relation have Maslov
  index zero and, with respect to a symplectic form constructed  by Perutz~\cite[Section 7]{Perutz07:handleslides} which agrees with the area
  form~\cite[Lemma 4.12]{OS04:HolomorphicDisks} away from the diagonal, zero
  energy. 
\end{proof}

\begin{figure}
  \centering
  \includegraphics{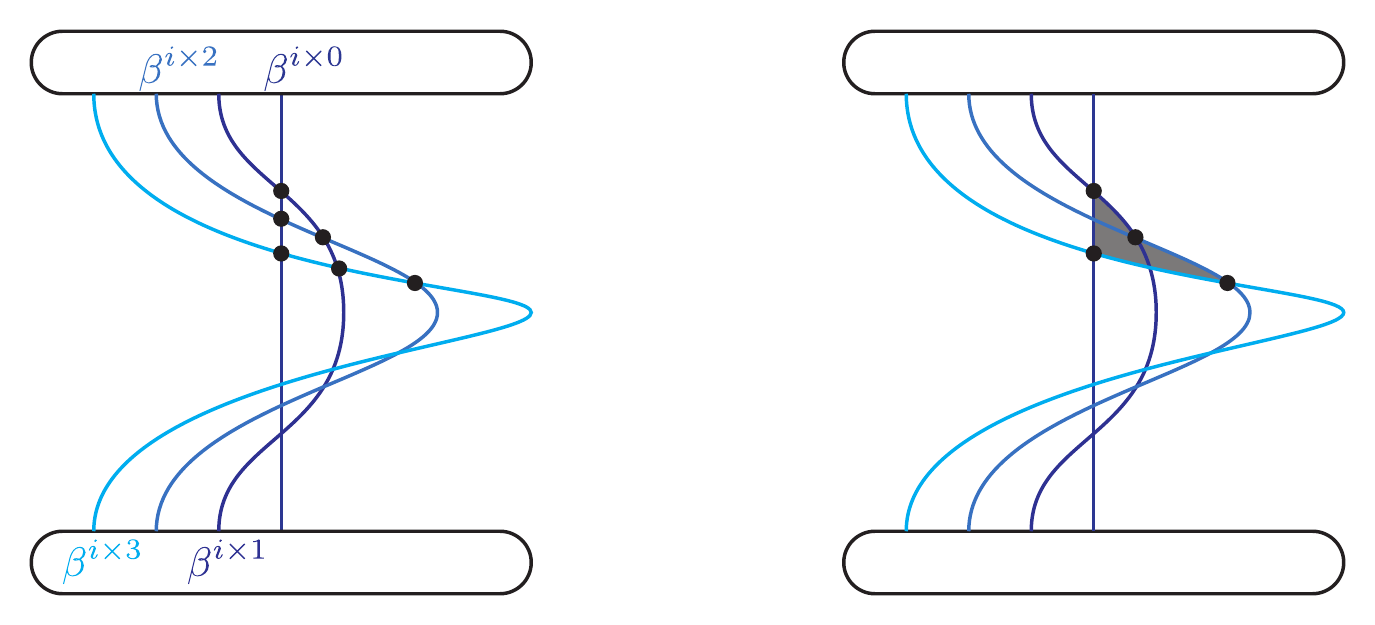}
  \caption{\textbf{A close approximation.} Left: Hamiltonian
    perturbations of $\beta^{i,0}$ giving an efficient
    approximation. If the distance $\epsilon$ between the
    $\beta^{i,j}$'s is sufficiently small then this approximation is
    close (Definition~\ref{def:CloseApproximation}). (The generators $\Theta^{i\times j<i\times j'}$ are
    marked.) Right: the domain $B_1$ from
    Lemma~\ref{lem:FiberedProductDegreeOne}.}
  \label{fig:close-approx}
\end{figure}

The dimensions of the moduli spaces $\cM^{B_\epsilon}$ and $\cM^{B'}$ are related by the following:

\begin{lemma}
  \label{lem:ApproximateDimension}
  Fix a subsequence $\ij{0}<\dots<\ij{n}$ of $\IndI\times \IndJ$, and
  let $k=R(i_0,\dots,i_n)$ be the number of repeated entries in the sequence $i_0\leq \cdots\leq i_n$.  Fix
  a homology class
  \[B'\in\pi_2(\x_0^{i_{s_0}<i_{s_{n-k}}},\x_0^{i_{s_{n-k-1}}<i_{s_{n-k}}},\dots,\x_0^{i_{s_0}<i_{s_1}})\]
  and let
  $B_\epsilon\in\pi_2(\x_\epsilon^{\ij{0}<\ij{n}},\x_\epsilon^{\ij{n-1}<\ij{n}}, \dots, \x_\epsilon^{\ij{0}<\ij{1}})$ be an 
  approximation to it.
  Then,
  \[
  \ind(B_\epsilon)=\ind(B').
  \]
  So, for a generic choice of admissible almost-complex structure,
  \[
  \dim(\cM^{B_\epsilon})-\dim(\cM^{B'})=k
  \]
  except if either:
  \begin{enumerate}
  \item\label{case:AD-2gons} $B'$ is the trivial homology class of bigons, i.e., the domain
    corresponding to $B'$ is $0$ and $k=n-1$. In this case,
    $\dim(\cM^{B'})=0$ and $\dim(\cM^{B_\epsilon})=k-1=n-2$.
  \item\label{case:AD-1gons} $B'$ is the trivial homology class of $1$-gons, i.e., the
    domain corresponding to $B'$ is $0$ and $k=n$. In this case,
    $\dim(\cM^{B'})=0$ and $\dim\cM^{B_\epsilon}=\max\{n-2,0\}$.
  \end{enumerate}
\end{lemma}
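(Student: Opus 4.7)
The plan has three steps: establish the index equality $\ind(B_\epsilon)=\ind(B')$, deduce the dimension difference via the expected-dimension formula of Definition~\ref{def:holo-polygon}, and treat the two exceptional cases where $\cM^{B'}$ collapses to a single constant-map point.

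For the index equality, I would decompose $B_\epsilon$ as $B'$ (transported via the map $\phi_\epsilon$ of Definition~\ref{def:ApproximatingHomotopyClass}) plus, at each of the $k$ repeated indices $m$ with $i_m=i_{m+1}$, a ``thin'' local contribution supported in a small tubular neighborhood of $\betas^{i_m}$. Since $\Theta^{\ij{m}<\ij{m+1}}$ is the top-degree generator of an efficient approximation and consists of nearest intersection points, this thin contribution takes the standard form familiar from the proof of handleslide invariance~\cite[Section~9]{OS04:HolomorphicDisks}: it is the disjoint union of $g$ small bigon-like regions with corners only at $\Theta$-points. A direct computation, using for instance Sarkar's combinatorial Maslov index formula~\cite{Sarkar11:IndexTriangles}, shows that each of these local contributions has Maslov index zero; the $k$ corrections therefore sum to zero and give $\ind(B_\epsilon)=\ind(B')$.

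The dimension comparison in the generic case is then immediate, since both $B_\epsilon$ (as an $(n+1)$-gon) and $B'$ (as an $(n-k+1)$-gon) satisfy the expected-dimension formula, yielding
\[ \dim\cM^{B_\epsilon} - \dim\cM^{B'} = (\ind(B_\epsilon)+n-2) - (\ind(B')+(n-k)-2) = k. \]
The exceptional cases arise precisely when $B'=0$: the moduli space $\cM^{B'}$ is then the single constant-map point, of actual dimension zero rather than the formally expected dimension. In Case~\ref{case:AD-2gons}, the approximation $B_\epsilon$ is a nontrivial polygon class and $\cM^{B_\epsilon}$ has the expected dimension $\ind(B_\epsilon)+n-2=n-2=k-1$. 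In Case~\ref{case:AD-1gons}, the same count gives $n-2$ for $n\geq 2$, while for $n=1$ the moduli space $\cM^{B_\epsilon}$ exceptionally consists of a single thin $\Theta$-bigon of dimension zero, accounting for the $\max$.

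The main obstacle is the local Maslov index calculation for the thin $\Theta$-domains, particularly in the presence of stacked repeated indices $i_m=i_{m+1}=\cdots=i_{m+r}$. The efficiency of the approximation and sufficient smallness of the Hamiltonian perturbations ensure that the corresponding thin contributions remain disjoint and individually of the standard form; given this, the vanishing of each contribution reduces to a single local computation, essentially the one underlying the handleslide invariance of $\HFa$.
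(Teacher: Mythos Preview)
Your approach is correct and matches the paper's in spirit: both decompose $B_\epsilon$ into $B'$ plus index-zero local pieces and then read off the dimension shift from the expected-dimension formula. The paper's execution is cleaner in two respects. First, it reduces to the case $k=1$ by induction, which sidesteps exactly the ``stacked repeated indices'' issue you flag as the main obstacle. Second, rather than computing the index of the thin piece via Sarkar's formula, the paper invokes isotopy invariance of the Maslov index to replace the given approximation by a particularly convenient one in which $B_\epsilon$ decomposes as the juxtaposition $B' * T$ with $T$ a single small triangle of index zero; additivity of the index then finishes the argument without any local computation.

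One small correction to your description: the thin local contribution at a repeated index is not a union of bigon-like regions with corners only at $\Theta$-points, but rather a small \emph{triangle} with one corner at a $\Theta$-point and the other two at nearest-point images of the adjacent generators (this is the triangle $T$ in the paper's model, and is what actually appears in the handleslide-invariance argument you cite). This does not affect the index-zero conclusion, but it is worth getting the geometry right.
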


\begin{proof}
  It suffices to consider the case $k=1$: the general case follows by
  induction. Relabeling, let $\{\betas^i\}_{i=1}^n$ be the curves in
  the boundary of $B_\epsilon$ and $\{\gammas^i\}_{i=1}^{n+1}$ be the curves
  in the boundary of $B'$.
  Invariance properties of the index imply that
  $\ind(B_\epsilon)-\ind(B')$ is independent of the choice of
  Hamiltonian isotopy used to define the efficient approximation. In
  particular, we may assume that there is a $j$ so that:
  \begin{itemize}
  \item $\gammas^i=\betas^i$ for $i<j$
  \item $\gammas^i=\betas^{i-1}$ for $i>j$.
  \item $\gammas^j$ intersects $\betas^{j-1}=\gammas^{j-1}$ in two
    points, as in
  Figure~\ref{fig:partic-close-approx}.
  \end{itemize}

  \begin{figure}
    \centering
    \includegraphics{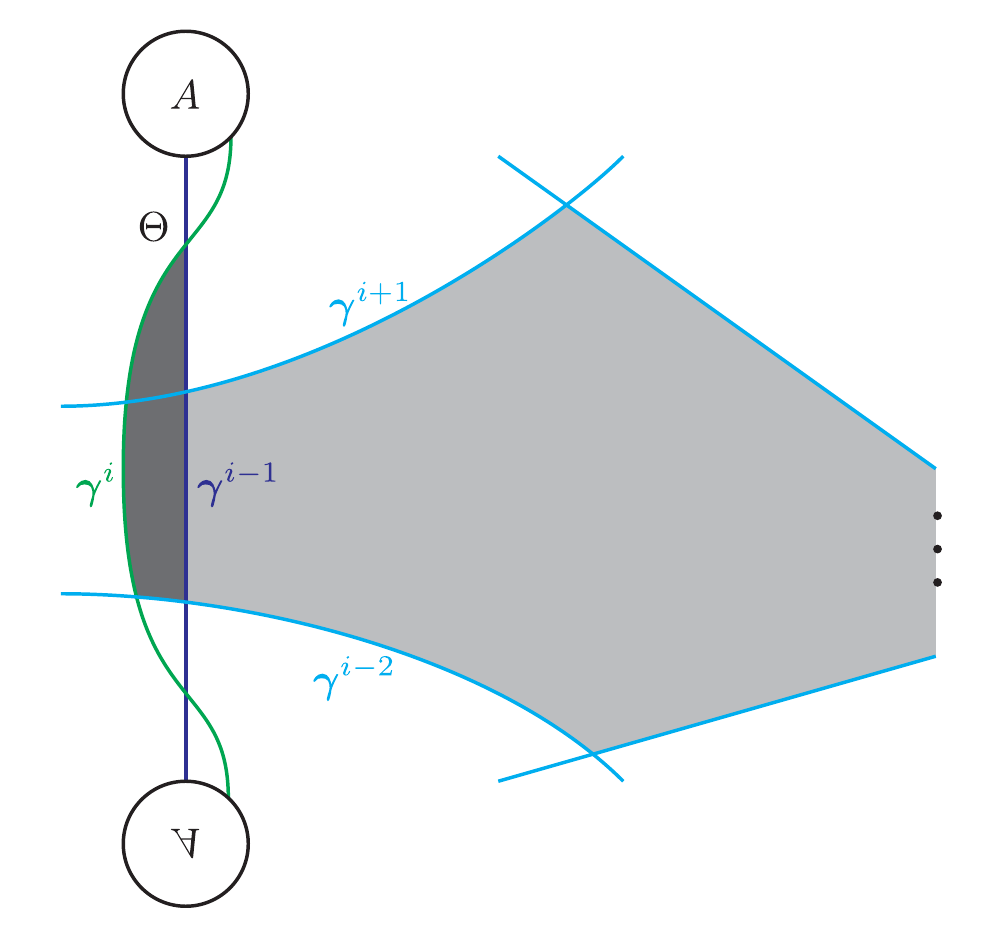}
    \caption{\textbf{A particular close approximation.} The domain
      $B_\epsilon$ is shaded; the domain
      $B'\subset B_\epsilon$ is lightly shaded and $T\subset
      B_\epsilon$ is darkly shaded.}
    \label{fig:partic-close-approx}
  \end{figure}

  In this case, the homotopy class $B_\epsilon$ decomposes as a
  juxtaposition of $B'$ and a triangle $T$. (Again, see
  Figure~\ref{fig:partic-close-approx}.) Now, $\ind(T)=0$, so by
  additivity of the index,
  \[
  \ind(B_\epsilon)=\ind(B')+\ind(T)=\ind(B').
  \]
  The expected dimension of $\cM^{B'}$ is $\ind(B')+n-3$,
  and the expected dimension of $\cM^{B_\epsilon}$ is
  $\ind(B_\epsilon)+n+1-3=\ind(B')+n+1-3$. Thus:
  \begin{itemize}
  \item If $\cM^{B'}$ and $\cM^{B_\epsilon}$ are transversally
    cut out then $\dim\cM^{B_\epsilon}-\dim\cM^{B'}=1=k$. This
    happens for generic $J$ except if $B'$ is the homology
    class of a constant map (which must be a bigon or $1$-gon).
  \item If $B'$ is the homology class of a constant bigon then
    $\dim\cM^{B'}=0$ and $\dim\cM^{B_\epsilon}=0=k-1$.
  \item If $B'$ is the homology class of a constant $1$-gon then
    $\dim\cM^{B'}=0$ and $\dim\cM^{B_\epsilon}=0$.
  \end{itemize}
  This completes the proof.
\end{proof}

\begin{definition}
  \label{def:CloseApproximation}
  An efficient approximation $\{\betas^{i\times j}\}_{i\times j\in \IndI\times \IndJ}$ is
  said to be {\em close} if the following conditions hold:
  \begin{enumerate}[label=(cl-\arabic*),ref=(cl-\arabic*)]
  \item 
    \label{cl:ApproximatePoints}
    Each generator $\x^{i_0<i_1}\in \Gen(\betas^{i_0},\betas^{i_1})$ has a canonical corresponding
    ``nearest-point'' generator $\x^{i_0\times j_0<i_1\times j_1}\in \Gen(\betas^{i_0\times j_0},\betas^{i_1\times j_1})$,
    and the corresponding ``nearest-point map'' induces an isomorphism
    of chain complexes.
    \[\CFa(\betas^{i_0},\betas^{i_1},z)
    \to
    \CFa(\betas^{i_0\times j_0},\betas^{i_1\times j_1},z).
    \]
  \item 
    \label{cl:Triangles}
    For each $i_0<i_1$ and $j_0<j_1$, the map
    \[
    m_2(\Theta^{i_1\times j_0<i_1\times j_1},\cdot)\co
    \CFa(\betas^{i_0\times j_0},\betas^{i_1\times j_0},z)\to
    \CFa(\betas^{i_0\times j_0},\betas^{i_1\times j_1},z) 
    \]
    gotten by counting
    holomorphic triangles using $\Theta^{i_1\times j_0<i_1\times j_1}$
    coincides with the nearest-point map
    \[ \CFa(\betas^{i_0\times j_0},\betas^{i_1\times j_0},z)
    \to \CFa(\betas^{i_0\times j_0},\betas^{i_1\times j_1},z).\]
    Similarly, the map
    \[
    m_2(\cdot,\Theta^{i_0\times j_0<i_0\times j_1})\co
    \CFa(\betas^{i_0\times j_1},\betas^{i_1\times j_1},z)\to
    \CFa(\betas^{i_0\times j_0},\betas^{i_1\times j_1},z) 
    \]
    gotten by counting holomorphic triangles using $\Theta^{i_0\times
      j_0<i_0\times j_1}$ coincides with the corresponding
    nearest-point map.
  \item
    \label{cl:Polygons}
    For each
    $i_0\times j_0<\dots <i_n\times j_n$ with $R(i_0,\dots,i_n)=k$, if
    $B_\epsilon\in\pi_2(\x_\epsilon^{\ij{0}<\ij{n}},
    \x_\epsilon^{\ij{n-1}<\ij{n}},\allowbreak\dots,\allowbreak
    \x_\epsilon^{\ij{0}<\ij{1}})$ approximates some
    $B'\in \pi_2(\x_0^{i_{s_0}<i_{s_{n-k}}},\x_0^{i_{s_{n-k-1}}<i_{s_{n-k}}},\dots,
    \x_0^{i_{s_0}<i_{s_1}})$ with $\ind(B')<3-n$,
    then $\cM^{B_\epsilon}$ is empty.
  \end{enumerate}
\end{definition}
Note that, in light of Lemma~\ref{lem:ApproximateDimension},
Condition~\ref{cl:Polygons} is only interesting when $\ind(B') = 2-n$.

The goal of the rest of this subsection is to prove the existence of close approximations.

\begin{lemma}
  \label{lem:PseudoHolomorphicRepresentative}
  For $\{\betas^{i\times j}_\epsilon\}_{i\times j\in \IndI \times \IndJ}$ a family of
  efficient approximations as above,
  fix a subsequence $\ij{0}<\dots<\ij{n}$ of $\IndI\times
  \IndJ$ with $R(i_0,\dots,i_n)=k$ and a homology class
  \[B'\in\pi_2(\x_0^{i_{s_0}<i_{s_{n-k}}},\x_0^{i_{s_{n-k-1}}<i_{s_{n-k}}},\dots,\x_0^{i_{s_0}<i_{s_1}}).\]
  Let
  $B_\epsilon\in\pi_2(\x_\epsilon^{\ij{0}<\ij{n}},
  \x_\epsilon^{\ij{n-1}<\ij{n}}, \dots,
  \x_\epsilon^{\ij{0}<\ij{1}})$ be approximations to
  $B'$ (in the sense of Definition~\ref{def:ApproximatingHomotopyClass}). If $\cM^{B_\epsilon}$ is non-empty for $\epsilon>0$ arbitrarily
  small then $\cM^{B'}$ is non-empty, as well.
\end{lemma}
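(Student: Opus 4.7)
The plan is to argue by Gromov compactness. Fix a sequence $\epsilon_k \to 0$ and, using the non-emptiness hypothesis, choose holomorphic polygons $u_k \in \cM^{B_{\epsilon_k}}$. The family of admissible almost-complex structures is fixed (independent of $\epsilon$), so the only data varying with $k$ are the boundary conditions: the circles $\betas^{\ij{m}}_{\epsilon_k}$ converge in $C^\infty$ to $\betas^{i_m}$ as $k\to\infty$ (and coincide for $i_m=i_{m+1}$).

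First I would check energy bounds. By Lemma~\ref{lem:ApproximatingHomotopyClass}, with the symplectic form constructed by Perutz that agrees with the area form away from the diagonal, nearly equivalent classes have the same energy; so $E(u_k)$ is uniformly bounded in terms of the energy of $B'$. Combined with the uniform local control afforded by weak admissibility, the hypotheses of Gromov compactness with moving Lagrangian boundary conditions (see~\cite{BEHWZ03:CompactnessInSFT}) are met. Extracting a subsequence, the $u_k$ converge to a limiting, possibly broken, holomorphic curve $u_\infty$ whose edges are mapped to the $\betas^{i_m}$ and whose homology class has the same local multiplicities at the reference points of Definition~\ref{def:ApproximatingHomotopyClass} as $B_{\epsilon_k}$, hence equals $B'$.

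Next I would analyze the corners. At a corner of $u_k$ labeled $\x_{\epsilon_k}^{\ij{m}<\ij{m+1}}$ with $i_m\neq i_{m+1}$, the asymptotic is by construction the nearest-point image of $\x_0^{i_m<i_{m+1}}$, which converges to the latter. At a corner labeled $\Theta^{i\times j_m<i\times j_{m+1}}$ with $i_m=i_{m+1}$, both adjacent boundary arcs are pressed into an arbitrarily thin tubular neighborhood of $\betas^{i_m}$, so in the limit they coalesce into a single arc with boundary on $\betas^{i_m}$, effectively erasing the corner. Collapsing all $k$ such $\Theta$-corners, the limit becomes a (possibly broken) holomorphic $(n+1-k)$-gon with edges on $\betas^{i_{s_0}},\dots,\betas^{i_{s_{n-k}}}$ and corners $\x_0^{i_{s_\ell}<i_{s_{\ell+1}}}$, representing $B'$. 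In the unbroken case this directly exhibits an element of $\cM^{B'}$; in general, each component lies in $\cM^{B'_\alpha}$ for some decomposition $B'=\sum_\alpha B'_\alpha$ of $B'$ in the Gromov compactification, which is the sense in which the moduli space is non-empty needed for the downstream application in Condition~\ref{cl:Polygons}.

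The main obstacle will be the local analysis at the $\Theta$-corners. One must rule out the possibility that the nearly-constant strip along $\betas^{i_m}$ between two such converging boundary arcs persists as a distinct component or bubbles off in a way that changes the combinatorial structure of the limiting polygon. The argument here is parallel to the ``thin region'' analysis used for handleslide invariance in~\cite{OS04:HolomorphicDisks}, and is where the efficiency hypothesis on the approximation (which forces $\Theta^{i\times j<i\times j'}$ to be a cycle of top degree and identifies it with the Floer-theoretic nearest-point model) enters in an essential way. Once this local model is in hand, the identification of the collapsed limit as a representative of $B'$ reduces to matching multiplicities at the reference points, which is immediate from Definition~\ref{def:ApproximatingHomotopyClass}.
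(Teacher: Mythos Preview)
Your outline matches the paper's approach: take a Gromov limit of the $u_{\epsilon_k}$, analyze the corners (the paper calls the $i_m\neq i_{m+1}$ ones \emph{persistent} and the $i_m=i_{m+1}$ ones \emph{ephemeral}, handling the latter via the removable singularities theorem, since in the limit both adjacent edges map to the same Heegaard torus), and identify the limiting, possibly degenerate curve as a representative of $B'$. The paper runs the argument in the symmetric product via the tautological correspondence, but that is inessential.

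The genuine gap is the identification of the homology class, which you twice call ``immediate.'' It is not, and it is where the paper spends most of its effort. One must track the divisors $u_\epsilon^{-1}\bigl(p_i\times\Sym^{g-1}(\Sigma)\bigr)$ in the \emph{source} and rule out that a subsequence of these marked points escapes to the boundary, to an ephemeral puncture, or to a persistent or optional puncture. Escape to the boundary or to an ephemeral puncture would, after rescaling, produce a nonconstant holomorphic disk with boundary on a single Heegaard torus meeting $p_i\times\Sym^{g-1}(\Sigma)$; but any such disk has the same local multiplicity at $p_i$ as at $z$, namely zero, a contradiction. Escape to a persistent or optional puncture is handled by reparameterizing to attach a chain of bigons, so the divisor point lands in an interior. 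Without this argument nothing prevents the reference-point multiplicities from jumping in the limit, and your appeal to Definition~\ref{def:ApproximatingHomotopyClass} alone does not close this. Two smaller corrections: the mechanism at ephemeral corners is removable singularities, not a ``thin region'' model borrowed from handleslide invariance; and the efficiency hypothesis plays no role in this lemma---it only serves to make the $\Theta$-generators well defined upstream.
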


\begin{proof}
  For both conceptual and notational clarity, we give this proof in
  the symmetric product formulation. The result in the cylindrical
  formulation then follows from the tautological correspondence, Lemma~\ref{lem:tautological}.

  Suppose we have curves
  $u_\epsilon\in \cM^{B_\epsilon}$ for some infinite subsequence of $\epsilon>0$
  converging to zero. Since we are fixing the homotopy class $B'$,
  Lemma~\ref{lem:ApproximatingHomotopyClass} gives an {\em a priori}
  energy estimate throughout the sequence.  Gromov compactness implies
  that away from finitely many points and arcs in the source, there is
  a subsequence of the $u_{\epsilon}$ which converges in
  $C^{\infty}_{\mathrm{loc}}$ to a possibly degenerate pseudo-holomorphic
  $(n+1)$-gon. (See Theorem~4.1.1 of~\cite{MS04:HolomorphicCurvesSymplecticTopology} for a nice
  treatment of Gromov compactness. Note that the current application
  appears to be slightly more general than the version stated there
  for the following three reasons: we have not one but several
  Lagrangians, our almost-complex structures are parameterized by
  points in the source, and the Lagrangians move in the
  sequence. Because the proof of Theorem~4.1.1 is local in the source,
  the fact that we have several Lagrangians causes no additional
  complications.  Since pseudo-holomorphic curves with variable almost-complex structures in the target give rise to pseudo-holomorphic
  curves in a product space, the second point also causes no
  additional difficulties. Finally, by taking a product with ${\mathbb
    C}$, a similar reduction takes care of the third point.)

  The limiting object is a possibly degenerate pseudo-holomorphic
  $(n+1)$-gon; i.e. the source of this curve is equipped with $n+1$
  punctures, the {\em obligatory punctures}, arising as the limits of
  the punctures in the sources, and possibly additional punctures
  corresponding to where the conformal structure on the $(n+1)$-gon
  degenerates, which we call {\em optional punctures}. By the
  removable singularities theorem or convergence of holomorphic curves
  to Lagrangian intersection points (depending on the puncture), we
  can extend the map continuously across all the
  punctures.\footnote{As explained in the next paragraph, punctures can be either ephemeral, if the Lagrangians on the two sides of the puncture are the same, or persistent if the Lagrangians on the two sides are different. See, for instance,~\cite[Theorem
    4.1.2]{MS04:HolomorphicCurvesSymplecticTopology} for the removable
    singularities theorem, which is relevant at the ephemeral
    punctures. Convergence of holomorphic curves to intersection
    points, which is relevant at the persistent punctures, is a
    special case of~\cite[Theorem 2]{Floer88:unregularized} or see,
    for instance,~\cite[Lemma 4.2.1]{WehrheimWoodward10:quilted}. In
    both cases, we use Perutz's result~\cite{Perutz07:handleslides}
    that the symmetric product is symplectic and the Heegaard tori are
    Lagrangian, so that the hypotheses of the theorems are satisfied;
    see also the discussion around~\cite[Theorem
    4.1.2]{MS04:HolomorphicCurvesSymplecticTopology}.}
  At the optional punctures, the limit can be taken carefully
  to get a sequence of bigons which connect pairs of optional
  punctures, so that the resulting nodal surface is connected.

  We consider now the obligatory punctures.  The $s\th$ obligatory
  puncture marks two consecutive edges, which are mapped to the
  Heegaard tori associated to $\betas^{i_{s}}$ and $\betas^{i_{s+1}}$.
  There are two cases, according to whether or not
  $i_s=i_{s+1}$. Punctures with $i_s=i_{s+1}$ we call {\em ephemeral
    punctures}, and those with $i_s\neq i_{s+1}$ we call {\em
    persistent punctures}.  Suppose that a puncture $p$ is
  persistent. Then, if the limiting curve $u'$ does not take $p$ to
  $\x_0^{i_{s}<i_{s+1}}$, but rather to some other
  $\y^{i_s<i_{s+1}}\in\Gen(\betas^{i_s},\betas^{i_{s+1}})$,
  we can reparameterize the sources to extract a chain of bigons
  connecting $\x_0^{i_{s}<i_{s+1}}$ and $\y^{i_s<i_{s+1}}$. We think
  of these bigons as forming part of the Gromov limiting curve $u'$.

  At each ephemeral puncture, we do not apply any further reparameterization;
  simply apply the removable singularities theorem
  to extend the map smoothly across that puncture.

  We now have a possibly degenerate pseudo-holomorphic $(n+1)$-gon
  representing some class 
  \[
  B''\in\pi_2( \x_\epsilon^{\ij{0}<\ij{n}}, \x_\epsilon^{\ij{n-1}<\ij{n}}, \dots, \x_\epsilon^{\ij{0}<\ij{1}}).
  \]
  Next, we argue that, perhaps after including some more bigons in the limit $u'$, we have $B''=B'$.
  
  To this end let $\{p_i\}_{i=1}^m$ be the points appearing as in Definition~\ref{def:ApproximatingHomotopyClass}.
  We can assume that all the curves in our sequence
  intersect the divisor
  $\{p_i\}\times\Sym^{g-1}(\Sigma)$ transversally; in fact, there is some integer
  $m_i$ with the property that
  $u_\epsilon^{-1}(p_i\times\Sym^{g-1}(\Sigma))$ is a degree $m_i$
  divisor in the source of $u_{\epsilon}$. We must argue that
  the preimage under $u'$ of 
  $p_i\times \Sym^{g-1}(\Sigma))$ is also a degree $m_i$
  divisor in the (possibly nodal) source of $u'$.

  Consider the sequence of divisors
  $u_\epsilon^{-1}(p_i\times\Sym^{g-1}(\Sigma))$.  Let $S$ denote the
  source of $u'$ and $\overline{S}$ the result of filling in the
  punctures of $S$, so $\overline{S}$ is a compact surface with
  boundary. We can assume, by passing to a subsequence, that the
  sequence $u_\epsilon^{-1}(p_i\times\Sym^{g-1}(\Sigma))$ converges to
  a divisor in $\overline{S}$.

  Clearly, if a subsequence of points in the divisors converge to some
  point $p$ in the interior of $\overline{S}$, then $u'$ maps $p$ into
  $p_i\times \Sym^{g-1}(\Sigma)$. 

  Taking the Gromov limit carefully, we can rule out the case where
  some subsequence in the divisors converges to a persistent
  puncture. If a subsequence runs off a puncture, reparameterize the
  surface to extract a sequence of bigons. In the limit, then, the
  sequence of divisors limit to points in the interior of a chain of
  disks attached at the persistent puncture.  
  We can similarly eliminate a subsequence of the divisors 
  converging to an optional puncture: by attaching bigons to the
  source, the subsequence will limit to the interior of one of the
  bigons attached at the optional puncture.

  It is impossible for some subsequence in the divisors
  to converge to a point in the boundary of the source of $u'$: for in
  that case, we could extract a holomorphic disk which meets
  $p_i\times \Sym^{g-1}(\Sigma)$ with boundary in one of the Heegaard
  tori (since the divisor $p_i\times \Sym^{g-1}(\Sigma)$ is disjoint
  from the Heegaard tori). But any holomorphic disk with boundary
  contained in some Heegaard torus has the same local multiplicity at
  $p_i$ as it does at $z$, which we assumed to be zero.

  In a similar vein, it is impossible for some subsequence in the
  divisors to converge to a point which is an ephemeral puncture. For
  if there were such a subsequence, we would be able to translate back from the puncture
  to obtain a pseudo-holomorphic strip $[0,1]\times {\mathbb R}$ with
  finite energy, satisfying the boundary conditions that $\{0\} \times
  {\mathbb R}$ and $\{1\}\times {\mathbb R}$ are mapped into some
  fixed Heegaard torus, and the normalization condition that point $s
  \times 0$ is obtained as a limit of points mapped into $p_i\times
  \Sym^{g-1}(\Sigma)$. In the case where the limit point lies in the
  interior (i.e. $0<s<1$), by the local $C^{\infty}$ convergence we
  conclude that $u(s,0)\in p_i\times \Sym^{g-1}(\Sigma)$. (We are
  using the fact that there are no sphere components in the
  Gromov limit: non-trivial spheres have non-zero local multiplicity
  at $z$.) Thus, after removing the singularities at the two
  punctures on the boundary, we can view the result as a holomorphic
  disk with boundary in a Heegaard torus with non-zero multiplicity at
  $p_i$, a contradiction. The possibility that $s=0$ or $1$ is ruled
  out similarly.
\end{proof}

\begin{proposition}
  \label{prop:CloseApproximations}
  Let $\{\betas^i\}_{i\in \IndI}$ be an admissible collection of attaching
  circles, and $\IndJ$ another partially ordered set. Then, given a
  generic, admissible collection of almost-complex structures, there is an
  $\IndI\times \IndJ$-filtered close approximation
  $\{\betas^{i\times j}\}_{i\times j \in \IndI\times \IndJ}$ to
  $\{\betas^{i}\}_{i\in \IndI}$  (in the sense of
  Definitions~\ref{def:Approximation}
  and~\ref{def:CloseApproximation}).
\end{proposition}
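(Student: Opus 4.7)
The plan is to build an efficient approximation by $\epsilon$-small Hamiltonian perturbations of the $\betas^i$, and then verify the three conditions in Definition~\ref{def:CloseApproximation} one at a time, with the third condition being the main point and requiring Lemma~\ref{lem:PseudoHolomorphicRepresentative}.

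Explicitly, for each $i\in\IndI$ I would take $\abs{\IndJ}$ nearly parallel Hamiltonian translates of each circle in $\betas^i$, supported in a tubular neighborhood of width $O(\epsilon)$, arranged as in Figure~\ref{fig:close-approx} so that any two translates intersect transversely in exactly two points and the Floer complex of the pair is concentrated in degree zero; this makes the approximation efficient, with preferred top generators $\Theta^{i\times j<i\times j'}$. A small enough perturbation preserves weak admissibility of every relevant sub-multi-diagram, since weak admissibility is an open condition on a finite set of basic domains.

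Condition~\ref{cl:ApproximatePoints} follows from a standard (simpler) variant of the Gromov compactness argument in the proof of Lemma~\ref{lem:PseudoHolomorphicRepresentative}: for $\epsilon$ small the nearest-point map is a bijection on generators, and any holomorphic bigon in the perturbed diagram limits to a holomorphic bigon in the unperturbed one, so the nearest-point map is a chain isomorphism. Condition~\ref{cl:Triangles} is the ``small triangle'' (handleslide invariance) computation of~\cite{OS04:HolomorphicDisks}: in the thin triple $(\betas^{i_0\times j_0},\betas^{i_1\times j_0},\betas^{i_1\times j_1})$ obtained from the approximation, the only index-zero triangles with $\Theta^{i_1\times j_0<i_1\times j_1}$-corner, for $\epsilon$ small, are the geometrically obvious small ones, and these realize the nearest-point identification; the statement for $\Theta^{i_0\times j_0<i_0\times j_1}$ is symmetric.

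The main obstacle is Condition~\ref{cl:Polygons}. Fix a sequence $\ij{0}<\cdots<\ij{n}$ in $\IndI\times\IndJ$ and a class $B'$ with $\ind(B')<3-n$. For the generic admissible almost-complex structures on the unperturbed data guaranteed by Proposition~\ref{prop:closed-trans}, $\cM^{B'}$ is transversely cut out of expected dimension $\ind(B')+n-3<0$, hence empty. Lemma~\ref{lem:PseudoHolomorphicRepresentative} then forces $\cM^{B_\epsilon}=\emptyset$ for every class $B_\epsilon$ approximating $B'$ and all $\epsilon$ smaller than some threshold depending on $B'$. To upgrade this to a single $\epsilon$ valid for all relevant classes, observe that chains in $\IndI\times\IndJ$ have length bounded by $\abs{\IndI\times\IndJ}$, so there are only finitely many sequences $\ij{0}<\cdots<\ij{n}$ to consider; for each such sequence, weak admissibility of the original multi-diagram combined with the vanishing $z$-multiplicity constraint forces the set of positive classes $B'$ of each fixed index to be finite, and the near-equivalence energy equality of Lemma~\ref{lem:ApproximatingHomotopyClass} confines attention to a finite range of indices realized by positive $B_\epsilon$ approximants. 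Taking the minimum of the finitely many $\epsilon$-thresholds thus arising produces a single $\epsilon>0$ for which all of~\ref{cl:ApproximatePoints}--\ref{cl:Polygons} hold.
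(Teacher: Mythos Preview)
Your proposal is correct and takes essentially the same approach as the paper: build the approximation from the small Hamiltonian translates of Figure~\ref{fig:close-approx}, then verify the three conditions with Lemma~\ref{lem:PseudoHolomorphicRepresentative} as the key tool. The only notable differences are that the paper also invokes Lemma~\ref{lem:PseudoHolomorphicRepresentative} (rather than the handleslide argument of~\cite{OS04:HolomorphicDisks}) to rule out extraneous index-$0$ triangles in Condition~\ref{cl:Triangles}, and that the paper does not spell out the uniformity-in-$\epsilon$ argument you append at the end.
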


\begin{proof}
  For each circle in $\betas^i$ consider the Hamiltonian perturbations
  shown in Figure~\ref{fig:close-approx} (for some total order
  extending the partial order on $\IndI$). For any $\epsilon$, these
  perturbations are efficient. Given a
  generic, admissible almost-complex structure, the
  moduli spaces of holomorphic curves in
  $(\Sigma,\betas^{i_0},\betas^{i_1},z)$ are transversally cut out. In
  particular, the moduli spaces do not change if we perturb
  $\betas^{i_0}$ and $\betas^{i_1}$ slightly. So, for $\epsilon$
  small enough, Condition~\ref{cl:ApproximatePoints} holds.

  Inspecting the diagram, there is a small triangle giving the nearest-point
  map as a term in $m_2(\Theta^{i_1\times j_0<i_1\times
    j_1},\cdot)$. It follows from
  Lemma~\ref{lem:PseudoHolomorphicRepresentative} that these are the
  only terms. Specifically, let $B_\epsilon$ be the domain of an index $0$
  triangle and suppose that $B_\epsilon$ admits a holomorphic
  representative for arbitrarily small $\epsilon$. The domain
  $B_\epsilon$ approximates some domain of bigons $B$, and by
  Lemma~\ref{lem:PseudoHolomorphicRepresentative} the domain $B$ has a
  holomorphic representative. By Lemma~\ref{lem:ApproximateDimension},
  the index of $B$ is also $0$. But this implies $B$ must be the
  trivial homology class. Hence, $B_\epsilon$ is supported in the
  isotopy region. Inspecting the diagram, this in turn implies that
  $B_\epsilon$ is the small triangle class already considered. So,
  Condition~\ref{cl:Triangles} holds.

  The argument for Condition~\ref{cl:Polygons} is similar. Again, if
  $B_\epsilon$ admits a holomorphic representative for arbitrarily
  small $\epsilon$ then by
  Lemma~\ref{lem:PseudoHolomorphicRepresentative} $B'$ admits a
  holomorphic representative. But since $\ind(B')<3-n$ this
  contradicts the assumption that we were working with a generic
  family of almost-complex structures.
\end{proof}

\subsection{Connected sums of chain complexes of attaching circles}
\label{subsec:ConnSum}

To define the gluing construction from
Proposition~\ref{intro:GlueChainComplexes}, we will form connected
sums of Heegaard surfaces.

\begin{definition}
  \label{def:ConnectedSum}
  Let $(\Sigma_1,\IndI,\{\betas^i\}_{i\in \IndI}, \{\eta^{i_1<i_2}\}_{i_1,i_2\in
    \IndI},z)$ and $(\Sigma_2,\IndJ,\{\gammas^j\}_{j\in \IndJ},
  \{\zeta^{j_1<j_2}\}_{j_1,j_2\in \IndJ},z)$ be chain complexes
  of attaching circles. We form a new $\IndI\times \IndJ$-filtered chain
  complex of attaching circles
$$(\Sigma_1\conn\Sigma_2,\{\deltas^{i\times j}\}_{i\times j\in \IndI\times \IndJ},
\{\omega^{\ij{1}<\ij{2}}\}_{\ij{1},\ij{2}\in \IndI\times \IndJ},z),$$
as follows. The surface $\Sigma_1\conn\Sigma_2$ is obtained by taking the
connected sum of $\Sigma_1$ and $\Sigma_2$ near the basepoint $z$.
Let $\{\betas^{i\times j}\}_{i\times j\in \IndI\times \IndJ}$ be a close approximation 
to $\{\betas^i\}_{i\in \IndI}$ (in the sense of Definitions~\ref{def:Approximation}
  and~\ref{def:CloseApproximation}) 
and let $\{\gammas^{i\times j}\}_{i\times j\in \IndI\times \IndJ}$ be a close approximation to
$\{\gammas^{j}\}_{j\in \IndJ}$.
We then define
\[
\deltas^{i\times j}=\betas^{i\times j}\cup \gammas^{i\times j}
\subset \Sigma_1\conn\Sigma_2.
\]
Define chains
$$\omega^{\ij{0}<\ij{1}}\in \CFa(\betas^{i_0\times j_0},
\gammas^{i_1\times j_1},z)$$
by
\begin{equation}
  \label{eq:DefineProductChain}
\omega^{\ij{0}<\ij{1}}=
\left\{\begin{array}{ll}
    \Theta^{i_0\times j_0<i_0\times j_1}\otimes \zeta^{i_0\times j_0<i_0\times j_1}
    & {\text{if $i_0=i_1$}} \\
    \eta^{i_0\times j_0<i_1\times j_0}\otimes \Theta^{i_0\times j_0<i_1\times j_0}
    & {\text{if $j_0=j_1$}} \\
    0 & {\text{otherwise.}}
  \end{array}
\right.
\end{equation}
Here, $\zeta^{i_0\times j_0<i_0\times j_1}$ and $\eta^{i_0\times j_0<i_1\times j_0}$
are generators in $\Sigma_2$ and $\Sigma_1$ respectively gotten by
applying the nearest-point map to 
$\zeta^{j_0<j_1}$ and $\eta^{i_0<i_1}$ respectively; 
and the right-hand-side of Equation~\eqref{eq:DefineProductChain}
uses the K{\"u}nneth isomorphism of chain groups (ignoring the differential)
\begin{equation}\label{eq:Kunneth}
\CFa(\betas^{\ij{0}},\betas^{\ij{1}},z)\otimes 
\CFa(\gammas^{\ij{0}},
\gammas^{\ij{1}},z) 
\stackrel{\cong}{\longrightarrow}
\CFa(\betas^{\ij{0}}\cup \gammas^{\ij{0}},
\betas^{\ij{1}}\cup \gammas^{\ij{1}},z).
\end{equation}
The
resulting $\IndI\times \IndJ$-filtered complex is called the {\em connected
  sum of chain complexes of attaching circles $(\Sigma_1,\IndI,\{\betas^i\}_{i\in \IndI},
\{\eta^{i_1<i_2}\}_{i_1,i_2\in \IndI},z)$ and $(\Sigma_2,\IndJ,\{\gammas^j\}_{j\in \IndJ}, 
\{\zeta^{j_1<j_2}\}_{j_1,j_2\in \IndJ},z)$} and
denoted
$$(\Sigma_1,\{\betas^i\}_{i\in \IndI}, \{\eta^{i_1<i_2}\}_{i_1,i_2\in \IndI},z)
\#(\Sigma_2,\{\gammas^j\}_{j\in \IndJ}, \{\zeta^{j_1<j_2}\}_{j_1,j_2\in \IndJ},z).$$
\end{definition}

We would like to show that the construction from
Definition~\ref{def:ConnectedSum} indeed gives a chain complex of
attaching circles. This will involve analyzing moduli spaces of
polygons in a connected sums of surfaces (Lemmas~\ref{lem:FiberedProduct}
and~\ref{lem:FiberedProductDegreeOne} below), and their limits as curves
are isotoped (Proposition~\ref{prop:CloseApproximations} above).

Before stating the lemmas we will need about polygons in connected
sums, we review some topological preliminaries.  Fix
$\IndI\times\IndJ$-filtered chain complexes of attaching circles
$\{\betas^{i\times j}\}_{i\times j\in\IndI\times\IndJ}$ and
$\{\gammas^{i\times j}\}_{i\times j\in\IndI\times\IndJ}$ in $\Sigma_1$
and $\Sigma_2$ respectively.  Every homotopy class 
\[
B\in
  \pi_2(\x^{\ij{0}<\ij{n}}\otimes \y^{\ij{0}<\ij{n}},
  \x^{\ij{n-1}<\ij{n}}\otimes\y^{\ij{n-1}<\ij{n}},\dots,
  \x^{\ij{0}<\ij{1}}\otimes\y^{\ij{0}<\ij{1}})
\]
with $n_z(B)=0$ has a corresponding splitting as
$B=B_1\conn B_2$, where 
\[
B_1\in\pi_2(\x^{\ij{0}<\ij{n}}, \x^{\ij{n-1}<\ij{n}},\dots, \x^{\ij{0}<\ij{1}})
\]
for the multi-diagram
$(\Sigma_1,\{\betas^{i\times j}\}_{i\times j\in\IndI\times \IndJ},z)$
and 
\[
B_2\in\pi_2(\y^{\ij{0}<\ij{n}},\y^{\ij{n-1}<\ij{n}},\dots,\y^{\ij{0}<\ij{1}})
\]
for the multi-diagram
$(\Sigma_2,\{\gammas^{i\times j}\}_{i\times j\in \IndI\times
  \IndJ},z)$.

Consider the forgetful map
\begin{equation}\label{eq:kappa}
  \kappa_{B_1}\co\cM^{B_1} \to \Conf(D_{n+1}),
\end{equation} 
defined by $\kappa_{B_1}(j,u)=j$, and the
corresponding map
$\kappa_{B_2}\co \cM^{B_2}\to \Conf(D_{n+1})$. 

\begin{definition}
  Let $\{J^1_j\}$ and $\{J^2_j\}$ be admissible collections of almost-complex structures for $\Sigma_1$ and $\Sigma_2$, respectively. Let
  $\{J_j\}$ be an admissible collection of almost-complex structures
  for $\Sigma=\Sigma_1\conn\Sigma_2$ which agrees with $J^1_j\amalg
  J^2_j$ away from the connected sum region. We call such an admissible collection
  of almost-complex structures on $\Sigma$ {\em compatible
    with the splitting} $\Sigma=\Sigma_1\conn\Sigma_2$.
\end{definition}

Admissible collections of almost-complex structures compatible with
the splitting $\Sigma=\Sigma_1\conn\Sigma_2$ clearly exist: they are easy
to construct explicitly from $\{J^1_j\}$ and $\{J^2_j\}$.

\begin{lemma}
  \label{lem:FiberedProduct}
  Fix admissible collections of almost-complex structures on $\Sigma$
  which are compatible with the splitting $\Sigma=\Sigma_1\conn\Sigma_2$.
  Then 
  the moduli space $\Mod^{B_1\conn B_2}$ is the fibered product
  of $\kappa_{B_1} \co \cM^{B_1}\to \Conf(D_{n+1})$ with $\kappa_{B_2}\co
  \cM^{B_2}\to \Conf(D_{n+1})$. 
  Further, if $J_j^1$ and $J_j^2$ are chosen generically then $\cM^{B_1}$ and $\cM^{B_2}$ are transversely cut out and the maps $\kappa_{B_1}$ and $\kappa_{B_2}$ are transverse to each other, so $\cM^B$ is a smooth manifold and
  \begin{equation}
    \label{eq:AdditiveIndex}
    \dim(\cM^B)=\dim(\cM^{B_1})+\dim(\cM^{B_2})-n+2.
  \end{equation}
\end{lemma}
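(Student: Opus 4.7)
The plan is to establish a bijection between $\cM^{B_1\conn B_2}$ and the fibered product $\cM^{B_1}\times_{\Conf(D_{n+1})}\cM^{B_2}$, and then deduce transversality and the dimension formula from genericity of the split almost-complex structure.

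First, I would construct the map $\cM^{B_1\conn B_2}\to \cM^{B_1}\times_{\Conf(D_{n+1})}\cM^{B_2}$. Fix $(j,u)\in\cM^{B_1\conn B_2}$, where $u\co (S,\bdy S)\to \Sigma\times D_{n+1}$ is the holomorphic polygon. Since the image of $u$ is disjoint from $\{z\}\times D_{n+1}$ (by condition (M-0)) and the connected-sum neck $N$ sits in a small neighborhood of $z$, I would observe that for sufficiently small neck, $u^{-1}(N\times D_{n+1})=\emptyset$: indeed, the almost-complex structure $J_j$ agrees with $J_j^1\amalg J_j^2$ on the complement of an arbitrarily small neck around $z$, and by maximum-principle/monotonicity considerations combined with the fact that the multiplicity $n_z(B)=0$ forces no curve component can cross the connect-sum region. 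Thus $S$ splits as a disjoint union $S_1\amalg S_2$ where $S_i=u^{-1}(\Sigma_i\times D_{n+1})$, and $u_i\coloneqq u|_{S_i}$ is a $(J^i_j)$-holomorphic polygon representing $B_i$. Setting $(j,u)\mapsto \bigl((j,u_1),(j,u_2)\bigr)$ gives an element of the fibered product.

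For the inverse, given $\bigl((j,u_1),(j,u_2)\bigr)$ with $\kappa_{B_1}(j,u_1)=\kappa_{B_2}(j,u_2)=j$, I would define $u=u_1\amalg u_2\co S_1\amalg S_2\to \Sigma\times D_{n+1}$. Since $J_j=J^1_j\amalg J^2_j$ away from the neck and neither $u_1$ nor $u_2$ enters the neck region, $u$ is $(J_j$-)holomorphic; the boundary conditions, asymptotics, and homology class split correctly, and the embedding/branched-covering conditions (M-3), (M-4) on each component combine to give them on $u$. This is visibly inverse to the forward map.

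Next comes transversality, which I expect to be the main technical issue. For generic choices of $\{J^1_j\}$ and $\{J^2_j\}$, standard Sard--Smale arguments in the spirit of \cite[Section~3]{Lipshitz06:CylindricalHF} (already invoked in Proposition~\ref{prop:closed-trans}) show that $\cM^{B_1}$ and $\cM^{B_2}$ are smoothly cut out by the $\overline{\partial}$-operator. To obtain transversality of the evaluation pair $(\kappa_{B_1},\kappa_{B_2})$ I would argue as follows: having fixed a generic $\{J^1_j\}$, I could further perturb $\{J^2_j\}$ in a family parametrized by $j\in \Conf(D_{n+1})$; such a perturbation lets the restriction of $\kappa_{B_2}$ to each stratum be as free as we like relative to $\kappa_{B_1}(\cM^{B_1})$, yielding mutual transversality. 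This is the familiar trick that lets one achieve transversality for a fibered product of moduli spaces when the two factors live on disjoint parts of the target. The compatibility condition~\ref{item:J-compatible} poses no obstacle since the boundary strata of $\Conf(D_{n+1})$ are handled by induction.

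Finally, once transversality is established, the fibered product is a smooth manifold of dimension
\[
\dim\cM^{B_1}+\dim\cM^{B_2}-\dim\Conf(D_{n+1}) = \dim\cM^{B_1}+\dim\cM^{B_2}-(n-2),
\]
which is Formula~\eqref{eq:AdditiveIndex}. I would also cross-check this with the additivity of the index: one has $\ind(B_1\conn B_2)=\ind(B_1)+\ind(B_2)$ (since the multiplicity at $z$ vanishes), so
\[
\dim\cM^{B_1\conn B_2}=\ind(B_1)+\ind(B_2)+n-2 = \bigl(\ind(B_1)+n-2\bigr)+\bigl(\ind(B_2)+n-2\bigr)-(n-2),
\]
matching the fibered-product dimension and serving as a consistency check on the decomposition.
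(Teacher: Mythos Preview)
Your proof is correct and follows essentially the same approach as the paper's: establish the bijection by splitting the source, then obtain transversality of the fibered product by perturbing one family of almost-complex structures relative to the other. The only cosmetic difference is that where you invoke ``maximum-principle/monotonicity'' to exclude the neck region, the paper uses Condition~\ref{item:J-fiber} (fibers of $\pi_\bD$ are $J_j$-holomorphic) to get positivity of intersections with $\{z\}\times D_{n+1}$ directly, which together with $n_z(B)=0$ forces the splitting $S=S_1\amalg S_2$.
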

\begin{proof}
  The description of $\Mod^{B_1\conn B_2}$ as a fibered product
  is immediate from the definitions. To wit, suppose 
  \[
  \bigl(u\co S\to \Sigma\times D_{n+1}\bigr)\in
  \cM^{B_1 \conn B_2}.
  \]
  So, $u$ is $J_j$-holomorphic for some $j\in\Conf(D_{n+1})$. 
  Since the fibers of $\pi_\bD$ are holomorphic
  (Condition~\ref{item:J-fiber}), the surface $S$ decomposes as a
  disjoint union $S=S_1\amalg S_2$, where $u(S_i)\subset
  \Sigma_i\times D_{n+1}$. The map $u|_{S_i}$ is a $J^i_j$-holomorphic
  representative of $B_i$.
  Conversely, given $J^i_j$-holomorphic representatives $u_i$ of
  $B_i$ ($i=1,2$), the map $u_1\amalg u_2$ is a $J_j$-holomorphic
  representative of $B$.

  The  transversality statement follows along the lines of~\cite[Chapter 3]{MS04:HolomorphicCurvesSymplecticTopology} but with three additional complications:
  \begin{enumerate}
  \item We are working in the Lagrangian boundary case, rather than the closed case, so we have to work with slightly different Sobolev spaces; see, for instance,~\cite[Section 3]{Lipshitz06:CylindricalHF} for a review of the relevant spaces.
  \item We are working with varying conformal structures on the source polygons. In the proof of transversality, one simply multiplies the source of the $\overline{\bdy}$-map by $\Conf(D_{n+1})$. Of course, this makes it easier to achieve transversality (in a sense that can be made precise). In particular, the arguments from~\cite{MS04:HolomorphicCurvesSymplecticTopology} go through essentially without change.
  \item We want to ensure that $\kappa_{B_1}$ is transverse to $\kappa_{B_2}$. It is not hard to see that for any choice of $J_j^2$ so that $\cM^{B_2}$ is transversely cut out we can choose a family $J_j^1$ so that $\cM^{B_1}$ is transversely cut out and $\ev\co \cM^{B_1}\to \Conf(D_{n+1})$ is transverse to $\cM^{B_2}$, using the fact that in the configuration space for $B_1$ we have multiplied by $\Conf(D_{n+1})$, and the (universal) $\overline{\bdy}$ operator was already transverse to the $0$-section before this multiplication.
  \end{enumerate}
  With these hints, we leave the details of the transversality argument to the reader.
\end{proof}

Lemma~\ref{lem:FiberedProduct} has the following simple special case:

\begin{lemma}
  \label{lem:FiberedProductTriangles}
  Let $(\Sigma_1,\betas^0,\betas^1,\betas^\infty,z)$ and
  $(\Sigma_2,\gammas^0,\gammas^1,\gammas^\infty,z)$ be admissible
  triples of attaching circles.  Fix admissible collections of almost-complex structures on $\Sigma$ which are compatible with the
  splitting $\Sigma=\Sigma_1\conn \Sigma_2$.  Fix
  $B_1\in\pi_2(\x^{0<2},\x^{1<2},\x^{0<1})$ and
  $B_2\in\pi_2(\y^{0<2},\y^{1<2},\y^{0<1})$ with $\ind(B_i)=0$,
  and consider the connected sum
  \[B=B_1\conn B_2\in\pi_2(\x^{0<2}\conn \y^{0<2},\x^{1<2}\conn \y^{1<2},\x^{0<1}\conn \y^{0<1}).\]
  Then, 
  \begin{equation}
    \label{eq:ProductDescription}
    \#\cM^{B_1\conn B_2}=\#\cM^{B_1}\cdot\#\cM^{B_2}.
  \end{equation}
  In particular,
  \begin{equation}
    \label{eq:TrianglesDecompose}
    m_2(\x^{1<2}\conn \y^{1<2},\x^{0<1}\conn \y^{0<1})
  = m_2(\x^{1<2},\x^{0<1})\otimes m_2(\y^{1<2},\y^{0<1}).
  \end{equation}
\end{lemma}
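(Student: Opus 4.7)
My plan is to derive this as a direct specialization of Lemma~\ref{lem:FiberedProduct}. The key observation is that for triangles (the case $n=2$), the moduli space $\Conf(D_3)$ of conformal structures on a disk with three boundary punctures is a single point, since any such disk is biholomorphic to the standard triangle. Thus the fibered product description from Lemma~\ref{lem:FiberedProduct} degenerates into an honest Cartesian product.

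In more detail: Lemma~\ref{lem:FiberedProduct} identifies $\cM^{B_1\conn B_2}$ with the fibered product of $\kappa_{B_1}\co\cM^{B_1}\to\Conf(D_3)$ and $\kappa_{B_2}\co\cM^{B_2}\to\Conf(D_3)$. Since $\Conf(D_3)$ is a point, this fibered product is simply $\cM^{B_1}\times\cM^{B_2}$. Under our assumption $\ind(B_1)=\ind(B_2)=0$, the expected dimension formula gives
\[
\dim \cM^{B_i} = \ind(B_i) + 3 - 3 = 0,
\]
so for a generic choice of admissible almost-complex structures compatible with the splitting $\Sigma=\Sigma_1\conn\Sigma_2$, both $\cM^{B_1}$ and $\cM^{B_2}$ are compact $0$-manifolds (see Proposition~\ref{prop:closed-trans} and Remark~\ref{rem:non-compact}). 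One verifies compatibility with Equation~\eqref{eq:AdditiveIndex}, which gives $\dim\cM^B = 0 + 0 - 2 + 2 = 0$. Taking cardinalities yields Equation~\eqref{eq:ProductDescription}.

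For Equation~\eqref{eq:TrianglesDecompose}, I would unpack the definition of $m_2$ from Definition~\ref{def:closed-polygon-map}. The generators of $\CFa(\betas^0\cup\gammas^0,\betas^2\cup\gammas^2,z)$ are, by the K\"unneth isomorphism~\eqref{eq:Kunneth}, products of generators for the two factors. Every homotopy class $B$ in the triple $(\Sigma,\betas^0\cup\gammas^0,\betas^1\cup\gammas^1,\betas^2\cup\gammas^2,z)$ with $n_z(B)=0$ splits uniquely as $B_1\conn B_2$, and index $\ind(B)=\ind(B_1)+\ind(B_2)=0$ forces (for generic almost-complex structures, where no negative-index class contributes) $\ind(B_1)=\ind(B_2)=0$. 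Summing over such classes and applying Equation~\eqref{eq:ProductDescription} term-by-term gives the desired tensor-product formula.

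The only mildly subtle point will be checking that all the homology classes appearing in the triangle count on the connected-sum side do arise from pairs $(B_1,B_2)$ of individually-admissible classes with $n_z = 0$ on each side, and that no extra low-energy bubbling off into the neck occurs; but this is standard for connected-sum arguments when the basepoint $z$ is placed in the neck, and the $n_z=0$ constraint plus the splitting of the $\Sigma$-projection into the two sides of the neck takes care of it. No further transversality input is needed beyond what is already supplied by Lemma~\ref{lem:FiberedProduct}.
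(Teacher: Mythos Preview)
Your proposal is correct and follows essentially the same approach as the paper: both derive Equation~\eqref{eq:ProductDescription} from Lemma~\ref{lem:FiberedProduct} by observing that $\Conf(D_3)$ is a single point so the fibered product collapses to a Cartesian product, and then deduce Equation~\eqref{eq:TrianglesDecompose} from Equation~\eqref{eq:ProductDescription}. The paper's version is terser---it simply says both equations follow ``immediately''---while you spell out the dimension count and the index-splitting argument, but the underlying idea is identical.
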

\begin{proof}
  Equation~\eqref{eq:ProductDescription} follows immediately from
  Lemma~\ref{lem:FiberedProduct}: the space of conformal structures
  $\Conf(D_3)$ consists of a single point, and hence the fibered
  product description reduces to a simple Cartesian product.
  Equation~\eqref{eq:TrianglesDecompose} follows immediately from
  Equation~\eqref{eq:ProductDescription}.
\end{proof}

\begin{lemma}
  \label{lem:FiberedProductDegreeOne}
  Fix an admissible collection of almost-complex structures on $\Sigma$
  which is compatible with the splitting $\Sigma=\Sigma_1\conn \Sigma_2$.
  Fix some $g$-tuple of attaching circles $\betas^{i_0}$ in $\Sigma_1$,
  let $\IndJ$ be a
  partially ordered set and let $\{\betas^{i_0\times j}\}_{j\in\IndJ}$ be close approximations to
  $\betas^{i_0}$.
  Then the map 
  \[
  \kappa=\coprod_{B_1}\kappa_{B_1}\co \coprod_{B_1\in
    \pi_2(\Theta^{i_0\times j_0<i_0\times j_n}, \Theta^{i_{0}\times
      j_{n-1}<i_0\times j_n},\dots, \Theta^{i_0\times j_0<i_0\times
      j_1})} \Mod^{B_1}\to \Conf(D_{n+1})
  \]
  has degree $1\pmod{2}$.

  This has the following consequence. Let $\{\gammas^j\}_{j\in\IndJ}$ be a
  $\IndJ$-filtered admissible
  collection of attaching circles in $\Sigma_2$ and fix a sequence
  $j_0<\dots<j_n$,
  generators $\y^{j_0<j_1},\dots,\y^{j_{n-1}<j_n}$ and $\y^{j_0<j_n}$
  and some $B_2\in\pi_2(\y^{j_0<j_n},\y^{j_{n-1}<j_n},\dots,
  \y^{j_0<j_1})$ for the multi-diagram
  $(\Sigma_2,\{\gammas^{j}\}_{j\in J},z)$.  
Let $\x^{i_0\times
    j_p<i_0\times j_q} = \Theta^{i_0\times j_p<i_0\times j_q}\conn 
  \y^{j_p<j_q}$. If $\dim(\cM^{B_2})=0$ then
  \begin{equation}\label{eq:FiberedProductDegreeOne}
  \#\cM^{B_2}\equiv\#\biggl(\bigcup_{B_1\in \pi_2(\Theta^{i_0\times j_0<i_0\times
      j_n},\dots,
    \Theta^{i_0\times j_0<i_0\times j_1})}\cM^{B_1\conn B_2}\biggr)\pmod{2}.
  \end{equation}
\end{lemma}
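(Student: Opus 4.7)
The plan is to first reduce~\eqref{eq:FiberedProductDegreeOne} to the claim that the map $\kappa$ has mod-$2$ degree $1$, and then prove that degree claim by induction on~$n$. By Lemma~\ref{lem:FiberedProduct}, applied for a generic admissible collection of almost-complex structures compatible with the splitting $\Sigma=\Sigma_1\conn\Sigma_2$, the moduli space appearing on the left-hand side of~\eqref{eq:FiberedProductDegreeOne} is the transversal fibered product
\[
\bigsqcup_{B_1}\cM^{B_1\conn B_2} \;=\; \Bigl(\bigsqcup_{B_1}\cM^{B_1}\Bigr)\times_{\Conf(D_{n+1})}\cM^{B_2}.
\]
Lemmas~\ref{lem:PseudoHolomorphicRepresentative} and~\ref{lem:ApproximateDimension} (Case~\ref{case:AD-1gons}) restrict the relevant $B_1$ to those approximating the trivial monogon class, all of which have $\dim\cM^{B_1}=n-2=\dim\Conf(D_{n+1})$. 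Since $\dim\cM^{B_2}=0$, for a generic choice of almost-complex structure each $\kappa_{B_2}(u_2)$ is a regular value of $\kappa$, and counting gives
\[
\#\bigsqcup_{B_1}\cM^{B_1\conn B_2}
\;=\; \sum_{u_2\in\cM^{B_2}} \#\kappa^{-1}(\kappa_{B_2}(u_2))
\;\equiv\; \deg(\kappa)\cdot\#\cM^{B_2} \pmod 2,
\]
so~\eqref{eq:FiberedProductDegreeOne} follows once $\deg(\kappa)\equiv 1\pmod 2$ is established.

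\textbf{Base case and inductive step.} For the base case $n=2$, the space $\Conf(D_3)$ is a single point, so $\deg(\kappa)=\sum_{B_1}\#\cM^{B_1}$ is just the coefficient of $\Theta^{i_0\times j_0<i_0\times j_2}$ in $m_2(\Theta^{i_0\times j_1<i_0\times j_2},\Theta^{i_0\times j_0<i_0\times j_1})$. The explicit small-triangle computation associated to the perturbations depicted in Figure~\ref{fig:close-approx}, together with Lemma~\ref{lem:PseudoHolomorphicRepresentative} and close-approximation Condition~\ref{cl:Polygons} (which rule out any larger classes), will show this coefficient equals~$1$. For the inductive step $n\geq 3$, I will take $j\in\Conf(D_{n+1})$ approaching a codimension-one boundary stratum of the associahedron $\oConf(D_{n+1})$, corresponding to a splitting $\Conf(D_{n_A+1})\times\Conf(D_{n_B+1})$ with $n_A+n_B=n+1$. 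By the compatibility Condition~\ref{item:J-compatible}, Gromov compactness, and the standard gluing theorem, the fiber $\kappa^{-1}(j)$ for such $j$ is in bijection with pairs $(u_A,u_B)$ of holomorphic polygons in the two sub-factors sharing a common generator $\y$ at the internal corner. Lemmas~\ref{lem:PseudoHolomorphicRepresentative} and~\ref{lem:ApproximateDimension} again force each sub-factor's homotopy class to approximate a trivial monogon class, pinning $\y$ to be the top-degree generator $\Theta^{i_0\times j_a<i_0\times j_b}$ at the appropriate indices. The inductive hypothesis applied to both smaller polygons then yields $\deg(\kappa)\equiv\deg(\kappa_A)\cdot\deg(\kappa_B)\equiv 1\cdot 1=1\pmod 2$, completing the induction.

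\textbf{Main obstacle.} The hardest step will be making the boundary-gluing argument fully precise. I need to verify that the compactification $\bar\kappa\co \bigsqcup\overline{\cM}^{B_1}\to\oConf(D_{n+1})$ has only the expected broken-polygon limits near each boundary stratum, with no sphere or disk bubbles forming in the $\Sigma$-factor, and—most importantly—that in any Gromov limit the internal corner generator $\y$ is always a $\Theta$-generator rather than some other element of $\CFa(\betas^{i_0\times j_a},\betas^{i_0\times j_b},z)$. The close-approximation conditions of Definition~\ref{def:CloseApproximation}, especially Condition~\ref{cl:Polygons}, are precisely what rule these unwanted limits out, and organizing this argument carefully for all codimension-one strata is the main technical content of the proof.
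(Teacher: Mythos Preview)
Your reduction to the degree statement and your base case $n=2$ match the paper's. For the general case, however, your approach diverges from the paper's and has a real gap.

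The paper does not argue by induction on~$n$. It first shows that $\kappa$ has a well-defined mod-$2$ degree: the codimension-one boundary of $\ocM$ consists of polygon degenerations (which lie over $\partial\oConf(D_{n+1})$) and bigon degenerations at a corner (which lie over interior points). The latter cancel in pairs because the approximation is \emph{efficient}, so the differential on each $\CFa(\betas^{i_0\times j},\betas^{i_0\times j'},z)$ vanishes identically. Thus $\kappa$ is a relative cycle and has a degree. The paper then computes that degree at the deepest corner of the associahedron, where the polygon decomposes into a chain of triangles; there the triangle identity $m_2(\Theta,\Theta)=\Theta$ \emph{exactly} forces every intermediate generator to be~$\Theta$ iteratively, and each triangle contributes~$1$.

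Your proposal omits the well-definedness step entirely. Without it you cannot equate $\#\kappa^{-1}(j)$ at a generic interior point with the count of broken curves near a boundary stratum, which is what your induction needs. This step uses efficiency of the approximation, not Condition~\ref{cl:Polygons}. Separately, your proposed mechanism for pinning the internal generator $\y$ to~$\Theta$ (via Lemmas~\ref{lem:PseudoHolomorphicRepresentative}, \ref{lem:ApproximateDimension}, and Condition~\ref{cl:Polygons}) is not right: those results compare approximated domains to their unapproximated limits and say nothing about which generator sits at a node of a broken polygon. At a codimension-one face the correct argument is an index/grading constraint (both pieces must have index~$0$, and with all~$\Theta$ inputs this forces the output into top degree, hence $\y=\Theta$). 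The paper sidesteps this issue altogether by going to the deepest corner, where the triangle case already says $m_2(\Theta,\Theta)$ has no terms other than~$\Theta$, so no separate argument about intermediate generators is required.
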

\begin{proof}
  First, consider the case of triangles ($n=2$). The space
  $\Conf(D_3)$ is a single point, so the statement is equivalent to
  the statement that the triangle map is given by
  \[
  F(\Theta^{i_0\times j_1<i_0\times j_2}\otimes \Theta^{j_0\times
    j_0<i_0\times j_1})=\Theta^{i_0\times j_0<i_0\times j_2}.
  \]
  This is well-known. To verify it, work first with a particular
  Heegaard triple---for instance, a sub-diagram of
  Figure~\ref{fig:close-approx}---and then observe that the triangle
  count is an isotopy invariant.

  Next we turn to the general case. The top-dimensional boundary of 
  \[
  \ocM\coloneqq \coprod_{B_1\in \pi_2(\Theta^{i_0\times j_0<i_0\times j_n},
    \Theta^{i_{0}\times j_{n-1}<i_0\times j_n},\dots,
    \Theta^{i_0\times j_0<i_0\times j_1})} \ocM^{B_1}\to
  \oConf(D_{n+1})
  \]
  (where $\ocM^{B_1}$ denotes the compactification of $\cM^{B_1}$)
  has two kinds of points:
  \begin{itemize}
  \item Boundary points where the $(n+1)$-gon $D_{n+1}$ degenerates as
    a union of two polygons with at least $3$ vertices each, and
  \item Boundary points where the $(n+1)$-gon $D_{n+1}$ does not
    degenerate but rather the curve splits off a bigon at one of the corners.
  \end{itemize}
  The map $\kappa$ sends the first kind of boundary points to points
  in the boundary of $\oConf(D_{n+1})$. The bigons occurring in points
  of the second kind are counted in the differential on the complex
  $\CFa(\betas^{i_0\times j},\betas^{i_0\times j'},z)$. So, since the
  differential on $\CFa(\betas^{i_0\times j},\betas^{i_0\times j'},z)$
  vanishes, boundary points of the second kind occur in pairs, both
  lying over the same point of $\Conf(D_{n+1})$. Thus, $\kappa$
  defines a relative cycle in
  $C_{n-2}(\oConf(D_{n+1}),\bdy\oConf(D_{n+1});\ZZ/2\ZZ)$ and hence
  $\kappa$ has a well-defined degree modulo $2$.
  

  Next, consider the preimage of the corner $q$ of
  $\oConf(D_{n+1})$ corresponding to the decomposition of $B_1$ into
  triangles, $B_1=B_1^n*B_1^{n-1}*\cdots*B_1^2$, where 
  \[
  B_1^{k}\in\pi_2(\Theta^{i_0\times j_0<i_0\times j_k},
  \Theta^{i_0\times j_{k-1}<i_0\times j_k}, \Theta^{i_0\times
    j_0<i_0\times j_{k-1}}),
  \]
  say. By the triangle case,
  \[
  \bigcup_{B_1^k\in \pi_2(\Theta^{i_0\times j_0<i_0\times j_k},
    \Theta^{i_0\times j_{k-1}<i_0\times j_k}, \Theta^{i_0\times
      j_0<i_0\times j_{k-1}})}\cM^{B_1^k}
  \]
  has (algebraically) one holomorphic representative. Moreover, by
  Oh's boundary perturbation technique~\cite{Oh96:boundary}, say, this
  representative is transversely cut out. Thus, standard gluing
  techniques (see, e.g.,~\cite[Proposition 5.39]{LOT1} for a more detailed discussion in
  a similar situation) imply that near this corner $q$, the moduli
  space $\cM$ is modeled on (an odd number of copies of)
  $[0,\epsilon)^{n-1}$, projecting to $\oConf(D_{n+1})$ by local
  homeomorphisms of topological manifolds with corners. In particular,
  this implies that
  \[
  \bigcup_{B_1^2,\dots,B_1^n}\cM^{B_1^n}\times\cM^{B_1^{n-1}}\times\dots\times\cM^{B_1^2}
  \]
  has (algebraically) one holomorphic representative, as well. Thus, $\kappa$ has degree $1\pmod{2}$.


  Finally, Equation~\eqref{eq:FiberedProductDegreeOne} now follows from
  Lemma~\ref{lem:FiberedProduct}.
\end{proof}

The following is a precise version of Proposition~\ref{intro:GlueChainComplexes};

\begin{proposition}
  \label{prop:GlueChainComplexes}
  If the curves $\betas^{i\times j}$ and $\gammas^{i\times j}$ in
  Definition~\ref{def:ConnectedSum} are close
  approximations to $\betas^i$ and $\gammas^j$ then the connected sum of
  attaching circles
  \[
  (\Sigma_1\cup \Sigma_2,\IndI\times \IndJ,\{\deltas^{i\times j}\}_{i\times j\in \IndI\times \IndJ},
  \{\omega^{\ij{1}<\ij{2}}\}_{\ij{1},\ij{2}\in \IndI\times \IndJ},z)
  \] 
  is an $\IndI\times \IndJ$-filtered
  chain complex of attaching circles
  (Definition~\ref{def:ChainComplex}).
\end{proposition}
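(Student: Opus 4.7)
The plan is to verify the compatibility condition~\eqref{eq:Compatibility} for the chains $\omega^{\ij{0}<\ij{1}}$; admissibility of multi-diagrams in $\Sigma_1\conn\Sigma_2$ is immediate from the admissibility of their restrictions to $\Sigma_1$ and $\Sigma_2$. Fix $\ij{0}<\ij{f}$ in $\IndI\times\IndJ$ and write $\Omega$ for the sum in~\eqref{eq:Compatibility}; I will show $\Omega=0$.

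The first reduction is to \emph{grid paths}. Since $\omega^{\ij{a}<\ij{b}}$ vanishes unless $i_a=i_b$ or $j_a=j_b$, only paths each of whose steps is purely in $i$ or purely in $j$ contribute to $\Omega$. For any such path, Lemma~\ref{lem:FiberedProduct} identifies the polygon count in $\Sigma_1\conn\Sigma_2$ with a fibered product over $\Conf(D_{n+1})$ of moduli in $\Sigma_1$ and $\Sigma_2$. Using the K\"unneth splitting~\eqref{eq:Kunneth} of each $\omega$-input into a $\Sigma_1$- and a $\Sigma_2$-factor, $\Omega$ becomes a sum of such fibered-product counts indexed by grid paths and by pairs $(B_1,B_2)$ of homology classes.

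The crux is a dimension count. A grid path of length $n$ with $p$ $i$-steps and $q=n-p$ $j$-steps has $k_1=q$ repeats in its $i$-sequence and $k_2=p$ repeats in its $j$-sequence. Combining Lemma~\ref{lem:ApproximateDimension} with Condition~\ref{cl:Polygons} on each factor, I expect to show that the only zero-dimensional fibered products come from three regimes: (i)~pure $i$-paths ($q=0$), where the $\Sigma_2$ side lies in Case~\ref{case:AD-1gons} of Lemma~\ref{lem:ApproximateDimension} and Lemma~\ref{lem:FiberedProductDegreeOne} reduces the count to $m_p\bigl(\eta^{i_{s_{p-1}}<i_{s_p}},\dots,\eta^{i_{s_0}<i_{s_1}}\bigr)$ tensored with the top $\Theta$-generator in $\Sigma_2$; (ii)~pure $j$-paths, symmetric to (i); and (iii)~length-two mixed paths ($n=2$, with $p=q=1$), where both factors fall into Case~\ref{case:AD-2gons} and Condition~\ref{cl:Triangles} identifies each contribution as $\eta^{i_0<i_f}\otimes\zeta^{j_0<j_f}$. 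For $n\geq 3$ with both $p,q\geq 1$, a careful tally (Condition~\ref{cl:Polygons} ruling out reduced indices $\leq 2-n$, and Lemma~\ref{lem:ApproximateDimension} bounding the contribution of the admissible exceptional cases) shows the fibered product has strictly positive dimension and so contributes nothing.

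With this trichotomy in hand, the conclusion follows case by case. When $j_0=j_f$, only pure $i$-paths arise and
\[
\Omega=\biggl(\sum_{p\geq 1}\,\,\sum_{i_0=i_{s_0}<\cdots<i_{s_p}=i_f} m_p\bigl(\eta^{i_{s_{p-1}}<i_{s_p}},\dots,\eta^{i_{s_0}<i_{s_1}}\bigr)\biggr)\otimes \Theta^{i_0\times j_0<i_f\times j_0}=0
\]
by the compatibility for $(\{\betas^i\},\{\eta^{i<i'}\})$; the case $i_0=i_f$ is symmetric. When both $i_0<i_f$ and $j_0<j_f$, the only contributing paths are the two length-two mixed ones, $\ij{0}<(i_f\times j_0)<\ij{f}$ and $\ij{0}<(i_0\times j_f)<\ij{f}$; each contributes $\eta^{i_0<i_f}\otimes\zeta^{j_0<j_f}$ and the two copies cancel mod~$2$. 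The hard part will be the trichotomy dimension count itself, which requires careful bookkeeping of the exceptional regimes of Lemma~\ref{lem:ApproximateDimension} against the exclusion imposed by Condition~\ref{cl:Polygons}.
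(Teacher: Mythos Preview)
Your proposal is correct and follows essentially the same approach as the paper: reduce to grid paths via the vanishing of $\omega$ on diagonal steps (the paper's Claim~1), establish the trichotomy by the dimension count combining Lemma~\ref{lem:FiberedProduct}, Lemma~\ref{lem:ApproximateDimension}, and Condition~\ref{cl:Polygons} (the paper's Claim~2), and then dispatch the three surviving cases using Lemma~\ref{lem:FiberedProductDegreeOne} for the pure paths and Condition~\ref{cl:Triangles} for the two length-two mixed paths (the paper's Claims~3--5). One small wording quibble: in the $n\geq 3$ mixed case the point is not that the fibered product is positive-dimensional but rather that any zero-dimensional fibered product would force one reduced factor to have negative expected dimension, which Condition~\ref{cl:Polygons} then shows is empty---this is the contrapositive of what you wrote, and is how the paper phrases it.
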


\begin{proof}
  We must show the structure equation~\eqref{eq:Compatibility} holds
  for the classes $\omega$. In fact, we will show that most terms in
  the Equation~\eqref{eq:Compatibility} vanish identically.
  
  Consider a sequence $\ij{0}<\dots<\ij{n}$ of
  indices in $\IndI\times\IndJ$. Let $k=R(i_0,\dots,i_n)$ and
  $\ell=R(j_0,\dots j_n)$.

  \textbf{Claim 1.} If $k+\ell\neq n$ then
  \begin{equation}
    \label{eq:term-vanish-1}
    m_{n}(\omega^{\ij{n-1}<\ij{n}},\dots,\omega^{\ij{0}<\ij{1}})=0.
  \end{equation}
  Indeed, in this case, some
  $\omega^{i_\alpha\times j_\alpha<i_{\alpha+1}\times j_{\alpha+1}}$
  is itself zero.

  So, from now on, we restrict to sequences with $k+\ell=n$.

  \textbf{Claim 2.} We have
  \begin{equation}
    \label{eq:term-vanish-2}
    m_{n}(\omega^{\ij{n-1}<\ij{n}},\dots,\omega^{\ij{0}<\ij{1}})=0
  \end{equation}
  unless one of:
  \begin{enumerate}
    \item $n=2$ and $k=\ell=1$,
    \item $k=n$ and $\ell=0$, or
    \item $k=0$ and $\ell=n$.
  \end{enumerate}

  To see this, fix for each $m=0,\dots,n-1$ a generator
  $\x^m\otimes\y^m\in\CFa(\betas^{\ij{m}},\betas^{\ij{m+1}},z)$ which is a component (summand) of the chain
  $\omega^{\ij{m},\ij{m+1}}$. Fix also a homotopy
  class of $n+1$-gons
  $B\in\pi_2(\x^n\otimes\y^n,\dots,\x^0\otimes\y^0)$ with
  $n_z(B)=0$ and
  \begin{equation}
    \label{eq:ZeroDimensionalModuliSpace}
    \dim(\cM^B)=0.
  \end{equation}
  We can decompose $B=B_1\conn B_2$, where
  $B_1\in\pi_2(\x^n,\dots,\x^0)$ and
  $B_2\in\pi_2(\y^n,\dots,\y^0)$.
  The homotopy class $B_1$ is an approximation to some
  $B'_1\in\pi_2(\x_0^{i_{s_{n+1-k}}},\dots,\x_0^{i_{s_1}})$
  (in the Heegaard tuple
  $(\Sigma,\betas^{i_{s_0}},\dots,\betas^{i_{s_{n-k+1}}},z)$).
  By Lemma~\ref{lem:ApproximateDimension}, 
  \begin{align}
    \dim(\cM^{B_1})-\dim(\cM^{B'_1})
    &=k
    \label{eq:LeftLimit}
  \end{align}
  unless $k\in\{n-1,n\}$ and $B'_1$ is trivial.
  Similarly, $B_2$ is an approximation to a homotopy
  class $B'_2$ of $(n+1-\ell)$-gons with
  \begin{equation}
    \label{eq:RightLimit}
    \dim(\cM^{B_2})-\dim(\cM^{B'_2})= \ell
  \end{equation}
  unless $\ell\in\{n-1,n\}$ and $B'_2$ is trivial.
  Combining
  Equations~\eqref{eq:ZeroDimensionalModuliSpace},
  \eqref{eq:AdditiveIndex}, \eqref{eq:LeftLimit},
  \eqref{eq:RightLimit},  
  and the fact that $n=k+\ell$,
  we conclude that
  \begin{equation}
    \label{eq:DimensionInequality}
  \dim(\cM^{B'_1})+\dim(\cM^{B'_2})<0
  \end{equation}
  unless one of:
  \begin{enumerate}
  \item\label{case:bigons} Both $B'_1$ and $B'_2$ are constant bigons.
    In particular, this gives $k=n-1$, $\ell=n-1$. So, since
    $k+\ell=n$, $k=\ell=1$.
  \item\label{case:n-gon} $B'_1$ is a constant $1$-gon. Then, in
    particular, $k=n$ and $\ell=0$.
  \item\label{case:n-gon-2} $B'_2$ is a constant $1$-gon. Then, in
    particular, $\ell=n$ and $k=0$.
  \end{enumerate}
  If one of $\dim(\cM^{B'_1})$ or $\dim(\cM^{B'_2})$ is negative,
  Property~\ref{cl:Polygons} ensures that $\cM^{B_1}$ or $\cM^{B_2}$
  (respectively) is empty, and hence, by
  Lemma~\ref{lem:FiberedProduct} $\cM^B$ is empty. So, for $\cM^B$ to
  be non-empty, one of cases~(\ref{case:bigons}),~(\ref{case:n-gon})
  or~(\ref{case:n-gon-2}) must occur.

  \textbf{Claim 3.} When $n=2$ and $k=\ell=1$, terms in
  Formula~\eqref{eq:Compatibility} cancel in pairs.

  To see this, observe that
  \begin{align*}
    m_2(\omega^{i_1\times j_0<i_1\times j_1},&\omega^{i_0\times
      j_0<i_1\times j_0}) \\
    &=m_2(\Theta^{i_1\times j_0<i_1\times j_1}\conn \zeta^{i_1\times j_0<i_1\times j_1},\eta^{i_0\times j_0<i_1\times j_0}\conn \Theta^{i_0\times j_0<i_1\times j_1}) \\
    &= m_2(\Theta^{i_1\times j_0<i_1\times j_1},\eta^{i_0\times j_0<i_1\times j_0})
    \otimes m_2(\zeta^{i_1\times j_0<i_1\times j_1},\Theta^{i_0\times j_0<i_1\times j_1}) \\
    &= \eta^{i_0\times j_0<i_1\times j_1}\otimes\zeta^{i_0\times j_0<i_1\times j_1},
  \end{align*}
  by the definition of $\omega$,
  Lemma~\ref{lem:FiberedProductTriangles}
  (Equation~\eqref{eq:TrianglesDecompose}), and
  Property~\ref{cl:Triangles} of the close approximation, in turn. 
  The same
  argument shows that
  \begin{align*}
    m_2( \omega^{i_0\times j_1<i_1\times j_1},\omega^{i_0\times
      j_0<i_0\times j_1})
    &=\eta^{i_0\times j_0<i_1\times j_1}\otimes\zeta^{i_0\times j_0<i_1\times j_1}.
  \end{align*}
  This proves the claim.

  \textbf{Claim 4.} The terms in Formula~\eqref{eq:Compatibility} with 
  $k=n$ (and $\ell=0$) cancel with each other. 

  Indeed, if $k=n$ then
  Lemma~\ref{lem:FiberedProductDegreeOne} applies: $B=B_1$ from
  Lemma~\ref{lem:FiberedProductDegreeOne}, and that lemma ensures that
  $\#\cM^{B_1\conn B_2}= \#\cM^{B_2}$. Thus,
  Equation~\eqref{eq:Compatibility} for the $\omega$ is a consequence
  of the corresponding condition on the $\zeta$: for any fixed
  $i_0\in\IndI$ and $j<j'\in\IndJ$,
  \begin{align}
    \sum_{j=j_0<j_1<\dots<j_{n-1}<j_n=j'} 
    &m_{n}(\omega^{i_{0}\times j_{n-1}<
      i_0\times j_n},\dots,
    \omega^{i_0\times j_0<i_0\times j_1}) \label{eq:MoveFirstFactor} \\
    &=
    \sum_{j=j_0<j_1<\dots<j_{n-1}<j_n=j'} 
    m_{n}(\zeta^{j_{n-1}<j_n},\dots,
    \zeta^{j_0<j_1}) \nonumber \\
    &=
    0. \nonumber
  \end{align}

  \textbf{Claim 5.} The terms in Formula~\eqref{eq:Compatibility} with 
  $\ell=n$ (and $k=0$) cancel with each other. This follows from the
  same argument used to prove Claim 4, with the two sides of the
  diagram reversed.

  The five claims account for all of the terms in
  Formula~\eqref{eq:Compatibility}, so
  Formula~\eqref{eq:Compatibility} holds and the proposition is
  proved.
\end{proof}

\subsection{The chain complex for a link}
\label{sec:Links}
 
A particular chain complex of attaching circles is constructed
in~\cite{BrDCov}, though without using this terminology.

Let $L$ be a $c$-component link in a three-manifold. A Heegaard
diagram subordinate to $L$ in $Y$ is a pointed Heegaard diagram
\[
(\Sigma,\alphas=\{\alpha_1,\dots,\alpha_g\},\betas=\{\beta_1^\infty,\dots,\beta_c^\infty,\beta_{c+1},\dots,\beta_g\},z)
\]
with the property that the components of $L$ are boundary-parallel
circles in the $\beta$-handlebody,
the attaching disk of $\beta_k^\infty$ meets
the $k\th$ component of $L$ transversally in one point if
$k=1,\dots,c$, and these are the only intersection points of the
$\beta$-attaching disks and $L$.

A tuple of framings for $L$ can be specified by curves
$\{\beta_k^0\}_{k=1}^{c}$, so that $\beta^0_k\cap \beta_\ell=\emptyset$ for $k=c+1,\dots,g$
and $\beta_k^0\cap\beta^\infty_\ell=\emptyset$ unless $k=\ell$, in which case the two curves
meet transversally in one point. For $k=1,\dots,c$, let $\beta_k^1$ be a standard
resolution of $\beta_k^{\infty}\cup \beta_k^0$, as in
Figure~\ref{fig:ExampleComplex}. With these choices, we have a
collection of attaching circles indexed by
$\IndI=\{0,1,\infty\}^c$. Given a sequence $i_0<\dots<i_n$ in $\IndI$
we can consider the Heegaard multi-diagram
$(\Sigma,\alphas,\betas^{i_0},\dots,\betas^{i_n},z)$. 

We perturb the
$\betas^{i_j}$'s to make this multi-diagram admissible, in an
efficient way, i.e., so that the differential on
$\CFa(\betas^{i_k},\betas^{i_{k+1}},z)$ vanishes identically.
(Any periodic domain can be written as a sum of doubly- and triply-periodic domains. The doubly-periodic domains have zero area by the above construction.
The triply periodic domains can be arranged to have zero area as well, see
for example Figure~\ref{fig:ExampleComplex}.)
If
$i_k$ and $i_{k+1}$ are consecutive (i.e., $i_k$ and $i_{k+1}$ differ
in exactly one coordinate) then let
$\Theta^{i_k<i_{k+1}}\in\CFa(\betas^{i_k},\betas^{i_{k+1}},z)$ be the
unique top-dimensional generator. Otherwise, let
$\Theta^{i_k<i_{k+1}}=0$.

\begin{definition}
  \label{def:ChangeFramingsComplex}
  The chain complex of attaching
  circles
  \[(\IndI=\{0,1,\infty\}^c,\{\betas^i\}_{i\in\IndI},\{\Theta^{i<i'}\}_{i,i'
    \in \IndI},z)\]
  is called {\em the chain complex of framing changes}.
  If we further specify an additional $g$-tuple of attaching circles $\alphas$,
  and let $Y$ be specified by the Heegaard diagram $(\Sigma,\alphas,\betas^{\infty},z)$, and $L\subset Y$ be the corresponding framed link, then we say
  that 
  $(\IndI=\{0,1,\infty\}^c,\{\betas^i\}_{i\in\IndI},\{\Theta^{i<i'}\}_{i,i'
    \in \IndI},z)$
  is the {\em chain complex of framing changes on the link $L\subset Y$
  specified by $\alphas$}.
\end{definition}
Definitions~\ref{def:AssociatedComplex} and~\ref{def:ChangeFramingsComplex} make 
\[
\bigoplus_{i\in \{0,1,\infty\}^c}
\CFa(\alphas,\{\betas^i\}_{i\in\{0,1,\infty\}^c},z)
\]
into a chain complex. The paper~\cite{BrDCov} also makes this vector
space into a chain complex, denoted $X$, by counting
pseudo-holomorphic polygons~\cite[Section 4.2]{BrDCov}.

\begin{lemma}
  The chain complex $X$ is the filtered complex associated in
  Definition~\ref{def:AssociatedComplex} to $\alphas$ and the chain
  complex associated to framing changes on the link
  (Definition~\ref{def:ChangeFramingsComplex}).
\end{lemma}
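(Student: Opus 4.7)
The plan is to show both complexes coincide by unpacking definitions on either side and matching them term by term; this is essentially a bookkeeping identification of two equivalent presentations of the same polygon count.

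First, I would observe that both complexes have the same underlying $\IndI$-filtered vector space $\bigoplus_{i \in \{0,1,\infty\}^c} \CFa(\alphas, \betas^i, z)$, with filtration coming from the product partial order on $\IndI = \{0,1,\infty\}^c$ (where $0<1<\infty$ on each factor). So the entire content of the lemma lies in comparing the two differentials.

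Next, I would unpack the differential on the Yoneda-style complex $\CCFa(\alphas, \{\betas^i\}, \{\Theta^{i<i'}\}, z)$ prescribed by Definitions~\ref{def:AssociatedComplex} and~\ref{def:ChangeFramingsComplex}. For $\x \in \CFa(\alphas, \betas^i, z)$, the component of the differential mapping to $\CFa(\alphas, \betas^j, z)$ is
\[
D^{i<j}(\x) = \sum_{i = i_0 < \cdots < i_n = j} m_n(\Theta^{i_{n-1}<i_n}, \dots, \Theta^{i_0<i_1}, \x).
\]
Since, by Definition~\ref{def:ChangeFramingsComplex}, $\Theta^{i_k<i_{k+1}}$ vanishes unless $i_k$ and $i_{k+1}$ are consecutive in $\IndI$ (differ in a single coordinate by one step), only the terms where every pair $(i_k, i_{k+1})$ is consecutive survive, and then each $\beta$-to-$\beta$ corner carries the distinguished top-degree generator. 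By Definition~\ref{def:closed-polygon-map} together with the tautological correspondence (Lemma~\ref{lem:tautological}), each such $m_n$ counts index-$(3-n-1)$ holomorphic $(n+2)$-gons in the Heegaard multi-diagram $(\Sigma, \alphas, \betas^{i_0}, \dots, \betas^{i_n}, z)$ with corners at $\x$, at the various $\Theta^{i_k<i_{k+1}}$, and at a generator of $\CFa(\alphas, \betas^j, z)$.

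Finally, I would compare this with the differential on $X$ from~\cite[Section~4.2]{BrDCov}, which is defined directly by the same count: polygons whose $\beta$-corners at consecutive transitions carry the canonical top-degree intersection points. Matching the sums is then immediate; the only item to verify carefully is that both sides may be computed with a common admissible family of almost-complex structures and a common efficient perturbation making the multi-diagram admissible, which is the setup chosen in Definition~\ref{def:ChangeFramingsComplex}. I do not anticipate any real obstacle: the lemma simply translates the construction of~\cite{BrDCov} into the language of chain complexes of attaching circles developed in this section.
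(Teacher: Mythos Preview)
Your proposal is correct and takes essentially the same approach as the paper: both arguments simply unwind Definition~\ref{def:AssociatedComplex} with the chains of Definition~\ref{def:ChangeFramingsComplex}, observe that only consecutive chains survive, and identify the result with the polygon-count differential of~\cite[Section~4.2]{BrDCov}. One small indexing slip: with your convention $i=i_0<\cdots<i_n=j$ there are $n$ chains $\Theta$ plus $\x$, so the map should be $m_{n+1}$ (counting $(n+2)$-gons of index $2-n$, as you in fact say), not $m_n$; the paper avoids this by indexing from $i_1$.
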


\begin{proof}
  The complex $X$ from~\cite[Section 4.2]{BrDCov} is 
  $
  \bigoplus_{i\in\IndI}\CFa(\alphas,\betas^i,z)
  $
  with differential given by 
  \[
  D(\xi) =\!\!\!\!\! \sum_{i_1<\cdots<i_k\text{ consecutive}}\!\!\!\!\! m_k(\Theta^{i_{k-1}<i_k},\cdots,\Theta^{i_1<i_2},\xi).
  \]
  This is exactly the complex from
  Definition~\ref{def:AssociatedComplex}.
\end{proof}


\section{Polygon counting in bordered manifolds}
\label{sec:Polygons}

The present section contains a generalization of some of the material
from Section~\ref{sec:Complexes} to the bordered setting.  In
Subsection~\ref{subsec:BorderedMultiDiagrams}, we introduce bordered
multi-diagrams (generalizing the earlier Heegaard multi-diagrams).  In
Subsection~\ref{subsec:BorderedPolygons}, we consider moduli spaces of
pseudo-holomorphic polygons in bordered multi-diagrams.  Counting
points in these moduli spaces gives the maps generalizing the
pseudo-holomorphic polygon counts considered earlier.  The algebra of
these holomorphic curve counts is described in
Subsection~\ref{subsec:BorderedPolygonMaps}. In
Subsection~\ref{subsec:ComplexAttachCircles}, it is shown that an
$\IndI$-filtered chain complex of attaching circles, together with a set
of bordered attaching curves, gives rise to a filtered $\Ainf$-module
over the algebra associated to a pointed matched circle.  These
results, Proposition~\ref{prop:DefCCFAa} (the Type~$A$ version)
and Proposition~\ref{prop:DefCCFDa} (the Type~$D$ version), can be
viewed as bordered analogues of the
filtered chain complexes constructed in
Proposition~\ref{prop:ChainComplexesOfAttachingCircles}. 
The filtered type $A$ modules and type $D$ modules will appear in the statement 
of the pairing theorem for polygons in Section~\ref{sec:PairingTheorem}.
Subsection~\ref{subsec:PolygonBimodule}
describes the further generalization to bordered Heegaard diagrams
with two boundary components (in the spirit of~\cite{LOT2}).
These generalizations appear in the statement of a pairing theorem used to 
prove the main theorem of this paper, Theorem~\ref{thm:IdentifySpectralSequences}.

\subsection{Bordered multi-diagrams}
\label{subsec:BorderedMultiDiagrams}

If $\Sigma$ is a surface with genus $g$ and a single boundary component,
one can consider $g$-tuples of attaching circles as in
Definition~\ref{def:AttachingCircles}. There is another kind of tuple
of curves which is natural in the bordered case:

\begin{definition}
  \label{def:AttachingCurves}
  Let $\Sigma$ be a compact, oriented surface with one boundary component.
  Fix a pointed matched circle $\PMC$ consisting of $2k$ pairs
  of points ${\mathbf a}\subset S^1=\bdy\Sigma$. A {\em complete set of bordered attaching curves compatible with
    $\PMC$} is a collection $\alphas=\{\alpha_1,\dots,\alpha_{g+k}\}$ of
  curves in $\Sigma$ such that:
  \begin{itemize}
  \item The curves $\alpha_i\in\alphas$ are pairwise disjoint.
  \item  $\alphas\cap \partial \Sigma = \bdy\alphas=\PMC$.
    We sometimes abbreviate this condition as
    $\partial (\Sigma,\alphas) = \PMC$.
  \item  The relative cycles $\{[\alpha_i]\}_{i=1}^{g+2k}$,
    where $[\alpha_i]\in H_1(\Sigma,\partial \Sigma)$,
    are linearly independent.
  \end{itemize}
\end{definition}

When considering holomorphic curves, we will attach a
cylindrical end to $\bdy\Sigma$. We will still denote the result by
$\Sigma$, and hope that this will not cause confusion.

\begin{definition}
  \label{def:Admissible}
  Let $\alphas$ be a complete set of bordered attaching curves in $\Sigma$ in
  the sense of Definition~\ref{def:AttachingCurves} (compatible with some $\PMC$),
  and let $\{\betas^i\}_{i=1}^n$ be an $n$-tuple of complete sets of 
  attaching circles (in the sense of
  Definition~\ref{def:AttachingCircles}). 
  We call the data $(\Sigma,\alphas,\betas^1,\dots\betas^n,z)$ a 
  {\em bordered multi-diagram}.

  A {\em generalized
    multi-periodic domain} is a relative homology class
  $B\in H_2(\Sigma,\alphas\cup\betas^{1}\cup\dots\cup\betas^{n}\cup \partial
  \Sigma)$ whose boundary
  $\bdy B$, viewed as an element of 
  \[
  H_1(\alphas\cup\betas^{1}\cup\dots\cup\betas^{n}\cup\bdy\Sigma,\partial\Sigma)\cong
H_1(\alphas\cup\betas^{1}\cup\dots\cup\betas^{n},{\mathbf
    a})
  \]
  is contained in the image of the
  inclusion 
  \[H_1(\alphas,{\mathbf a})\oplus
  H_1(\betas^{1})\oplus\dots\oplus H_1(\betas^{n})\to
  H_1(\alphas\cup\betas^{1}\cup\dots\cup\betas^{n},{\mathbf a}).\]
  A generalized multi-periodic domain $P$ has a local multiplicity $n_x(P)$ at any point $x\in
  \Sigma\setminus (\alphas\cup\betas^{1}\cup\dots\cup\betas^{n})$.
  A {\em multi-periodic domain} is one whose local
  multiplicity at (the region adjacent to) the point $z$ vanishes.

  A multi-periodic domain is called {\em provincial} if all of its
  local multiplicities near $\partial \Sigma$ vanish; equivalently, if
  it has trivial boundary in $H_0({\mathbf a})$.

  We say that $(\Sigma,\alphas,\{\betas^{i}\}_{i\in\IndI},z)$ is {\em
    admissible} if any non-zero multi-periodic domain has both
  positive and negative local multiplicities. The diagram
  $(\Sigma,\alphas,\{\betas^{i}\}_{i\in\IndI},z)$ is called {\em
    provincially admissible} if any non-zero provincial 
  multi-periodic domain has
  both positive and negative local multiplicities.
\end{definition}

\begin{lemma}
\label{lem:AdmissibleMultiDiagram}
If $\{\betas^i\}_{i\in\IndI}$ is an $\IndI$-filtered, admissible collection of attaching circles
(in the sense of Definition~\ref{def:AdmissibleAttachingCircles}),
and $\alphas$ is a complete set of bordered attaching curves compatible with some pointed matched
circle $\PMC$ on $\partial\Sigma$, then we can always find another complete set of bordered attaching
curves $\alphas'$ compatible with $\PMC$ so that 
\begin{itemize}
\item $(\Sigma,\alphas',\{\betas^i\}_{i\in\IndI},z)$ is admissible.
\item $\alphas'$ is isotopic to $\alphas$.
\end{itemize}
\end{lemma}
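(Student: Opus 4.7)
The plan is to adapt the standard winding technique of Ozsv\'ath and Szab\'o (see \cite[Section 5]{OS04:HolomorphicDisks}), in the form used in~\cite{LOT1} for the bordered setting. First, I would analyze the structure of multi-periodic domains for $(\Sigma,\alphas,\{\betas^i\}_{i\in\IndI},z)$. Every multi-periodic domain $P$ has a boundary decomposition
\[
\bdy P = \bdy_\alpha P + \sum_{i\in\IndI} \bdy_{\beta^i} P,
\]
with $\bdy_\alpha P \in H_1(\alphas,\CircPts)$ and $\bdy_{\beta^i} P \in H_1(\betas^i)$. Those multi-periodic domains with $\bdy_\alpha P = 0$ are precisely the multi-periodic domains of the closed multi-diagram obtained from $(\Sigma,\{\betas^i\}_{i\in\IndI},z)$ by capping off $\bdy\Sigma$; by the admissibility hypothesis on the $\betas^i$'s, every such nonzero $P$ already has local multiplicities of both signs. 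It therefore suffices to arrange that every nonzero $P$ with $\bdy_\alpha P \neq 0$ also has mixed-sign multiplicities.

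Next, I would fix representatives $P_1,\dots,P_m$ for a basis of the finite-dimensional quotient $\Pi$ consisting of multi-periodic domains modulo those with $\bdy_\alpha P = 0$. For each $P_j$, I would choose an $\alpha$-curve $\alpha_{i_j}$ appearing nontrivially in $\bdy_\alpha P_j$, together with a properly embedded arc or circle $\gamma_j$ lying in $\interior(\Sigma)\setminus(\betas^1\cup\cdots\cup\betas^n\cup\{z\})$ that meets $\alpha_{i_j}$ transversally in a single point and pairs nontrivially with $\bdy P_j$. Winding $\alpha_{i_j}$ along $\gamma_j$ is an isotopy supported in $\interior(\Sigma)$, so it neither moves $\PMC$ nor touches any $\betas^i$; after the winding, the multiplicities of $P_j$ change by $\pm \langle \bdy P_j,\gamma_j\rangle$ on the two sides of the winding region, forcing $P_j$ to have local multiplicities of both signs.

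Iterating this procedure, at each stage replacing $\{P_j\}$ by a basis for the remaining subspace of multi-periodic domains lacking mixed signs, produces after finitely many steps a complete set of bordered attaching curves $\alphas'$, isotopic to $\alphas$, for which the diagram $(\Sigma,\alphas',\{\betas^i\}_{i\in\IndI},z)$ is admissible. The main technical point is to verify that each winding genuinely enlarges the subspace of mixed-sign multi-periodic domains without spoiling earlier progress; this is a standard linear-algebra argument modeled on the proof of the corresponding admissibility lemma in~\cite{LOT1}, and is the step that requires the most care. Everything else in the proof is bookkeeping: checking that $\alphas'$ is still a complete set of bordered attaching curves (which is immediate, since we only isotoped them in the interior) and that the requisite dual arcs $\gamma_j$ exist (which follows because $\bdy P_j$ is a nonzero relative $1$-cycle on $\alpha_{i_j}$).
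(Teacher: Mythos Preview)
Your approach is the paper's: the proof there simply says ``this follows by winding transversely to the $\alpha$ curves, as in \cite[Lemma~5.4]{OS04:HolomorphicDisks},'' citing the bordered analogue in \cite{LOT1} for the case $|\IndI|=1$. Your decomposition of multi-periodic domains into those with $\bdy_\alpha P = 0$ (handled by the admissibility hypothesis on $\{\betas^i\}$) and those with $\bdy_\alpha P \neq 0$ (handled by winding the $\alpha$-curves) is the standard one, and the verification that domains with $\bdy_\alpha P = 0$ agree with periodic domains of the capped-off $\beta$-multi-diagram is correct.

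There is, however, one restriction in your outline that is both unnecessary and potentially fatal as written: you ask the winding curves $\gamma_j$ to lie in $\Sigma\setminus(\betas^1\cup\cdots\cup\betas^n)$. When $|\IndI|$ is large the union $\bigcup_i\betas^i$ can fill $\Sigma$, in which case no essential curve disjoint from all the $\betas^i$ exists and the construction stalls; your one-line justification (``because $\bdy P_j$ is a nonzero relative $1$-cycle on $\alpha_{i_j}$'') does not address this. The fix is simply to drop that condition. Winding $\alpha_{i_j}$ along $\gamma_j$ is an isotopy of $\alpha_{i_j}$ alone, so the $\betas^i$ stay fixed whether or not $\gamma_j$ crosses them, and the winding still produces multiplicities of both signs for any periodic domain carrying $\alpha_{i_j}$ in its boundary. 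The genuine requirements on $\gamma_j$ are only that it meet $\alpha_{i_j}$ once transversally, be disjoint from the other $\alpha$-curves and from $z$, and that the winding be supported away from $\bdy\Sigma$ so as not to disturb $\PMC$; these can always be arranged because the $\alpha$-curves are linearly independent in $H_1(\Sigma,\bdy\Sigma)$, hence have connected complement. With that correction your sketch matches the standard argument.
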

\begin{proof}
  This follows by winding transversely to the $\alpha$ curves, as in
  the case of bigons~\cite[Lemma 5.4]{OS04:HolomorphicDisks}. (The
  corresponding result for bordered Heegaard diagrams (i.e.,
  $|\IndI|=1$), is~\cite[Proposition~\ref*{LOT1:prop:admis-achieve-maintain}]{LOT1}.)
\end{proof}
  
\subsection{Moduli spaces of polygons in bordered manifolds}  
\label{subsec:BorderedPolygons}
 
As discussed in Section~\ref{sec:Complexes}, there are polygon counts
defined in Heegaard Floer homology which satisfy the
$\Ainf$ relations.  The goal of this subsection and the next is to
generalize these polygon maps to the bordered context.  Suppose that
$(\Sigma,\alphas,\{\betas^i\}_{i\in\IndI},z)$ is an admissible
multi-diagram.  We will define maps
\[
\lsub{n}m_k\co \CFa(\betas^{i_{n-1}},\betas^{i_n},z)\otimes\dots \otimes 
\CFa(\betas^{i_1},\betas^{i_2},z)
\otimes \CFAa(\alphas,\betas^{i_1})\otimes
 \Alg^{\otimes k}
 \to
 \CFAa(\alphas,\betas^{i_n})
\]
by combining the definition of type $A$ modules
from~\cite[Chapter~\ref*{LOT1:chap:type-a-mod}]{LOT1} with
the usual polygon counts.

In this subsection, we set up the relevant moduli spaces. As is usual
in the cylindrical setting, this is a two-step process. First we
introduce moduli spaces of polygons with a fixed source.  The expected
dimension of these moduli spaces depends on the Euler characteristic
of the source.  We then show that for embedded polygons, the Euler
characteristic of the source is determined by the homology
class. (This is an extension of Sarkar's index formula for
polygons~\cite{Sarkar11:IndexTriangles}.) In the next section, we will
count moduli spaces of rigid, embedded polygons to define the polygon
maps.

To start, attach a cylindrical end $S^1\times[0,\infty)$ to
$\bdy\Sigma$. We will denote the result by~$\Sigma$, as well; this
should not cause confusion.

We generalize Definition~\ref{def:AdmissibleJs} to the bordered context:
\begin{definition}
  \label{def:AdmissibleJ}
  An {\em admissible collection of almost-complex structures} is a
  choice of smooth family $\{J_j\}_{j\in\Conf(D_n)}$ of almost-complex
  structures on $\Sigma\times D_n$ for each $n\geq 3$, satisfying all
  the conditions in Definition~\ref{def:AdmissibleJs},
  and the following further condition:
  \begin{itemize}
    \item over the cylindrical end $S^1\times [0,\infty)$ of $\Sigma$,
      the complex structure $J_j$ splits as a product $j_0\times j$,
      where $j_0$ is a standard cylindrical complex structure on $S^1\times [0,\infty)$.
    \end{itemize}
\end{definition}

\begin{definition}\label{def:bord-moduli}
  Let $\{J_j\}_{j\in\Conf(D_{n+1})}$ be an admissible collection of almost-complex structures. Fix a complete set of bordered attaching curves $\alphas$
  compatible with $\PMC$ (Definition~\ref{def:AttachingCurves}), and a
  further collection of $n$ complete sets of attaching circles
  $\betas^{1},\dots,\betas^n$ (Definition~\ref{def:AttachingCircles}).
  Fix generators $\x^k\in\Gen(\betas^{k},\betas^{{k+1}})$ for
  $k=1\dots,{n-1}$, as well as $\x^0\in\Gen(\alphas,\betas^{1})$,
  $\x^n\in\Gen(\alphas,\betas^{n})$,
  and consider maps
  \begin{equation}\label{eq:bordered-polygon-map}
    u\co (S,\bdy S)\to \bigl(\Sigma\times D_{n+1},(\alphas\times e_{0}) \cup
    (\betas^1\times
    e_1)\cup\dots\cup(\betas^n\times e_n) \bigr)
  \end{equation}
  where $S$ is a punctured Riemann surface and $D_{n+1}$ is equipped
  with a set of
  points $q_i\in \bdy D_{n+1}$ for $i=1,\dots,k$,
  with the following properties:
  \begin{enumerate}[label=(c-\arabic*),ref=c-\arabic*,start=0]
  \item The projection $\pi_\Sigma\circ u\co S\to \Sigma$ has degree $0$ at the region adjacent to the basepoint $z$.
  \item The punctures of $S$ are mapped to the punctures
    $\{p_{i,i+1}\}_{i=0}^\ell\cup \{q_i\}_{i=1}^k$ of
    $D_{n+1}\setminus \{q_i\}_{i=1}^k$.
  \item 
    The curve $u$ is asymptotic to  $\x^i \times \{p_{i,i+1}\}$ at the preimage of the 
    puncture $p_{i,i+1}$.
  \item The curve $u$ is asymptotic to $\rhos_i \times \{q_i\}$ at the
    punctures above $q_i$, for some set of Reeb chords $\rhos_i$ (in
    $Z\setminus\{z\}$ with endpoints in $\CircPts$).
  \item 
    \label{item:BoundaryMonotone} At each point $q\in (e_{0}\setminus
    \{q_i\}_{i=1}^\ell)$, the $g$ points
    $(\pi_\Sigma\circ u)\bigl((\pi_\CDisk\circ u)^{-1}(q)\bigr)$
    lie in $g$ distinct $\alpha$-curves.
  \end{enumerate}
  The set of such $u$ decomposes into homology classes, denoted
  $\pi_2(\x^{n},\x^{n-1},\dots,\x^0;\rhos_1,\dots,\rhos_m)$.
  For fixed $B\in\pi_2(\x^{n},\x^{n-1},\dots,\x^0;\rhos_1,\dots,\rhos_m)$, let
  \[
  \Mod^{B,S}=\Mod^B(\x^n,\dots,\x^0;\rhos_1,\dots,\rhos_m;S)
  \]
  denote the
  moduli space pairs $(j,u)$ where $j\in\Conf(D_{n+1})$ and $u$ is a $J_j$- holomorphic representative of
  $B\in\pi_2(\x^n,\dots,\x^0;\rhos_1,\dots,\rhos_m)$.
\end{definition}

Condition~(\ref{item:BoundaryMonotone}) can be formulated as a
combinatorial condition on the $(\x,\rhos_1,\dots,\rhos_m)$: it is the
{\em strong boundary monotonicity}
of~\cite[Section~\ref*{LOT1:sec:curves-in-sigma}]{LOT1}. It is
equivalent to the algebraic condition that $\x\otimes
a(\rhos_1)\otimes\dots\otimes a(\rhos_m)$ is a non-vanishing element in
$\CFAa(\alphas,\betas^1)\otimes\Alg(\PMC)\otimes\dots\otimes\Alg(\PMC)$,
where the tensor is taken over the ring of idempotents in
$\Alg(\PMC)$;
see~\cite[Lemma~\ref*{LOT1:lem:strong-monotone-tensor}]{LOT1}.

\begin{lemma}\label{lem:ind-fixed-source}
  The expected dimension of the moduli space
  $\Mod^{B}(\x^n,\dots,\x^0;\rhos_1,\dots,\rhos_m;S)$ is given by
  $\ind(B,S;\rhos_1,\dots,\rhos_m)+n-2$ where
  \begin{equation}\label{eq:ind-fixed-source}
  \ind(B,S;\rhos_1,\dots,\rhos_m)\coloneqq \left(\frac{3-n}{2}\right)g-\chi(S)+2e(B)+m.
  \end{equation}
  Here, $g$ is the genus of $\Sigma$ (which is also the number of
  elements in each $\x^i$) and $e(B)$ denotes the Euler measure of
  $B$.
\end{lemma}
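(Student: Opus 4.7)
The plan is to obtain this by a Riemann--Roch computation together with the dimension count for $\Conf(D_{n+1})$, generalizing two already-established special cases: the bordered bigon formula ($n=1$), proved as \cite[Proposition~\ref*{LOT1:prop:index}]{LOT1}, which yields
\[
\ind(B, S; \rhos_1, \dots, \rhos_m) = g - \chi(S) + 2e(B) + m,
\]
and the closed $(n+1)$-gon formula (when $\alphas$ consists only of circles and $m = 0$), proved as part of \cite[Section~10]{Lipshitz06:CylindricalHF} in the cylindrical setting, which yields
\[
\ind(B, S) = \frac{3-n}{2}g - \chi(S) + 2e(B).
\]
Both specializations agree with the claimed formula, so the task is to interpolate between them in the presence of bordered boundary, east--infinity Reeb chord asymptotics, and arbitrary $n$.

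First I would fix $j \in \Conf(D_{n+1})$ and compute the index of the linearized $\overline{\partial}_{J_j}$ operator on sections of $u^* T(\Sigma \times D_{n+1})$ with the appropriate totally-real boundary conditions and asymptotics. The split structure of $J_j$ on the cylindrical end $S^1 \times [0,\infty) \subset \Sigma$ (Definition~\ref{def:AdmissibleJ}) and the exponential convergence to Reeb chords at east-infinity mean that the Fredholm theory at the east-infinity punctures is governed by standard SFT indices, contributing $+m$ as in the bigon case. The contribution from the source's Euler characteristic and the Maslov-type boundary contribution combine, via Riemann--Roch, to give $-\chi(S) + 2e(B)$ together with corner corrections that sum to $\frac{1-n}{2}g$ at fixed $j$. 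Adding $\dim \Conf(D_{n+1}) = n-2$ and regrouping yields the stated expression.

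An equivalent and perhaps cleaner approach, which I would actually carry out, is induction on $n$. The base case $n=1$ is \cite[Proposition~\ref*{LOT1:prop:index}]{LOT1}. For the inductive step, degenerate $D_{n+1}$ along an interior arc into $D_{k+1} \cup D_{n-k+1}$ with $k \ge 1$: by the standard gluing/Fredholm additivity for $\overline\partial$-operators, $\Mod^{B,S}$ degenerates into the fibered product of moduli spaces $\Mod^{B_1, S_1}$ and $\Mod^{B_2, S_2}$ where $\chi(S) = \chi(S_1) + \chi(S_2)$, $e(B) = e(B_1) + e(B_2)$, $m = m_1 + m_2$, and one of the two smaller polygons is itself bordered while the other is closed. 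The Fredholm indices satisfy
\[
\ind(B, S; \rhos_1, \dots, \rhos_m) = \ind(B_1, S_1; \dotsb) + \ind(B_2, S_2; \dotsb) + 1,
\]
with the $+1$ accounting for the gluing length parameter. The arithmetic identity $\frac{3-n}{2}g = \frac{3-k}{2}g + \frac{3-(n-k)}{2}g + \frac{n-3}{2}g$ fails as written, but a short computation shows the index contributions combine correctly once one splits off a closed sub-polygon (so one summand uses the closed formula and the other the inductive bordered formula), with the $-g/2$ per added edge matching the $-1$ shift in the closed polygon normalization each step.

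The main technical obstacle is just the bookkeeping in this last step: distinguishing the bordered and closed summands after each degeneration, tracking how the $g$-term and the $\Conf$ dimension trade off, and verifying that Reeb-chord asymptotics at east-infinity, which may be distributed between $S_1$ and $S_2$, contribute additively. Once the arithmetic is matched in one case, invariance of the Fredholm index under homology class and source deformations (which alter $e(B)$ and $\chi(S)$ in controlled ways) extends the formula to all $(B, S, \vec\rho)$.
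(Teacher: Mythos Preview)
Your first outline—directly adapting the Riemann--Roch computation from the bigon case—is exactly what the paper does; its proof is a one-line reference to that adaptation together with the closed-polygon case in \cite{Lipshitz06:CylindricalHF,Sarkar11:IndexTriangles}. (Your stated corner correction $\tfrac{1-n}{2}g$ is off by $g$: the fixed-$j$ Fredholm index already equals $\ind$, with $g$-coefficient $\tfrac{3-n}{2}$.)

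The inductive route you say you would actually carry out is not in the paper, and the arithmetic you flag as failing has two concrete errors. First, $S_1$ and $S_2$ are glued along the $g$ punctures asymptotic to the intermediate generator, so $\chi(S)=\chi(S_1)+\chi(S_2)-g$, not $\chi(S_1)+\chi(S_2)$. Second, the ``$+1$ for the gluing length parameter'' belongs in the \emph{dimension} relation, not in $\ind$: since $\dim=\ind+(\text{sides})-3$ and the degenerate stratum already has one fewer $\Conf$-dimension, the correct additivity is simply $\ind=\ind_1+\ind_2$. (Your polygon sizes are also off by one: a codimension-one degeneration of $D_{n+1}$ is $D_a\times D_b$ with $a+b=n+3$, not $n+2$.) With these fixes the induction closes immediately—the $g$-terms contribute $\tfrac{4-a}{2}g+\tfrac{4-b}{2}g=\tfrac{5-n}{2}g$, and subtracting $\chi(S)+g$ gives $\tfrac{3-n}{2}g-\chi(S)$—so the strategy is sound once repaired, though less direct than the paper's.
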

\begin{proof}
  This is a simple adaptation of the proof in the bigon
  case~\cite[Proposition~\ref*{LOT1:Prop:Index}]{LOT1}; see
  also~\cite[Section 10.2]{Lipshitz06:CylindricalHF}
  and~\cite{Sarkar11:IndexTriangles} in the closed case.
\end{proof}

Next, we observe that, as with bigons, embeddedness is equivalent to a
condition on the Euler characteristic of $S$. To state the formula for
$\chi(S)$ we need a little more notation:
\begin{itemize}
\item Given a domain $B$, let $\bdy_i B$ denote the part of $B$ lying
  along $\betas^i$, and $\bdy_0B$ the part of $B$ lying along
  $\alphas$.
\item Given two curves $\gamma,\eta$ in $\Sigma$, with
  $\gamma\pitchfork\eta$ but possibly intersecting at the endpoints of
  $\gamma$ or $\eta$, we can define the \emph{jittered intersection
    number} of $\gamma$ and $\eta$, denoted $\gamma\cdot\eta$, by
  pushing $\eta$ slightly so that the endpoints of $\eta$
  (respectively $\gamma$) become disjoint from $\gamma$ (respectively
  $\eta$) in the four obvious ways, and averaging the
  results. See~\cite[Section~\ref*{LOT1:sec:index-additive}]{LOT1}.
\item Given a pair of Reeb chords $\rho_1,\rho_2$ in $Z$, let
  $L(\rho_1,\rho_2)$ be the linking number of $\bdy \rho_1$ and
  $\bdy\rho_2$, i.e., the multiplicity with which $\rho_2$ covers
  $\bdy \rho_1$. (This can be a half-integer, if $\rho_1$ and $\rho_2$
  share an endpoint.) Extend $L$ bilinearly to a function
  $L(\rhos_1,\rhos_2)$ on pairs of sets of Reeb chords.
  See~\cite[Section~\ref*{LOT1:sec:pregrading}
  and~\ref*{LOT1:sec:ind-at-emb}]{LOT1}.
\item Given a set of Reeb chords $\rhos$, let
  $\iota(\rhos)=-\frac{|\rhos|}{2}-\sum_{\{\rho_1,\rho_2\}\subset\rhos}|L(\rho_1,\rho_2)|.$
  See~\cite[Section~\ref*{LOT1:sec:ind-at-emb}]{LOT1}.
\end{itemize}

\begin{proposition}\label{prop:emb-ind}
  Suppose that $u\in \Mod^B(\x^n,\dots,\x^0;\rhos_1,\dots,\rhos_m;S)$
  is an embedded holomorphic $(n+1)$-gon. Then 
  \begin{align}
    \chi(S)&=g+e(B)-n_{\x^0}(B)-n_{\x^n}(B)\label{eq:emb-chi}\\
    &\qquad-\sum_{n\geq
      j>\ell\geq1}\bdy_j(B)\cdot\bdy_\ell(B)-\sum_i\iota(\rhos_i)-\sum_{i<j}L(\rhos_i,\rhos_j)\nonumber\\
    \ind(B;S;\rhos_1,\dots,\rhos_m)&=e(B)+n_{\x^0}(B)+n_{\x^n}(B)-\left(\frac{n-1}{2}\right)g\label{eq:emb-ind}
    \\
    &\qquad+\sum_{n\geq
      j>\ell\geq1}\bdy_j(B)\cdot\bdy_\ell(B)+m+\sum_i\iota(\rhos_i)+\sum_{i<j}L(\rhos_i,\rhos_j).\nonumber
  \end{align}
  Moreover, if $\Mod^B(\x^n,\dots,\x^0;\rhos_1,\dots,\rhos_m;S)$
  contains a non-embedded holomorphic polygon then
  \begin{multline*}
    \ind(B;S;\rhos_1,\dots,\rhos_m)\leq 
    \Biggl[e(B)+n_{\x^0}(B)+n_{\x^n}(B)-\left(\frac{n-1}{2}\right)g+\sum_{n\geq
      j>\ell\geq1}\bdy_j(B)\cdot\bdy_\ell(B)
    \\
    +m+\sum_i\iota(\rhos_i)+\sum_{i<j}L(\rhos_i,\rhos_j)\Biggr]-2.
  \end{multline*}
\end{proposition}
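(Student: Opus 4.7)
The plan is to combine Sarkar's index formula for closed holomorphic polygons~\cite{Sarkar11:IndexTriangles} with the bordered bigon embedded index formula from~\cite[Section~\ref*{LOT1:sec:ind-at-emb}]{LOT1}. The two starting inputs are already in place: Lemma~\ref{lem:ind-fixed-source} expresses $\ind(B,S;\rhos_1,\dots,\rhos_m)$ linearly in $\chi(S)$, and the cylindrical setup matches that of~\cite{Lipshitz06:CylindricalHF} near the $\alpha$-boundary and near the Reeb chord punctures. So the content of the proposition is really the formula~\eqref{eq:emb-chi} for $\chi(S)$ together with the embedded/non-embedded dichotomy; everything else is algebra from Lemma~\ref{lem:ind-fixed-source}.

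I would compute $\chi(S)$ by separating the closed polygon contribution from the Reeb chord contribution. For the closed contribution, argue first in the case $m=0$: then the situation is essentially that of a closed $(n+1)$-gon counted in $\Sym^g(\Sigma)$ via the tautological correspondence (Lemma~\ref{lem:tautological}), and for an embedded representative Sarkar's computation~\cite{Sarkar11:IndexTriangles} gives
\[
\chi(S) = g + e(B) - n_{\x^0}(B) - n_{\x^n}(B) - \sum_{n\geq j>\ell\geq 1} \bdy_j(B)\cdot\bdy_\ell(B),
\]
where the $\alpha$-corners at $\x^0$ and $\x^n$ contribute the point-measure terms, and each unordered pair of $\beta$-tuples $(\betas^j,\betas^\ell)$ contributes the jittered intersection term from corners lying in $\betas^j\cap\betas^\ell$. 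Next, turn on Reeb chord punctures. Their contribution is purely local to the cylindrical end $S^1\times[0,\infty)$, where by Definition~\ref{def:AdmissibleJ} the almost-complex structure splits as a product and the geometry of $u$ degenerates as in~\cite[Proposition~\ref*{LOT1:Prop:Index}]{LOT1}. The same count of asymptotic branch covering data therefore subtracts $\sum_i \iota(\rhos_i) + \sum_{i<j} L(\rhos_i,\rhos_j)$, yielding~\eqref{eq:emb-chi}. Plugging~\eqref{eq:emb-chi} into Lemma~\ref{lem:ind-fixed-source} and simplifying proves~\eqref{eq:emb-ind}.

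For the non-embedded bound, I would use the standard double-point argument: the homological self-intersection of $u$, viewed as a section of $\Sigma\times D_{n+1}\to D_{n+1}$, is determined by the homology class $B$ together with the asymptotic data, hence equals the value $\chi_{\mathrm{emb}}(S)$ predicted by~\eqref{eq:emb-chi}. If $u$ has any non-embedded point (an interior double point, or a boundary double point not forced by the asymptotics), then $\chi(S)\geq \chi_{\mathrm{emb}}(S)+2$; substituting into Lemma~\ref{lem:ind-fixed-source} gives the claimed inequality $\ind\leq \ind_{\mathrm{emb}} - 2$. This is the same mechanism as in~\cite[Proposition~\ref*{LOT1:Prop:Index}]{LOT1} and~\cite[Section 4.2]{Lipshitz06:CylindricalHF}, adapted to the present setting where the source has $n+1$ boundary components meeting at the polygon corners.

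The main obstacle will be bookkeeping at the three heterogeneous kinds of corner: the two $\alpha\beta$-corners at $p_{n,0}$ and $p_{0,1}$ (producing $n_{\x^0}(B)+n_{\x^n}(B)$), the $\binom{n-1}{2}$ pairwise $\beta\beta$-corner interactions (producing the jittered intersection sum), and the Reeb chord punctures on $e_0$ (producing the $\iota$ and $L$ terms). Verifying that these three kinds of corrections are genuinely independent — in particular that mixed interactions between $\beta\beta$-corners and Reeb chord punctures do not contribute — uses the fact that the $\beta$-curves are disjoint from the cylindrical end, and that the $\alpha$-arcs are disjoint from the $\beta$-circles. With that independence, $\chi(S)$ decomposes as a sum of the Sarkar contribution and the LOT Reeb contribution, and the rest is algebra.
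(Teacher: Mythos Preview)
Your plan correctly identifies the two ingredients, and the paper agrees that the formula is a synthesis of Sarkar's closed-polygon index with the Reeb-chord corrections from~\cite{LOT1}. But proving the $m=0$ case via Sarkar and then ``turning on Reeb chord punctures'' is not a proof as stated: there is no deformation from a provincial polygon to one with Reeb chords, and the quantities $e(B)$, $n_{\x^0}(B)$, $n_{\x^n}(B)$ all shift when $B$ acquires multiplicity near $\bdy\Sigma$, so you cannot simply add a correction to the $m=0$ answer. Your locality observation---no cross-terms between $\beta\beta$-corners and Reeb chords because the $\beta$-circles miss the cylindrical end---is the right geometric reason the final formula splits cleanly, but it is not itself a computation of $\chi(S)$ for a curve that has both kinds of feature.

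The paper instead runs a single direct computation modeled on~\cite[Proposition~\ref*{LOT1:prop:asympt_gives_chi}]{LOT1}. It realizes $D_{n+1}$ as the strip $[0,1]\times\RR$ with the $n-1$ $\beta\beta$-corners placed on $\{1\}\times\RR$, and uses the (holomorphic but corner-moving) $\RR$-translation $\tau_r$. Riemann--Hurwitz gives $e(S)=e(B)-\br(\pi_\Sigma\circ u)$, and the branch count equals $u\cdot\tau_\epsilon(u)$ up to a within-packet Reeb linking correction. Sliding from $\epsilon$ to large $R$, intersections appear or disappear on the $\{1\}$-side as $\beta$-corners pass one another (yielding $\sum_{j>\ell}\bdy_j(B)\cdot\bdy_\ell(B)+g(n-1)/4$, exactly Sarkar's contribution) and on the $\{0\}$-side as Reeb chords pass one another (yielding $\sum_{i<j}L(\rhos_i,\rhos_j)$, exactly the \cite{LOT1} contribution); for $R\gg 0$ one has $u\cdot\tau_R(u)=n_{\x^0}(B)+n_{\x^n}(B)-g/2$. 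The independence you noted is precisely that these two sliding processes happen on opposite edges of the strip---but one must actually run the translation argument to see and exploit this.

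Your non-embedded argument is fine and matches the paper's: each double point of $u$ adds $2$ to $u\cdot\tau_\epsilon(u)$ beyond the tangency count, so $\chi(S)$ exceeds the embedded value by $2$ per double point and $\ind$ drops accordingly.
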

\begin{proof}
  The formula is a combination of the embedded index formula for
  bigons~\cite[Proposition~\ref*{LOT1:prop:asympt_gives_chi}]{LOT1}
  and Sarkar's index formula for
  $n$-gons~\cite{Sarkar11:IndexTriangles}. We prove it by
  imitating the proof
  of~\cite[Proposition~\ref*{LOT1:prop:asympt_gives_chi}]{LOT1}; see
  also the proof
  of~\cite[Proposition~\ref*{LOT1:prop:tri-emb-ind-general}]{LOT1}. We
  will be brief.

  \begin{figure}
    \centering
    \includegraphics{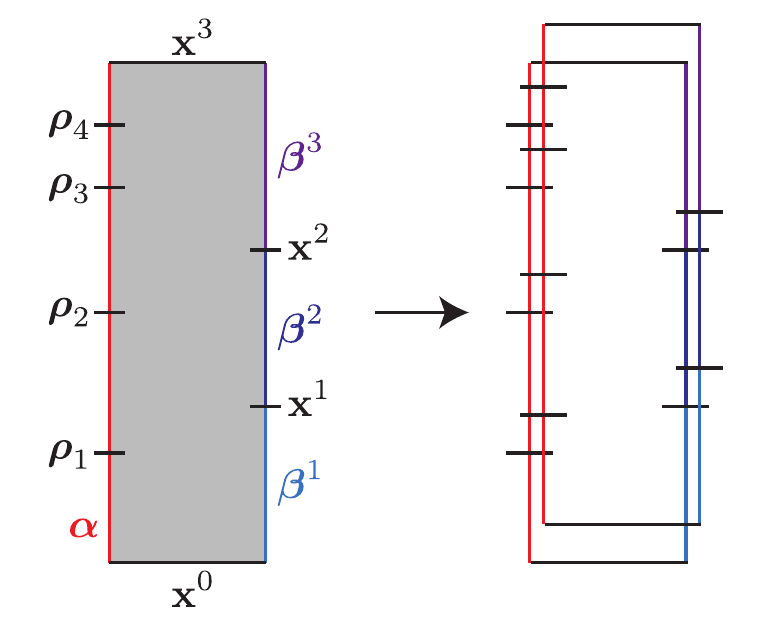}
    \caption{\textbf{Computing the embedded index for polygons.} The
      polygon shown is a $4$-gon (rectangle).}
    \label{fig:polygon-ind}
  \end{figure}

  We start with the formula for $\chi$. View the disk $D_{n+1}$ as the
  strip $[0,1]\times\RR$ with $(n-1)$ marked points on
  $\{1\}\times\RR$, so that the boundary conditions $\alphas$
  correspond to $\{0\}\times\RR$ and $\x^0$ corresponds to
  $[0,1]\times\{-\infty\}$. There is an induced $\RR$-action on
  $D_{n+1}$ which is holomorphic but does not preserve (all but two
  of) the corners; continuing to think of $D_{n+1}$ as a strip, we
  will call this $\RR$-action \emph{translation}. Let $\tau_r(u)$ be
  the result of translating $u$ by $r$ units.

  Given a map $v$ between Riemann surfaces, let $\br(v)$ denote the
  ramification degree of $v$. Recall that $\pi_\bD\co \Sigma\times
  D_n\to D_n$ and $\pi_\Sigma\co \Sigma\times D_n\to \Sigma$ are the
  two projections. Let $\frac{\partial}{\partial t}$ be the vector
  field generated by the $\RR$-action on $D_n$.

  Viewing the $\x^i$ punctures of $S$ as right-angled corners and each
  Reeb chord as having two right-angled corners, we have
  \begin{equation}\label{eq:ind-chi-e}
  \chi(S)=e(S)+\frac{(n+1)g}{4}+\sum_i\frac{|\rhos_i|}{2}.
  \end{equation}
  By the Riemann-Hurwitz formula, 
  \begin{equation}\label{eq:ind-R-H}
  e(S)=e(B)-\br(\pi_\Sigma\circ u).
  \end{equation}
  By definition $\br(\pi_\Sigma\circ u)$ is the number of tangencies
  of $\frac{\partial}{\partial t}$ to $u$. Taking into account that
  sliding Reeb chords in $\rhos_i$ past each other introduces boundary
  double points we have
  \begin{equation}\label{eq:br-to-tau}
  \br(\pi_\Sigma\circ u)=u\cdot
  \tau_\epsilon(u)-\sum_i\sum_{\{\rho_a,\rho_b\}\subset \rhos_i}L(\rho_a,\rho_b),
  \end{equation}
  where $u\cdot \tau_\epsilon(u)$ denotes the intersection
  number---algebraic or geometric does not matter, since holomorphic
  curves intersect positively.

  Translating farther, for $R$ sufficiently large we have
  \begin{equation}
    \label{eq:ind-R-large}
    u\cdot \tau_\epsilon(u)=u\cdot\tau_R(u)+\sum_{n\geq
      j>\ell\geq1}\bdy_j(B)\cdot\bdy_\ell(B)+\frac{g(n-1)}{4}+\sum_{i<j}L(\rhos_i,\rhos_j).
  \end{equation}
  Again, the contribution of $L(\rhos_i,\rhos_j)$ comes from Reeb
  chords sliding past each other; see, e.g., the proof
  of~\cite[Proposition~\ref*{LOT1:prop:asympt_gives_chi}]{LOT1}. The
  contribution of $\sum_{n\geq
    j>\ell\geq1}\bdy_j(B)\cdot\bdy_\ell(B)+\frac{g(n-1)}{4}$ comes
  from intersections appearing or disappearing along the boundary,
  where $\betas^i$ and $\betas^j$ intersect; see, the proof
  of~\cite[Proposition~\ref*{LOT1:prop:tri-emb-ind-general}]{LOT1}. 

  As in the bigon case, 
  \begin{equation}
    \label{eq:ind-R-v-large}
    u\cdot\tau_R(u)=n_{\x^0}(B)+n_{\x^n}(B)-\frac{g}{2}.
  \end{equation}
  Combining
  Equations~\eqref{eq:ind-chi-e},~\eqref{eq:ind-R-H},~\eqref{eq:br-to-tau},~\eqref{eq:ind-R-large}
  and~\eqref{eq:ind-R-v-large} gives
  \begin{align*}
    \chi(S)&=\frac{(n+1)g}{4}+\sum_i\frac{|\rhos_i|}{2}+e(B)-\Bigl[-\sum_i\sum_{\{\rho_a,\rho_b\}\subset
        \rhos_i}L(\rho_a,\rho_b)+\sum_{n\geq
      j>\ell\geq1}\bdy_j(B)\cdot\bdy_\ell(B)\\
    &\qquad\qquad+\frac{g(n-1)}{4}+\sum_{i<j}L(\rhos_i,\rhos_j)+n_{\x^0}(B)+n_{\x^n}(B)-\frac{g}{2}\Bigr]\\
    &=g+e(B)-n_{\x^0}(B)-n_{\x^n}(B)+\sum_i\frac{|\rhos_i|}{2}+\sum_i\sum_{\{\rho_a,\rho_b\}\subset
        \rhos_i}L(\rho_a,\rho_b)\\
      &\qquad\qquad-\sum_{n\geq
      j>\ell\geq1}\bdy_j(B)\cdot\bdy_\ell(B)-\sum_{i<j}L(\rhos_i,\rhos_j)\\
    &=g+e(B)-n_{\x^0}(B)-n_{\x^n}(B)-\sum_i\iota(\rhos_i)
      -\sum_{n\geq
      j>\ell\geq1}\bdy_j(B)\cdot\bdy_\ell(B)-\sum_{i<j}L(\rhos_i,\rhos_j),
  \end{align*}
  as claimed.

  Combining Formulas~\eqref{eq:ind-fixed-source}
  and~\eqref{eq:emb-chi} gives
  \begin{align*}
    \ind(u)&=\left(\frac{3-n}{2}\right)g+2e(B)+m\\
    &\qquad-\biggl[
      g+e(B)-n_{\x^0}(B)-n_{\x^n}(B)-\sum_i\iota(\rhos_i)\\
    &\qquad\qquad
      -\sum_{n\geq
        j>\ell\geq1}\bdy_j(B)\cdot\bdy_\ell(B)-\sum_{i<j}L(\rhos_i,\rhos_j)
    \biggr]\\
    &=\left(\frac{1-n}{2}\right)g+e(B)+m
      +n_{\x^0}(B)+n_{\x^n}(B)+\sum_i\iota(\rhos_i) \\
    &\qquad  +\sum_{n\geq
        j>\ell\geq1}\bdy_j(B)\cdot\bdy_\ell(B)+\sum_{i<j}L(\rhos_i,\rhos_j),
  \end{align*}
  as claimed.
  
  Finally, for non-embedded curves, each double point or equivalent
  singularity increases $\chi$ by $2$, and consequently drops $\ind$
  by $2$.
\end{proof}

Finally, we define the moduli spaces of embedded curves: 
\begin{definition}\label{def:emb-moduli-sp}
  Let $\cM^B=\cM^B(\x^n,\dots,\x^0;\rhos_1,\dots,\rhos_m)$ denote the
  set of embedded holomorphic maps in the homology class $B$ with
  asymptotics $\x^n,\dots,\x^0$ and $\rhos_1,\dots,\rhos_m$;
  equivalently, 
  \begin{equation*}
    \cM^B(\x^n,\dots,\x^0;\rhos_1,\dots,\rhos_m)\coloneqq
    \bigcup_{\chi(S)\text{ given by 
    }\eqref{eq:emb-chi}}\cM^B(\x^n,\dots,\x^0;\rhos_1,\dots,\rhos_m;S).
  \end{equation*}
\end{definition}

\begin{proposition}
  For a generic, admissible family $\{J_j\}$ of almost-complex
  structures, the moduli spaces
  $\Mod^{B,S}=\Mod^B(\x^n,\dots,\x^0;\rhos_1,\dots,\rhos_m;S)$, and
  hence also the embedded moduli spaces
  $\cM^B(\x^n,\dots,\x^0;\rhos_1,\dots,\rhos_m)$, are transversely cut
  out, and hence are smooth manifolds whose dimensions are given by
  $\ind(B,S;\rhos_1,\dots,\rhos_m)+n-2$ and
  $\ind(B;\rhos_1,\dots,\rhos_m)+n-2$, respectively.
\end{proposition}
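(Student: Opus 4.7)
The argument is a routine combination of three existing transversality techniques: the bordered bigon case developed in~\cite{LOT1}, the closed polygon case of Proposition~\ref{prop:closed-trans}, and Sarkar's embedded index framework~\cite{Sarkar11:IndexTriangles}.

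The plan is as follows. First, I would fix the source $S$, the homology class $B$, and the asymptotic data $\x^0, \dots, \x^n, \rhos_1, \dots, \rhos_m$, and form the universal moduli space of triples $(j, u, \{J_{j'}\})$ with $j \in \Conf(D_{n+1})$, $\{J_{j'}\}$ an admissible family in the sense of Definition~\ref{def:AdmissibleJ}, and $u$ a $J_j$-holomorphic representative. The aim is to show that the linearization of $\overline{\partial}$ is surjective at each such triple, after which the Sard--Smale theorem applied to the forgetful map to the space of admissible families produces transversality for a generic family, with countability of the possible choices of $(B, S, \rhos_1, \dots, \rhos_m)$ yielding a simultaneous generic choice.

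Surjectivity rests on the familiar argument that, at a smooth interior point $p\in S$ mapped away from the punctures and Reeb asymptotics, perturbations of $J_j$ localized near $\pi_\Sigma(u(p))$---permitted by Conditions~\ref{item:J-proj} and~\ref{item:J-fiber} away from the cylindrical ends---span the cokernel of $D\overline{\partial}$. This uses somewhere-injectivity of $\pi_\Sigma \circ u$ for nontrivial homology classes, proved exactly as in the bordered bigon case of~\cite{LOT1}. The varying conformal structure on $D_{n+1}$ only makes surjectivity easier, since the ambient parameter space enlarges by $\Conf(D_{n+1})$, just as in the proof of Proposition~\ref{prop:closed-trans}.

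For the embedded moduli space, Proposition~\ref{prop:emb-ind} shows that if $u \in \Mod^{B,S}$ is embedded then $\chi(S)$ is determined by $B$ together with the asymptotics via Equation~\eqref{eq:emb-chi}. Since embeddedness is an open condition in the $C^\infty$ topology, $\cM^B$ is a union of open subsets of the various $\Mod^{B,S}$ with $\chi(S)$ given by~\eqref{eq:emb-chi}. Combining Lemma~\ref{lem:ind-fixed-source} with Formula~\eqref{eq:emb-ind} yields the stated dimension. The only real technical point, such as it is, is the behavior of the linearization near the Reeb-chord asymptotics at east infinity, which is Morse--Bott with respect to the cylindrical splitting of $J$ imposed by Definition~\ref{def:AdmissibleJ}; this is handled precisely as in~\cite{LOT1} and constitutes no new analytic difficulty.
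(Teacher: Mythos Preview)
Your proposal is correct and takes essentially the same approach as the paper: the paper's own proof is a one-line reference to ``standard transversality results'' with a pointer to the bigon case in~\cite{LOT1}, and what you have written is precisely a sketch of those standard details, combining the bordered bigon transversality from~\cite{LOT1} with the extra $\Conf(D_{n+1})$ parameter as in Proposition~\ref{prop:closed-trans}.
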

\begin{proof}
  This follows from standard transversality results; see, for
  instance,~\cite[Proposition~\ref{LOT1:prop:transversality}]{LOT1}
  for the analogous result for bigons.
\end{proof}

The spaces $\Mod^{B,S}$ and $\cM^B(\x^n,\dots,\x^0;\rhos_1,\dots,\rhos_m)$ are, 
of course, typically non-compact, except when they are $0$-dimensional, in 
which case they are compact; see also Remark~\ref{rem:non-compact}.

\subsection{Maps induced by polygon counts}
\label{subsec:BorderedPolygonMaps}
\begin{definition}\label{def:n-m-k}
  Suppose that $(\Sigma,\alphas,\{\betas^i\}_{i=1}^n,z)$ is a
  provincially admissible multi-diagram.
  Define the map
  \[ \lsub{n}m_k\co \CFa(\betas^{{n-1}},\betas^{n},z)\otimes\dots \otimes 
  \CFa(\betas^{1},\betas^{2},z)\otimes \CFAa(\alphas,\betas^1)\otimes
  \overbrace{\Alg \otimes \dots \otimes \Alg}^k \to
  \CFAa(\alphas,\betas^n),\]
  by extending the following formulas linearly.
  For fixed $\eta^{i}\in\Gen(\betas^{i-1},\betas^{i})$
  and sequence of sets of Reeb chords $\vec\rhos=(\rhos_1,\dots,\rhos_k)$ 
  define:
  \begin{equation}
    \lsub{n}m_k(\eta^{n},\dots,\eta^{1},\x,a(\rhos_1),\dots,a(\rhos_k)) 
    = \!\!\!\!\!\!\!\!\!\sum_{\substack{\y\in\Gen(\alphas,\betas^{n})\\
    		    B\in\pi_2(\y,\eta^{n-1},\dots,\eta^1,\x;\rhos_1,\dots,\rhos_m)\\ 
    \ind(B;\rhos_1,\dots,\rhos_m)=2-n}}\!\!\!\!\!\!\!\!\!
    \#\Mod^B(\y,\eta^{n-1},\dots,\eta^{1},\x;\rhos_1,\dots,\rhos_k)
    \y    \label{eq:DefineAction}
  \end{equation}
\end{definition}
So, for example, $\lsub{0}m_0$ is the differential on
$\CFAa(\alphas,\betas^0)$. Since the moduli spaces being counted are
$0$-dimensional, they are compact and hence finite. Provincial
admissibility implies that the sum defining $\lsub{n}m_k$ is also
finite
(compare~\cite[Proposition~\ref*{LOT1:prop:provincial-admis-finiteness}]{LOT1}).

Admissibility guarantees
that $\lsub{n}m_k=0$ for a fixed multi-diagram and all sufficiently
large $k$
(compare~\cite[Proposition~\ref*{LOT1:prop:admis-finiteness}]{LOT1}).

The polygon counts satisfy an $\Ainf$ relation. Before stating this
relation, note that $\Alg$ is a $\dg$ algebra, so $\mu_{i-j+1}=0$
unless $j=i$ or $j=i-1$.

\begin{proposition}
  \label{prop:AinftyRelation}
  The polygon counts defined above satisfy the following $\Ainf$ relation:
  \begin{align*}
    0&=\sum_{0\leq i\leq k;\ 0\leq p \leq n}
    \lsub{n-p}m_{k-i}(\eta^{n},\dots \eta^{p+1}, \lsub{p}m_i(\eta^p,\dots,\eta^1,\x,a_1,\dots,a_i),a_{i+1},\dots a_k) \\
    &+ \sum_{1\leq p<q\leq n} 
    \lsub{n-q+p}m_k(\eta^{n},\dots,\eta^{q+1},m_{q-p+1}(\eta^{q},\dots,\eta^p),\eta^{p-1},\dots,\eta^1,\x,a_1,\dots,a_k) \\
    &+ \sum_{1\leq i\leq j\leq n}
    \lsub{n}m_{k-j+i}(\eta^n,\dots,\eta^1,\x,a_1,\dots,\mu_{i-j+1}(a_i,\dots,a_j),\dots,a_k),
  \end{align*}
  for any $\x\in \Gen(\alphas,\betas^1)$, 
  $\eta^i\in \CFa(\betas^i,\betas^{i+1},z)$,
  $a_i\in\Alg(\PMC)$.
\end{proposition}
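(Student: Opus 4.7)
The plan is to derive this relation in the standard way, by identifying each summand on the right-hand side with a combinatorial type of end of a suitable one-dimensional moduli space of embedded holomorphic polygons. Fix generators $\y,\eta^{n-1},\dots,\eta^1,\x$ and sets of Reeb chords $\rhos_1,\dots,\rhos_k$, and consider those classes $B$ with $\ind(B;\rhos_1,\dots,\rhos_k)=3-n$. For a generic admissible family $\{J_j\}$ the moduli space $\cM^B(\y,\eta^{n-1},\dots,\eta^1,\x;\rhos_1,\dots,\rhos_k)$ is a smooth $1$-manifold (by the transversality statement preceding the proposition), and provincial admissibility combined with $n_z(B)=0$ ensures that only finitely many $B$ contribute, exactly as for the bigon case in~\cite{LOT1}.

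I next classify the ends of these moduli spaces using SFT compactness~\cite{BEHWZ03:CompactnessInSFT} together with the embedded index formula (Proposition~\ref{prop:emb-ind}). The ends fall into three geometric types. First, the $(n+1)$-gon $D_{n+1}$ can degenerate to a point of $\partial\oConf(D_{n+1})$ so that $D_{n+1}$ splits into two sub-polygons meeting at a vertex. Either the $\alphas$-edge $e_0$ lies entirely in one sub-polygon, in which case the other sub-polygon is closed and its count equals some $m_{q-p+1}(\eta^{q},\dots,\eta^{p})$, producing the second sum; or $e_0$ is divided between the two sub-polygons, in which case both are bordered and the contribution has the form $\lsub{n-p}m_{k-i}(\dots,\lsub{p}m_i(\dots),\dots)$, producing the first sum. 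Second, two adjacent Reeb chord sets $\rhos_j,\rhos_{j+1}$ can collide on $\partial\Sigma$, yielding the $\mu_2$ contributions to the third sum via the joining product on $\Alg(\PMC)$; and a single Reeb chord can split into two shorter chords sharing an endpoint, yielding the $\mu_1$ contributions. In each case the matching between a combinatorial type of end and an algebraic term follows directly from Definitions~\ref{def:n-m-k} and~\ref{def:closed-polygon-map}, together with the standard gluing theorems used in the closed case (proof of Proposition~\ref{prop:closed-Ainf-rel}) and the bordered bigon case (see~\cite{LOT1}).

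The main technical hurdle will be excluding unwanted degenerations, so that the above enumeration of ends is complete. I must rule out sphere and disk bubbles (using $n_z(B)=0$ and the linear independence of $\alphas,\betas^{i}$), periodic and multi-periodic domain bubbles (using provincial admissibility), and boundary collisions of $\alpha$-intersection points under $\pi_\Sigma\circ u$ (using the strong boundary monotonicity condition~\ref{item:BoundaryMonotone}). Non-embedded limit curves cannot appear as ends of the $1$-dimensional embedded moduli spaces because, by the strict inequality at the end of Proposition~\ref{prop:emb-ind}, they have expected dimension at least two smaller than the embedded dimension, and so occur in codimension at least $2$. With these exclusions in hand, summing the contributions over ends of each $1$-manifold and working over $\Field$ yields the stated identity.
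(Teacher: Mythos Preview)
Your proposal is correct and takes the same approach as the paper's own proof, which is simply the assertion that the result is a straightforward synthesis of the closed polygon $A_\infty$ relation (Proposition~\ref{prop:closed-Ainf-rel}) with the proof of the $A_\infty$ relation for $\CFAa$ from~\cite[Proposition~\ref*{LOT1:prop:A-module-defined}]{LOT1}. Your enumeration of ends is essentially right; the precise east-infinity bookkeeping (in particular, the $\mu_1$ terms come from odd shuffle curve degenerations rather than a single chord splitting, and one must verify that the remaining join and split curve ends cancel or combine with collision ends appropriately) is somewhat more delicate than your sketch indicates, but you correctly defer those bordered details to~\cite{LOT1}.
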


\begin{proof}
  This is a straightforward synthesis of the proof of the $\Ainf$ relation for polygon counting
  (Proposition~\ref{prop:closed-Ainf-rel}) with the proof of the $\Ainf$ relation for $\CFAa$~\cite[Proposition~\ref*{LOT1:prop:A-module-defined}]{LOT1}.
\end{proof}

\begin{remark}
  \label{rem:FukABimodule}
  Proposition~\ref{prop:AinftyRelation} has an interpretation in terms
  of Fukaya categories. As in Remark~\ref{rem:ChainComplexIsTypeD},
  let $\HFuk$ denote the full subcategory of the Fukaya category of
  $\Sym^g(\Sigma)$ generated by Heegaard tori. Then
  Proposition~\ref{prop:AinftyRelation} can be interpreted as saying
  that $\CFAa(\alphas,\cdot)$ is an $\Ainf$-bimodule over $\HFuk$
  and $\Alg(\PMC)$. (Convention~\ref{conv:order} means that
  $\CFAa(\alphas,\cdot)$ is a left-right bimodule; with the usual
  composition conventions for Heegaard Floer homology
  $\CFAa(\alphas,\cdot)$ would be a right-right bimodule.)
\end{remark}

There is a type $D$ analogue of the above construction (compare~\cite[Chapter~\ref*{LOT1:chap:type-d-mod}]{LOT1}).
Recall that
for type $D$ structures, one considers a different orientation
convention: in that case, one considers a collection $\alphas$ of
bordered attaching curves in $\Sigma$ which are compatible with $-\PMC$.

Now, if $(\Sigma,\alphas,\{\betas^i\}_{i=1}^n,z)$ is an admissible
bordered multi-diagram, polygon counts give maps
\[
\delta_n^1\co \CFa(\betas^{{n-1}},\betas^{n},z)\otimes \dots \otimes
\CFa(\betas^{1},\betas^{2},z)\otimes \CFDa(\alphas,\betas^{1}) \to
\Alg(\PMC)\otimes \CFDa(\alphas,\betas^{n}),
\]
defined as follows:
\[ 
\delta_n^1(\eta^{n-1},\dots,\eta^1,\x) =
\sum_{\substack{\y\in\Gen(\alphas,\betas^{i_n})\\\vec{\rho}\\
    B\in\pi_2(\y,\eta^{n-1},\dots,\eta^1,\x;\vec{\rho})\\ \ind(B,\vec{\rho})=2-n}}
\#\Mod^B(\y,\eta^{n-1},\dots,\eta^1,\x;\vec\rho)
a(-\vec\rho) \otimes \y,
\] 
where $\vec\rho$ runs over all sequences
$\vec\rho=(\{\rho_1\},\dots,\{\rho_k\})$ of (singleton sets of) Reeb
chords for which $(\x,\{-\rho_1\},\dots,\{-\rho_k\})$ is strongly
boundary monotone; and if $\vec\rho=(\rho_1,\dots,\rho_k)$, then
\[a(-\vec\rho)=\prod_{i=1}^k a(-\rho_i).\]
So, for example, $\delta^1_1$ is the differential on
$\CFDa(\alphas,\betas^1)$. 

The map $\delta_{n}^1$ can be naturally extended to a map 
\[\delta_{n}^1\co \CFa(\betas^{n-1},\betas^n,z)\otimes\dots \otimes \CFa(\betas^1,\betas^2,z)\otimes \Alg(\PMC)\otimes \CFDa(\alphas,\betas^1) \to \Alg(\PMC)\otimes \CFDa(\alphas,\betas^n)\]
by the formula 
\[ 
{\widetilde \delta}^1_n(\eta^{n-1},\dots,\eta^1,a,\x)=
(\mu_2(a,\cdot)\otimes\Id_{\CFDa})\circ
\delta^1_n(\eta^{n-1},\dots,\eta^1,\x).
\]

\begin{proposition}
  \label{prop:AinftyRelationD}
  The polygon counts defined above satisfy the following $\Ainf$ relation:
  \begin{align*} \sum_{1\leq p\leq n} & {\widetilde\delta}^1_{n-p+1}
  (\eta^{i_{n-1}<i_n},\dots,\eta^{i_p<i_{p+1}},\delta^1_{p+1}(\eta^{i_{p-1}<i_p},\dots,\eta^{i_0<i_1},\x)) \\
  & + (\mu_1\otimes \Id) \circ \delta^1_{n+1}(\eta^{i_{n-1}<i_n},\dots,\eta^{i_0<i_1},\x) \\
  & + \sum_{1\leq p\leq q\leq n} \delta^1_{n-q+p+1}(\eta^{i_{n-1}<i_n},\dots,m_{q-p+1}(\eta^{i_{q-1}<i_q},\dots,\eta^{i_{p-1}<i_p}),\eta^{i_{p-2}<i_{p-1}},\dots,\eta^{i_0<i_1},\x) \\
  &=0.\end{align*}
\end{proposition}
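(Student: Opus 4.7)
The plan is to imitate the proof of Proposition~\ref{prop:AinftyRelation} (the type~$A$ version), combined with the standard derivation of the $\Ainf$ relation for $\CFDa$ in the bigon case, adjusted to the type~$D$ convention in which all algebra asymptotics are recorded on the output side rather than as inputs. Concretely, fix a sequence $i_0<\dots<i_n$ in~$\IndI$, a generator $\x\in\Gen(\alphas,\betas^{i_0})$, and chains $\eta^{i_{p-1}<i_p}\in\CFa(\betas^{i_{p-1}},\betas^{i_p},z)$. For each $\y\in\Gen(\alphas,\betas^{i_n})$, each strongly boundary monotone sequence of Reeb chords $\vec\rho=(\rho_1,\dots,\rho_k)$, and each homology class $B\in\pi_2(\y,\eta^{i_{n-1}<i_n},\dots,\eta^{i_0<i_1},\x;\vec\rho)$ with $\ind(B;\vec\rho)=2-n$, the moduli space of embedded pseudo-holomorphic $(n+2)$-gons $\cM^B$ is $1$-dimensional by Proposition~\ref{prop:emb-ind}; it has a Gromov compactification $\ocM^B$ which is a compact $1$-manifold with boundary, so the algebraic count of its codimension-one boundary strata vanishes modulo~$2$.

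The boundary $\ocM^B\setminus\cM^B$ decomposes into the following mutually exclusive families, each producing exactly one family of terms in the stated identity. First, as the conformal structure of~$D_{n+2}$ degenerates along a chord separating the $\alphas$-edge into two arcs, the polygon splits as a pair of polygons both carrying $\alphas$ boundary conditions, of sizes $(p+2)$ and $(n-p+2)$; for $1\leq p\leq n$, these degenerations contribute the terms $\widetilde\delta^1_{n-p+1}(\eta^{i_{n-1}<i_n},\dots,\eta^{i_p<i_{p+1}},\delta^1_{p+1}(\eta^{i_{p-1}<i_p},\dots,\eta^{i_0<i_1},\x))$. The tilde on the outer $\delta$ reflects that the algebra output of the inner $\delta^1_{p+1}$ is multiplied, via $\mu_2$ in $\Alg(\PMC)$, with the algebra output of the outer polygon at the glued internal chord. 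Second, when the collapsing chord cuts off a closed sub-polygon carrying only $\beta$ boundary conditions, we obtain the terms $\delta^1_{n-q+p+1}(\dots,m_{q-p+1}(\dots),\dots,\x)$, with the closed sub-polygon counted by the ordinary map $m_{q-p+1}$ of Definition~\ref{def:closed-polygon-map}; the sub-case $q=p$ absorbs the two-story breakings in which a bigon splits off at the interior corner labelled~$\eta^{i_{p-1}<i_p}$. Third, degenerations at east infinity where two consecutive Reeb chords of $\vec\rho$ fuse into a single chord via $\mu_2$ in $\Alg(\PMC)$ are, in the type~$D$ setting, recorded on the output side and therefore absorbed into the $\widetilde\delta$ compositions of the first family. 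Finally, degenerations at east infinity where a single Reeb chord resolves via the differential $\mu_1$ in $\Alg(\PMC)$ assemble, summed over the resolving chord, into the term $(\mu_1\otimes\Id)\circ\delta^1_{n+1}(\eta^{i_{n-1}<i_n},\dots,\eta^{i_0<i_1},\x)$.

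The technical ingredients are standard: transversality for the relevant moduli spaces is achieved by generic admissible families of almost-complex structures via the argument of Proposition~\ref{prop:emb-ind}; Gromov--Hofer compactness~\cite{BEHWZ03:CompactnessInSFT} identifies the codimension-one ends of $\ocM^B$ with the combinatorial types above, using Conditions~\ref{item:J-proj}--\ref{item:J-compatible} as in the proof of Proposition~\ref{prop:closed-Ainf-rel}; and provincial admissibility of the multi-diagram ensures that all the resulting sums over~$B$ and~$\vec\rho$ are finite. The main obstacle is really only the bookkeeping of confirming that each enumerated boundary stratum is matched with exactly one term in the relation, and this is parallel to the type~$A$ bookkeeping already carried out in the proof of Proposition~\ref{prop:AinftyRelation}, the only difference being the transfer of Reeb asymptotics from input to output and the corresponding replacement of input-side $\mu_2$ contributions by the $\widetilde\delta$ construction.
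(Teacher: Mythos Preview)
Your proposal is correct and follows the same route as the paper, which simply says to combine the closed-polygon $\Ainf$ relation (Proposition~\ref{prop:closed-Ainf-rel}) with the type~$D$ structure-relation argument from~\cite[Proposition~\ref*{LOT1:prop:typeD-d2}]{LOT1}. Your account is considerably more detailed than the paper's one-line proof; the one place where it is loose is the treatment of east-infinity ends: the claim that collision-of-chord degenerations are ``absorbed into the $\widetilde\delta$ compositions of the first family'' elides the careful bookkeeping of join/split/shuffle curve ends needed to match these boundary strata with the algebra relations, which is precisely the substantive content of the LOT1 argument you cite. Since you explicitly defer to that reference, the proof is sound.
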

\begin{proof}
  As in the proof of Proposition~\ref{prop:AinftyRelationD}, the proof
  is a combination of the proof of the usual polygon counting
  $\Ainf$ relation (Proposition~\ref{prop:closed-Ainf-rel}) with the
  proof of the corresponding relation
  in the bordered case~\cite[Proposition~\ref*{LOT1:prop:typeD-d2}]{LOT1}. 
\end{proof}

\begin{remark}
  \label{rem:FukDBimodule}
  Continuing with the notation from Remark~\ref{rem:FukABimodule}, we
  can think of Proposition~\ref{prop:AinftyRelationD} as giving
  $\CFDa(\alphas,\cdot)$ the structure of a type~\DA~bimodule over
  $\Alg(-\PMC)$ and $\HFuk$.
\end{remark}

\begin{remark}\label{remark:CFD-is-CFA-DT-Id}
  Proposition~\ref{prop:AinftyRelationD} can alternatively be thought of as a
  formal consequence of Proposition~\ref{prop:AinftyRelation}.  Before
  describing this, we give an analogous way of deducing the type $D$
  structure relation for $\CFDa(\alphas,\betas)$ from the type $A$
  structure relation for
  $\CFAa(\alphas,\betas)$.  Consider the type \DD\ bimodule
  $X=\CFDDa(\Id)$ associated to the identity cobordism from $F(\PMC)$
  to itself. This bimodule was computed
  in~\cite[Definition~\ref*{LOT4:def:DDstruct}]{LOT4}: it is generated
  by pairs of complementary idempotents, and the differential is given
  by
  \[
  \bdy(I\otimes I')=\sum_{\text{chords }\xi}Ia(\xi)\otimes a'(\xi)I'.
  \]
  It follows that $\CFDa(\alphas,\betas)=X\DT\CFAa(\alphas,\betas)$.
  (Here, equality denotes a canonical identification, not merely a homotopy equivalence.)
  Taking this as a definition of
  $\CFDa(\alphas,\betas)$, the type $D$ structure equation on
  $\CFDa(\alphas,\betas)$ is a formal consequence of the type $DD$
  structure equation on $X$ (as verified directly
  in~\cite[Proposition~\ref*{LOT4:prop:DDsquaredZero}]{LOT4}), the
  type $A$ structure relation on $\CFAa(\alphas,\betas)$, and the fact
  that a type $DD$ bimodule tensored with a type $A$ module gives a
  type $D$
  module~\cite[Section~\ref*{LOT2:sec:tensor-products}]{LOT2}.
  
  In an analogous manner, we could have defined the $\Alg(-\PMC)\Hyph
  \HFuk$ bimodule structure on $\CFDa(\alphas,\cdot)$ as
  $X\DT\CFAa(\alphas,\cdot)$. Now the desired~\DA~structure equations
  follow from the $\Ainf$-bimodule relations
  (Proposition~\ref{prop:AinftyRelation}), the~\DD\ structure
  relations on $X$, and the fact that a type~\DD\ bimodule tensored with an
  $\Ainf$-bimodule gives a type~\DA~structure (see
  again~\cite[Section~\ref*{LOT2:sec:tensor-products}]{LOT2}).
\end{remark}

\subsection{Complexes of attaching circles and filtered bordered modules}
\label{subsec:ComplexAttachCircles}

In Proposition~\ref{prop:ChainComplexesOfAttachingCircles}, we described how an $\IndI$-filtered
chain complex of attaching
circles along with another set of attaching circles gives rise to an $\IndI$-filtered chain complex.

Our aim here is to prove the analogue in the bordered
setting. Specifically, we will show how a chain complex of attaching
circles in a Heegaard surface with boundary, along with a further set
of bordered attaching curves compatible with a pointed matched circle~$\PMC$
gives rise to an $\IndI$-filtered $\Ainf$-module.

\begin{definition}\label{def:CCFA}
  Let $\Sigma$ be a surface with boundary, $z$ a basepoint in
  $\bdy\Sigma$, and $\alphas$ a complete set of
  bordered attaching curves compatible with some fixed pointed matched circle
  $\PMC$.  Let $(\{\betas^i\}_{i\in \IndI},
  \{\eta^{i<i'}\}_{i,i'\in \IndI})$ be a chain complex of attaching
  circles.  Suppose moreover that the multi-diagram
  $(\Sigma,\alphas,\{\betas^i\}_{i\in\IndI},z)$ is provincially
  admissible (see Definition~\ref{def:AttachingCurves}).

  Define the $\IndI$-filtered $\Ainf$-module
  $\CCFAa(\alphas,\{\betas^i\}_{i\in \IndI},\{\eta^{i<i'}\}_{i,i'\in\IndI},z)$ over $\Alg(\PMC)$
  to be
  \[\{\CFAa(\alphas,\betas^i)\}_{i\in \IndI}\] together with the
  $\Ainf$-homomorphisms
  \[
  F^{i<i'}\co \CFAa(\alphas,\betas^i)\to
  \CFAa(\alphas,\betas^{i'})
  \] 
  for $i,i'\in \IndI$ with $i<i'$ defined by
  \[
  F^{i<i'}(\x,a_1,\dots,a_k)=\sum_{i=i_0<\dots<i_n=i'}
  \lsub{n}m_k(\eta^{i_{n-1}<i_{n}},\dots,\eta^{i_{0}<i_{1}},\x,a_1,\dots,a_k).
  \]
\end{definition}

When they are clear from the context, we will drop the indexing set from the notation, writing
\[
\CCFAa(\alphas,\{\betas^i\},\{\eta^{i<i'}\})\coloneqq
\CCFAa(\alphas,\{\betas^i\}_{i\in \IndI},\{\eta^{i<i'}\}_{i,i'\in\IndI}).
\]

The following is a precise version of the type $A$ case of Theorem~\ref{intro:ChainComplexToCFA-CFD}:

\begin{proposition}
  \label{prop:DefCCFAa}
  If the diagram $(\Sigma,\alphas,\{\betas^i\}_{i\in\IndI},\{\eta^{i<i'}\}_{i,i'\in\IndI},z)$
  is provincially admissible then
  the object $\CCFAa(\alphas,\{\betas^i\},\{\eta^{i<i'}\})$ 
  of Definition~\ref{def:CCFA} is an $\IndI$-filtered $\Ainf$-module over $\Alg(\PMC)$.
  Its associated graded object is $\bigoplus_{i\in \IndI}\CFAa(\alphas,\betas^i)$.
  If $(\alphas,\{\betas^i\}_{i\in\IndI},\{\eta^{i<i'}\}_{i,i'\in\IndI},z)$
  is admissible (Definition~\ref{def:Admissible}) then
  $\CCFAa(\alphas,\{\betas^i\},\{\eta^{i<i'}\})$
  is bounded.
\end{proposition}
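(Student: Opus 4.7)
The plan is to verify the compatibility condition of Definition~\ref{def:filt-Ainf-module}, namely
\[ d(F^{i<i'}) = \sum_{i<j<i'} F^{j<i'}\circ F^{i<j} \]
for every $i<i'$ in $\IndI$. The heart of the argument is Proposition~\ref{prop:AinftyRelation}, combined with the defining relation~\eqref{eq:Compatibility} of a chain complex of attaching circles; the associated-graded and boundedness statements will then follow routinely. I would first substitute the defining formula for $F^{i<i'}$ into both sides and expand using the composition and differential formulas for morphisms in the \dg category of $\Ainf$-modules over $\Alg(\PMC)$. After this expansion, both sides become sums, indexed by chains $i = i_0 < \cdots < i_n = i'$ in $\IndI$, of specific expressions built from the polygon maps $\lsub{n}m_k$, applied to $\x$, the algebra inputs $a_1,\dots,a_k$, and the chains $\eta^{i_{\ell-1}<i_\ell}$ slotted between consecutive $\beta$-tuples.

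Next I would apply Proposition~\ref{prop:AinftyRelation}, for each fixed chain, to the sequence $(\eta^{i_{n-1}<i_n},\dots,\eta^{i_0<i_1})$, and reorganize the resulting three types of terms. Type~(A) terms $\lsub{n-p}m_{k-i}(\dots,\lsub{p}m_i(\dots),\dots)$ with $0<p<n$ split the chain at the intermediate index $j = i_p$ and reassemble, after summing over chains, into $\sum_{i<j<i'} F^{j<i'}\circ F^{i<j}$; type~(A) terms with $p=0$ or $p=n$, taken together with the type~(C) terms $\lsub{n}m_{k-j+i}(\dots,\mu_{*}(a_i,\dots,a_j),\dots)$, reassemble into $d(F^{i<i'})$---namely the contributions from the module differentials on $\CFAa(\alphas,\betas^i)$ and $\CFAa(\alphas,\betas^{i'})$ and from the algebra operations $\mu_1,\mu_2$.

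The remaining type~(B) ``middle'' terms $\lsub{n-q+p}m_k(\dots, m_{q-p+1}(\eta^{i_{q-1}<i_q},\dots,\eta^{i_{p-1}<i_p}),\dots)$ vanish after summing. Indeed, fix the outer polygon operation with its remaining $\eta$-slots and with its module and algebra inputs, and fix the two outer indices $\alpha = i_{p-1}$ and $\beta = i_q$; the inner sum, taken over all intermediate chains $\alpha = j_0 < \cdots < j_{q-p} = \beta$ and over all chain lengths, of $m_{q-p+1}(\eta^{j_{q-p-1}<j_{q-p}},\dots,\eta^{j_0<j_1})$ is precisely the left-hand side of~\eqref{eq:Compatibility} for the pair $(\alpha,\beta)$, and hence vanishes. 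Combining these identifications yields $d(F^{i<i'}) + \sum_{i<j<i'} F^{j<i'}\circ F^{i<j} = 0$, which is the desired compatibility (working over $\Field$).

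Restricting $F^{i<i'}$ to $i = i'$ leaves only the $n=0$ summand, which is the usual $\Ainf$-module structure $\lsub{0}m_k$ on $\CFAa(\alphas,\betas^i)$ from Definition~\ref{def:n-m-k}, so the associated graded is $\bigoplus_{i\in\IndI}\CFAa(\alphas,\betas^i)$ as claimed. For boundedness under admissibility, the standard area/energy argument from~\cite[Proposition~\ref*{LOT1:prop:admis-finiteness}]{LOT1}, applied to the admissible multi-diagram $(\Sigma,\alphas,\{\betas^i\}_{i\in \IndI},z)$, forces $\lsub{n}m_k$ to vanish whenever $k$ is sufficiently large, uniformly in the chain $i_0<\cdots<i_n$ (of which there are only finitely many), so both $m^i_n$ and $F^{i<i'}_n$ vanish for large $n$. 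The main obstacle is the combinatorial bookkeeping in the second paragraph: one must carefully unravel the definitions of composition and differential of $\Ainf$-module morphisms in order to match the type~(A) and type~(C) contributions bijectively with $d(F^{i<i'})$ and $F^{j<i'}\circ F^{i<j}$, paying attention to how a contiguous run of algebra inputs gets split between successive morphisms in a composition and to how $\mu_2$-contractions are absorbed into the differential.
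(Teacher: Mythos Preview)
Your proposal is correct and follows precisely the approach the paper indicates: deduce the compatibility condition~\eqref{eq:CompatibilityCondition} from Proposition~\ref{prop:AinftyRelation} together with Equation~\eqref{eq:Compatibility}, organizing the terms exactly as in the closed-case proof of Proposition~\ref{prop:ChainComplexesOfAttachingCircles}. The paper's own proof is a two-sentence sketch pointing to these same ingredients, so you have simply unpacked what the paper leaves to the reader.
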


\begin{proof}
  The compatibility condition (from
  Equation~\eqref{eq:CompatibilityCondition}) is a
  consequence of the $\Ainf$ relation
  (Proposition~\ref{prop:AinftyRelation}), together with the
  compatibility conditions for a chain complex of attaching
  circles (Equation~\eqref{eq:Compatibility}). See also the proof of
  Proposition~\ref{prop:ChainComplexesOfAttachingCircles}.
\end{proof}

\begin{remark}
  Recall (Remark~\ref{rem:ChainComplexIsTypeD}) that a chain complex
  of attaching circles can be viewed as an $\IndI$-filtered type $D$
  structure (twisted complex) over $\HFuk$. Now, $\CCFAa$ can be
  thought of as the tensor product of this filtered type $D$ structure
  with $\CFAa(\alphas,\cdot)$, thought of as a bimodule over $\HFuk$
  and $\Alg(\PMC)$ as in Remark~\ref{rem:FukABimodule}. Accordingly,
  this tensor product is naturally an $\IndI$-filtered $\Ainf$-module over
  $\Alg(\PMC)$ (since a type $D$ structure tensored with an
  $\Ainf$-bimodule is a type $A$ module~\cite[Section~\ref*{LOT2:sec:tensor-products}]{LOT2}.
\end{remark}

Similarly, we can form 
\[
\CCFDa(\alphas,\{\betas^i\},\{\eta^{i<i'}\},z)=
\CCFDa(\alphas,\{\betas^i\}_{i\in \IndI},\{\eta^{i<i'}\}_{i,i'\in\IndI},z),
\]
which is an $\IndI$-filtered type~$D$ structure
over $\Alg(-\PMC)$.
In this case,
\[ \lsup{i<i'}\delta^1 \co \CFDa(\alphas,\betas^i)\to \Alg(\PMC)\otimes \CFDa(\alphas,\betas^{i'}) \]
is defined by
\[
\lsup{i<i'}\delta^1=\sum_{i=i_1<\dots<i_n=i'}
\delta^1_n(\eta^{i_{n-1}<i_{n}},\dots,\eta^{i_{1}<i_{2}},\x).
\]

Here is the more precise version of the type $D$ case of Theorem~\ref{intro:ChainComplexToCFA-CFD}:
\begin{proposition}
\label{prop:DefCCFDa}  
  If the diagram $(\Sigma,\alphas,\{\betas^i\}_{i\in\IndI},\{\eta^{i<i'}\}_{i,i'\in\IndI},z)$
  is provincially admissible, the object
  $\CCFDa(\alphas,\allowbreak\{\betas^i\},\allowbreak\{\eta^{i<i'}\},z)$
  defined above is an $\IndI$-filtered type $D$ structure over $\Alg(-\PMC)$. Its associated
  graded object is
  $\bigoplus_{i\in\IndI}\CFDa(\alphas,\betas^i,z)$.
  If
  $(\alphas,\allowbreak\{\betas^i\}_{i\in\IndI},\allowbreak\{\eta^{i<i'}\}_{i,i'\in\IndI},\allowbreak z)$
  is admissible (Definition~\ref{def:Admissible}) then
  $\CCFDa(\alphas,\{\betas^i\},\{\eta^{i<i'}\},z)$
  is bounded (Definition~\ref{def:BoundedTypeD}).
\end{proposition}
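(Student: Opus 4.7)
The plan is to follow the same pattern used to prove Proposition~\ref{prop:DefCCFAa}, with Proposition~\ref{prop:AinftyRelationD} replacing Proposition~\ref{prop:AinftyRelation}. First I would expand $\lsup{i<k}\delta^1$ as the sum, over subsequences $i = i_0 < \cdots < i_n = k$, of polygon counts $\delta^1_n(\eta^{i_{n-1}<i_n},\dots,\eta^{i_0<i_1},\cdot)$. The $\Ainf$ relation of Proposition~\ref{prop:AinftyRelationD} then expresses, for each such subsequence, a vanishing sum consisting of three groups of terms: those in which one inner tuple of $\eta$'s is combined by an $m_\ell$; those in which two consecutive blocks of $\delta^1_p$'s are composed via a single $\mu_2$; and a single $(\mu_1\otimes\Id)$-term.

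Summed over all sequences, the terms of the first kind cancel in pairs by the defining compatibility~\eqref{eq:Compatibility} of the chain complex of attaching circles. The terms of the second kind assemble into $\sum_{i<j<k}\lsup{j<k}\delta^1\circ\lsup{i<j}\delta^1$. The $(\mu_1\otimes\Id)$-terms, together with the ones where some $\delta^1_{i_s<i_{s+1}}$ is the usual differential on $\CFDa(\alphas,\betas^{i_s},z)$ applied before or after $\lsup{i<k}\delta^1$, make up $d(\lsup{i<k}\delta^1)$ in the morphism complex of type $D$ structures. Setting this sum to zero is exactly the compatibility condition~\eqref{eq:CompatibilityCondition} in the form of Definition~\ref{def:filt-D}, and the bookkeeping is the obvious analogue of that in the proof of Proposition~\ref{prop:ChainComplexesOfAttachingCircles} (compare Figure~\ref{fig:attach-circ-cx}). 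The identification of the associated graded object is tautological, since the ``diagonal'' morphisms $\lsup{i<i}\delta^1 = \delta^1$ are exactly the type $D$ structure maps on $\CFDa(\alphas,\betas^i,z)$.

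For boundedness under admissibility I would adapt the argument of~\cite[Proposition~\ref*{LOT1:prop:admis-finiteness}]{LOT1} from bigons to polygons: admissibility bounds the set of homology classes contributing to $\lsup{i<i'}\delta^1$ and ensures that sufficiently high iterates of $\delta^1$ on each $P^i$ vanish, which is Definition~\ref{def:BoundedTypeD}. Alternatively, as indicated in Remark~\ref{remark:CFD-is-CFA-DT-Id}, the whole proposition can be deduced formally from Proposition~\ref{prop:DefCCFAa} via the canonical identification $\CCFDa(\alphas,\{\betas^i\},\{\eta^{i<i'}\},z)=\CFDDa(\Id)\DT\CCFAa(\alphas,\{\betas^i\},\{\eta^{i<i'}\},z)$, since box-tensoring a type $\DD$ bimodule with a (bounded, filtered) $\Ainf$-module produces a (bounded, filtered) type $D$ structure. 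The only real obstacle in the direct approach is combinatorial bookkeeping; the $\DD$-tensoring route side-steps it and is probably the cleanest way to write up the final argument.
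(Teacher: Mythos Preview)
Your proposal is correct and follows essentially the same approach as the paper: the paper's proof is the one-liner ``This follows from Proposition~\ref{prop:AinftyRelationD} together with Equation~\eqref{eq:Compatibility},'' and your write-up is an accurate elaboration of that argument (with the alternative $\CFDDa(\Id)$-tensoring route of Remark~\ref{remark:CFD-is-CFA-DT-Id} noted as well).
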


\begin{proof}
  This follows from Proposition~\ref{prop:AinftyRelationD}
  together with Equation~\eqref{eq:Compatibility}.
\end{proof}

\subsection{Bordered multi-diagrams with two boundary components and filtered bimodules}
\label{subsec:PolygonBimodule}
We now turn to the generalization to bordered Heegaard diagrams with
$2$ boundary components.

\begin{definition}
  \label{def:2AttachingCurves}
  Let $\Sigma$ be a compact, oriented surface with two boundary
  components, $\bdy_L\Sigma$ and $\bdy_R\Sigma$.  Fix pointed matched
  circles $\PMC_L$ and $\PMC_R$ consisting of $2k_L$ and $2k_R$ pairs
  of points $\CircPts_L\subset \bdy_L\Sigma$ and $\CircPts_R\subset
  \bdy_R\Sigma$, respectively. A {\em complete set of bordered attaching curves
    compatible with $\PMC_L$ and $\PMC_R$} is a collection
  $\alphas=\{\alpha_1,\dots,\alpha_{g+k_L+k_R}\}$ of curves in $\Sigma$ such that:
  \begin{itemize}
  \item The curves $\alpha_i\in\alphas$ are pairwise disjoint.
  \item $\alphas\cap \partial \Sigma =
    \bdy\alphas=\PMC_L\amalg\PMC_R$.  In particular, each $\alpha$-arc
    has both of its endpoints on the same boundary component of
    $\Sigma$.  We sometimes abbreviate this condition as $\partial
    (\Sigma,\alphas) = \PMC_L\amalg\PMC_R$.
  \item  The relative cycles $\{[\alpha_i]\}_{i=1}^{g+k_L+k_R}$,
    where $[\alpha_i]\in H_1(\Sigma,\partial \Sigma)$,
    are linearly independent.
  \end{itemize}
\end{definition}

\begin{definition}\label{def:2Admissible}
  Let $\alphas$ be a complete set of bordered attaching curves in $\Sigma$ in
  the sense of Definition~\ref{def:2AttachingCurves} (compatible with some
  $\PMC_L$ and $\PMC_R$),
  and let $\{\betas^i\}_{i=1}^n$ be an $n$-tuple of complete sets of 
  attaching circles (in the sense of
  Definition~\ref{def:AttachingCircles}). Fix also an arc
  $\arcz\subset \Sigma\setminus (\alphas\cup\bigcup_i\betas^i)$
  connecting $\bdy_L\Sigma$ and $\bdy_R\Sigma$. (Existence of such an
  arc is not guaranteed by the other hypotheses.)
  We call the data $(\Sigma,\alphas,\betas^1,\dots\betas^n,\arcz)$ a 
  {\em bordered multi-diagram with $2$ boundary components}.

  Define multi-periodic domains in the $2$ boundary component case exactly
  as in Definition~\ref{def:Admissible} (with the requirement that the 
  region containing $\arcz$ has coefficient $0$).
  A multi-periodic domain is
  called {\em provincial} if all of its multiplicities near $\partial
  \Sigma$ vanish. Admissibility and provincial admissibility are defined as before;
  these notions can be refined as follows. A domain is called {\em left-provincial} (respectively {\em
    right-provincial}) if all its local multiplicities around
  $\partial_L\Sigma$ (respectively $\partial_R \Sigma$) vanish. The diagram is
  called \emph{left-admissible} (respectively \emph{right-admissible})
  if all its non-zero
  right-provincial (respectively left-provincial) period domains have both
  positive and negative local multiplicities. 
\end{definition}

We can now define bimodule analogues of $\CCFAa$. This is a
straightforward synthesis of the definition of $\CFAAa$
from~\cite[Definition~\ref*{LOT2:def:AA-bimod}]{LOT2} and $\CCFAa$
from Definition~\ref{def:CCFA}. We sketch this
construction.

Fix a chain complex of attaching circles in $\Sigma$,
$(\IndI,\{\betas^i\}_{i\in\IndI},\{\eta^{i_1<i_2}\}_{i_1,i_2\in\IndI})$
and a complete set of bordered attaching curves $\alphas$ (compatible with some
$\PMC_L$ and $\PMC_R$). Assume that
$(\Sigma,\alphas,\{\betas^i\},\arcz)$ is provincially admissible.

As in~\cite[Definition~\ref*{LOT2:def:Drilling}]{LOT2}, we can form
the \emph{drilled Heegaard surface} $\Sigma_{\dr}$ by attaching a
one-handle to $\Sigma$ to connect the two endpoints of the arc
${\mathbf z}$. We fix a basepoint $z\in\partial\Sigma_{\dr}$ somewhere
on the boundary of the attached one-handle.  We can consider
$\CCFAa(\Sigma_{\dr},\alphas,\{\betas^i\},\{\eta^{i_1<i_2}\})$,
which is a filtered module over $\Alg(\PMC)$, where $\PMC=\PMC_L\conn\PMC_R=\partial
(\Sigma_\dr \cap \alphas)$. Then $\Alg(\PMC_L)$ and $\Alg(\PMC_R)$ are
commuting subalgebras of $\Alg(\PMC)$. 

Via restriction of scalars, we can view 
$\CCFAa(\Sigma_{\dr},\alphas,\{\betas^i\},\{\eta^{i_1<i_2}\})$
as a right filtered $\Ainf$-module over $\Alg(\PMC_L)\otimes\Alg(\PMC_R)$. The category
of right (filtered) $\Ainf$-modules over $\Alg(\PMC_L)\otimes\Alg(\PMC_R)$ is equivalent to
the category of right-right (filtered) $\Ainf$-bimodules over $\Alg(\PMC_L)\Hyph\Alg(\PMC_R)$ (see,
e.g.,~\cite[Section~\ref*{LOT2:sec:A-Bop-vs-bimods}]{LOT2}); 
let
\[
\CCFAAa(\Sigma,\alphas,\{\betas^i\},\{\eta^{i_1<i_2}\})
=
\CCFAAa(\Sigma,\alphas,\{\betas^i\}_{i\in\IndI},\{\eta^{i_1<i_2}\}_{i_1<i_2\in\IndI})
\]
denote
$\CCFAa(\Sigma_{\dr},\alphas,\{\betas^i\},\{\eta^{i_1<i_2}\})$
viewed as an $\Alg(\PMC_L)\Hyph\Alg(\PMC_R)$-bimodule.
Explicitly, the generators of $\CCFAAa$ are the generators of $\CCFAa$. 
The action of the sequences $(a_1,\dots,a_m)$ in $\Alg(\PMC_L)$
and $(b_1,\dots,b_n)$ in $\Alg(\PMC_R)$ on a generator $\x$
of $\CCFAAa$ is the sum
of the actions on $\x$
of all sequences $(c_1,\dots,c_{m+n})$ which interleave $(a_1,\dots,a_m)$
and $(b_1,\dots,b_n)$.

Now, define
\begin{align}
  \CCFDAa(\alphas,\{\betas^i\},\{\eta^{i_1<i_2}\})
  &\coloneqq 
  \CCFAAa(\alphas,\{\betas^i\},\{\eta^{i_1<i_2}\})
  \DT_{\Alg(\PMC_L)}\CFDDa(\Id_{\PMC_L})\label{eq:CCFDA-as-DT}\\
  \CCFDDa(\alphas,\{\betas^i\},\{\eta^{i_1<i_2}\})
  &\coloneqq 
  \CCFAAa(\alphas,\{\betas^i\},\{\eta^{i_1<i_2}\})
  \DT_{\Alg(\PMC_L)}\CFDDa(\Id_{\PMC_L})\\
  &
  \hphantom{\coloneqq\CCFAAa(\alphas,\{\betas^i\},\{\eta^{i_1<i_2}\})\ }
  \DT_{\Alg(\PMC_R)}\CFDDa(\Id_{\PMC_R}).\nonumber
\end{align}
(Compare Remark~\ref{remark:CFD-is-CFA-DT-Id}.)  We spell out this
definition more explicitly for the case of $\CCFDAa$.

As in the connected boundary case, when talking about holomorphic
curves we will abuse notation and let $\Sigma$ denote the result of
attaching cylindrical ends to the boundary components of
$\Sigma$. The definition of an admissible collection of almost-complex
structures (Definition~\ref{def:AdmissibleJ}) generalizes in an
obvious way to the two boundary component case; fix an admissible
collection of almost-complex structures.

Fix also the following data: 
\begin{itemize}
\item a sequence of sets of Reeb chords
  $\vec\rhos^R=(\rhos_1^R,\dots,\rhos_n^R)$ in $\PMC_R$ so that
  $(\x,\vec\rhos)$ is strongly boundary monotone,
\item a collection of generators
  $\eta^{i_m<i_{m+1}}\in\Gen(\betas^{i_m},\betas^{i_{m+1}})$ for
  $m=1,\dots,n-1$.
\end{itemize}
Define
\begin{align*}
\lsub{n}\delta^1_k&(\eta^{i_{n-1}<i_n},\dots,\eta^{i_1<i_2},\x,\rhos_1,\dots,\rhos_k)\\
&= \sum_{\substack{\y\in\Gen(\alphas,\betas^{i_n})\\ \vec\rho^L\\B\in\pi_2(\y,\eta^{n-1},\dots,\eta^1,\x;\vec{\rhos}^R,\vec{\rho}^L)\\\ind(B;\vec{\rhos}^R,\vec{\rho}^L)=2-n}}
a(-\vec\rho^L)\otimes
\#\Mod^B(\y,\eta^{i_{n-1}<i_n},\dots,\eta^{i_1<i_2},\x;\vec\rho^L;\vec\rhos^R)
\y,
\end{align*}
where here the sum is over all sequences of Reeb chords $\vec\rho^L$
in $\PMC_L$, and
\[\#\Mod^B(\y,\eta^{i_{n-1}<i_n},\dots,\eta^{i_1<i_2},\x;\vec\rho^L;\vec\rhos^R)\]
denotes the counts of holomorphic polygons
whose corners (in order) are mapped to
$\y$, $\eta^{i_{n_1}<i_n},\dots,\eta^{i_1<i_2}$, and $\x$;
the Reeb chords appearing along $\alpha$-arcs which land in 
$\partial_L\Sigma$ are, in order, $\vec\rho^L$; and the 
sets of Reeb chords appearing along the $\alpha$-arcs which land
in $\partial_R\Sigma$ are, in order, $\vec\rhos^R$.

Let $\CFDAa(\alphas,\betas^i)$ denote the vector space generated
by $\Gen(\alphas,\betas^i)$. We can extend
$\lsub{n}\delta^1_k$ linearly 
to give a map
\begin{align*}
  \lsub{n}\delta^1_k\co &\CFDAa(\betas^{i_{n-1}},\betas^{i_n})\otimes\dots\otimes
  \CFa(\betas^{i_1},\betas^{i_2},\arcz) \otimes \CFDAa(\alphas,\betas^{i_1})\otimes 
  \overbrace{\Alg(\PMC_R)\otimes\dots\otimes \Alg(\PMC_R)}^k \\
  & \to
  \Alg(\PMC_L)\otimes \CFDAa(\alphas,\betas^{i_n}).
\end{align*}

Fix a chain complex $(\{\betas^i\}_{i\in\IndI},\eta^{i_1<i_2},\arcz)$ of attaching circles.
Define
\[ 
\lsup{i<i'}\delta^1_k\co \CFDAa(\alphas,\betas^{i})
\otimes \overbrace{\Alg(\PMC_R)\otimes\dots\otimes\Alg(\PMC_R)}^k
\to \Alg(\PMC_L)\otimes \CFDAa(\alphas,\betas^{i'}) 
\]
by 
\begin{equation}\label{eq:CCFDA-direct}
\lsup{i<i'}\delta^1_k(\x,\rhos_1,\dots,\rhos_k) =
\sum_n\sum_{i=i_1<\dots<i_n=i'}
\lsup{n}\delta^1_{k}(\eta^{i_{n-1}<i_n},\dots,\eta^{i_1<i_2},\x,\rhos_1,\dots,\rhos_k).
\end{equation}

\begin{lemma}\label{lem:CCFDA-agree}
  The two definitions of
  $\CCFDAa(\alphas,\{\betas^i\},\{\eta^{i_1<i_2}\},\arcz)$, from
  Formula~\eqref{eq:CCFDA-as-DT} and Formula~\eqref{eq:CCFDA-direct}, agree.
\end{lemma}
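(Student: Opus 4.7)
The plan is to reduce this to the unfiltered analogue already recorded in Remark~\ref{remark:CFD-is-CFA-DT-Id} and then check that the additional filtration data supplied by the chains $\eta^{i_1<i_2}$ is carried through the tensor product in the expected way. The key point is that $\CFDDa(\Id_{\PMC_L})$ is computed in \cite[Definition~\ref*{LOT4:def:DDstruct}]{LOT4} to have generators given by pairs of complementary idempotents $(I\otimes I')$, and its iterated structure map acts by
\[
(I\otimes I') \longmapsto \sum_{\xi_1,\dots,\xi_n}\bigl(a(\xi_1)\cdots a(\xi_n)\bigr)\otimes (I\otimes I')\cdot\bigl(a'(\xi_1)\cdots a'(\xi_n)\bigr),
\]
where the $\xi_i$ run over Reeb chords in $\PMC_L$. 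Under the standard identification $a(-\rho)\leftrightarrow a'(\rho)$ relating $\Alg(\PMC_L)$ and $\Alg(-\PMC_L)$, this is exactly the bookkeeping needed to turn a sequence of type~$A$ inputs on the left boundary into a single algebra coefficient on the type~$D$ side.

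First I would unwind the definition of the tensor product in Equation~\eqref{eq:CCFDA-as-DT} for a generator $\x\otimes (I\otimes I')$: the $\IndI$-filtered $\DA$-operation on the tensor product is obtained by applying the filtered $\Ainf$ operations on $\CCFAAa$, where the algebra inputs on the $\PMC_L$ side come from iterates of $\delta^1$ on $\CFDDa(\Id_{\PMC_L})$ (as in Lemma~\ref{lem:FilteredTensorProductDA}). Concretely, each filtered action
\[
F^{i<i'}_{k+\ell}\bigl(\x,c_1,\dots,c_{k+\ell}\bigr)\in\CFAAa(\alphas,\betas^{i'})
\]
on $\CCFAAa$, with $(c_1,\dots,c_{k+\ell})$ some shuffle of a sequence $\vec\rhos^R$ on the right and a sequence $\vec\xi^L$ on the left, gets converted, after tensoring with $\CFDDa(\Id)$, into a contribution
\[
a'(\xi_1)\cdots a'(\xi_m)\otimes F^{i<i'}_{k+\ell}(\x,c_1,\dots,c_{k+\ell})
\]
to $\lsup{i<i'}\delta^1_k(\x,\rhos^R_1,\dots,\rhos^R_k)$.

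Next I would recall, from Definition~\ref{def:CCFA} and the definition of restriction of scalars used to build $\CCFAAa$, that the operation $F^{i<i'}_{k+\ell}$ on $\CCFAAa$ is itself the sum over chains $i=i_1<\dots<i_n=i'$ of the polygon counts $\lsub{n}m_{k+\ell}(\eta^{i_{n-1}<i_n},\dots,\eta^{i_1<i_2},\x,c_1,\dots,c_{k+\ell})$, where the shuffled sequence $(c_1,\dots,c_{k+\ell})$ records the Reeb chords appearing along the two boundary components in the order in which they appear along the $\alpha$-arcs of a holomorphic polygon. But this is exactly the same moduli space of polygons that is counted in the definition of $\lsub{n}\delta^1_{k}(\eta^{i_{n-1}<i_n},\dots,\eta^{i_1<i_2},\x,\vec\rhos^R)$; the only bookkeeping difference is that in the tensor-product description the left-side chords appear as algebra inputs $c_j=a(\xi_j)\in\Alg(\PMC_L)$, while in the direct description they appear as the coefficient $a(-\vec\rho^L)$ on the output. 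Under the identification $a(\xi)\leftrightarrow a'(-\xi)$ provided by $\CFDDa(\Id_{\PMC_L})$, the two prescriptions match term by term.

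The routine part is assembling these identifications and summing over all orderings and chains $i=i_1<\dots<i_n=i'$; the only thing requiring care is keeping track of orientations when passing between $\PMC_L$ and $-\PMC_L$, and verifying that the compatibility condition \eqref{eq:CompatibilityCondition} on the filtration morphisms transports through $\DT$ (which is Lemma~\ref{lem:FilteredTensorProductDA}). I expect the main obstacle to be purely notational: there is no new geometric input beyond what is already contained in Remark~\ref{remark:CFD-is-CFA-DT-Id} and the definition of the filtered tensor product, and once the indexing conventions are fixed, the two definitions produce the same type~\DA~operation on each piece $\CFDAa(\alphas,\betas^i)$ and the same filtration maps $\lsup{i<i'}\delta^1_k$ between them.
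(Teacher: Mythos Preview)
Your proposal is correct and takes essentially the same approach as the paper, which simply says ``This is immediate from the definitions. (Compare Remark~\ref{remark:CFD-is-CFA-DT-Id}.)'' You have spelled out in detail exactly the unpacking the paper leaves implicit.
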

\begin{proof}
  This is immediate from the definitions. (Compare
  Remark~\ref{remark:CFD-is-CFA-DT-Id}.)
\end{proof}

\begin{proposition}
  \label{prop:BimoduleMorphism}
  Let $(\Sigma,\alphas,\{\betas^i\}_{i\in\IndI},\arcz)$ be an admissible bordered
  multi-diagram with $2$ boundary components.
  The above maps
  $\lsup{i<i'}\delta^1_k$ give
  $\bigoplus_{i\in\IndI} \CFDAa(\alphas,\betas^i)$
  the structure of an $\IndI$-filtered type $DA$ bimodule.
  Moreover, if the diagram is left-admissible (respectively
  right-admissible) then the corresponding module is left-bounded
  (respectively right-bounded).
\end{proposition}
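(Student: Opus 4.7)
The plan is to derive Proposition~\ref{prop:BimoduleMorphism} from Proposition~\ref{prop:DefCCFAa} (applied to the drilled Heegaard surface) together with the formal behavior of the tensor product against the identity type \DD\ bimodule, mirroring the argument sketched in Remark~\ref{remark:CFD-is-CFA-DT-Id}. The main benefit of going through the drilled diagram is that it lets us reuse all the analysis from Section~\ref{subsec:ComplexAttachCircles} without having to re-examine moduli-space compactness in the two-boundary-component setting.

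First, I would apply Proposition~\ref{prop:DefCCFAa} to the drilled multi-diagram $(\Sigma_\dr,\alphas,\{\betas^i\},\{\eta^{i_1<i_2}\},z)$ to obtain an $\IndI$-filtered $\Ainf$-module over $\Alg(\PMC)$, where $\PMC=\PMC_L\conn\PMC_R$. Provincial admissibility of the original diagram transfers to the drilled diagram: a provincial multi-periodic domain in $\Sigma_\dr$ restricts to a provincial multi-periodic domain in $\Sigma$ with the same multiplicities away from the attached one-handle, and no new periodic domains arise because the handle is disjoint from all $\betas^i$. Since $\Alg(\PMC_L)$ and $\Alg(\PMC_R)$ sit inside $\Alg(\PMC)$ as commuting subalgebras, restriction of scalars followed by the standard reinterpretation (summing over all interleavings of left- and right-algebra inputs) produces the filtered right-right $\Ainf$-bimodule $\CCFAAa(\alphas,\{\betas^i\},\{\eta^{i_1<i_2}\})$ referred to in the excerpt above.

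Next, by Lemma~\ref{lem:CCFDA-agree} we have the identification
\[
\CCFDAa(\alphas,\{\betas^i\},\{\eta^{i_1<i_2}\},\arcz)
\;=\;
\CCFAAa(\alphas,\{\betas^i\},\{\eta^{i_1<i_2}\})\DT_{\Alg(\PMC_L)}\CFDDa(\Id_{\PMC_L}).
\]
Tensoring a filtered $\Ainf$-bimodule against the type \DD\ bimodule $\CFDDa(\Id_{\PMC_L})$ converts it into a filtered type \DA\ bimodule; this is the filtered analogue of the formal construction reviewed in Remark~\ref{remark:CFD-is-CFA-DT-Id}, and the compatibility condition~\eqref{eq:CompatibilityCondition} passes through termwise under $\DT$, exactly as in Lemma~\ref{lem:FilteredTensorProductDA}. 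The resulting type \DA\ structure equations are then a formal consequence of (i) the filtered $\Ainf$-bimodule structure equations on $\CCFAAa$, and (ii) the \DD\ structure equation on $\CFDDa(\Id_{\PMC_L})$ established in~\cite[Proposition~\ref*{LOT4:prop:DDsquaredZero}]{LOT4}.

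For the boundedness claims, suppose the original diagram is left-admissible. Then every non-zero right-provincial multi-periodic domain in the drilled diagram has both positive and negative local multiplicities, which (by the standard energy/area estimate used in Proposition~\ref{prop:DefCCFAa}) forces only finitely many sequences of $\Alg(\PMC_L)$-inputs to give non-zero $\Ainf$-operations in $\CCFAAa$, for any fixed generator and any fixed sequence of $\Alg(\PMC_R)$-inputs. Under $\DT_{\Alg(\PMC_L)}\CFDDa(\Id_{\PMC_L})$, each such $\Alg(\PMC_L)$-input of $\CCFAAa$ contributes exactly one $\Alg(\PMC_L)$-output of $\CCFDAa$, so this input-finiteness translates directly into left-boundedness of $\CCFDAa$. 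The right-admissibility case is entirely symmetric and yields right-boundedness. The main obstacle is the bookkeeping in the first step: namely, verifying that polygon counts in the two-boundary-component diagram $(\Sigma,\alphas,\{\betas^i\},\arcz)$ agree with polygon counts in its drilled counterpart, so that the maps $\lsub{n}\delta^1_k$ of Subsection~\ref{subsec:PolygonBimodule} are indeed computed by the $\CCFAa(\Sigma_\dr,\ldots)\DT\CFDDa(\Id_{\PMC_L})$ construction. This is a notationally intricate but routine adaptation of the identification used in the unfiltered case~\cite[Definition~\ref*{LOT2:def:AA-bimod}]{LOT2}; the new filtration data $\{\eta^{i_1<i_2}\}$ plays a purely passive role throughout.
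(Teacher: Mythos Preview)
Your proposal is correct and follows one of the two approaches the paper gives. The paper's proof is brief: it says the filtered type \DA\ structure equations follow either by directly combining the moduli-space compactification arguments for polygons with the standard bordered analysis (as in \cite[Proposition~\ref*{LOT2:prop:DA-module}]{LOT2}), or alternatively from Lemma~\ref{lem:CCFDA-agree}; boundedness is deferred to \cite[Lemma~\ref*{LOT2:lem:admis-bd-DA}]{LOT2}. You have chosen the second route and fleshed it out in full, which the paper does not do.

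One small redundancy: the ``main obstacle'' you flag in your final paragraph---matching polygon counts in $(\Sigma,\alphas,\{\betas^i\},\arcz)$ with those in the drilled surface---is precisely the content of Lemma~\ref{lem:CCFDA-agree}, which you have already invoked. Recall that $\CCFAAa$ is \emph{defined} as $\CCFAa(\Sigma_\dr,\ldots)$ with restricted scalars, so there is no separate comparison to make on the $\CCFAAa$ side; the only identification needed is between the direct holomorphic definition~\eqref{eq:CCFDA-direct} of $\CCFDAa$ and the tensor-product definition~\eqref{eq:CCFDA-as-DT}, and that is exactly what Lemma~\ref{lem:CCFDA-agree} asserts. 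So once you cite that lemma, you are done with that step and need not revisit it.
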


\begin{proof}
  The fact that these maps define a filtered type \DA\ bimodule is a straightforward combination of the 
  compactification of moduli spaces of polygons
  with the arguments from standard bordered Floer homology, as
  in~\cite[Proposition~\ref*{LOT2:prop:DA-module}]{LOT2}. 
  Alternately, this follows from Lemma~\ref{lem:CCFDA-agree}.
  The admissibility and boundedness statements follow exactly along
  the lines of~\cite[Lemma~\ref*{LOT2:lem:admis-bd-DA}]{LOT2}.
\end{proof}


\section{The pairing theorem for chain complexes of attaching circles}
\label{sec:PairingTheorem}

The first goal of this section is to prove Theorem~\ref{intro:PairingForPolygons},
which we restate here:

\begin{theorem}
  \label{thm:PolygonPairing}
  Let $(\Sigma_1,\alphas^1,z)$ and $(\Sigma_2,\alphas^2,z)$
  be surfaces-with-boundary,
  each equipped with complete sets of bordered attaching curves $\alphas^1$ and
  $\alphas^2$ and basepoints $z\in\bdy\Sigma_i$ with 
  $\partial(\Sigma_1,\alphas^1)=\PMC$ and
  $\partial(\Sigma_2,\alphas^2)=-\PMC$.
  Let
  \[(\Sigma_1,\IndI,\{\betas^i\}_{i\in \IndI},\{\eta^{i<i'}\}_{i,i'\in \IndI})
  ~\text{and}~(\Sigma_2,\IndJ,\{\gammas^{j}\}_{j\in \IndJ},
  \{\zeta^{j<j'}\}_{j,j'\in \IndJ})\] be chain complexes
  of attaching circles, in $\Sigma_1$ and $\Sigma_2$ respectively.
  Suppose moreover that 
  both $(\Sigma_1,\alphas,\{\betas^i\}_{i\in\IndI},z)$ 
  $(\Sigma_2,\alphas,\{\gammas^j\}_{j\in\IndJ},z)$ 
  are provincially admissible, and at least one is admissible.
  Then, there is an $\IndI\times \IndJ$-filtered quasi-isomorphism
  \begin{equation}
    \label{eq:PairingQuasiIsomorphism}
    \begin{split}
      \CCFAa(&\Sigma_1,\alphas^1,\{\betas^i\}_{i\in \IndI},z)\DT
      \CCFDa(\Sigma_2,\alphas^2,\{\gammas^j\}_{j\in \IndJ},z)\\
      &\simeq \CCFa(\Sigma_1\cup\Sigma_2,\alphas^1\cup\alphas^2,
      (\{\betas^i\}_{i\in \IndI},\{\eta^{i<i'}\}_{i,i'\in \IndI})
      \#(\{\gammas^{j}\}_{j\in \IndJ},\{\zeta^{j<j'}\}_{j,j'\in \IndJ}),
      z).
    \end{split}
  \end{equation}
\end{theorem}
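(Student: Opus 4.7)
The plan is to combine the close approximation technology of Section~\ref{subsec:Approximations} with a filtered version of the time dilation argument used to prove the ordinary pairing theorem in~\cite{LOT1}.

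First I would replace $\{\betas^i\conn\gammas^j\}$ on the right-hand side of Equation~\eqref{eq:PairingQuasiIsomorphism} by the connected-sum attaching circles $\deltas^{i\times j} = \betas^{i\times j}\cup\gammas^{i\times j}$ associated to a close approximation, using Proposition~\ref{prop:CloseApproximations} to guarantee existence of such approximations and Proposition~\ref{prop:IsotopiesInduceEquivalences} (suitably generalized to the bordered setting, by a standard continuation-map argument) to identify the two right-hand-side complexes up to filtered quasi-isomorphism. The K\"unneth isomorphism~\eqref{eq:Kunneth}, applied level by level, now gives a canonical identification of the underlying vector spaces
\[
  \bigoplus_{(i,j)\in\IndI\times\IndJ}\CFAa(\alphas^1,\betas^{i\times j})\otimes\CFDa(\alphas^2,\gammas^{i\times j})
  \;\cong\;\bigoplus_{(i,j)}\CFa(\alphas^1\cup\alphas^2,\deltas^{i\times j},z)
\]
of the two sides; the content of the theorem is that the two differentials (which mix filtration levels) agree up to filtered chain homotopy.

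Next I would stretch the neck along the connect-sum circle, degenerating holomorphic polygons in $(\Sigma_1\cup\Sigma_2,\alphas^1\cup\alphas^2,\deltas^{i\times j},z)$ into pairs of matched bordered polygons on the two sides, exactly as in the proof of the pairing theorem in~\cite[Chapter~\ref*{LOT1:chap:pairing}]{LOT1}. This degeneration argument, combined with Lemma~\ref{lem:FiberedProduct} and a standard gluing argument, identifies the ``diagonal'' part of the right-hand-side differential (the $m_1$ on each summand) with the tensor-product differential on the left-hand side. For the off-diagonal part of the differential, i.e. the maps between distinct filtration levels, one must examine polygons whose corners are mapped to chains $\omega^{(i,j)<(i',j')}$. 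By the definition~\eqref{eq:DefineProductChain} of $\omega$, each such corner is either of the form $\Theta\otimes\zeta$ (when $i=i'$) or $\eta\otimes\Theta$ (when $j=j'$), so under the neck-stretch each broken polygon splits as a polygon on $\Sigma_1$ with corners among $\{\eta^{i<i'},\Theta^{i\times j<i\times j'}\}$ and a polygon on $\Sigma_2$ with corners among $\{\zeta^{j<j'},\Theta^{i\times j<i'\times j}\}$, with matching Reeb chord asymptotics. By Condition~\ref{cl:Polygons} of the close approximation together with Lemma~\ref{lem:FiberedProductDegreeOne}, the $\Theta$-corners on either side contribute $1\pmod 2$ when the other corners are ``genuine,'' and contribute trivially otherwise; this is precisely what is needed to match the structure maps $D^{(i,j)<(i',j')}$ defined by~\eqref{eq:DefineDifferentialOnDT} on the left-hand side.

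The preliminary step mentioned in the introduction concerns the fact that each polygon in this degeneration argument has several ``Reeb heights'' that must be synchronized between the two sides; one breaks the translation ambiguity by normalizing in one of the strip-like ends, just as in the proof of the self-pairing (Hochschild) theorem~\cite[Theorem~\ref*{LOT2:thm:Hochschild}]{LOT2}. Once these preliminaries are in place the degeneration and gluing arguments are a routine extension of those in~\cite[Chapter~\ref*{LOT1:chap:pairing}]{LOT1}, and yield a quasi-isomorphism between the two sides of~\eqref{eq:PairingQuasiIsomorphism}; the filtered version then follows because the map constructed this way respects the $\IndI\times\IndJ$-filtration on the nose. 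The main obstacle is the bookkeeping: ruling out ``bad'' degenerations in which too much area escapes into the isotopy region of a close approximation, and showing that the $\Theta$-corner contributions collapse in exactly the right way so that the mixed $\Theta\otimes\zeta$ and $\eta\otimes\Theta$ terms produce the tensor-product differential of~\eqref{eq:DefineDifferentialOnDT} rather than a proliferation of unwanted terms. Both difficulties are controlled by the closeness conditions~\ref{cl:ApproximatePoints}--\ref{cl:Polygons} of Definition~\ref{def:CloseApproximation} and by Lemma~\ref{lem:PseudoHolomorphicRepresentative}. Finally, the bimodule version Theorem~\ref{thm:PolygonPairingDA} follows by applying the same argument to the drilled diagram of Subsection~\ref{subsec:PolygonBimodule}, exactly as $\CFAAa$ is reduced to $\CFAa$ in~\cite{LOT2}.
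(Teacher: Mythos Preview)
Your outline has the right ingredients but is missing the key mechanism that makes the argument work, and your invocation of Lemma~\ref{lem:FiberedProductDegreeOne} misstates what that lemma can do.

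After neck stretching, a matched polygon consists of a pair $(u_1,u_2)$ sharing the \emph{same} conformal class in $\Conf(D_{n+1})$: the map $\evbig{}$ couples both the Reeb-chord heights \emph{and} the location of all $n{-}1$ polygon corners. So each matched polygon has a single sequence of corners $\ij{0}<\cdots<\ij{n}$; on the $\Sigma_1$ side some of these are $\eta$-type and the rest are $\Theta$'s, and dually on the $\Sigma_2$ side. Lemma~\ref{lem:FiberedProductDegreeOne} only says that a polygon with \emph{all} $\Theta$ corners has degree~$1$ onto $\Conf$; it says nothing about a polygon with a mixture of $\Theta$ and $\eta$ corners, and there is no reason such a mixed polygon should factor as a genuine $\eta$-polygon times a degree-$1$ $\Theta$-polygon. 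So you cannot ``collapse the $\Theta$-corners'' directly from the matched complex, and in particular you have no way to decouple the conformal structures on the two sides, which is essential since on the $\DT$ side the $\Sigma_1$-polygon and $\Sigma_2$-polygon can have different numbers of sides.

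The paper resolves this by inserting an extra step between neck-stretching and the close-approximation/$\epsilon\to 0$ argument: translate only the Reeb-chord part of $\evbig{}$ by a parameter $t$ (the complexes $\tMatchedComplex$) and send $t\to\infty$. Compactness forces each matched pair to break into a two-story building $u_1*u_2$, $v_1*v_2$ with $u_2$ and $v_1$ provincial; these are the \emph{cross-matched polygons}. Now the conformal matching is between $u_1$ and $v_1$ (and between $u_2$ and $v_2$) separately, while the Reeb-chord matching is between $u_1$ and $v_2$. At this point, and only at this point, sending $\epsilon\to 0$ forces (via the close-approximation conditions) the provincial stories $u_2$ and $v_1$ to be all-$\Theta$, so Lemma~\ref{lem:FiberedProductDegreeOne} applies and lets you drop them. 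The resulting \emph{chord-matched polygon pairs} have unconstrained, independent conformal structures on the two sides, and the time-dilation argument from~\cite{LOT1} then finishes the proof. Your reference to the Hochschild theorem is the right citation for the translation idea, but its role here is not ``normalizing in a strip-like end''; it is producing the two-story degeneration that separates the conformal constraint from the Reeb-chord constraint.
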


On the left side, we are taking chain complexes of type $A$
structures and type $D$ structures associated to chain complexes of
attaching circles as explained in
Subsection~\ref{subsec:ComplexAttachCircles} and then forming their tensor
product, which is an $\IndI\times \IndJ$-filtered complex (see
Lemma~\ref{lem:FilteredTensorProduct}).  On the right side, we
are taking the connected sum of the complexes of attaching circles, in the
sense of Definition~\ref{def:ConnectedSum}, and forming the associated
chain complex, as in Definition~\ref{def:AssociatedComplex}. In
particular, this involves using close
approximations of the attaching circles, as in
Proposition~\ref{prop:GlueChainComplexes}.

The proof of Theorem~\ref{thm:PolygonPairing} occupies
Sections~\ref{sec:pairing-prelim}--\ref{sec:time-dilate}.  In
Section~\ref{sec:on-boundedness} we formulate a version of
Theorem~\ref{thm:PolygonPairing} with weaker admissibility
hypotheses. We formulate and sketch a proof of the bimodule analogue
of Theorem~\ref{thm:PolygonPairing} in
Section~\ref{sec:pairing-bimodules}. There is a simpler pairing
theorem in the case of pairing holomorphic triangles with holomorphic
bigons, as described in Section~\ref{sec:pairing-triangles}, which we
use in Section~\ref{sec:exact-tri} to show that the surgery exact
triangle implied by bordered Floer theory~\cite{LOT1} agrees with the
original Heegaard Floer surgery exact
triangle~\cite{OS04:HolDiskProperties}.

\begin{figure}
\centering
\input{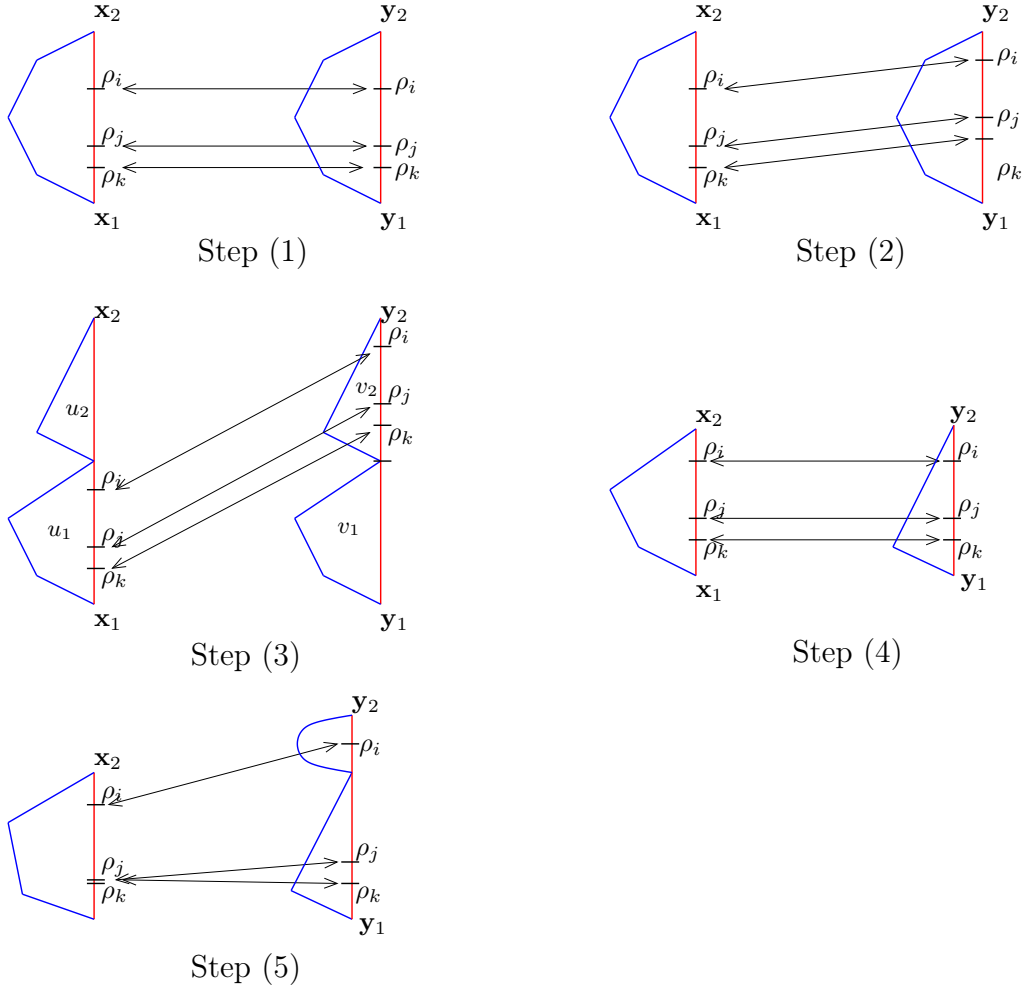}
\caption{\textbf{Sketch of the pairing theorem.}
  Schematic illustration of the curve configurations which contribute to the boundary operators of the chain complexes
  described in Steps~\ref{step:First}-\ref{step:LastComplex}. Note that in the first two steps, the precise positions of the chords
  are matched (in the second step, up to an overall, pre-specified translation factor); while in the other steps, only the relative 
  positions are matched. Moreover, in the first three steps, there is a matching of the conformal structures of the curves
  on the two sides, absent from the last two steps.}
\label{fig:PairingSketch}
\end{figure}
The proof of Theorem~\ref{thm:PolygonPairing} has the following steps,
as illustrated in Figure~\ref{fig:PairingSketch}:
\begin{enumerate}[label=(\arabic*),ref=(\arabic*)]
\item 
  \label{step:First}
  Form the connected sum of $\Sigma_1$ and $\Sigma_2$, and take a
  limit of almost-complex structures so that holomorphic polygons in $\Sigma_1\conn \Sigma_2$
  correspond to pairs of polygons in $\Sigma_1$ and $\Sigma_2$ which
  satisfy a matching condition for both the conformal structure of
  the polygon and the positions of the Reeb chords
  (Proposition~\ref{prop:TakeNeckToInfinity}). The moduli space of
  such matched polygons is denoted $\cMM_{[0]}$, and the complex
  which counts them is denoted $\MatchedComplex$.
\item Consider a new chain complex, $\tMatchedComplex$, whose
  differential counts points in the moduli space $\cMM_{[t]}$ of pairs
  of polygons where the matching condition
  for the Reeb chords is translated by a real parameter $t$.  The
  filtered quasi-isomorphism type of this complex is independent of
  the parameter $t$ (Lemma~\ref{lem:VaryTranslationParameter}). (This
  is similar to a step in the proof
  of~\cite[Theorem~\ref*{LOT2:thm:Hochschild}]{LOT2}.)
\item Send the $t$ parameter to $\infty$
  (Lemma~\ref{lem:TransToInfinity}). The chain complex
  $\tMatchedComplex$ for large enough $t$ is now identified with a
  different chain complex, $\ComplexCross$, whose differential counts
  points in the moduli space $\CrossMatched$ of
  \emph{cross-matched polygons}. Cross-matched polygons consists of
  two-story buildings $u_1*u_2$ and $v_1*v_2$ in $\Sigma_1$ and
  $\Sigma_2$ respectively, where the modulus of $u_i$ (in
  $\Conf(D_n)$) is matched with 
  the modulus of $v_i$, for $i=1,2$; $u_1$ and $v_2$ are provincial;
  and the (relative) 
  positions of the Reeb chords on $u_2$
  and $v_1$ are matched.
\item Now, move the approximation parameter $\epsilon$ in the
  construction of the attaching circles $\betas^{i\times j}$ and
  $\gammas^{i\times j}$. If $\epsilon$ is sufficiently small, the
  cross-matched polygons in the previous step can be simplified: the
  provincial curves $v_1$ and $u_2$ are determined uniquely, and hence
  they can be thrown out
  (Proposition~\ref{prop:SendEpsilonToZero}). This gives a new chain
  complex, denoted $\ChordMatchedComplex$, counting points in the
  moduli space $\ChordMatched$ of \emph{chord-matched
      polygon pairs}, consisting of a polygon $u_1$ in $\Sigma_1$ and
  $v_2$ in $\Sigma_2$, which satisfy a matching condition on the chord
  heights. At this point, the moduli of the polygons become
  unconstrained.
\item
  \label{step:LastComplex}
  Dilate time in the matching conditions on chord-matched
  polygon pairs, as in the proof of the usual pairing theorem in bordered
  Floer homology~\cite[Chapter~\ref*{LOT1:chap:tensor-prod}]{LOT1}. As in that
  proof, once the
  dilation parameter is sufficiently large
  (Proposition~\ref{prop:DilateToInfinity}), the chain complex is
  identified with a chain complex $\tsicComplex$, whose differential
  counts \emph{trimmed simple ideal-matched polygon pairs}
  (Definition~\ref{def:tsicPolygons}).
\item Again as in the proof of the original pairing theorem, counts of trimmed
  simple ideal-matched polygon pairs have an algebraic
  interpretation, establishing the pairing theorem for polygons.
\end{enumerate}

\subsection{Preliminaries}\label{sec:pairing-prelim}

Before launching into the above steps, we make an observation concerning the admissibility hypotheses:

\begin{proposition}
  \label{prop:GlueAdmissibility}
  If both $(\Sigma_1,\alphas^1,\{\betas^i\}_{i\in\IndI},z)$ and 
  $(\Sigma_2,\alphas^2,\{\gammas^j\}_{j\in\IndJ},z)$
  are provincially admissible, and one of the two is admissible, then
  their gluing
  \[
  (\Sigma_1\cup\Sigma_2,\alphas^1\cup\alphas^2, (\{\betas^i\}_{i\in
    \IndI},\{\eta^{i<i'}\}_{i,i'\in \IndI}) \conn (\{\gammas^{j}\}_{j\in
    \IndJ},\{\zeta^{j<j'}\}_{j,j'\in \IndJ}), z)
  \] 
  is admissible.
\end{proposition}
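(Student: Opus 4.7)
The plan is to analyze a non-zero multi-periodic domain $P$ on the glued multi-diagram by restricting it to each side. Fix a subsequence $(i\times j)_0<\dots<(i\times j)_n$ in $\IndI\times\IndJ$, and write $P=P_1+P_2$, where $P_k$ is the restriction of $P$ to $\Sigma_k$. First I would verify that each $P_k$ is a multi-periodic domain for the $\IndI\times\IndJ$-filtered bordered multi-diagram on $\Sigma_k$ obtained from Definition~\ref{def:ConnectedSum}, and that the boundary of $P_k$ at the pointed matched circle $\PMC$ is determined by the same multiplicities with which the corresponding $\alpha$-circles of $\Sigma_1\cup\Sigma_2$ appear in $\bdy P$. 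A direct consequence is that $P_1$ is provincial (in the sense of Definition~\ref{def:Admissible}) if and only if $P_2$ is.

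A preliminary step is to transfer the (provincial) admissibility hypotheses from the original $\IndI$-filtered bordered multi-diagrams to the $\IndI\times\IndJ$-filtered approximated diagrams $(\Sigma_k,\alphas^k,\{\betas^{i\times j}\}_{i\times j\in\IndI\times\IndJ},z)$ and $(\Sigma_k,\alphas^k,\{\gammas^{i\times j}\}_{i\times j\in\IndI\times\IndJ},z)$. This is where the close approximation construction enters: by Definition~\ref{def:Approximation} and Proposition~\ref{prop:CloseApproximations}, the perturbations introducing the extra filtration direction are small Hamiltonian isotopies supported in tubular neighborhoods of the original $\beta$- and $\gamma$-curves, so the only new multi-periodic domains are ``thin'' annular ones between $\betas^{i\times j}$ and $\betas^{i\times j'}$ (and similarly for $\gammas$). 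These can be arranged to have both positive and negative multiplicities (or to be decomposable into pieces each with this property), so provincial admissibility on each side, and full admissibility on the admissible side, persist after approximation. This may require a further winding of $\alphas^k$ as in Lemma~\ref{lem:AdmissibleMultiDiagram}, but the hypothesis of the proposition is invariant under isotopy of $\alphas^k$.

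With these preliminaries established, the main argument is a short case analysis. If both $P_1$ and $P_2$ are provincial, then since $P\neq 0$ at least one $P_k$ is non-zero, and provincial admissibility of the corresponding $\IndI\times\IndJ$-filtered diagram on $\Sigma_k$ shows that $P_k$ has both positive and negative local multiplicities; hence so does $P$. Otherwise both $P_1$ and $P_2$ are non-provincial and, in particular, both non-zero. Assume without loss of generality that $(\Sigma_1,\alphas^1,\{\betas^i\}_{i\in\IndI},z)$ is the admissible side; then the approximated diagram on $\Sigma_1$ is admissible, so $P_1$ has both positive and negative multiplicities, and therefore so does $P$.

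The main obstacle is the preliminary step on preservation of admissibility under close approximation: one has to verify that the additional thin annular periodic domains introduced by the Hamiltonian perturbations do not destroy (provincial) admissibility, possibly after adjusting $\alphas^k$ by winding. The subsequent case analysis is essentially formal, and the whole argument parallels the standard gluing-of-admissibility results in bordered Floer homology.
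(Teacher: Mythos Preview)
Your proposal is correct and is essentially the argument the paper has in mind; the paper's own proof simply cites the analogous result for ordinary bordered diagrams \cite[Lemma~\ref*{LOT1:lem:closed-admissible}]{LOT1} and asserts that the multi-diagram case causes no additional complications. You have spelled out that cited argument in detail, including the close-approximation step that the paper glosses over (the winding of $\alphas^k$ you worry about is not actually needed: a multi-periodic domain for the approximated diagram either projects via $\phi_\epsilon$ to a non-zero multi-periodic domain for the original diagram, whence both signs by hypothesis, or is supported in the thin isotopy region, where the Hamiltonian perturbations of Figure~\ref{fig:close-approx} automatically produce both signs).
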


\begin{proof}
  In the usual bordered setting, the analogous result
  is~\cite[Lemma~\ref*{LOT1:lem:closed-admissible}]{LOT1}.
  The fact that we are dealing here with multi-diagrams causes no additional complications.
\end{proof}

Thus, the admissibility hypotheses ensure that the right-hand side of
Equation~\eqref{eq:PairingQuasiIsomorphism} is well-defined.
Moreover, provincial admissibility ensures that the tensor factors appearing on the left-hand side
are defined, and the admissibility hypotheses ensure that the tensor product is defined
(see Propositions~\ref{prop:DefCCFAa},~\ref{prop:DefCCFDa}, and Lemma~\ref{lem:FilteredTensorProduct}).

\subsection{Holomorphic curves in \texorpdfstring{$\Sigma_1\conn \Sigma_2$}{Sigma 1 \#  Sigma 2} and matched polygons}

In this subsection we discuss the limits of polygons in
$\Sigma_1\conn \Sigma_2$ when one stretches the neck. The results are
analogous
to~\cite[Section~\ref*{LOT1:sec:moduli-matched-pairs}]{LOT1}; we
assume the reader is familiar with the treatment there, and highlight
the parts where the case of polygons is somewhat more complicated.

\begin{convention}
  Throughout the rest of this section, we fix:
  \begin{itemize}
  \item Riemann surfaces $\Sigma_1$ and $\Sigma_2$.
  \item A pointed matched circle $\PMC$.
  \item Complete sets of bordered attaching curves $\alphas_1$
    (respectively $\alphas_2$) in $\Sigma_1$ (respectively $\Sigma_2)$
    compatible with $\PMC$ (respectively $-\PMC$).
  \item Complete sets of attaching circles
    $\betas^1,\dots,\betas^{n_1}$ (respectively
    $\gammas^1,\dots,\gammas^{n_2}$) in $\Sigma_1$ (respectively
    $\Sigma_2$).
  \item Chains $\eta^{i_1<i_2}\in
    \CFa(\Sigma_1,\betas^{i_1},\betas^{i_2},z)$ (respectively
    $\zeta^{j_1<j_2}\in \CFa(\Sigma_2,\gammas^{j_1},\gammas^{j_2},z)$)
    making $(\Sigma_1,\allowbreak\{\betas^i\}_{i\in
      \IndI},\allowbreak\{\eta^{i_1<i_2}\}_{i_1,i_2\in\IndI},z)$ (respectively
    $(\Sigma_2,\{\gammas^j\}_{j\in
      \IndJ},\{\zeta^{j_1<j_2}\}_{j_1,j_2\in\IndJ},z)$) into a chain
    complex of attaching circles.
  \item For each $\ell,m$, a close approximation $\betas^{i_\ell\times j_m}$ (respectively
    $\gammas^{i_\ell\times j_m}$) to $\betas^{i_\ell}$ (respectively
    $\gammas^{j_m}$).
  \end{itemize}
\end{convention}

Recall from Convention~\ref{conv:ij} that $\ij{k}$ denotes
$i_k\times j_k$. Also recall from Definition~\ref{def:ConnectedSum}
that the chain $\eta^{\ij{1}<\ij{2}}$ is the nearest-point map applied
to $\eta^{i_1<i_2}$ if $j_1=j_2$, or is the top generator if
$i_1=i_2$, or else is $0$, and similarly for $\zeta^{\ij{1}<\ij{2}}$
but with the roles of $i$ and $j$ exchanged,
and
\[
\omega_{\ij{1}<\ij{2}}=\eta^{\ij{1}<\ij{2}}\otimes \zeta^{\ij{1}<\ij{2}}.
\]

We find it convenient to make the following notational simplification.
Given generators $\x_1\in \Gen(\alphas^1,\betas^i)$ and
$\x_2\in\Gen(\alphas^1,\betas^{i'})$, and a sequence
$\ij{1}<\dots<\ij{n}$ with $i=i_1$ and $i'=i_n$, let
\[
\pi_2(\x_2, \eta^{\ij{n-1}<\ij{n}}, \dots,\eta^{\ij{1}<\ij{2}}, \x_1)
\] 
denote the union over all terms
${\mathbf w}_m$ in the chain $\eta^{\ij{m}<\ij{m+1}}$, $m=1,\dots,n-1$, of
\[
\pi_2(\x_2,{\mathbf w}_{n-1},\dots,{\mathbf w}_{1},\x_1).
\]
We make the corresponding notational simplification on $\Sigma_2$, as well.

\begin{definition}
  Let $\x\in\Gen(\alphas^1,\betas^{i\times j})$ 
  and $\y\in\Gen(\alphas^2,\gammas^{i\times j})$ 
  be a pair of generators with the property that the
  $\alphas^1$-arcs occupied by the generator $\x$ are complementary to
  the $\alphas^2$-arcs occupied by the generator $\y$, with respect to
  the identification induced by $\bdy
  (\Sigma_1,\alphas^1)=\PMC=-\bdy(\Sigma_2,\alphas^2)$. Then we say
  that $\x$ and
  $\y$ are a {\em complementary pair of generators}. Note that this 
  condition is equivalent to the condition that
  \[
  \x\cup \y\in \Gen(\alphas^1\cup\alphas^2,
  \betas^{i\times j}\cup \gammas^{i\times j}).
  \]
  We denote this induced generator by $\x\conn \y\coloneqq
  \x\cup \y$.

  Given two complementary pairs of generators $(\x_1,\y_1)$ and
  $(\x_2,\y_2)$,
  there is an obvious (injective) map 
  \begin{multline*}
  \pi_2(\x_2\conn \y_2,\eta^{\ij{n-1}< \ij{n}}\conn \zeta^{\ij{n-1}<\ij{n}},
  \dots, \eta^{\ij{1}<\ij{2}}\conn \zeta^{\ij{1}<\ij{2}},\x_1\conn \y_1)\\
  \to 
  \pi_2(\x_2,\eta^{\ij{n-1}<\ij{n}},\dots, \eta^{\ij{1}<\ij{2}},\x_1)
  \times
  \pi_2(\y_2,\zeta^{\ij{n-1}<\ij{n}},\dots,\zeta^{\ij{1}<\ij{2}},\y_1).
  \end{multline*}
  We call the image of this map the set of \emph{matched domains}
  (connecting these generators). If $(B_1,B_2)$ is in the image, then
  its preimage is denoted $B_1 \glue B_2$.
\end{definition}

Our next goal is to define matched holomorphic curves. Before doing
so, we need a little more notation. Fix generators
$\x_1$ and $\y_1$, a source $S_1$, Reeb chords $\rho_1,\dots,\rho_m$ and
a homology class $B_1$. We will define a map 
\[
\evbig{1}\co \Mod^{B_1}(\y_1,\eta^{\ij{n-1}<\ij{n}},\dots, \eta^{\ij{1}<\ij{2}},\x_1;\{\rho_1\},\dots,\{\rho_m\};S_1)\to 
(\RR^m\times \RR^{n-1})/\RR
\]
(where $\RR$ acts on $\RR^m\times\RR^{n-1}\cong \RR^{n+m-1}$ by
diagonal translation). To this end, consider an element $(j,u)\in
\Mod^{B_1}(\y_1,\eta^{\ij{n-1}<\ij{n}},\dots,
\eta^{\ij{1}<\ij{2}},\x_1;\{\rho_1\},\dots,\{\rho_m\};S_1)$. There is
a corresponding polygon $\kappa(j,u)\coloneqq j\in \Conf(D_{n+1})$. As
we have been suppressing $j$ from the notation, we will often abuse
notation and write $\kappa(u)$ instead of $\kappa(j,u)$. Now, consider
a biholomorphic map $\kappa(u)\to [0,1]\times\RR$ that sends the
corners of $\kappa(u)$ corresponding to $\x_1$ and $\y_1$ to $-\infty$
and $\infty$, respectively. (This map is well-defined up to
translation.) The remaining corners of $\kappa(u)$ give points
$(1,t'_1),\dots,(1,t'_{n-1})\in \{0\}\times\RR$ and the coordinates of
the Reeb chords of $u$ give points
$(0,t_1),\dots,(0,t_m)\in\{0\}\times\RR$. (Here, the orderings are
chosen so that $t'_1<\dots<t'_{n-1}$ and $t_1<\dots<t_m$.) Define
\[
\evbig{1}(u)=(t_1,\dots,t_m,t'_1,\dots,t'_{n-1})\in(\RR^m\times \RR^{n-1})/\RR.
\]
We define a map $\evbig{2}$ on the moduli space of curves in $\Sigma_2$ similarly.

\begin{definition}
  \label{def:MatchedPolygons}
  Fix $i<i'$ and $j<j'$, and let
  \begin{align*}
  \x_1&\in \Gen(\alphas^1,\betas^{i\times j}), &
  \x_2&\in \Gen(\alphas^1,\betas^{i'\times j'}),\\
  \y_1&\in \Gen(\alphas^2,\gammas^{i\times j}), &
  \y_2&\in \Gen(\alphas^2,\gammas^{i'\times j'})
  \end{align*}
  be generators with the property that $\x_1$ and $\y_1$ is a complementary
  pair of generators, and $\x_2$ and $\y_2$ is a complementary pair of 
  generators.
  Fix a sequence $i\times j=\ij{1}<\dots<\ij{n}=i'\times j'$ and
  matched homology classes 
  \begin{align*}
    B_1&\in \pi_2(\x_2, \eta^{\ij{n-1}<\ij{n}},\dots, \eta^{\ij{1}<\ij{2}},\x_1) \\
    B_2&\in \pi_2(\y_2, \zeta^{\ij{n-1}<\ij{n}},\dots, \zeta^{\ij{1}<\ij{2}}, \y_1).
  \end{align*}
  Fix also punctured Riemann surfaces with boundary $S_1$ and $S_2$
  and a sequence of Reeb chords $(\rho_1,\dots,\rho_m)$.

  Define the \emph{moduli space of matched polygons in the homology
    classes $B_1$ and $B_2$ with sources $S_1$ and $S_2$ and Reeb chords $(\rho_1,\dots,\rho_m)$} to be
  \begin{multline}\label{eq:MatchedPolygons}
    \cMM^{B_1\glue B_2}_{[0]}(\x_1\conn \y_1,\x_2\conn \y_2;\rho_1,\dots,\rho_m;S_1,S_2) \\
    \coloneqq 
    \Mod^{B_1}(\y_1,\eta^{\ij{n-1}<\ij{n}},\dots,\eta^{\ij{1}<\ij{2}},\x_1;\{\rho_1\},\dots,\{\rho_m\};S_1)\\
    \times_{\Id}
    \Mod^{B_2}(\y_2,\zeta^{\ij{n-1}<\ij{n}},\dots,\zeta^{\ij{1}<\ij{2}},\x_2;\{-\rho_1\},\dots,\{-\rho_m\};S_2),
  \end{multline}
  where by $\times_\Id$ we mean the pairs $(u_1,u_2)$ with $\evbig{1}(u_1)=\evbig{2}(u_2)$. 
 
  Note that there is a degenerate case of bigons with no Reeb chords
  (i.e., $n=1$ and $m=0$), and correspondingly there are no evaluation
  maps.
\end{definition}

(The notation $[0]$ comes from the fact that these moduli spaces will
fit into a one-parameter family indexed by a real number $t$, and
these correspond to $t=0$; see Definition~\ref{def:tSlid}.)

The expected dimension of
$\cMM^{B_1\glue B_2}_{[0]}(\x_1\conn \y_1,\x_2\conn \y_2;\rho_1,\dots,\rho_m;S_1,S_2)$
is given in terms of
\begin{equation}\label{eq:MM-ind-source}
  \begin{split}
    \ind(B_1,S_1;B_2,S_2;\rho_1,\dots,\rho_m)& \coloneqq
    \ind(B_1,S_1;\{\rho_1\},\dots,\{\rho_m\}) \\
    &\qquad\qquad +\ind(B_2,S_2;\{-\rho_1\},\dots,\{-\rho_m\})-m\\
    &=\left(\frac{3-n}{2}\right)(g_1+g_2)-\chi(S_1)-\chi(S_2)+2e(B_1)+2e(B_2)+m
  \end{split}
\end{equation}
according to the following:
\begin{lemma}\label{lem:matched-transversality}
  For a generic, admissible family of almost-complex structures for
  $\Sigma_1$ and $\Sigma_2$, the moduli space
  $\cMM^{B_1\glue B_2}_{[0]}(\x_1\conn \y_1,\x_2\conn \y_2;\rho_1,\dots,\rho_m;S_1,S_2)$
  is transversally cut out, and so is a (non-compact) manifold whose dimension 
  is
  $\ind(B_1,S_1;B_2,S_2;\rho_1,\dots,\rho_m)+n-2$
  (Formula~\eqref{eq:MM-ind-source}).
\end{lemma}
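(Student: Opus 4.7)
The plan is to view $\cMM^{B_1\glue B_2}_{[0]}$ as a fibered product of smooth manifolds and to establish transversality in two stages: first for the individual factors $\Mod^{B_i}(\ldots;S_i)$, and then for the fibered product itself via the evaluation maps $\evbig{i}$. The dimension count will then follow from the fibered product formula combined with Formula~\eqref{eq:MM-ind-source}.

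For the first stage, transversality of each $\Mod^{B_i}(\ldots;S_i)$ for a generic admissible family of almost-complex structures on $\Sigma_i$ follows from a standard Sard-Smale argument, essentially unchanged from the bigon case treated in \cite[Proposition~\ref*{LOT1:prop:transversality}]{LOT1}. The generalization to polygons is parallel to Proposition~\ref{prop:closed-trans} in the closed case: the presence of the additional parameter $j\in\Conf(D_{n+1})$ only makes transversality easier to achieve, and the dimension $\ind(B_i,S_i;\cdot)+n-2$ has already been computed in Lemma~\ref{lem:ind-fixed-source}.

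The main work is the second stage: showing that the evaluation maps $\evbig{1}$ and $\evbig{2}$ can be made transverse to each other as maps into the $(m+n-2)$-dimensional target $(\RR^m\times\RR^{n-1})/\RR$. The plan is to fix a generic admissible family $\{J^2_j\}$ on $\Sigma_2$ achieving transversality of $\Mod^{B_2}$, then vary the family $\{J^1_j\}$ on $\Sigma_1$. The key observation is that local perturbations of $J^1_j$ in neighborhoods of the $\alpha$-arcs on $\partial\Sigma_1$ move the Reeb chord heights of curves in $\Mod^{B_1}$ independently, while variations in the underlying $j\in\Conf(D_{n+1})$ (which is part of the datum of a holomorphic polygon) vary the conformal positions of the $\betas$-corners. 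Together these are enough to make the universal evaluation map a submersion onto $(\RR^m\times\RR^{n-1})/\RR$, so that by Sard-Smale the map $\evbig{1}$ for generic $\{J^1_j\}$ is transverse to $\evbig{2}(\Mod^{B_2}(\ldots;S_2))$. This is the same strategy employed in Lemma~\ref{lem:FiberedProduct} and in the standard bordered pairing theorem~\cite[Chapter~\ref*{LOT1:chap:tensor-prod}]{LOT1}. The principal obstacle is arranging that the perturbations respect the admissibility conditions~\ref{item:J-proj}--\ref{item:J-compatible}; as in those references, one accomplishes this by confining the perturbations to compact pieces of $\Sigma_1\times D_{n+1}$ away from the strip-like neighborhoods of the punctures and away from the boundary strata of $\oConf(D_{n+1})$, where the almost-complex structure is constrained by the compatibility requirement.

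Granting transversality, the dimension computation is immediate from the fibered product: $\dim\cMM^{B_1\glue B_2}_{[0]} = \dim\Mod^{B_1}+\dim\Mod^{B_2}-(m+n-2)$, which upon substituting the individual dimensions from Lemma~\ref{lem:ind-fixed-source} and rearranging via Formula~\eqref{eq:MM-ind-source} equals $\ind(B_1,S_1;B_2,S_2;\rho_1,\dots,\rho_m)+n-2$, as claimed. The degenerate case of bigons with no Reeb chords ($n=1$, $m=0$) requires no fibered product at all and reduces to standard bigon transversality for each factor.
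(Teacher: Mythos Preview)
Your proposal is correct and is essentially an expanded version of the paper's own proof, which simply states that the result follows from standard transversality arguments and refers to \cite[Lemma~\ref*{LOT1:lemma:Matched-Exp-Dim}]{LOT1} for the bigon case. Your two-stage Sard--Smale argument (transversality of the factors, then of the fibered product via the evaluation maps) and the dimension count via the fibered product formula are precisely the content those references encode.
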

\begin{proof}
  This follows from standard transversality
  results. See~\cite[Lemma~\ref*{LOT1:lemma:Matched-Exp-Dim}]{LOT1}
  for the corresponding statement in the original bordered setting.
\end{proof}

Of course, we want to count embedded holomorphic curves; and as usual
this condition determines the Euler characteristic of $S$. Given 
\[
B\in
\pi_2(\x_2\conn \y_2,\eta^{\ij{n-1}<\ij{n}}\conn \zeta^{\ij{n-1}<\ij{n}}, \dots,
\eta^{\ij{1}<\ij{2}}\conn \zeta^{\ij{1}<\ij{2}},\x_1\conn \y_1),
\]
following~\cite{Sarkar11:IndexTriangles}, the
embedded Euler characteristic and index in the class $B$ are given by
\begin{align}
  \chi_\emb(B)&\coloneqq g+e(B)-n_{\x_0}(B)-n_{\x_n}(B)-\sum_{n\geq
      j>\ell\geq1}\bdy_j(B)\cdot\bdy_\ell(B)\\
  \ind_\emb(B)&\coloneqq e(B)+n_{\x_0}(B)+n_{\x_n}(B)-\left(\frac{n-1}{2}\right)g+\sum_{n\geq
      j>\ell\geq1}\bdy_j(B)\cdot\bdy_\ell(B).\label{eq:emb-ind-bord}
\end{align}
(Compare Proposition~\ref{prop:emb-ind}.)

\begin{definition}
  The \emph{moduli space $\cMM^{B_1\glue B_2}_{[0]}(\x_1\conn \y_1,\x_2\conn \y_2)$ of embedded matched polygons}
  is the union of the spaces
  $\cMM^{B_1\glue B_2}_{[0]}(\x_1\conn \y_1,\x_2\conn \y_2;\rho_1,\dots,\rho_m;S_1,S_2)$
  over all sequences of Reeb chords and all sources $S_1$, $S_2$ with 
  \[
  \chi(S_1\glue S_2)=\chi_{\emb}(B_1\glue B_2).
  \]
  (Here, $S_1\glue S_2$ denotes the gluing of $S_1$ and $S_2$ at the
  corresponding punctures. Note that the Euler characteristic of the
  glued surface
  $S_1\glue S_2$ is $\chi(S_1\glue S_2)=\chi(S_1)+\chi(S_2)-m$.)
\end{definition}

\begin{lemma}\label{lem:emb-matched-emb}
  A matched pair of polygons $(u_1,u_2)$ is in the corresponding
  embedded matched moduli space if and only if both $u_i$ are
  embedded. Moreover, the expected dimension of
  $\cMM^{B_1\glue B_2}_{[0]}(\x_1\conn \y_1,\x_2\conn \y_2)$ is given by
  $\ind_\emb(B_1\glue B_2)+n-2$.
\end{lemma}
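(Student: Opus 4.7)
The plan is to reduce both statements to an additivity property of the embedded Euler characteristic under gluing along matched Reeb chords, namely
\begin{equation}\label{eq:plan-additivity}
\chi_\emb(B_1\glue B_2) = \chi(S_1^{\emb}) + \chi(S_2^{\emb}) - m,
\end{equation}
where $S_i^{\emb}$ denotes the source predicted by the embedded formula of Proposition~\ref{prop:emb-ind} for $u_i$ in class $B_i$ with Reeb chords $\{\rho_1\},\dots,\{\rho_m\}$ (respectively $\{-\rho_1\},\dots,\{-\rho_m\}$), and $m$ is the number of matched Reeb-chord pairs. Once \eqref{eq:plan-additivity} is in hand, the biconditional follows quickly: if $u_1$ and $u_2$ are both embedded, then $\chi(S_i) = \chi(S_i^{\emb})$ by Proposition~\ref{prop:emb-ind}, and since $\chi(S_1\glue S_2) = \chi(S_1) + \chi(S_2) - m$, we obtain $\chi(S_1\glue S_2) = \chi_\emb(B_1\glue B_2)$. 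Conversely, if equality holds on the glued side, the inequality in Proposition~\ref{prop:emb-ind} applied to each $u_i$ forces both to be equalities, so each $u_i$ is embedded.

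To establish \eqref{eq:plan-additivity} I would use the formula for $\chi_\emb$ of a polygon in a closed surface (the adaptation of Proposition~\ref{prop:emb-ind} with no Reeb chord corrections) together with the bordered formula for $\chi(S_i^{\emb})$, and check additivity term by term: the Euler measure $e$ is additive across the gluing circle; the multiplicities $n_{\x_0\conn\y_0}$ and $n_{\x_n\conn\y_n}$ split as sums of multiplicities on each side; the interior intersection numbers $\bdy_j(B)\cdot\bdy_\ell(B)$ split since $\betas^{\ij{k}}$ curves live on one side and $\gammas^{\ij{k}}$ curves live on the other; and the total genus satisfies $g = g_1 + g_2$. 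The key cancellation occurs with the Reeb chord corrections: for each matched pair the contributions $\iota(\{\rho\}) + \iota(\{-\rho\}) = -1$ (since $|L(\rho,-\rho)| = 0$ for a single chord in each set), and the cross terms $\sum_{a<b} L(\rho_a,\rho_b)$ coming from $\Sigma_1$ cancel those coming from $\Sigma_2$ because the chords in the second sequence are the negatives of those in the first, so the linking numbers have opposite signs. Summing the $-\frac{1}{2}$ contributions from each of the $2m$ chord endpoints and adding the $-m$ from $\chi(S_1\glue S_2) = \chi(S_1) + \chi(S_2) - m$ precisely accounts for the discrepancy between the bordered and closed versions of the embedded Euler characteristic formula.

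For the dimension statement, I would combine Lemma~\ref{lem:matched-transversality}, which gives $\dim \cMM^{B_1\glue B_2}_{[0]} = \ind(B_1,S_1;B_2,S_2;\rho_1,\dots,\rho_m) + n - 2$, with the same kind of additivity argument applied to the embedded index formula \eqref{eq:emb-ind-bord}. Specifically, I would show
\[
\ind(B_1,S_1^{\emb};B_2,S_2^{\emb};\rho_1,\dots,\rho_m) = \ind_\emb(B_1\glue B_2),
\]
which follows from \eqref{eq:plan-additivity} together with the relation between $\ind$ and $\chi(S)$ from Lemma~\ref{lem:ind-fixed-source}, exactly mirroring how \eqref{eq:emb-ind-bord} is derived from \eqref{eq:emb-chi} in Proposition~\ref{prop:emb-ind}.

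The main obstacle I anticipate is purely bookkeeping: tracking all the $\iota$ and linking-number corrections through the gluing with the correct signs, and in particular verifying that the contributions from matched chords really do cancel in the way indicated. This is essentially the same check carried out for bigons in~\cite[Section~\ref*{LOT1:sec:moduli-matched-pairs}]{LOT1}, and the extra $n-1$ interior polygon corners contribute symmetrically to both sides of \eqref{eq:plan-additivity} via the intersection numbers $\bdy_j(B)\cdot\bdy_\ell(B)$, so no new phenomenon arises beyond those present in the bigon case.
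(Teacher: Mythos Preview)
Your proposal is correct and follows essentially the same approach as the paper's proof. The paper also reduces the biconditional to the additivity identity $\chi_\emb(B_1\glue B_2)=\chi(S_1^{\emb})+\chi(S_2^{\emb})-m$, established via exactly the two cancellations you identify, namely $\sum_i\iota(\{\rho_i\})+\sum_i\iota(\{-\rho_i\})=-m$ and $\sum_{i<j}L(\rho_i,\rho_j)+\sum_{i<j}L(-\rho_i,-\rho_j)=0$, and then invokes the inequality direction of Proposition~\ref{prop:emb-ind} to force both $u_i$ embedded when equality holds on the glued side; the dimension statement is likewise deduced from Lemma~\ref{lem:matched-transversality} together with Formula~\eqref{eq:emb-ind-bord}.
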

(Compare~\cite[Lemma~\ref*{LOT1:lem:matched-embedded}]{LOT1}.)
\begin{proof}
  To see that the $u_i$ are embedded, it suffices to show that
  $\chi(S_i)$ is given by Formula~(\ref{eq:emb-chi}). We have
  \begin{align*}
    \chi(S_1\glue S_2)&=\chi(S_1)+\chi(S_2)-m\\
    &\geq \chi_\emb(u_1)+\chi_\emb(u_2)-m\\
    &=g_1+e(B_1)-n_{\x_0}(B_1)-n_{\x_n}(B_1)
    -\sum_{n\geq
      j>\ell\geq1}\bdy_j(B_1)\cdot\bdy_\ell(B_1)\\
    &\qquad-\sum_i\iota(\{\rho_i\})-\sum_{i<j}L(\rho_i,\rho_j)\\
    &\qquad + g_2+e(B_2)-n_{\x_0}(B_2)-n_{\x_n}(B_2)
    -\sum_{n\geq
      j>\ell\geq1}\bdy_j(B_2)\cdot\bdy_\ell(B_2)\\
    &\qquad-\sum_i\iota(\{-\rho_i\})-\sum_{i<j}L(-\rho_i,-\rho_j)-m\\
    &=\chi_\emb(B_1\glue B_2),
  \end{align*}
  where the last equality uses the facts that 
  \begin{align*}
  \sum_i\iota(\{\rho_i\})+\sum_i\iota(\{-\rho_i\})&=-m\\
  \sum_{i<j}L(\rho_i,\rho_j)+\sum_{i<j}L(-\rho_i,-\rho_j)&=0.
  \end{align*}
  Thus, if $\chi(S_1\glue S_2)=\chi_\emb(B_1\glue B_2)$ we must have
  $\chi(S_i)=\chi_\emb(u_i)$, as desired.

  The fact that the expected dimension is given by
  $\ind_\emb(B_1\glue B_2)+n-2$ follows from Formula~(\ref{eq:emb-ind-bord})
  and Lemma~\ref{lem:matched-transversality}.
\end{proof}

The moduli spaces defined above can be used to construct an $\IndI\times \IndJ$-filtered chain complex
$\MatchedComplex=\{\MatchedComplex^{i\times j},D^{i\times j<i'\times j'}\}$ defined by
$\MatchedComplex^{i\times
  j}=\CFa(\alphas^1\cup\alphas^2,\betas^{i\times
  j}\cup\gammas^{i\times j},z)$ and
\[
D^{i\times j<i'\times j'}(\x_1\conn  \y_1) = \hspace{-1em}
\sum_{\substack{\x_2\times
  \y_2\in\Gen(\alphas^1\cup\alphas^2,\betas^{i'\times
    j'}\cup\gammas^{i'\times j'})\\ 
  \ij{1} < \ij{2} < \dots <\ij{n} \\ 
(B_1,B_2)\text{ s.t.}\ind_\emb(B_1\glue B_2)=2-n}}\hspace{-1em}
\#\cMM_{[0]}^{B_1\glue B_2}(\x_1\conn \y_1,\x_2\conn \y_2) \x_2\conn  \y_2.
\]
 
The following is a generalization of Lemma~\ref{lem:FiberedProduct},
as well as~\cite[Theorem~\ref*{LOT1:thm:PrimitivePairing}]{LOT1}:
\begin{proposition}
  \label{prop:TakeNeckToInfinity}
  For sufficiently long connect sum parameters on $\Sigma_1\conn \Sigma_2$, 
  there is an identification of chain complexes
  \[
  \MatchedComplex\cong
  \CCFa(\Sigma_1\cup\Sigma_2,\alphas^1\cup\alphas^2,
  (\{\betas^i\}_{i\in \IndI},\{\eta^{i<i'}\}_{i,i'\in \IndI})
  \#(\{\gammas^{j}\}_{j\in \IndJ},\{\zeta^{j<j'}\}_{j,j'\in \IndJ}),
  z).\]
\end{proposition}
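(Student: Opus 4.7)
The plan is to identify the two complexes generator by generator and then match their differentials using the standard neck-stretching/gluing technology of SFT, applied in the presence of the polygon moduli from Section~\ref{subsec:HolCurves}. First, the underlying $\IndI\times\IndJ$-graded vector spaces agree: a generator of $\MatchedComplex^{i\times j}$ is by definition a complementary pair $(\x_1,\y_1)$ with $\x_1\in\Gen(\alphas^1,\betas^{i\times j})$ and $\y_1\in\Gen(\alphas^2,\gammas^{i\times j})$, and such pairs are canonically in bijection with $\Gen(\alphas^1\cup\alphas^2,\deltas^{i\times j})$ under the gluing map $(\x_1,\y_1)\mapsto \x_1\conn\y_1$. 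Under the K\"unneth isomorphism~\eqref{eq:Kunneth}, the chain $\omega^{\ij{k}<\ij{k+1}}$ on the connect-sum surface decomposes (as forced by Equation~\eqref{eq:DefineProductChain}) either as $\eta^{\ij{k}<\ij{k+1}}\otimes \Theta^{\ij{k}<\ij{k+1}}$ or as $\Theta^{\ij{k}<\ij{k+1}}\otimes\zeta^{\ij{k}<\ij{k+1}}$; these are precisely the corner asymptotics appearing in the definition of $\MatchedComplex$.

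To identify the differentials, fix a sequence of connect-sum parameters $T_n\to\infty$ and consider a sequence of holomorphic polygons $u_n$ in $\Sigma_1\conn_{T_n}\Sigma_2$ in a fixed homology class $B$, counted by $D^{\ij{1}<\ij{n}}$ on the right-hand side. Because $B$ has vanishing multiplicity at $z$, the restrictions $u_n|_{\Sigma_i}$ have uniformly bounded energy, and SFT compactness~\cite{BEHWZ03:CompactnessInSFT} (in the form we have already invoked in the proof of Lemma~\ref{lem:PseudoHolomorphicRepresentative}) produces a limiting configuration consisting of a $J^1$-holomorphic curve $u_1$ in $\Sigma_1\times D_{n+1}$ and a $J^2$-holomorphic curve $u_2$ in $\Sigma_2\times D_{n+1}$, together with possible auxiliary components in $(S^1\times\RR)\times D_{n+1}$. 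The usual argument rules out the auxiliary components (they would carry positive energy, which is accounted for by the pairing of the Reeb asymptotics of $u_1$ and $u_2$), and forces the Reeb asymptotics of $u_1$ and $u_2$ along the neck to agree in pairs and in their temporal positions. Since $\pi_{\bD}\circ u_n$ all factor through a single polygon $D_{n+1}$, the limiting curves $u_1,u_2$ share the same underlying conformal polygon, so $\kappa(u_1)=\kappa(u_2)$ in $\Conf(D_{n+1})$; together with the chord-height matching, this is precisely the fiber product condition defining $\cMM^{B_1\glue B_2}_{[0]}$ (Definition~\ref{def:MatchedPolygons}). Lemma~\ref{lem:emb-matched-emb} then guarantees that the limiting pair is embedded with the correct index.

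The opposite direction is a gluing argument. Fix a rigid matched pair $(u_1,u_2)\in \cMM^{B_1\glue B_2}_{[0]}$. Transversality in the two factors (Lemma~\ref{lem:matched-transversality}) combined with surjectivity of the joint evaluation map $\evbig{1}\times\evbig{2}$ on matched Reeb heights and matched conformal parameters lets us apply a standard pregluing/Newton-iteration construction, analogous to the one used in~\cite[Chapter~\ref*{LOT1:chap:tensor-prod}]{LOT1} for the pairing theorem for $\CFa$. This produces, for every sufficiently long neck length $T$, a unique (up to sign) holomorphic polygon $u_T$ in $\Sigma_1\conn_T\Sigma_2$ in the homology class $B_1\glue B_2$, and conversely every such polygon for $T$ sufficiently large arises this way. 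The main obstacle (and where the hypothesis on the connect-sum parameter really enters) is that, in principle, as $T$ varies the set of rigid curves could change via the appearance of extra degenerations involving strips with ends in $(S^1\times\RR)\times D_{n+1}$. To rule these out one uses admissibility, which (via Proposition~\ref{prop:GlueAdmissibility}) bounds the finite collection of homology classes $B$ in which polygons could contribute, and hence the argument in~\cite[Chapter~\ref*{LOT1:chap:tensor-prod}]{LOT1} applies to produce a single threshold $T_0$ beyond which gluing is bijective simultaneously for all relevant classes.

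Putting the two directions together, for $T$ sufficiently large the counts $\#\cMM^{B_1\glue B_2}_{[0]}(\x_1\conn\y_1,\x_2\conn\y_2)$ agree with the counts $\#\cM^{B_1\glue B_2}(\x_1\conn\y_1,\x_2\conn\y_2)$ defining the differential in $\CCFa(\Sigma_1\cup\Sigma_2,\ldots)$, giving the claimed identification of $\IndI\times\IndJ$-filtered chain complexes.
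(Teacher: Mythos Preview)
Your proposal is correct and takes essentially the same approach as the paper: the paper's proof is a one-sentence reference to the standard neck-stretching argument, citing~\cite[Theorem~\ref*{LOT1:thm:PrimitivePairing}]{LOT1} for the bigon case, and you have spelled out in detail exactly that argument (SFT compactness for one direction, gluing for the other, admissibility for a uniform threshold). One minor correction: the relevant reference in~\cite{LOT1} for the neck-stretching and gluing is the section on moduli spaces of matched pairs and Theorem~\ref*{LOT1:thm:PrimitivePairing}, not Chapter~\ref*{LOT1:chap:tensor-prod}, which concerns the later time-dilation step.
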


\begin{proof}
  This is a standard neck stretching argument;
  see~\cite[Theorem~\ref*{LOT1:thm:PrimitivePairing}]{LOT1} for the result
  for the case of bigons.
\end{proof}

\subsection{Time translation}

We now introduce a time translation parameter $t\in \RR$.
\[
\tau_t\co (\RR^{m}\times\RR^{n-1})/\RR \to (\RR^{m}\times\RR^{n-1})/\RR
\]
be the map
\[
\tau_t(t_1,\dots,t_m,t'_1,\dots,t'_{n-1})=(t+t_1,\dots,t+t_m,t'_1,\dots,t'_{n-1}).
\]

\begin{definition}\label{def:tSlid}
  The \emph{moduli space of $t$-slid matched polygons} is defined
  exactly like the moduli space of matched polygons
  (Definition~\ref{def:MatchedPolygons}) except with
  Equation~\ref{eq:MatchedPolygons} replaced by
  \begin{multline}
  \cMM^{B_1\glue B_2}_{[t]}(\x_1\conn \y_1,\x_2\conn \y_2;\rho_1,\dots,\rho_m;S_1,S_2) \\
  \coloneqq 
  \Mod^{B_1}(\x_2,\eta^{\ij{n-1}<\ij{n}},\dots,
  \eta^{\ij{1}<\ij{2}},\x_1;\{\rho_1\},\dots,\{\rho_m\};S_1)\\
\times_{\tau_t}
  \Mod^{B_2}(\y_2,\zeta^{\ij{n-1}<\ij{n}},\dots,
  \zeta^{\ij{1}<\ij{2}},\y_1;\{-\rho_1\},\dots,\{-\rho_m\};S_2),
  \end{multline}
  where by $\times_{\tau_t}$ we mean the pairs $(u_1,u_2)$ with $\tau_t(\evbig{1}(u_1))=\evbig{2}(u_2)$. 

  The \emph{moduli space of embedded $t$-slid matched polygons}
  $\cMM^{B_1\glue B_2}_{[t]}(\x_1\conn \y_1,\x_2\conn \y_2)$
  is the union of the spaces
  $\cMM^{B_1\glue B_2}_{[t]}(\x_1\conn \y_1;\x_2\conn \y_2;\rho_1,\dots,\rho_m;S_1,S_2)$
  over all sequences of Reeb chords and all sources $S_1$, $S_2$ with 
  $\chi(S_1\glue S_2)=\chi_{\emb}(B_1\glue B_2).$
\end{definition}

The notion of a $0$-slid matched polygon, of course, coincides
with the notion of a matched polygon as in
Definition~\ref{def:MatchedPolygons}. Moreover, the moduli space of
$t$-slid matched bigons ($n=1$) is independent of $t$, as is the degenerate
case of $t$-slid matched polygons with no Reeb chords ($m=0$).

The obvious analogue of Lemma~\ref{lem:matched-transversality} holds
for the $t$-slid matched moduli spaces: 
\begin{lemma}\label{lem:t-matched-transversality}
  For any fixed $t$ and generic $J$ (depending on $t$), the moduli
  spaces of $t$-slid matched polygons
  $\cMM^{B_1\glue B_2}_{[t]}(\x_1\conn \y_1;\x_2\conn \y_2;\rho_1,\dots,\rho_m;S_1,S_2)$
  are transversally cut out. Similarly, for fixed, generic $J$ and
  generic $t$ (depending on $J$), the moduli spaces
  \[\cMM^{B_1\glue B_2}_{[t]}(\x_1\conn \y_1;\x_2\conn \y_2;\rho_1,\dots,\rho_m;S_1,S_2)\]
  are transversally cut out. The dimension of
  $\cMM^{B_1\glue B_2}_{[t]}(\x_1\conn \y_1;\x_2\conn \y_2;\rho_1,\dots,\rho_m;S_1,S_2)$
  is given by $\ind(B_1,S_1;B_2,S_2;\rho_1,\dots,\rho_m)+n-2$.
\end{lemma}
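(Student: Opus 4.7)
The proof proceeds by the same fibered-product transversality argument used for Lemma~\ref{lem:matched-transversality} (the $t=0$ case), with minor modifications to accommodate the translation parameter. My plan is to first establish transversality of the individual factors and their evaluation maps, then deduce transversality of the fibered product, and finally verify the dimension count.

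The basic setup is that $\cMM^{B_1\glue B_2}_{[t]}$ is, by Definition~\ref{def:tSlid}, the preimage of the diagonal under the map
\[
\Mod^{B_1}(\dots;S_1) \times \Mod^{B_2}(\dots;S_2) \xrightarrow{\tau_t\circ\evbig{1}\, \times\, \evbig{2}} \bigl((\RR^m\times\RR^{n-1})/\RR\bigr)^2.
\]
Equivalently, it is the fibered product of $\tau_t\circ\evbig{1}$ and $\evbig{2}$. The first step is to achieve transversality of the two individual moduli spaces: for a generic admissible family of almost-complex structures on $\Sigma_1$ (respectively $\Sigma_2$), Lemma~\ref{lem:ind-fixed-source} (together with standard transversality in the Lagrangian boundary case, as in~\cite[Section~3]{Lipshitz06:CylindricalHF} and the proof of Proposition~\ref{prop:closed-trans}) guarantees that $\Mod^{B_1}(\dots;S_1)$ and $\Mod^{B_2}(\dots;S_2)$ are smooth manifolds of the claimed dimensions. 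Observe that, because the conformal parameter $j\in\Conf(D_{n+1})$ is allowed to vary in each factor, the maps $\evbig{1}$ and $\evbig{2}$ have $\RR^{n-1}$ worth of freedom coming from the corners, in addition to the freedom coming from the Reeb chord heights; this is crucial for cutting down to the diagonal in $(\RR^m\times\RR^{n-1})/\RR$.

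The main step is to perturb in order to achieve transverality of the fibered product. For the first assertion (fixed $t$, generic $J$), I would fix the family of almost-complex structures $\{J^2_j\}$ on $\Sigma_2$ and then perturb $\{J^1_j\}$ on $\Sigma_1$. The universal moduli problem (i.e., allowing $J^1_j$ to vary) is transverse in the usual sense, and the evaluation map $\evbig{1}$ remains a submersion on this universal space because tangent vectors to the cokernel of the $\overline{\bdy}$-operator can be produced by local variations of $J^1_j$ supported away from the Reeb chord ends; composing with $\tau_t$ is a diffeomorphism on the target and preserves submersivity. A standard Sard--Smale argument then yields generic $J^1$ for which $\tau_t\circ\evbig{1}$ is transverse to $\evbig{2}$, so the fibered product is transversally cut out. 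For the second assertion (fixed generic $J$, generic $t$), the same argument goes through with $t$ playing the role of the perturbation parameter: regarding $\RR\times \Mod^{B_1}\times \Mod^{B_2}\to\bigl((\RR^m\times\RR^{n-1})/\RR\bigr)^2$ defined by $(t,u_1,u_2)\mapsto(\tau_t\circ\evbig{1}(u_1),\evbig{2}(u_2))$, transversality to the diagonal is automatic (since $\tau_t$ acts freely in the Reeb-chord directions for $m\geq 1$, and the $m=0$ case is trivial), and Sard's theorem gives transversality of $t$-slices for generic $t$.

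Finally, the dimension count is a straightforward combination of Equation~\eqref{eq:MM-ind-source} with the dimension of the target of the matching condition. The target $(\RR^m\times\RR^{n-1})/\RR$ has dimension $m+n-2$, and the individual moduli spaces have dimensions $\ind(B_i,S_i;\{\pm\rho_1\},\dots,\{\pm\rho_m\})+n-2$ by Lemma~\ref{lem:ind-fixed-source}. Therefore
\[
\dim\cMM^{B_1\glue B_2}_{[t]} = \bigl(\ind(B_1,S_1) + n-2\bigr) + \bigl(\ind(B_2,S_2) + n-2\bigr) - (m+n-2),
\]
which equals $\ind(B_1,S_1;B_2,S_2;\rho_1,\dots,\rho_m)+n-2$ by Equation~\eqref{eq:MM-ind-source}. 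The main conceptual obstacle is just verifying that the two evaluation maps genuinely have enough freedom to be made transverse; but this is precisely the role of the varying conformal structure on $D_{n+1}$, which contributes the $\RR^{n-1}$ factor in the target, exactly matching what is needed to achieve a submersion.
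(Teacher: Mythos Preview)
Your approach is correct and matches the paper's, which simply says the result ``follows from standard arguments (which we do not spell out here)''; you have essentially spelled out those standard arguments.

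One imprecision worth flagging: in your argument for the second assertion (fixed generic $J$, generic $t$), you claim that transversality of the parametrized map $\RR\times\Mod^{B_1}\times\Mod^{B_2}\to\bigl((\RR^m\times\RR^{n-1})/\RR\bigr)^2$ to the diagonal is ``automatic (since $\tau_t$ acts freely in the Reeb-chord directions).'' This is not quite right: varying $t$ contributes only a single tangent direction, which is not enough by itself to achieve transversality to a codimension-$(m+n-2)$ diagonal. The correct argument is that \emph{genericity of $J$} (not the $t$-action) ensures the parametrized moduli space $\bigcup_t\cMM_{[t]}$ is transversally cut out---this follows from the same universal-moduli-space argument you used for part~1, now treating $t$ as an additional parameter---and \emph{then} Sard's theorem applied to the projection $\bigcup_t\cMM_{[t]}\to\RR$ gives that generic $t$ is a regular value. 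Your dimension count is correct.
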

\begin{proof}
  Like Lemma~\ref{lem:matched-transversality}, this follows from
  standard arguments (which we do not spell out here).
\end{proof}

Lemma~\ref{lem:emb-matched-emb} still applies to show that the
embedded moduli spaces consist of embedded curves.

The moduli spaces of $t$-slid matched polygons can be used to define a
chain complex $\tMatchedComplex$ generalizing
$\MatchedComplex$. Define $\tMatchedComplex=\{\tMatchedComplex^{i\times
  j},D_{[t]}^{i\times j<i'\times j'}\}$ to be $\tMatchedComplex^{i\times
  j}=\CFa(\alphas^1\cup\alphas^2,\betas^{i\times
  j}\cup\gammas^{i\times j},z)$ with differential
\[
D_{[t]}^{i\times j<i'\times j'}(\x_1\conn  \y_1) =\hspace{-1em}
\sum_{\substack{\x_2\times
  \y_2\in\Gen(\alphas^1\cup\alphas^2,\betas^{i'\times
    j'}\cup\gammas^{i'\times j'})\\[0.5pt]
  \ij{1} < \ij{2} < \dots \ij{n}\\[0.5pt]
  (B_1,B_2)\text{ s.t.}\ind_\emb(B_1\glue B_2)=2-n}}\hspace{-1em}
\#\cMM_{[t]}^{B_1\glue B_2}(\x_1\conn \y_1;\x_2\conn \y_2) \x_2\conn  \y_2.
\]
This is the same as $\MatchedComplex$, except that the differential
counts $t$-slid matched polygons rather than simply matched polygons.

\begin{remark}
  Moduli spaces with a shift in the time parameter appeared in the
  proof of~\cite[Theorem~\ref*{LOT2:thm:Hochschild}]{LOT2}.
\end{remark}

\begin{lemma}\label{lem:tMatched}
  For a generic $J$ and generic $t\in \RR$, $\tMatchedComplex$ is an
  $\IndI\times \IndJ$-filtered chain complex.
\end{lemma}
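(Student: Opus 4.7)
\textbf{Proof plan for Lemma~\ref{lem:tMatched}.} The plan is to verify the compatibility condition of Definition~\ref{def:IFilteredCx} by analyzing the codimension-one boundary of the one-dimensional moduli spaces of $t$-slid matched polygons. Specifically, for each pair $\ij{1}<\ij{k}$ in $\IndI\times\IndJ$ and each pair of generators $\x_1\conn\y_1$, $\x_2\conn\y_2$, I would consider the union
\[
\bigcup_{\substack{\ij{1}<\ij{2}<\dots<\ij{k}\\(B_1,B_2)\text{ with }\ind_\emb(B_1\glue B_2)=3-n}} \cMM_{[t]}^{B_1\glue B_2}(\x_1\conn\y_1,\x_2\conn\y_2),
\]
which, by Lemma~\ref{lem:t-matched-transversality} and Lemma~\ref{lem:emb-matched-emb}, is a transversely cut out $1$-manifold for generic $J$ and $t$. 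The goal is to show that the algebraic count of its boundary points reproduces exactly the relation
\[
d(D^{\ij{1}<\ij{k}}_{[t]}) + \sum_{\{\ij{2}\mid\ij{1}<\ij{2}<\ij{k}\}} D^{\ij{2}<\ij{k}}_{[t]}\circ D^{\ij{1}<\ij{2}}_{[t]} = 0.
\]

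First I would compactify the moduli space and enumerate the codimension-one ends. Because $\cMM_{[t]}$ is a fibered product of moduli spaces on the two sides over the evaluation maps $\evbig{1}$ and $\evbig{2}$, every limiting configuration has \emph{matched} degenerations on the two sides: the conformal modulus of the polygon domain on each side must degenerate in tandem, and any Reeb chord running to $\pm\infty$ on one side must be accompanied by the matched chord on the other. The possible codimension-one ends then fall into three classes: (a) ends where the disk $D_{n+1}$ degenerates in $\oConf(D_{n+1})$ into two polygons joined at a node, with matched configurations on both sides; (b) ends where one of the polygon factors breaks into a two-story curve by having one of its $\eta$- or $\zeta$-input corners split off into a closed polygon (without Reeb chords) and a lower-dimensional matched piece remains; and (c) ends where a matched pair of Reeb chords runs off to $\pm\infty$ on the two sides, yielding a bigon/polygon splitting of matched type. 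Ends of type (a) and (c) together account for the composition $D^{\ij{2}<\ij{k}}_{[t]}\circ D^{\ij{1}<\ij{2}}_{[t]}$ (splitting the sequence at the intermediate index $\ij{2}$) and the $d(D^{\ij{1}<\ij{k}}_{[t]})$ term (where the splitting is at one of the extreme generators $\x_1\conn\y_1$ or $\x_2\conn\y_2$), in complete analogy with the standard polygon-counting $\Ainf$-relation (Propositions~\ref{prop:closed-Ainf-rel} and~\ref{prop:AinftyRelation}).

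Ends of type (b) --- where a closed polygon splits off from one side, using the $m_n$ operation applied to a subsequence of the chains $\eta^{\ij{m}<\ij{m+1}}$ or $\zeta^{\ij{m}<\ij{m+1}}$ --- cancel in pairs by a separate argument: they pair up to give the compatibility equation~\eqref{eq:Compatibility} for $\{\eta^{i<i'}\}$ or $\{\zeta^{j<j'}\}$, each of which vanishes because $(\{\betas^i\}, \{\eta^{i<i'}\})$ and $(\{\gammas^j\},\{\zeta^{j<j'}\})$ were assumed to be chain complexes of attaching circles in the sense of Definition~\ref{def:ChainComplex}. Here I would use Proposition~\ref{prop:TakeNeckToInfinity} as a guide for how the combinatorics of matched degenerations interacts with the $\omega^{\ij{m}<\ij{m+1}}$ chains, together with Lemma~\ref{lem:FiberedProduct}-style fibered-product descriptions to handle the product structure of the boundary.

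The main obstacle is to ensure that genericity of $t$ (in addition to genericity of $J$) rules out extra codimension-one phenomena arising from the matching condition itself. Specifically, one must argue that for generic $t$ the evaluation maps $\evbig{1}$ and $\tau_t\circ\evbig{2}$ are transverse on all strata entering the one-dimensional moduli space, including the strata indexed by nontrivial sequences $\ij{1}<\ij{2}<\dots<\ij{k}$ and by all choices of sources $S_1,S_2$ with $\chi(S_1\glue S_2)=\chi_{\emb}(B_1\glue B_2)$. Since the codimension of the relevant bad loci in $(\RR^m\times\RR^{n-1})/\RR$ is positive for all but the expected boundary strata, a standard Sard-Smale argument (parallel to the transversality step in~\cite[Chapter~\ref*{LOT1:chap:tensor-prod}]{LOT1}) gives a full-measure set of admissible $t$. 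With these transversality statements in hand, the three classes of ends above exhaust the boundary, and the compatibility condition follows.
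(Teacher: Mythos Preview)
Your overall strategy---examine the ends of one-dimensional moduli spaces of $t$-slid matched polygons---is correct, and matches the paper. However, your enumeration of the ends has a genuine gap and one misattribution.

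\textbf{Missing ends at east infinity.} Your list (a), (b), (c) omits the codimension-one ends where two consecutive Reeb chords on the $\Sigma_1$ side (and, by the matching, simultaneously on the $\Sigma_2$ side) collide in height. These produce $t$-slid matched \emph{combs} with a join or split component at east infinity, exactly as in~\cite[Section~\ref*{LOT1:sec:curves_at_east_infinity}]{LOT1}. Your item (c), ``a matched pair of Reeb chords runs off to $\pm\infty$,'' describes instead a two-story degeneration (a special case of your (a)), not a collision at finite height. The join/split ends must be shown to cancel in pairs---join on one side pairing with split on the other---via the argument of~\cite[Proposition~\ref*{LOT1:prop:CombsCancel}]{LOT1}. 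Without this, the boundary count does not close up.

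\textbf{Source of the cancellation for the closed-polygon ends.} For your type (b) ends, the conformal structure is matched across the two sides via $\kappa$, so when a closed polygon breaks off it does so \emph{simultaneously} on both sides. By Lemma~\ref{lem:FiberedProduct}, the count of such matched pairs is exactly the count of closed polygons in $\Sigma_1\conn\Sigma_2$ with inputs the glued chains $\omega^{\ij{m}<\ij{m+1}}=\eta^{\ij{m}<\ij{m+1}}\conn\zeta^{\ij{m}<\ij{m+1}}$. The cancellation therefore comes from Equation~\eqref{eq:Compatibility} for the $\omega$'s---i.e., from Proposition~\ref{prop:GlueChainComplexes}---not from the separate relations for $\{\eta^{i<i'}\}$ or $\{\zeta^{j<j'}\}$. (The latter relations are of course an ingredient in the proof of Proposition~\ref{prop:GlueChainComplexes}, but that proof also uses the close-approximation hypotheses in an essential way.)
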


\begin{proof}
  As usual, this follows from looking at ends of one-dimensional
  moduli spaces;
  compare~\cite[Proposition~\ref*{LOT1:prop:DSquaredZero}]{LOT1}.  A
  one-dimensional moduli space of $t$-slid matched polygons can have
  the following kinds of ends:
  \begin{enumerate}[label=(e-\arabic*),ref=(e-\arabic*)]
  \item 
    \label{end:AlphaDegeneration}
    A two-story $t$-matched polygon (i.e., this looks like two
    copies of the picture from Figure~\ref{fig:attach-circ-cx} parts
    (d) or (e)).
  \item 
    \label{end:BetaDegeneration}
    A $t$-matched polygon, juxtaposed with a pair of polygons, with
    boundaries on the $\beta$-curves (i.e., this looks like two copies
    of the picture from Figure~\ref{fig:attach-circ-cx} parts (b) or
    (c)).
  \item 
    \label{end:Join}
    A $t$-slid matched comb, with a single curve at $e\infty$ which is
    a join component (in the sense
    of~\cite[Section~\ref*{LOT1:sec:curves_at_east_infinity}]{LOT1}).
  \item 
    \label{end:Split}
    A $t$-slid matched comb, with a single curve at $e\infty$ which is
    a split component (in the sense
    of~\cite[Section~\ref*{LOT1:sec:curves_at_east_infinity}]{LOT1}).
  \end{enumerate}
  Ends of Type~\ref{end:BetaDegeneration} cancel in pairs (just like
  in the proof of
  Proposition~\ref{prop:ChainComplexesOfAttachingCircles}), since the
  $\{\eta^{i\times j<i'\times j'}\conn \zeta^{i\times j<i'\times
    j'}\}_{i\times j<i'\times j'\in\IndI\times\IndJ}$ form a chain complex of attaching
  circles (according to Proposition~\ref{prop:GlueChainComplexes}).
    
  Ends of Types~\ref{end:Join} and~\ref{end:Split} are the only ends
  involving curves at $e\infty$, as in the proof
  of~\cite[Proposition~\ref{LOT1:prop:EndsOfTwoDimModuliSpacesT}]{LOT1}.
  Moreover, these ends cancel in pairs, as in the proof
  of~\cite[Proposition~\ref{LOT1:prop:CombsCancel}]{LOT1}.
  
  The remaining terms, corresponding to ends of
  Type~\ref{end:AlphaDegeneration}, are all counted in the $\x_2\times
  \y_2$ coefficient of
  \[
  D_{[t]}^{i_1\times j_1<i'\times j'}\circ D_{[t]}^{i\times j<i_1\times
    j_1}(\x_1\times \y_1) + \partial \circ D_{[t]}^{i\times j<i'\times
    j'}(\x_1\times \y_1) + D_{[t]}^{i\times j<i'\times
    j'}\circ \partial(\x_1\times \y_1).
  \] 
  (The last two terms correspond to when one of the degenerating
  polygons is a bigon.)
\end{proof}

We turn next to proving independence of the translation parameter
$t$. Again, we need more notation.

Fix a smooth function $\tau^1_{t_1,t_2}\co \RR\to \RR$ so that there is a constant $N$ with
\begin{equation}
  \label{eq:InterpolatingFunction}
  \tau^1_{t_1,t_2}(t)=\left\{\begin{array}{ll}
      t+t_1 & {\text{if $t<-N$}} \\
      t+t_2 & {\text{if $t>N$}}\end{array}\right.
\end{equation}
This induces a function 
\[
\tau_{t_1,t_2}\co \RR^{m}\times\RR^{n-1}\to\RR^{m}\times\RR^{n-1}
\]
by
\[
\tau_{t_1,t_2}(x_1,\dots,x_m,y_1,\dots,y_{n-1})=(\tau^1_{t_1,t_2}(x_1),\dots,\tau^1_{t_1,t_2}(x_m),y_1,\dots,y_{n-1}).
\]

Let $\pi\co \RR^{m+n-1}\to\RR^{m+n-1}/\RR$ denote projection. Let
\[
\tMod^{B_1}=\{(u,p)\in \Mod^{B_1}\times\RR^{m+n-1} \mid
\evbig{1}(u)=\pi(p)\in (\RR^m\times\RR^{n-1})/\RR\}. 
\]
So, $\tMod^{B_1}\cong \Mod^{B_1}\times\RR$. Define $\tMod^{B_2}$
similarly. There are well-defined maps $\evbig{i}\co \tMod^{B_i}\to
\RR^m\times\RR^{n-1}$.

\begin{definition}
  The
  \emph{moduli space of $t_1$-$t_2$-slid-matched polygons in $B_1$ and
    $B_2$} is defined exactly like the moduli space of matched polygons
  (Definition~\ref{def:MatchedPolygons}) except with
  Formula~(\ref{eq:MatchedPolygons}) replaced by 
  \begin{multline}
  \cMM^{B_1\glue B_2}_{[t_1;t_2]}(\x_1\conn \y_1,\x_2\conn \y_2;\rho_1,\dots,\rho_m;S_1,S_2) \\
  \coloneqq 
  \tMod^{B_1}(\x_2,\eta^{\ij{n-1}<\ij{n}},\dots,
  \eta^{\ij{1}<\ij{2}},\x_1;\{\rho_1\},\dots,\{\rho_m\};S_1)\\
  \times_{\tau_{t_1,t_2}}
  \tMod^{B_2}(\y_2,\zeta^{\ij{n-1}<\ij{n}},\dots,
  \zeta^{\ij{1}<\ij{2}},\y_1;\{-\rho_1\},\dots,\{-\rho_m\};S_2),
  \end{multline}
  where by $\times_{\tau_{t_1,t_2}}$ we mean the pairs $(u_1,u_2)$ with $\tau_{t_1,t_2}(\evbig{1}(u_1))=\evbig{2}(u_2)$. 

  The \emph{moduli space of embedded $t_1$-$t_2$-slid matched polygons}
  $\cMM^{B_1\glue B_2}_{[t_1;t_2]}(\x_1\conn \y_1,\x_2\conn \y_2)$
  is the union of the spaces
  $\cMM^{B_1\glue B_2}_{[t_1;t_2]}(\x_1\conn \y_1;\x_2\conn \y_2;\rho_1,\dots,\rho_m;S_1,S_2)$
  over all sequences of Reeb chords and all sources $S_1$, $S_2$ with 
  $\chi(S_1\glue S_2)=\chi_{\emb}(B_1\glue B_2).$
\end{definition}

Note that the space
$\cMM^{B_1\glue B_2}_{[t_1;t_2]}(\x_1\conn \y_1,\x_2\conn \y_2;\rho_1,\dots,\rho_m;S_1,S_2)$
is one dimension larger than the space
$\cMM^{B_1\glue B_2}_{[t]}(\x_1\conn \y_1,\x_2\conn \y_2;\rho_1,\dots,\rho_m;S_1,S_2)$.

In the case of no Reeb chords ($m=0$), if $n>1$ then the moduli space
\[
\cMM^{B_1\glue B_2}_{[t_1;t_2]} \cong
\RR\times \Mod^{B_1}\times_{ \kappa_{B_1}=\kappa_{B_2} } \Mod^{B_2}.
\]
is never rigid. In the case of bigons with no Reeb chords ($m=0$, $n=1$), the moduli space
$\cMM^{B_1\glue B_2}_{[t_1;t_2]}$ is rigid only in the special case that $B_1$ and $B_2$ are the trivial
(all $0$) domain of bigons. We do consider these $\RR$-invariant
bigons (unions of trivial strips) to be $t_1$-$t_2$-slid matched
polygons.

The obvious analogues of Lemmas~\ref{lem:emb-matched-emb}
and~\ref{lem:t-matched-transversality} hold, namely, the curves in the
embedded $t_1$-$t_2$-slid matched moduli spaces are embedded and for
generic $J$ and generic $t_1$, $t_2$ and $\tau_{t_1,t_2}$ these moduli
spaces are transversally cut out.

Counting $t_1$-$t_2$-slid matched polygons furnishes the chain
homotopy equivalence used in the following:

\begin{lemma}
  \label{lem:VaryTranslationParameter}
  Given generic $J$ and generic $t_1, t_2\in\RR$,
  there is a filtered chain homotopy equivalence ${\mathcal
    C}_{[t_1]}\simeq {\mathcal C}_{[t_2]}$.
\end{lemma}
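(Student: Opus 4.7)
The plan is to prove the equivalence via a continuation-map argument familiar from Floer theory. Define a morphism $F\co {\mathcal C}_{[t_1]}\to{\mathcal C}_{[t_2]}$ of $\IndI\times\IndJ$-filtered chain complexes whose components count rigid $t_1$-$t_2$-slid matched polygons: for each $\ij{0}\leq\ij{n}$,
\[
F^{\ij{0}\leq\ij{n}}(\x_1\conn\y_1)\coloneqq \sum \#\cMM^{B_1\glue B_2}_{[t_1;t_2]}(\x_1\conn\y_1,\x_2\conn\y_2)\,\x_2\conn\y_2,
\]
with the sum taken over targets $\x_2\conn\y_2$, intermediate sequences $\ij{0}<\ij{1}<\dots<\ij{n}$, and homology-class pairs with $\ind_\emb(B_1\glue B_2)=1-n$. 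Because the $t_1$-$t_2$-slid moduli have dimension one greater than the $t$-slid moduli, this index constraint picks out the rigid $0$-dimensional components. For generic $J$, $t_1$, $t_2$, and interpolating function $\tau^1_{t_1,t_2}$, these moduli are transversely cut out by an extension of Lemma~\ref{lem:t-matched-transversality}, and admissibility ensures the sums are finite.

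The verification that $F$ is a filtered chain map proceeds by analyzing the ends of the $1$-dimensional moduli spaces $\cMM^{B_1\glue B_2}_{[t_1;t_2]}$, which by the same classification used in the proof of Lemma~\ref{lem:tMatched} come in three types: two-story $\alpha$-degenerations, two-story $\beta$-degenerations, and east-infinity join/split combs. The $\beta$-degenerations cancel in pairs via the structure equation~\eqref{eq:Compatibility} for the glued chain complex of attaching circles; the east-infinity combs cancel in pairs via the arguments of~\cite[Propositions~\ref*{LOT1:prop:EndsOfTwoDimModuliSpacesT} and~\ref*{LOT1:prop:CombsCancel}]{LOT1}. For the $\alpha$-degenerations, the key observation is that $\tau^1_{t_1,t_2}$ equals translation by $t_1$ on $(-\infty,-N)$ and by $t_2$ on $(N,\infty)$; consequently each story in a degenerate configuration inherits its matching condition from where its chord heights lie relative to $[-N,N]$. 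Rigidity and dimension count force exactly one story to carry the full $[t_1;t_2]$ matching (and hence to contribute a factor of $F$), while the other is pure $[t_1]$-slid or pure $[t_2]$-slid (and contributes a factor of $D_{[t_1]}$ or $D_{[t_2]}$, respectively). The resulting identity is, schematically,
\[
d(F^{i<k})+\sum_{i<j<k}\bigl(D^{j<k}_{[t_2]}\circ F^{i<j}+F^{j<k}\circ D^{i<j}_{[t_1]}\bigr)=0,
\]
which is precisely the relation defining a filtered chain map.

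Define the reverse morphism $G\co{\mathcal C}_{[t_2]}\to{\mathcal C}_{[t_1]}$ analogously using an interpolating function $\tau^1_{t_2,t_1}$. To produce filtered chain homotopies $G\circ F\simeq \id_{{\mathcal C}_{[t_1]}}$ and $F\circ G\simeq\id_{{\mathcal C}_{[t_2]}}$, I would introduce a one-parameter family $\{\tau^{1,R}\}_{R\geq 0}$ of interpolating functions from $t_1$ back to $t_1$, where for $R$ large $\tau^{1,R}$ is the concatenation of $\tau^1_{t_1,t_2}$ with $\tau^1_{t_2,t_1}$ with gluing length $R$, and at $R=0$ it is the trivial interpolation corresponding to the $[t_1]$ matching. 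A neck-stretching argument in the spirit of Proposition~\ref{prop:TakeNeckToInfinity} identifies the $R\to\infty$ limit of the resulting $R$-parameter moduli with the composition $G\circ F$, while at $R=0$ the moduli compute $\id_{{\mathcal C}_{[t_1]}}$; the end analysis of the $R$-parameter $1$-dimensional moduli then produces the desired chain homotopy. The reverse composition $F\circ G\simeq\id$ is handled symmetrically.

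The main obstacle will be the $\alpha$-degeneration end-analysis in the chain-map verification: one must verify that for generic $\tau^1_{t_1,t_2}$ no breaking configuration places a chord corner on the transition boundary $\pm N$, so that every two-story building decomposes cleanly into $[t_1]$-, $[t_2]$-, and $[t_1;t_2]$-slid strata with a well-defined partition of chord heights. Once this genericity, together with the transversality extensions of Lemmas~\ref{lem:matched-transversality} and~\ref{lem:t-matched-transversality} to the $[t_1;t_2]$-moduli and to the $R$-parametric family above, is in hand, the remainder of the argument follows the standard continuation-map template combined with the polygon-counting machinery already developed for Lemma~\ref{lem:tMatched}.
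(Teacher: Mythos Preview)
Your proposal is correct and follows essentially the same approach as the paper's proof: both define the map by counting rigid $t_1$-$t_2$-slid matched polygons, verify the chain-map relation via the same end classification ($\alpha$-degenerations giving $D_{[t_2]}\circ F$ and $F\circ D_{[t_1]}$, $\beta$-degenerations cancelling via the structure equation for the glued chain complex of attaching circles, and east-infinity combs cancelling in join/split pairs), and produce the homotopy to the identity by varying the interpolating function in a one-parameter family.

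One small remark on your ``main obstacle'': the mechanism by which one story of an $\alpha$-degeneration becomes pure $[t_1]$- or $[t_2]$-slid is not really about avoiding chord heights landing at $\pm N$. The point is rather that in the lifted moduli $\tMod$ the absolute heights of the two stories separate to $\pm\infty$ in the limit; whichever story escapes to $+\infty$ (respectively $-\infty$) has all its heights eventually lying above $N$ (respectively below $-N$), so that $\tau_{t_1,t_2}$ restricted there is pure translation by $t_2$ (respectively $t_1$) and one may quotient by the recovered $\RR$-invariance to land in $\cMM_{[t_2]}$ (respectively $\cMM_{[t_1]}$). The remaining story keeps bounded absolute heights---possibly inside $[-N,N]$---and so retains the full $[t_1;t_2]$ matching. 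Thus no special genericity of $\tau^1_{t_1,t_2}$ relative to the transition region is needed for the decomposition; the dimension count you invoke is what forces exactly one story to carry the $[t_1;t_2]$ matching.
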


\begin{proof}
  Define a map
  $f_{[t_1;t_2]}\co {\mathcal C}_{[t_1]}\to{\mathcal C}_{[t_2]}$
  by
  \[
  f(\x_1\conn \y_1)=
\sum_{\substack{\x_2\times
  \y_2\in\Gen(\alphas^1\cup\alphas^2,\betas^{i'\times
    j'}\cup\gammas^{i'\times j'})\\[0.5pt]
  \ij{1} < \ij{2} < \dots \ij{n}\\[0.5pt]
  (B_1,B_2)\text{ s.t.}\ind_\emb(B_1\glue B_2)=1-n}}\hspace{-1em}
  \#\cMM^{B_1\glue B_2}_{[t_1;t_2]}(\x_1\conn \y_1;\x_2\conn \y_1)
  \x_2\conn \y_2.
  \] 
  The verification that $f$ is a chain map goes by considering the
  ends of one-dimensional moduli spaces
  $\cMM^{B\glue C}_{[t_1;t_2]}(\x_1\conn \y_1,\x_2\conn \y_2)$, which are of the
  following kinds:
  \begin{enumerate}[label=(e-\arabic*),ref=(e-\arabic*)]
  \item 
    \label{end:FBoundary} Pairs of two-story polygons $u_1*u_2$ (on $\Sigma_1$) and
    $v_1*v_2$ (on $\Sigma_2$), where $(u_1,v_1)$ is a
    $t_1$-$t_2$-slid matched polygon and $(u_2,v_2)$ is a
    $t_2$-slid matched polygon; here, each of the four polygons $u_1$, $u_2$,
    $v_1$, and $v_2$ has an edge mapped into $\alphas$.
  \item 
    \label{end:BoundaryF} Pairs of two-story polygons $u_1*u_2$ (on $\Sigma_1$) and
    $v_1*v_2$ (on $\Sigma_2$), where $(u_1,v_1)$ is a
    $t_1$-slid matched polygon and $(u_2,v_2)$ is a
    $t_1$-$t_2$-slid matched polygon; here, each of the four polygons $u_1$, $u_2$,
    $v_1$, and $v_2$ has an edge mapped into $\alphas$.
  \item 
    \label{end:CancelInPairs}
    Pairs of two-story polygons $u_1*u_2$ (on $\Sigma_1$) and $v_1*v_2$
    (on $\Sigma_2$), where only $u_1$ and $v_1$ have edges mapped into
    $\alphas$; in this case, so $(u_1,v_1)$ is a
    $t_1$-$t_2$-slid matched polygon and $u_2$ and $v_2$ are ordinary,
    provincial polygons.
  \item 
    \label{end:JoinSplit}
    Triples $(u_1, w, v_2)$, where $w$ is a curve at $e\infty$.
  \end{enumerate}
  The count of ends of Type~\ref{end:FBoundary} contributes
  $\partial_{[t_2]}\circ f_{[t_1;t_2]}$, while the count of ends of
  Type~\ref{end:BoundaryF} contributes
  $f_{[t_1;t_2]}\circ \partial_{[t_1]}$.  Ends of
  Type~\ref{end:CancelInPairs} cancel in pairs, because the
  $\eta^{i\times j<i'\times j'}\conn \zeta^{i\times j<i'\times j'}$ form a
  chain complex of attaching circles.
  Ends of Type~\ref{end:JoinSplit} cancel in pairs, corresponding to
  viewing a split component for $u_1$ (respectively $u_2$) as a join
  component for $u_2$ (respectively $u_1$), or vice-versa (as in the
  proof of Lemma~\ref{lem:tMatched}
  or~\cite[Proposition~\ref{LOT1:prop:CombsCancel}]{LOT1}).

  The composition $f_{[t_1;t_2]}\circ f_{[t_2;t_1]}$ is chain
  homotopic to the identity map. The chain homotopy counts polygons in
  a one-parameter family of moduli spaces of the form
  $\cMM_{[t_1;t_1]}^{B_1\glue B_2}$ indexed by a real parameter $T$ which varies the choice of interpolating function
  (Equation~\eqref{eq:InterpolatingFunction}) implicit in the
  definition of the moduli space.
\end{proof}

\subsection{Translating time to \texorpdfstring{$\infty$}{infinity} and cross-matched polygons}
Consider the moduli space $\Mod^{B_1}(\y_1,\eta^{\ij{n-1}<\ij{n}},\dots, \eta^{\ij{1}<\ij{2}},\x_1;\{\rho_1\},\dots,\{\rho_m\};S_1)$. The
$\RR$-coordinates of the Reeb chords give an evaluation map
\[
\ev_{B_1}\co \Mod^{B_1}(\y_1,\eta^{\ij{n-1}<\ij{n}},\dots, \eta^{\ij{1}<\ij{2}},\x_1;\{\rho_1\},\dots,\{\rho_m\};S_1)\to \RR^{m}/\RR.
\]
Similarly, the conformal structure on the source gives a
forgetful map
\[
\kappa_{B_1}\co \Mod^{B_1}(\y_1,\eta^{\ij{n-1}<\ij{n}},\dots, \eta^{\ij{1}<\ij{2}},\x_1;\{\rho_1\},\dots,\{\rho_m\};S_1)\to \Conf(D_{n+1})
\]
(compare Equation~\eqref{eq:kappa}). In other words, these maps can be obtained from $\evbig{1}$ by projecting to $\RR^m/\RR$ or $\RR^{n-1}/\RR$, respectively:
\[
\begin{tikzpicture}
\node at (0,0) (MM) {$\Mod^{B_1}$};
\node at (4,0) (RR) {$(\RR^m\times\RR^{n-1})/\RR$};
\node at (8,2) (Rtop) {$\RR^{m}/\RR$};
\node at (8,-2) (Conf) {$\RR^{n-1}/\RR\cong \Conf(D_{n+1}).$};
\draw[->] (MM) to node[above]{\lab{\evbig{1}}} (RR);
\draw[->, bend left=12] (MM) to node[above]{\lab{\ev_{B_1}}} (Rtop);
\draw[->, bend right=12] (MM) to node[below]{\lab{\kappa_{B_1}}} (Conf);
\draw[->] (RR) to (Rtop);
\draw[->] (RR) to (Conf);
\end{tikzpicture}
\]
Similar remarks apply to curves in $\Sigma_2$. 

\begin{definition}
  \label{def:CrossMatchedPolygons}
  With notation as in Definition~\ref{def:MatchedPolygons}, define the
  \emph{moduli space of cross-matched polygons in the homology classes
    $B_1$ and $B_2$ with sources $S_1$, $S_2$, $T_1$ and $T_2$ and
    Reeb chords $(\rho_1,\dots,\rho_m)$} (for $m\geq 1$),
  \[
  \CrossMatched^{B_1\glue B_2}(\x_1\conn \y_1,\x_2\conn \y_2;\rho_1,\dots,\rho_m;S_1,T_1,S_2,T_2),
  \]
  to consist of quadruples of pseudo-holomorphic curves
  \begin{align*}
    u_1&\co S_1\to \Sigma_1\times[0,1]\times\RR & u_2&\co T_1\to\Sigma_1\times[0,1]\times\RR\\
    v_1&\co S_2\to \Sigma_2\times[0,1]\times\RR & v_2&\co T_2\to\Sigma_2\times[0,1]\times\RR
  \end{align*}
  satisfying:
  \begin{enumerate}[label=(X-\arabic*),ref=(X-\arabic*)]
  \item\label{item:XM-1} 
    $u_1$ represents a homology class
    \[
    B_1^1\in \pi_2(\x', \eta^{\ij{n_1-1}<\ij{n_1}}, \dots, \eta^{\ij{1}<\ij{2}}, \x_1);
    \]
    for some $1\leq n_1 \leq n$ and
    $\x'\in\Gen(\alphas_1,\betas^{\ij{n_1}})$ (so $u_1$ is
    an $n_1+1$-gon); and $u_2$ represents a homology class
    \[
    B_1^2\in \pi_2 (\x_2, \eta^{\ij{n-1}<\ij{n}},\dots, \eta^{\ij{n_2}<\ij{n_1+1}},\x')
    \] 
    (so $u_2$ is an $n_2+1$-gon with $n_1+n_2=n+1$).
  \item\label{item:XM-2}
    $v_1$ represents a homology class
    \[
    B_2^1\in \pi_2 (\y', \zeta^{\ij{n_1-1}<\ij{n_1}}, \dots, \zeta^{\ij{1}<\ij{2}}, \y_1),
    \]
    for some $\y'\in\Gen(\alphas_2,\betas^{\ij{n_1}})$ and
    $v_2$ represents a homology class
    \[
    B_2^2\in \pi_2(\y_2, \zeta^{\ij{n-1}<\ij{n}}, \dots, \zeta^{\ij{n_1}<\ij{n_1+1}}, \y').
    \]
  \item $B_1^1+B_1^2=B_1$, and $B_2^1+B_2^2=B_2$.
  \item $u_2$ and $v_1$ are provincial (i.e., they have no Reeb chords at  $e\infty$).
  \item $\ev_{B_1^1}(u_1)$ and $\ev_{B_2^2}(v_1^2)$ agree up to overall
    translation (i.e., as elements of $\RR^{m}/\RR$).
  \item 
    \label{crM:Moduli}
    The conformal structures of the bases of $u_1$ and $v_1$ coincide,
    as do the conformal structures of the bases of $u_2$ and $v_2$;
    i.e.,
    \begin{align*}
      \kappa_{B_1^1}(u_1)&=\kappa_{B_2^1}(v_1) \\
      \kappa_{B_1^2}(u_2)&=\kappa_{B_2^2}(v_2).
    \end{align*}
  \end{enumerate}

  The moduli space of \emph{embedded cross-matched polygons}
  $\CrossMatched^{B_1\glue B_2}(\x_1\conn \y_1,\x_2\conn \y_2)$ is the
  union of the spaces 
  \[
  \CrossMatched^{B_1\glue B_2}(\x_1\conn \y_1,\x_2\conn \y_2;\rho_1,\dots,\rho_m;S_1,T_1,S_2,T_2),
  \]
  over all sequences of Reeb chords and sources $S_1$, $T_1$, $S_2$,
  $T_2$ with 
  \[
  \chi(S_1\glue T_1\glue S_2\glue T_2)=\chi_{\emb}(B_1\glue B_2).
  \]
  (As usual, $\glue$ denotes gluing at the corresponding punctures.) In
  this union, we also include the matched polygons (from
  Definition~\ref{def:MatchedPolygons}) with no Reeb chords,
  corresponding to the degenerate case $m=0$.
\end{definition}

Fix a cross-matched holomorphic curve $(u_1*u_2,v_1*v_2)$.  Let
$\dim(u_1)$ denote the expected dimension of the moduli space of
holomorphic curves near $u_1$; i.e.,
$\dim(u_1)=\ind(u_1)+n_1-2$. (Note that in the degenerate case that
$u_1$ is an $\RR$-invariant bigon, $\dim(u_1)=-1$ while the moduli
space is, in fact, $0$-dimensional.)  Define $\dim(u_2)$, $\dim(v_1)$
and $\dim(v_2)$ similarly.

\begin{lemma}\label{lem:trans-cross-matched}
  With respect to a generic family of almost-complex structures, the
  moduli spaces of cross-matched holomorphic polygons are
  transversally cut out. Fix a cross-matched polygon
  $(u_1*u_2,v_1*v_2)$, and suppose that $c$ of $\{u_1,u_2,v_1,v_2\}$
  are $\RR$-invariant bigons (disjoint unions of trivial
  strips). Then, near $(u_1*u_2,v_1*v_2)$ the moduli space of
  cross-matched polygons has dimension
  \[
  \dimXM(u_1*u_2,v_1*v_2)=\dim(u_1)+\dim(u_2)+\dim(v_1)+\dim(v_2)+3-n+c-\min\{m-1,0\},
  \]
  where $m$ denotes the number of Reeb chords in $u_1$ (or
  equivalently $v_2$).
\end{lemma}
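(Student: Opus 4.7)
The proof naturally decomposes into a transversality statement and a dimension calculation, both of which follow by combining the fiber-product structure of $\CrossMatched$ with the transversality machinery already developed earlier in the paper.

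My approach to transversality would be to exhibit $\CrossMatched$ as a fiber product of four individual moduli spaces along the evaluation maps $(\kappa_{B_1^1},\kappa_{B_1^2},\ev_{B_1^1})$ and $(\kappa_{B_2^1},\kappa_{B_2^2},\ev_{B_2^2})$, and to invoke the standard principle that a fiber product of transversally cut out moduli spaces over mutually transverse evaluation maps is itself transversally cut out. The four individual moduli spaces $\Mod^{B_1^1}, \Mod^{B_1^2}, \Mod^{B_2^1}, \Mod^{B_2^2}$ are transversally cut out for generic admissible $J^1$ and $J^2$ on $\Sigma_1$ and $\Sigma_2$, by Lemma~\ref{lem:ind-fixed-source} (and its proof, which is essentially the bordered/polygon analogue of the bigon case from~\cite[Proposition~\ref*{LOT1:prop:transversality}]{LOT1}). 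The $\kappa$ maps are automatically submersions on moduli spaces of transversally cut out polygons, since varying the base conformal structure is already a free parameter in $\Conf(D_{n_i+1})$. The core step is to show that the evaluation maps $\ev_{B_1^1}$ and $\ev_{B_2^2}$ are submersions and remain mutually transverse after the $\kappa$ matchings are imposed. This follows the standard scheme: fix $J^2$ generically so that $v_1,v_2$ are transverse and $\ev_{B_2^2}$ is a submersion, then choose $J^1$ generically within the infinite-dimensional space of admissible families satisfying the constraints so that $u_1,u_2$ are also transverse and the evaluations pair transversely (compare the argument for bigons in~\cite[Section~\ref*{LOT1:sec:curves-in-sigma}]{LOT1} and the closed polygon case in Proposition~\ref{prop:closed-trans}).

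For the dimension count, I would simply add up the contributions: $\sum_i\dim(u_i)+\sum_j\dim(v_j)$ is the ``virtual'' total (where each $\dim$ is as defined in the statement, treating an $\mathbb R$-invariant bigon as $-1$-dimensional); each $\mathbb R$-invariant bigon contributes an extra $+1$ to the actual dimension of the corresponding individual moduli space, hence the correction $+c$; the matching $\kappa_{B_1^1}(u_1)=\kappa_{B_2^1}(v_1)$ in $\Conf(D_{n_1+1})$ contributes codimension $n_1-2$; the matching $\kappa_{B_1^2}(u_2)=\kappa_{B_2^2}(v_2)$ contributes codimension $n_2-2$; and the chord-height matching $\ev_{B_1^1}(u_1)=\ev_{B_2^2}(v_2)$ in $\mathbb R^m/\mathbb R$ contributes codimension $m-1$ when $m\geq 1$ and nothing when $m=0$. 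Using $n_1+n_2=n+1$, the $\kappa$ codimensions sum to $n-3$, and the $\ev$ contribution combines to $-\min\{m-1,0\}$ once the $m=0$ case is treated separately (since then the $\ev$-matching is vacuous, producing the formula's $+1$ correction). Care must be taken with the degenerate cases $n_1=1$ or $n_2=1$, where the relevant $\Conf(D_{\bullet})$ collapses to a point so the corresponding matching is vacuous; these edge cases are absorbed by interpreting each codimension as $\max(\cdot,0)$ and are balanced by the $c$ correction from the attendant $\mathbb R$-invariant strips.

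The main technical obstacle is the transversality of the evaluation pairing at the level of $\ev$: unlike $\kappa$, which is essentially free because $\Conf(D_{n_i+1})$ parametrizes a moduli of base structures, making $\ev$ a submersion requires a genuine perturbation of the family $\{J_j\}$, and one must verify that the perturbations available (varying $J$ away from the boundary $\partial\Sigma$ and across the $\Conf(D_n)$-parameter) suffice to move each Reeb-height independently. This is the step most directly analogous to~\cite[Chapter~\ref*{LOT1:chap:tensor-prod}]{LOT1}, and I would import the argument from there essentially verbatim, using the cylindrical ends and the freedom in the family $\{J_j\}_{j\in\Conf(D_{n_i+1})}$ to achieve simultaneous transversality of $(\kappa,\ev)$ on each factor, and then of the pairings $\kappa_{B_1^i}=\kappa_{B_2^i}$ and $\ev_{B_1^1}=\ev_{B_2^2}$.
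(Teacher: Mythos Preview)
Your approach is essentially the same as the paper's: the paper simply says transversality ``follows from standard techniques'' and then does exactly the constraint count you describe ($(n_1-2)+(n_2-2)$ from the two $\kappa$-matchings, $m-1$ from the $\ev$-matching when $m\geq 1$, and $+c$ from the $\RR$-stabilizers on trivial bigons, with $n_1+n_2=n+1$). Your write-up is more detailed than the paper's on the transversality side, which is fine.

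One small correction: your claim that ``the $\kappa$ maps are automatically submersions on moduli spaces of transversally cut out polygons, since varying the base conformal structure is already a free parameter'' is not right. The map $\kappa\co\Mod^B\to\Conf(D_{n_i+1})$ records the conformal structure of the base of a $J_j$-holomorphic curve; you cannot freely move $j$ and expect to carry $u$ along with it, and indeed $\kappa$ is typically not a submersion (e.g., when $\Mod^B$ is zero-dimensional and $\Conf(D_{n_i+1})$ is not). What you actually need---and what the paper uses, compare the proof of Lemma~\ref{lem:FiberedProduct}---is only that the \emph{fiber product} is transverse, which is strictly weaker and is achieved by perturbing the $J$-family on one side relative to the other. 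You already sketch this correctly in your ``core step'' and in the final paragraph, so the argument survives; just drop the submersion claim for $\kappa$.
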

\begin{proof}
  Again, the transversality statement follows from standard
  techniques. For the statement about dimensions, observe that 
  we are matching the conformal structures on $u_1$ and $v_1$, which
  gives an $(n_1-2)$-dimensional constraint; then we are matching
  conformal structures on $u_2$ and $v_2$, which gives a further
  $(n_2-2)$-dimensional constraint; finally, we have an $(m-1)$-dimensional
  constraint coming from the matching condition on the
  chords (if $m\geq 1$). Together with the observation that $n=n_1+n_2-1$, this
  explains all of the terms in the formula except $c$, which comes
  from the fact that if $u_1$, say, is an $\RR$-invariant bigon then
  $\dim(u_1)$ differs by $1$ from the actual dimension of the moduli
  space near $u_1$ (because $u_1$ has $\RR$ as a stabilizer).
\end{proof}

Counting cross-matched polygons also gives a chain complex $\ComplexCross$.
In fact:

\begin{lemma}
  \label{lem:TransToInfinity}
  For all sufficiently large $t$, the differential in
  $\tMatchedComplex$ coincides with the differential counting
  cross-matched polygons.
\end{lemma}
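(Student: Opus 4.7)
The plan is to establish the equality of counts for large $t$ via the standard bijection between rigid $t$-slid matched polygons and rigid cross-matched polygons, combining Gromov compactness with a gluing construction. The core geometric observation is that the shift $\tau_t$ in the matching condition $\tau_t(\evbig{1}(u_1))=\evbig{2}(u_2)$ acts only on the Reeb chord coordinates and not on the conformal structure coordinates. After normalizing overall translation so that the intermediate conformal corners remain bounded, the Reeb chord heights on side $1$ are forced to $-\infty$ while those on side $2$ are forced to $+\infty$, relative to the corners. This is precisely the splitting pattern that produces the two-story buildings appearing in Definition~\ref{def:CrossMatchedPolygons}.

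First I would prove the compactness direction. Given a sequence $t_n\to\infty$ and a sequence of rigid matched polygons $(f_n,g_n)\in \cMM^{B_1\glue B_2}_{[t_n]}$, standard Gromov compactness for holomorphic polygons in the cylindrical setting (as used throughout \cite[Chapter~\ref*{LOT1:chap:tensor-prod}]{LOT1}) produces, after passing to a subsequence, limit holomorphic buildings on each side. The evaluation asymptotics force the $\Sigma_1$-limit to break into a bottom story $u_1$ containing all Reeb chords together with a top provincial story $u_2$, and symmetrically for side~$2$. The conformal-corner part of the matching condition $\tau_{t_n}(\evbig{1}(f_n))=\evbig{2}(g_n)$ is not altered by the shift, so it passes to the limit and forces the conformal structures on the bottom stories on the two sides to agree, and likewise for the top stories; the Reeb chord heights on $u_1$ and $v_2$ agree after the obvious re-normalization. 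This is precisely a cross-matched polygon. Transversality from Lemmas~\ref{lem:t-matched-transversality} and~\ref{lem:trans-cross-matched}, together with the east-infinity analysis already carried out in the proof of Lemma~\ref{lem:tMatched}, rules out further breakings, bubbling, or degenerations at $e\infty$ at the dimension $0$ level for generic almost-complex structures.

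Next I would prove the gluing direction. Given a rigid cross-matched polygon $(u_1*u_2, v_1*v_2)$ and any sufficiently large $t$, I would preglue the two stories on each side with a neck of length comparable to $t$, so that the pregluing automatically respects the shifted matching condition $\tau_t$, and then perturb to an honest holomorphic matched polygon by a Newton iteration, following the analogous gluing arguments in the proof of the original pairing theorem and its time-translated analogue in \cite[Theorem~\ref*{LOT2:thm:Hochschild}]{LOT2}. The transversality of the matching equations and the implicit function theorem show that this produces, for each rigid cross-matched polygon, a unique rigid $t$-slid matched polygon for all $t$ sufficiently large, and that the gluing map is injective.

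The main obstacle is not any of the individual compactness or gluing steps, both of which follow well-established patterns in bordered Floer theory, but rather a uniformity issue: the lemma requires a \emph{single} $t_0$ that works simultaneously for every pair of generators and every pair of homology classes contributing to either differential. Here the admissibility hypotheses are essential, since they cut down the homology classes with $\ind_{\emb}(B_1\glue B_2)=2-n$ contributing to the differential from any fixed pair of generators to a finite collection (compare Proposition~\ref{prop:GlueAdmissibility} and the finiteness discussion in Definition~\ref{def:n-m-k}). Taking the maximum of the $t_0$'s associated with each such class, and using compactness to ensure that no rigid $t$-slid matched polygon escapes the gluing image for $t$ beyond this maximum, then yields the desired $t_0$.
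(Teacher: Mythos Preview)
Your overall approach---Gromov compactness to show the $t\to\infty$ limits are cross-matched polygons, gluing for the converse, and admissibility for uniformity---is the same as the paper's, and your gluing and uniformity discussions are fine. The compactness step, however, contains a real gap in the reasoning.

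You claim that after normalizing so the intermediate corners stay bounded, ``the Reeb chord heights on side~$1$ are forced to $-\infty$ while those on side~$2$ are forced to $+\infty$.'' But $\tau_t$ only forces the side-$2$ heights to exceed the side-$1$ heights by~$t$; only the \emph{difference} is forced to diverge, not each side separately. In simple examples (say a rigid curve on side~$1$ paired with a one-parameter family on side~$2$) only one side's curve actually degenerates, and the other side acquires a trivial bigon as its second story. Read literally, your heuristic is in fact inconsistent: if the side-$1$ chords drop below all corners you get a Reeb-chord bigon at the bottom ($n_1=1$), while the side-$2$ chords rising above all corners gives a Reeb-chord bigon at the top ($n_1=n$); these cannot both hold, since conformal matching forces $u_1,v_1$ (and $u_2,v_2$) to have the same number of sides.

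What actually does the work is the dimension count you relegate to ``ruling out further breakings.'' The paper argues as follows: Gromov compactness produces a pair of combs $(U,V)$ with, a priori, arbitrarily many stories and east-infinity components. Since $\kappa$ is unaffected by $\tau_t$, the conformal structures of corresponding stories of $U$ and $V$ agree; since the Reeb-height matching is shifted by~$t$, the Reeb chords in story~$i$ of $U$ are matched with those in story $i{-}1$ of~$V$, so for $m\geq 1$ each comb must have at least two stories. A dimension count then shows there are \emph{exactly} two stories and no east-infinity pieces. That count is the heart of the compactness argument, not an afterthought; you should reorganize your compactness paragraph to make this explicit rather than leaning on the incorrect ``both sides to $\pm\infty$'' picture.
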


\begin{proof}
  This follows from compactness and gluing arguments,
  similar to (but easier than) the proof
  of~\cite[Proposition~\ref*{LOT1:prop:LargeTLimit}]{LOT1}. Fix a
  rigid, non-provincial homology class, so the pair of curves are
  asymptotic to at least one Reeb chord at $e\infty$: the provincial case is
  trivial. The compactness theorem for pseudoholomorphic
  combs~\cite[Proposition 5.24]{LOT1}, or rather its obvious extension
  to polygons, implies that any sequence of $t$-slid matched polygons
  with $t\to\infty$ has a subsequence converging to some pair $U$, $V$
  of holomorphic combs. Further, the evaluation maps $\ev$ at the
  (far) east punctures of $U$ and $V$ must satisfy that the evaluation
  map on the $i\th$ story of $U$ matches with the evaluation map on
  the $(i-1)\st$ story of $V$, up to an overall translation on each
  story, while the conformal structures (i.e., the forgetful maps
  $\kappa$) on the $i\th$ story of $U$ and the $i\th$ story of $V$
  must agree.  In particular, each of $U$ and $V$ must have at least
  two stories.
  
  Next, a dimension count implies that each of $U$ and $V$ has no
  components at $e\infty$, and consists of exactly two stories. The
  fact that in the limit we consider the conformal structures and
  evaluation maps separately means that number of matching conditions
  has decreased by~$1$, i.e., the expected dimension has increased
  by~$1$. However, each story of $U$ or $V$ beyond the first reduces
  the
  expected dimension by $1$ on each side, so $2$ overall, but only
  decreases the number of matching conditions imposed by the
  evaluation maps $\evbig{}$ by $1$. 
  Thus, with two stories the expected dimension of the limit object is
  at most the same as for a 1-story $t$-slid matched curves; with
  three stories the expected dimension is at least one smaller than
  for a 1-story $t$-slid matched curve; and so on. So, for generic
  almost-complex structures, $U$ and $V$ must have exactly two
  stories.  Degenerating a curve at $e\infty$ or having two levels of
  Reeb chords collapse reduces the expected dimension of the moduli
  space on each side by at least $1$, but only reduces the number of
  matching conditions imposed
  (cf.~\cite[Theorem~\ref*{LOT1:thm:master_equation}]{LOT1}) by at
  most one, so is a codimension-1 degeneration overall. Thus, each of
  $U$ and $V$ must consist of two stories with no components at
  $e\infty$.
  
  So, we have shown that each pair $(U,V)$ occurring as a $t\to\infty$
  end of the moduli space of $t$-slid matched polygons is a
  cross-matched polygon.  Finally, the polygon analogue of the gluing
  result~\cite[Proposition~\ref*{LOT1:prop:gluing_two_story}]{LOT1}
  implies that near each cross-matched polygon the space
  $\bigcup_{t\in(T,\infty)}\cMM^{B_1\#B_2}_{[t]}$ is an open interval,
  so in particular the modulo $2$ count of $t$-slid matched polygons
  and cross-matched polygons agrees.
\end{proof}

\subsection{Small \texorpdfstring{$\epsilon$}{epsilon} approximation and chord-matched polygon pairs}

We next consider what happens when we take the approximation parameter
$\epsilon\to 0$. Our main goal is to show that, for $\epsilon$
sufficiently small, rigid cross-matched polygons correspond to rigid
chord-matched polygon pairs (Definition~\ref{def:ChordMatchedPolygons}); see
Proposition~\ref{prop:SendEpsilonToZero}. To prove this, we introduce
an auxiliary notion, simplified cross-matched polygons, which are the
Gromov limits of cross-matched polygons as $\epsilon\to 0$. (The
definition of simplified cross-matched polygons can be refined, but
Definition~\ref{def:SimplifiedCrossMatchedPolygons} will suffice
for our purposes: its main role is to restrict what kinds of
cross-matched polygons exist for small $\epsilon$.)

\begin{definition}
  \label{def:SimplifiedCrossMatchedPolygons}
  Fix $i<i'$ and $j<j'$, and let
  \begin{align*}
  \x_1&\in \Gen(\alphas^1,\betas^i), &
  \x_2&\in \Gen(\alphas^1,\gammas^{i'}),\\
  \y_1&\in \Gen(\alphas^2,\betas^{j}), &
  \y_2&\in \Gen(\alphas^2,\gammas^{j'})
  \end{align*}
  be generators with the property that $\x_1$ and $\y_1$ is a complementary
  pair of generators, and $\x_2$ and $\y_2$ is a complementary pair of 
  generators.
  Fix sequences $i=i_0<\dots<i_{n_1}=i'$ and $j=j_0<\dots<j_{n_2}=j'$ and
  homology classes 
  \begin{align*}
    B_1&\in \pi_2(\x_2, \eta^{i_{n_1-1}< i_{n_1}}, \dots,
    \eta^{i_1<i_2}, \x_1) \\
    B_2&\in \pi_2(\y_2, \zeta^{j_{n_2-1}<j_{n_2}}, \dots,
    \zeta^{j_1<j_2}, \y_1).
  \end{align*}
  Fix also punctured Riemann surfaces with boundary $S_1$, $S_2$,
  $T_1$ and $T_2$ and a sequence of Reeb chords
  $(\rho_1,\dots,\rho_m)$.

  Define the \emph{moduli space of simplified cross-matched polygons
    in the homology classes $B_1$ and $B_2$ with sources $S_1$, $S_2$,
  $T_1$ and $T_2$},
  \[ \SimpCrossMatched^{B_1\glue B_2}(\x_1\conn \y_1;\x_2\conn \y_2;\rho_1,\dots,\rho_m;S_1,T_1,S_2,T_2),\]
  to consist of quadruples of pseudo-holomorphic curves 
  \begin{align*}
    u_1&\co S_1\to \Sigma_1\times[0,1]\times\RR & u_2&\co T_1\to\Sigma_1\times[0,1]\times\RR\\
    v_1&\co S_2\to \Sigma_2\times[0,1]\times\RR & v_2&\co T_2\to\Sigma_2\times[0,1]\times\RR
  \end{align*}
  such that
  \begin{enumerate}[label=(SX-\arabic*),ref=(SX-\arabic*)]
    \item 
    $u_1$ represents some
    \[
    B_1^1\in \pi_2 (\x', \eta^{i_{\ell_1-1}<i_{\ell_1}}, \dots, \eta^{i_1<i_2}, \x_1)
    \]
    for some $1\leq \ell_1 \leq n_1$ and
    $\x'\in\Gen(\alphas_1,\betas^{i_{\ell_1}})$ (so $B_1^1$
    is an $\ell_1+1$-gon); and $u_2$ represents some
    \[
    B_1^2\in \pi_2 (\x_2, \eta^{i_{n_1-1}<i_{n_1}}, \dots,
    \eta^{i_{\ell_1}<i_{\ell_1+1}}, \x')
    \]
    (so $B_1^2$ is an $\ell_1'+1$-gon for $\ell_1+\ell_1'-1=n_1$);
  \item $v_1$ represents some
    \[
    B_2^1\in \pi_2 (\y', \zeta^{j_{\ell_2-1}< j_{\ell_2}}, \dots,
    \zeta^{j_1<j_2}, \y_1)
    \]
    for some $1\leq \ell_2 \leq n_2$ and
    $\y'\in\Gen(\alphas_2,\betas^{j_{\ell_2}})$ (so $B_2^1$
    is an $\ell_2+1$-gon); and $v_2$
    represents some
    \[
    B_2^2\in \pi_2 (\y_2, \zeta^{j_{n_2-1}<j_{n_2}}, \dots,
    \zeta^{j_{\ell_2}<j_{{\ell_2}+1}}, \y');
    \]
    (so $B_2^2$ is an $\ell_2'+1$-gon for $\ell_2+\ell_2'-1=n_2$);
  \item $B_1^1+B_1^2=B_1$ and $B_2^1+ B_2^2=B_2$;
  \item $u_2$ and $v_1$ are provincial (i.e., they have no Reeb chords at $e\infty$); and
  \item $\ev_{B_1^1}(u_1)$ and $\ev_{B_2^2}(v_2)$ agree up to overall
    translation (i.e., as elements of $\RR^{m}/\RR$).
  \end{enumerate}

  The moduli space of \emph{embedded simplified cross-matched polygons}
  \[\SimpCrossMatched^{B_1\glue B_2}(\x_1\conn \y_1,\x_2\conn \y_2)\] is the
  union of the spaces 
  \[
  \SimpCrossMatched^{B_1\glue B_2}(\x_1\conn \y_1,\x_2\conn \y_2;\rho_1,\dots,\rho_m;S_1,T_1,S_2,T_2),
  \]
  over all sequences of Reeb chords and sources $S_1$, $T_1$, $S_2$,
  $T_2$ with 
  \[
  \chi(S_1\glue T_1\glue S_2\glue T_2)=\chi_{\emb}(B_1\glue B_2).
  \]
  Again, in this union, we also include the matched polygons (from
  Definition~\ref{def:MatchedPolygons}) with no Reeb chords,
  corresponding to the degenerate case $m=0$.
\end{definition}

When comparing Definition~\ref{def:SimplifiedCrossMatchedPolygons}
with Definition~\ref{def:CrossMatchedPolygons}, the reader should be
aware of two key differences: the Heegaard diagrams appearing in
Definition~\ref{def:SimplifiedCrossMatchedPolygons} do not involve
approximations to the $\betas^i$ or $\gammas^j$, whereas those in
Definition~\ref{def:CrossMatchedPolygons} do; and, in
Definition~\ref{def:SimplifiedCrossMatchedPolygons}, the conformal
moduli of the polygons are unconstrained whereas in
Definition~\ref{def:CrossMatchedPolygons}, there is a restriction
coming from Property~\ref{crM:Moduli}.

\begin{lemma}\label{lem:trans-simp-cross-matched}
  With respect to a generic family of almost-complex structures, the
  moduli spaces of simplified cross-matched holomorphic polygons are
  transversally cut out. Fix a generic $\{J_j\}$ and a simplified
  cross-matched polygon $(u_1*u_2,v_1*v_2)$, and suppose that $c$ of
  $\{u_1,u_2,v_1,v_2\}$ are $\RR$-invariant bigons (disjoint unions of
  trivial strips). Then, near $(u_1*u_2,v_1*v_2)$ the moduli space of
  cross-matched polygons has dimension
  \[
  \dimSXM(u_1*u_2,v_1*v_2)=\dim(u_1)+\dim(u_2)+\dim(v_1)+\dim(v_2)+c-\min\{m-1,0\},
  \]
  where $m$ denotes the number of Reeb chords in $u_1$ (or
  equivalently $v_2$).
\end{lemma}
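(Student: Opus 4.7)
The proof will closely parallel that of Lemma~\ref{lem:trans-cross-matched}, with the key difference being that simplified cross-matched polygons impose no matching condition on the conformal moduli of the polygon bases. The plan is as follows.

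First, transversality follows from standard arguments for moduli spaces of pseudo-holomorphic curves with Lagrangian boundary conditions, as in~\cite[Chapter 3]{MS04:HolomorphicCurvesSymplecticTopology} and the adaptations in Section~\ref{subsec:BorderedPolygons}. The point is that the four pieces $u_1, u_2, v_1, v_2$ live on two different surfaces and the only coupling between them is the Reeb chord matching condition (a finite-dimensional constraint in $\RR^m/\RR$). For a generic admissible family of almost-complex structures on $\Sigma_1$ and $\Sigma_2$ one can ensure, as in Lemma~\ref{lem:matched-transversality}, that (i) each of the four underlying moduli spaces of pseudo-holomorphic polygons is transversely cut out; (ii) the evaluation map $\ev_{B_1^1} \co \Mod^{B_1^1}\to \RR^m/\RR$ is transverse to the evaluation map $\ev_{B_2^2}\co\Mod^{B_2^2}\to \RR^m/\RR$; and (iii) the two-story fiber product structure at the joint generators $\x'$ and $\y'$ is transverse. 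Combined with the requirement that $u_2$ and $v_1$ are provincial (a closed condition determined by the homology class, not a genuine transversality problem), this gives transversality for $\SimpCrossMatched^{B_1 \glue B_2}$.

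For the dimension count, I would tabulate the contributions summand by summand. Ignoring the matching condition, the moduli spaces of $u_1, u_2, v_1, v_2$ contribute $\dim(u_1) + \dim(u_2) + \dim(v_1) + \dim(v_2)$ to the dimension, as long as none of them is an $\RR$-invariant bigon; each $\RR$-invariant bigon component among the four pieces has actual dimension zero while $\dim(\cdot)$ as defined just above Lemma~\ref{lem:trans-cross-matched} equals $-1$, which explains the $+c$ correction. The only further constraint is the chord-matching condition $\ev_{B_1^1}(u_1)=\ev_{B_2^2}(v_2)$ in $\RR^m/\RR$, which cuts down the dimension by $m-1$ when $m \geq 1$ (and imposes no condition when $m=0$, corresponding to the degenerate matched-bigon case); this accounts for the $-\min\{m-1,0\}$ term. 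Unlike in Lemma~\ref{lem:trans-cross-matched}, there is \emph{no} matching condition on the conformal moduli $\kappa_{B_i^j}$, so the $(n_1 - 2)+(n_2-2) = n - 3$ constraints present in the cross-matched case are absent here. This is exactly why the formula differs from that of Lemma~\ref{lem:trans-cross-matched} by precisely the $3-n$ term.

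The main (minor) obstacle is bookkeeping around the degenerate strata where one or more of $u_1, u_2, v_1, v_2$ is an $\RR$-invariant trivial strip: in those cases the naive formula $\ind + (\text{number of corners}) - 2$ for $\dim$ overcounts by one for each such component because of the unbroken $\RR$-symmetry, and the fiber product description over the evaluation maps must be interpreted appropriately. This is handled exactly as in the proof of Lemma~\ref{lem:trans-cross-matched}, and yields the correction term $+c$ in the statement.
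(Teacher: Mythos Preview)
Your proposal is correct and follows essentially the same approach as the paper's proof. The paper's argument is even briefer: it simply notes that transversality is similar to (and easier than) Lemma~\ref{lem:trans-cross-matched} since there is no conformal-structure matching to worry about, and then does the same dimension count you give---$(m-1)$ constraints from the Reeb-chord matching when $m\geq 1$, plus the $+c$ correction for $\RR$-invariant bigons.
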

\begin{proof}
  The proof of transversality is similar to the (omitted) proof of
  transversality in
  Lemma~\ref{lem:trans-cross-matched} (but easier, since we do not
  have to worry about matching the conformal structures).
  
  For the dimension counting, note that we have $(m-1)$ constraints
  coming from the Reeb chords (if $m$ is at least one), and recall
  that $\dim(u_1)$, say, differs from the actual dimension of the
  moduli space near $u_1$ by $1$ if $u_1$ is an $\RR$-invariant bigon.
\end{proof}

As in Definition~\ref{def:ApproximatingHomotopyClass}, looking at the
local multiplicities away from the isotopy region gives a map
\[
\phi_\epsilon\co \pi_2(\x_2, \eta^{\ij{n-1}<\ij{n}}, \dots, \eta^{\ij{1}<\ij{2}},\x_1)\to
\pi_2(\x_2,\eta^{i_{s_{n-k-1}}<i_{s_{n-k}}},\dots,\eta^{i_{s_1}<i_{s_2}},\x_1),
\]
where $k = R(i_1,\dots,i_{n})$ is the number of repeated entries in the sequence $i_1,\dots,i_n$.
As before, we will call
$B_\epsilon\in \phi_\epsilon^{-1}(B')$ an \emph{approximation to
  $B'$}.

The following is an analogue of Lemma~\ref{lem:PseudoHolomorphicRepresentative}.
\begin{proposition}\label{prop:XM-to-SXM}
  With notation as in
  Definition~\ref{def:SimplifiedCrossMatchedPolygons}, suppose that
  \begin{align*}
    B_{1,\epsilon}&\in \pi_2(\x_2, \eta^{\ij{n-1}<\ij{n}}, \dots, \eta^{\ij{1}<\ij{2}},\x_1)\\
    B_{2,\epsilon}&\in \pi_2(\y_2, \zeta^{\ij{n-1}<\ij{n}}, \dots, \zeta^{\ij{1}<\ij{2}}, \y_1)
  \end{align*}
  are approximations to $B_1$ and $B_2$. If
  $\CrossMatched^{B_{1,\epsilon}\glue B_{2,\epsilon}}(\x_1\conn \y_1,\x_2\conn \y_2)$ is non-empty for
  $\epsilon>0$ arbitrarily small then
  $\SimpCrossMatched^{B_1\glue B_2}(\x_1\conn \y_1;\x_2\conn \y_2)$
  is non-empty, as well.
\end{proposition}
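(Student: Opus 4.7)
The strategy is to adapt the Gromov compactness argument of Lemma~\ref{lem:PseudoHolomorphicRepresentative}, applying it simultaneously to each of the four curves making up a cross-matched polygon. Fix a sequence $\epsilon_k \to 0$ together with cross-matched polygons $(u_{1,k} * u_{2,k},\ v_{1,k} * v_{2,k}) \in \CrossMatched^{B_{1,\epsilon_k}\glue B_{2,\epsilon_k}}$. Since the homology classes $B_{1,\epsilon_k}$ and $B_{2,\epsilon_k}$ are approximations to the fixed classes $B_1$ and $B_2$, Lemma~\ref{lem:ApproximatingHomotopyClass} provides uniform \emph{a priori} energy bounds for each of the four sequences of maps (with respect to Perutz's symplectic form). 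Passing to a subsequence in $k$, I may assume that all four sequences Gromov-converge to (possibly degenerate) pseudo-holomorphic combs.

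The next step is to carry out the limiting process on each component as in the proof of Lemma~\ref{lem:PseudoHolomorphicRepresentative}: reparametrize to absorb any persistent-puncture limits (extracting chains of bigons which connect limit intersection points to the required asymptotic generators), apply the removable singularities theorem at ephemeral punctures (those where the two adjacent Lagrangians coincide), and include any nonconstant bigons connecting optional punctures. The verification that the resulting limit objects represent the required homology classes $B_1^1, B_1^2, B_2^1, B_2^2$ follows from the local multiplicity argument using the basepoints on either side of each $\betas^{i\times j}$ and $\gammas^{i\times j}$ translate, exactly as in Lemma~\ref{lem:PseudoHolomorphicRepresentative}. The translation-invariance/positivity of intersection argument precluding divisor points from escaping into ephemeral punctures or onto the boundary of the source is identical.

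Now I match up the four limits. The conformal structure matching Condition~\ref{crM:Moduli} says $\kappa(u_{1,k}) = \kappa(v_{1,k})$ and $\kappa(u_{2,k}) = \kappa(v_{2,k})$ in $\Conf(D_\bullet)$. Since $\overline{\Conf}(D_\bullet)$ is compact, after passing to a further subsequence we may assume both pairs converge to a common point of $\overline{\Conf}$. If that limit lies in the interior of $\Conf$ the four curves inherit matched base conformal structures; if it lies on the boundary, the corresponding polygons split along the appropriate associahedron stratum, producing additional stories which (by the dimension count in Lemma~\ref{lem:trans-simp-cross-matched} compared with Lemma~\ref{lem:trans-cross-matched}) can be absorbed into the definition of a simplified cross-matched polygon, since simplified cross-matched polygons impose no matching condition on conformal moduli. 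Similarly, the Reeb-chord matching condition $\ev_{B_1^1}(u_{1,k}) = \ev_{B_2^2}(v_{2,k}) \in \RR^m/\RR$ is a closed condition that passes to the Gromov limit (using the fact that Reeb chords at $e\infty$ are preserved in Gromov convergence up to possible collision, and collisions correspond to closed conditions on the evaluation map).

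The main obstacle will be bookkeeping: ensuring that the bigons and optional-puncture bubbles produced on the four sides respect the provincial/non-provincial distinction required of $(u_2', v_1')$ versus $(u_1', v_2')$, and verifying that the embedded Euler characteristic constraint is preserved under these operations. For the first point, note that since $u_{2,k}$ and $v_{1,k}$ are provincial for every $k$, no Reeb chords at $e\infty$ can appear in their limits, so the provincial conditions on $u_2'$ and $v_1'$ are automatic; moreover, any additional bigons split off at ephemeral punctures are provincial and can be absorbed by Lemma~\ref{lem:ApproximateDimension}. For the second point, the embedded Euler characteristic is determined by the homology class via Lemma~\ref{lem:emb-matched-emb} and the analogous formula for cross-matched polygons, so since the homology classes match up under $\phi_\epsilon$ and Lemma~\ref{lem:ApproximatingHomotopyClass} preserves the index (hence $\chi_\emb$) on each side, the assembled limit lies in the embedded moduli space $\SimpCrossMatched^{B_1\glue B_2}(\x_1\conn \y_1, \x_2\conn \y_2)$, as desired.
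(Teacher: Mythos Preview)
Your approach is essentially the paper's: its proof is a single sentence, saying the result follows from the argument of Lemma~\ref{lem:PseudoHolomorphicRepresentative} together with the treatment of east~$\infty$ from the compactness result in~\cite{LOT1}. You correctly identify both ingredients---running the Gromov limit of Lemma~\ref{lem:PseudoHolomorphicRepresentative} componentwise, and handling the Reeb chords on $u_1$ and $v_2$.

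One overcomplication worth trimming: your third paragraph worries about the limit of the conformal-moduli matching condition~\ref{crM:Moduli}, but simplified cross-matched polygons (Definition~\ref{def:SimplifiedCrossMatchedPolygons}) impose \emph{no} such matching, so this condition is simply discarded. Indeed, after ephemeral punctures are removed in the limit, $u_1'$ becomes an $(\ell_1+1)$-gon and $v_1'$ an $(\ell_2+1)$-gon with $\ell_1$ and $\ell_2$ in general different, so their conformal structures live in different spaces $\Conf(D_{\ell_1+1})$ and $\Conf(D_{\ell_2+1})$ and cannot be compared. There is no need to track limits in $\oConf$ or to absorb extra stories via a dimension count; the point is precisely that this constraint disappears.
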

\begin{proof}
  This follows from the same argument as
  Lemma~\ref{lem:PseudoHolomorphicRepresentative}, treating east
  $\infty$ as in the proof
  of~\cite[Proposition~\ref*{LOT1:prop:compactness}]{LOT1}.
\end{proof}

As mentioned earlier, we will never actually count the moduli space of simplified
cross-matched polygons. The moduli spaces we will count (and continue
to study in Section~\ref{sec:time-dilate}) are the moduli spaces of
chord-matched polygon pairs:
\begin{definition}
  \label{def:ChordMatchedPolygons}  
  Fix generators 
  \begin{align*}
    \x_1&\in\Gen(\alphas^1,\betas^{i})
  &\x_2&\in\Gen(\alphas^1,\betas^{i'}) &
  \y_1&\in\Gen(\alphas^2,\gammas^j)
  &\y_2&\in\Gen(\alphas^2,\gammas^{j'})
  \end{align*}
  so that $\x_1$ and
  $\y_1$ are a complementary pair and $\x_2$ and $\y_2$ are a
  complementary pair. Fix homology classes $B_1\in\pi_2(\x_2,
  \eta^{i_{{n_1}-1}<i_{n_1}}, \dots, \eta^{i_1<i_2}, \x_1)$ and
  $B_2\in\pi_2(\y_2, \zeta^{j_{{n_2}-1}<j_{n_2}}, \dots,\allowbreak \zeta^{j_1<j_2},
  \y_1)$, surfaces $S_1$ and $S_2$, and a sequence of Reeb chords
  $\rho_1,\dots,\rho_m$.  The {\em{moduli space of
      chord-matched polygon pairs in the homology classes $B_1$ and
      $B_2$ with sources $S_1$ and $S_2$}} is
  the fibered product
  \begin{align*}
  \ChordMatched^{B_1\glue B_2}&(\x_1\conn \y_1,\x_2\conn \y_2;\rho_1,\dots,\rho_m;S_1,S_2)\\
  &=\Mod^{B_1}(\x_2,\eta^{i_{{n_1}-1}<i_{n_1}},\dots,\eta^{i_2<i_1},\x_1;\rho_1,\dots,\rho_m;S_1)\\
  &\qquad\times_{\ev_{B_1}=\ev_{B_2}} \Mod^{B_2}(\y_2,\zeta^{j_{{n_2}-1}<j_{n_2}},\dots,\zeta^{j_2<j_1},\y_1;\rho_1,\dots,\rho_m;S_2).
  \end{align*}

  The moduli space of \emph{embedded chord-matched polygon pairs}
  $\ChordMatched^{B_1\glue B_2}(\x_1\conn \y_1;\x_2\conn \y_2)$ is the union of
  the spaces
  \[ 
  \ChordMatched^{B_1\glue B_2}(\x_1\conn \y_1,\x_2\conn \y_2;\rho_1,\dots,\rho_m;S_1,S_2)
  \]
  over all sequences of Reeb chords and sources $S_1$ and $S_2$ with 
  \begin{equation}\label{eq:chord-matched-emb-source}
  \chi(S_1)=\chi_{\emb}(B_1)\qquad\text{and}\qquad\chi(S_2)=\chi_\emb(B_2).
  \end{equation}
  In this union we also include the degenerate case where there are no
  Reeb chords (i.e., $m=0$) and one of $B_1$ or $B_2$ is the trivial
  homology class of bigons.
\end{definition}

Note that the two polygons in
Definition~\ref{def:ChordMatchedPolygons} typically have a different
number of sides, and their conformal structures are unconstrained. The
moduli space of embedded chord-matched polygon pairs has expected
dimension
\[
\ind_\emb(B_1\glue B_2)+n_1+n_2-3.
\]

\begin{proposition}
  \label{prop:SendEpsilonToZero}
  Fix a pair of domains $B_{1,\epsilon}$ and $B_{2,\epsilon}$ which are approximations to a pair of domains $B_1$ and $B_2$, so that the expected dimension of polygons in the glued homology class $B_1\glue B_2$ is $0$.  
  Then for $\epsilon$ sufficiently small, the moduli space of
  cross-matched polygons
  $\CrossMatched^{B_{1,\epsilon}\glue B_{2,\epsilon}}(\x_1\conn \y_1,\x_2\conn \y_2)$
  in the homology class $B_{1,\epsilon}\glue B_{2,\epsilon}$ is empty except in the following
  cases:
  \begin{itemize}
  \item (The non-degenerate case.) If there are $m>0$ Reeb chords,
    $i_{n_1+1}=\dots=i_{n}$, $j_{1}=\dots=j_{n_1}$, and
    $\phi_\epsilon(B_1^2)$ and $\phi_\epsilon(B_2^1)$ are trivial
    domains (all of whose multiplicities are $0$). (Here, $B_1^2$ and
    $B_2^1$ are from items~\ref{item:XM-1} and~\ref{item:XM-2},
    respectively.)
  \item (The degenerate case.) If there are no Reeb chords ($m=0$),
    and one of $B_1$ or $B_2$ is the
    trivial domain (all of whose multiplicities are $0$).
  \end{itemize}
  Further, 
  \[
  \sum_{\substack{\phi_\epsilon(B_{1,\epsilon})=B_1\\\phi_\epsilon(B_{2,\epsilon})=B_2}}
  \#\CrossMatched^{B_{1,\epsilon}\glue B_{2,\epsilon}}(\x_1\conn \y_1,\x_2\conn \y_2)=
  \#\ChordMatched^{B_1\glue B_2}(\x_1\conn \y_1,\x_2\conn \y_2).
  \]
\end{proposition}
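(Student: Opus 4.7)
The plan is to take the $\epsilon \to 0$ limit of the moduli space of cross-matched polygons and use the close approximation property (Condition~\ref{cl:Polygons}) together with a dimension count to rule out all but the two listed cases. First I would invoke Proposition~\ref{prop:XM-to-SXM}: if $\CrossMatched^{B_{1,\epsilon}\glue B_{2,\epsilon}}(\x_1\conn \y_1,\x_2\conn \y_2)$ is nonempty for a sequence $\epsilon_k \to 0$, then the Gromov limit produces a simplified cross-matched polygon $(\widetilde u_1, \widetilde u_2, \widetilde v_1, \widetilde v_2) \in \SimpCrossMatched^{B_1\glue B_2}(\x_1\conn \y_1,\x_2\conn \y_2)$, where $\widetilde u_a$ and $\widetilde v_a$ represent the $\phi_\epsilon$-projections of the corresponding sub-classes of $B_{1,\epsilon}$ and $B_{2,\epsilon}$.

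The crux is the dimension accounting.  Rigidity of the cross-matched polygon ($\dimXM = 0$) combined with Lemma~\ref{lem:trans-cross-matched} determines $\dim u_1 + \dim u_2 + \dim v_1 + \dim v_2$ in terms of $n$, $c$, and $m$, while Lemma~\ref{lem:trans-simp-cross-matched} expresses the expected dimension of the limit simplified cross-matched moduli space in terms of the dimensions of the four projected factors.  Lemma~\ref{lem:ApproximateDimension} records that the index is preserved under $\phi_\epsilon$ but that the moduli-space dimension drops by the number of repeated indices (with exceptions for trivial bigons and $1$-gons).  Combining these facts with Condition~\ref{cl:Polygons}, which forces each $\widetilde u_a, \widetilde v_a$ supporting a holomorphic representative to have nonnegative expected dimension, I would conclude that the only consistent scenarios are: (i)~when $m \geq 1$, the $i$-coordinate is constant along $u_2$'s corners and the $j$-coordinate is constant along $v_1$'s corners, so that $\phi_\epsilon(B_1^2)$ and $\phi_\epsilon(B_2^1)$ are both trivial domains (yielding the L-shaped staircase $i_{n_1+1}=\cdots=i_n$, $j_1=\cdots=j_{n_1}$); or (ii)~when $m=0$, one of $B_1, B_2$ is itself trivial.

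For the enumerative assertion, in the non-degenerate case a chord-matched pair $(\widetilde u_1, \widetilde v_2) \in \ChordMatched^{B_1\glue B_2}$ is reconstructed as a cross-matched polygon by gluing in provincial triangle-tree fillers $u_2, v_1$ supported in the isotopy region.  These fillers inherit the chord positions from the chord-matched pair, and the conformal-structure matching required by~\ref{crM:Moduli} between each provincial filler on one side and the nonprovincial polygon on the other is precisely the type of constraint handled by Lemma~\ref{lem:FiberedProductDegreeOne}, whose degree-$1$ statement shows that such matched fillers contribute $1 \pmod 2$.  Summing over the L-shaped staircases in $\IndI\times\IndJ$ above $(B_1, B_2)$ thus gives the claimed equality of counts.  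The main obstacle will be in step two: making the dimension inequalities sharp enough to pinpoint exactly when the provincial projections must be trivial, and then confirming that the exceptional cases of Lemma~\ref{lem:ApproximateDimension} correspond exactly to the degenerate case of the proposition.  Once the dimension dichotomy is established, the final counting is a direct application of Lemma~\ref{lem:FiberedProductDegreeOne}.
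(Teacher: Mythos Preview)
Your proposal is correct and follows essentially the same approach as the paper: invoke Proposition~\ref{prop:XM-to-SXM} to pass to a simplified cross-matched limit, run the dimension comparison via Lemmas~\ref{lem:trans-cross-matched}, \ref{lem:trans-simp-cross-matched}, and~\ref{lem:ApproximateDimension}, and then use Lemma~\ref{lem:FiberedProductDegreeOne} for the enumerative equality. One small point: the paper does not use Condition~\ref{cl:Polygons} here (that condition, as stated, is for polygons with all-$\beta$ boundary), but rather uses the single global inequality $\dimSXM\geq 0$ coming from transversality of the limiting simplified cross-matched moduli space, which after the dimension bookkeeping yields $0\leq (c'-c)-2$ and hence $c'=2$, $c=0$; this directly forces $u_2'$ and $v_1'$ to be $\RR$-invariant bigons and gives the L-shape.
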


\begin{proof}
  This proof can be thought of as a variant of the proof of
  Proposition~\ref{prop:GlueChainComplexes}. In the degenerate case of
  curves with no Reeb chords ($m=0$), the result in fact follows from
  the proof of Proposition~\ref{prop:GlueChainComplexes}, so we will
  now restrict attention to the case of $m>0$ Reeb chords.

  Let $\{(u_1^k*u_2^k,v_1^k*v_2^k)\}_{k=1}^{\infty}$ be a sequence of
  cross-matched polygons in the homology classes $B_{1,\epsilon_k}^1$, $B_{1,\epsilon_k}^2$,
  $B_{2,\epsilon_k}^1$ and $B_{2,\epsilon_k}^2$ with
  \[
  \dimXM(u_1^k*u_2^k,v_1^k*v_2^k)=0
  \]
  and perturbation parameters $\epsilon_k$ converging to~$0$. Here,
  $B_{1,\epsilon_k}^1$ is an approximation (in the sense of
  Definition~\ref{def:ApproximatingHomotopyClass}) to some homology
  class $B_1^1$, and similarly for $B_{1,\epsilon_k}^2$,
  $B_{2,\epsilon_k}^1$ and $B_{2,\epsilon_k}^2$.  Restricting attention to
  a subsequence, we can assume that $u_1^k$ and $v_1^k$ are
  $(n_1+1)$-gons and $u_2^k$ and $v_2^k$ are $(n_2+1)$-gons, for $n_1$
  and $n_2$ independent of~$k$. Proposition~\ref{prop:XM-to-SXM} then
  guarantees that
  $\SimpCrossMatched^{B_1\glue B_2}(\x_1\conn \y_1,\x_2\conn \y_2)$
  is non-empty. Let $(u_1'*u_2',v_1'*v_2')\in
  \SimpCrossMatched^{B_1\glue B_2}(\x_1\conn \y_1,\x_2\conn \y_2)$. 

  Observe that
  Lemma~\ref{lem:ApproximateDimension} holds even in the present case
  where one of the sets of attaching curves---the
  $\alphas$'s---contains some arcs, giving:
  \begin{align*}
    \dim(u_1^k)-\dim(u_1')&= R(i_1,\dots,i_{n_1})\\
    \dim(u_2^k)-\dim(u_2')&= R(i_{n_1},\dots,i_{n})\\
    \dim(v_1^k)-\dim(v_1')&= R(j_1,\dots,j_{n_1})\\
    \dim(v_2^k)-\dim(v_2')&= R(j_{n_1},\dots,j_n).
  \end{align*}
  (The notation $\dim$ has a slightly different meaning in
  this section from in Lemma~\ref{lem:ApproximateDimension}: here, an
  $\RR$-invariant $u$ has $\dim(u)=-1$. Also, in the present setting,
  Case~(\ref{case:AD-1gons}) of Lemma~\ref{lem:ApproximateDimension}
  does not occur.)
  It is easy to see that
  \begin{align*}
    R(i_1,\dots,i_{n_1})+R(j_1,\dots,j_{n_1})
    &= n_1-1 \\
    R(i_{n_1},\dots,i_{n})+R(j_{n_1},\dots,j_{n})
    &= n_2-1.
  \end{align*}
  We conclude that
  \begin{align*}
    0\leq \dimSXM(u_1'*u_2',v_1'*v_2') &=
    \dim(u_1')+\dim(u_2')+\dim(v_1')+\dim(v_2')+c'-m+1 \\
    &= \dim(u_1^k)+\dim(u_2^k)+\dim(v_1^k)+\dim(v_2^k)+2\\
    &\qquad -n_1-n_2+c'-m+1\\
    &=\dimXM(u_1^k*u_2^k,v_1^k*v_2^k)-2+(c'-c)\\
    &= (c'-c)-2,
  \end{align*}
  where $c'$ is the number of $\{u_1',u_2',v_1',v_2'\}$ which are
  $\RR$-invariant bigons, and $c$ is the number of
  $\{u_1^k,u_2^k,v_1^k,v_2^k\}$ which are $\RR$-invariant
  bigons. Since there is at least one Reeb chord, $u_1^k$, $u_1'$,
  $v_2^k $ and $v_2'$ are not $\RR$-invariant bigons, so $c'\leq
  2$. Moreover, if $c'=2$ then $i_{n_1+1}=\dots=i_{n}$ and
  $j_{1}=\dots=j_{n_1}$. This proves the first half of the statement.

  For the second half, Lemma~\ref{lem:FiberedProductDegreeOne} says
  that the forgetful maps $\kappa_{B_1^2}$ and $\kappa_{B_2^1}$ are
  both degree $1$. So, dropping Condition~\ref{crM:Moduli} of
  Definition~\ref{def:CrossMatchedPolygons} and forgetting the
  components $u_{2,\epsilon}$ and $v_{1,\epsilon}$ does not change the
  holomorphic curve counts. Since $i_{n_1+1}=\dots=i_{n}$ and
  $j_{1}=\dots=j_{n_1}$, we have $i_{1}<\dots<i_{n_1}$ and
  $j_{n_1}<\dots<j_n$, and so the remaining curves $(u_1^k,v_2^k)$ can
  be viewed as elements of
  $\ChordMatched^{B_1\glue B_2}(\x_1\conn \y_1,\x_2\conn \y_2)$. This
  completes the proof.
\end{proof}

The moduli space of chord-matched polygon pairs can be used to
construct a chain complex. Specifically, define
$\ChordMatchedComplex=\{\ChordMatchedComplex^{i\times
  j}\}_{i\times j\in \IndI\times \IndJ}$ to be $\ChordMatchedComplex^{i\times
  j}=\CFa(\alphas^1\cup\alphas^2,\betas^i\cup\gammas^{j},z)$ with differential
\[
D^{i\times j<i'\times j'}(\x_1\conn  \y_1) =\sum_{\substack{\x_2\conn 
  \y_2\in\Gen(\alphas^1\cup\alphas^2,\betas^{i'\times
    j'}\cup\gammas^{i'\times j'})\\[0.5pt]
  i = i_1 < i_2 < \dots< i_{n_1} = i'\\[0.5pt]
   j = j_1 < j_2 < \dots < j_{n_2} = j'\\[1pt]
(B_1,B_2)\text{ s.t.}\ind_\emb(B_1\glue B_2)=3-n_1-n_2}}
\#\ChordMatched^{B_1\glue B_2}(\x_1\conn \y_1,\x_2\conn \y_2) \x_2\conn  \y_2.
\]

One could verify directly that $\partial^2=0$ on $\ChordMatchedComplex$, but this also follows
from Proposition~\ref{prop:SendEpsilonToZero}.

\subsection{Time dilation for chord-matched polygon pairs}\label{sec:time-dilate}

The ``time dilation''
argument from~\cite[Chapter~\ref*{LOT1:chap:tensor-prod}]{LOT1} identifies
the chain complex $\ChordMatchedComplex$ with the chain complex for
the tensor product. Before giving it, we introduce one more piece of
terminology. We have avoided describing the compactifications of the
moduli spaces of polygons, via \emph{holomorphic polygonal combs},
because of the cumbersome notation. But we will need one special case
of these objects:
\begin{definition}
  Fix a sequence of sets of attaching circles
  $\betas^0,\dots,\betas^n$, a subsequence $i_0=0<i_1<\dots<i_m=n$,
  generators $\x_j\in\Gen(\alphas,\betas^{i_j})$ ($j=0,\dots,m$) and
  $\eta^j\in\Gen(\betas^j,\betas^{j+1})$ ($j=0,\dots,n-1$), and
  homology classes $B_j\in
  \pi_2(\x_{j+1},\eta^{i_{j+1}},\dots,\eta^{i_j},\x_j)$.  A
  \emph{spinal holomorphic polygonal comb} is a sequence of
  holomorphic polygons (as in Definition~\ref{def:emb-moduli-sp})
  $(u_1,u_2,\dots,u_m)$ where $u_j\in\cM^{B_j}$. We say that this
  polygonal comb has $m$ stories and represents the homology class
  $B_1+\cdots+B_m\in\pi_2(\x_m,\eta^{n-1},\dots,\eta^0,\x_0)$; and
  $u_i$ is the $i\th$ story of the comb.
\end{definition}
There is a trivial spinal holomorphic polygonal comb, with $m=0$
stories. Holomorphic polygons can be viewed as $1$-story spinal
holomorphic polygonal combs.

The following definition is a generalization
of~\cite[Definition~\ref*{LOT1:def:TrimmedSimpleIdealMatchedCurve}]{LOT1}
to polygons:

\begin{definition}
  \label{def:tsicPolygons}
  A {\em trimmed simple ideal-matched polygon pair} connecting
  complementary pairs of generators $\x_1\conn \y_1$ and $\x_2\conn \y_2$ in the
  homology classes
  \[ B_1\in\pi_2(\x_2, \eta^{i_{n-1}<i_{n}}, \dots, \eta^{i_1<i_2}, \x_1)
  ~\text{and}~
  B_2\in\pi_2(\y_2, \eta^{j_{m-1}<j_{m}}, \dots, \zeta^{j_1<j_2},
  \y_1)\] is a pair of spinal holomorphic polygonal combs $w_1$ and $w_2$ where
  \begin{enumerate}[label=(T-\Alph*),ref=T-\Alph*,leftmargin=*]
  \item\label{TSIC:provincial} One of $w_1$ or $w_2$ is trivial and the other is a
    rigid (i.e., index $3-c$ where $c$ is the number of corners)
    holomorphic polygon with no $e$ punctures or
  \item\label{TSIC:cosmopolitan} $(w_1,w_2)$ has the following properties:
  \begin{enumerate}[label=\hspace{1em}(T-B\arabic*),ref=T-B\arabic*,leftmargin=*]
  \item The comb $w_1$ is a (one story) holomorphic curve representing $B_1$ which
    is asymptotic to the non-trivial sequence of non-empty sets of
    Reeb chords ${\vec\rhos}=(\rhos_1,\dots,\rhos_q)$.
    \item The curve $w_1$ is rigid (with respect to $\vec{\rhos}$).
    \item The comb $w_2$ is a $q$-story spinal holomorphic polygonal building 
      representing the homology class $B_2$.
    \item Each story of $w_2$ is rigid.
    \item Each of $w_1$ and $w_2$ is strongly boundary monotone.
    \item For each $i=1,\dots,q$,
      the east punctures of the $i\th$ story of $w_2$ are labeled,
      in order, by a non-empty sequence of Reeb chords
      $(-\rho^i_1,\dots,-\rho^i_{\ell})$ which have the property that
      the sequence of singleton sets of chords
      $\vec{\rho}^i=(\{\rho^i_1\},\dots,\{\rho^i_\ell\})$ is composable.
    \item The composition of the sequence of singleton sets of Reeb
      chords on the $i\th$ story of $w_2$ (with reversed
      orientation) coincides with the $i\th$ set of Reeb chords
      $\rhos_i$ appearing on the boundary of $w_1$.
    \end{enumerate}
    \end{enumerate}
\end{definition}

We can define a chain complex $\tsicComplex$,
which counts points in zero-dimensional moduli spaces
of trimmed simple ideal-matched polygon pairs. Rather than proving
directly that this does in fact define a chain complex, we identify it (up to
homotopy equivalence) with the chain complex $\ChordMatchedComplex$:

\begin{proposition}
  \label{prop:DilateToInfinity}
  The chain complex whose differential counts chord-matched polygon pairs
  $\ChordMatchedComplex$ is homotopy equivalent to the complex whose
  differential counts trimmed simple ideal-matched polygon pairs $\tsicComplex$.
\end{proposition}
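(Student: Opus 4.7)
The plan is to adapt the time-dilation argument from \cite[Chapter~\ref*{LOT1:chap:tensor-prod}]{LOT1}, which proved the pairing theorem for bigons, to the polygon setting at hand. The strategy is to interpolate between $\ChordMatchedComplex$ and $\tsicComplex$ by a one-parameter family of chain complexes $\ChordMatchedComplex_R$, indexed by a dilation parameter $R\in[1,\infty)$. At $R=1$ one recovers $\ChordMatchedComplex$; as $R\to\infty$ the matching condition on chord heights becomes infinitely stretched, forcing one side of the matched pair to split into a spinal holomorphic polygonal building whose stories are matched, in order, to the sets of Reeb chords of a single polygon on the other side --- i.e.\ a trimmed simple ideal-matched polygon pair.

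Concretely, for each $R\geq 1$ I would define
\[
\ChordMatched_R^{B_1\glue B_2}(\x_1\conn\y_1,\x_2\conn\y_2;\rho_1,\dots,\rho_m;S_1,S_2)
\coloneqq \Mod^{B_1}(\cdots;S_1) \times_{R\cdot\ev_{B_1}=\ev_{B_2}} \Mod^{B_2}(\cdots;S_2),
\]
where the matching condition dilates the $\RR^m/\RR$ coordinates on the $\Sigma_1$ side by $R$ before comparing. For generic $R$ these moduli spaces are transversely cut out (standard transversality, as in Lemma~\ref{lem:trans-cross-matched}) and have the same expected dimension as at $R=1$. Counting rigid elements gives an $\IndI\times\IndJ$-filtered complex $\ChordMatchedComplex_R$; that $\partial^2=0$ follows from compactification of one-dimensional moduli, with the four types of ends analyzed as in Lemma~\ref{lem:tMatched}. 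Varying $R$ through a one-parameter family gives a filtered chain homotopy equivalence $\ChordMatchedComplex_{R_1}\simeq\ChordMatchedComplex_{R_2}$, by counting points in the one-dimensional moduli space $\bigcup_{R\in[R_1,R_2]}\ChordMatched_R$ in the usual way.

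The crucial step is the $R\to\infty$ limit. Given a sequence $(u_1^k,u_2^k)\in\ChordMatched_{R_k}^{B_1\glue B_2}$ with $R_k\to\infty$, the matching condition $R_k\cdot\ev_{B_1}(u_1^k)=\ev_{B_2}(u_2^k)$ forces the relative heights of consecutive Reeb chords of $u_2^k$ to tend either to $\infty$ or to $0$, according to whether the corresponding chords on the $\Sigma_1$-side belong to different sets $\rhos_i,\rhos_{i+1}$ or to the same set $\rhos_i$. Combined with the compactness theorem for holomorphic combs (the polygon analogue of \cite[Proposition~\ref*{LOT1:prop:compactness}]{LOT1}), this forces the sequence $\{u_2^k\}$ to Gromov-converge to a $q$-story spinal polygonal building whose $i\th$ story carries exactly the chords of $\rhos_i$, while $\{u_1^k\}$ remains a single (one-story) polygon; a dimension count rules out any components at $e\infty$, any further breaking of $u_1^k$, and any collision of chords within a single $\rhos_i$. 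In the degenerate case with $m=0$, the matching condition is vacuous and one of the sides is already a trivial comb, recovering item~\ref{TSIC:provincial} of Definition~\ref{def:tsicPolygons}. Conversely, given a trimmed simple ideal-matched polygon pair, a standard gluing argument (analogous to \cite[Proposition~\ref*{LOT1:prop:gluing_two_story}]{LOT1}) produces, for $R$ sufficiently large, a unique mod-$2$ element of $\ChordMatched_R$ nearby.

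The main obstacle is the careful bookkeeping in the compactness-plus-gluing argument for polygons: the polygonal buildings have extra corners at the chain elements $\eta^{i<i'}$ and $\zeta^{j<j'}$, and one must verify that the degeneration in the $R\to\infty$ limit only affects the splitting of the $\Sigma_2$-side polygon into stories along $\alphas^2$, rather than pinching off a $\beta$- or $\gamma$-boundary piece that would alter the filtration level. However, because the conformal moduli of the two polygons are \emph{unconstrained} in the definition of chord-matched pairs (contrast Definition~\ref{def:CrossMatchedPolygons}\ref{crM:Moduli}), the polygon moduli decouple from the chord-matching dilation, and the degeneration analysis from the bigon case in \cite{LOT1} carries over with only notational changes. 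The conclusion is $\ChordMatchedComplex\simeq\ChordMatchedComplex_R\simeq\tsicComplex$ for $R$ sufficiently large, as desired.
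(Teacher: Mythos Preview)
Your proposal is correct and follows essentially the same approach as the paper: introduce a dilation parameter in the chord-matching condition, show the resulting complexes are chain homotopy equivalent for varying parameter (handling the extra $\beta$- and $\gamma$-polygon ends as in Lemma~\ref{lem:VaryTranslationParameter}), and identify the large-parameter limit with trimmed simple ideal-matched polygon pairs via the compactness and gluing arguments of \cite[Propositions~\ref*{LOT1:prop:VaryTChain} and~\ref*{LOT1:prop:LargeTLimit}]{LOT1}. Your write-up is in fact more detailed than the paper's, which simply cites the relevant propositions from \cite{LOT1} and notes that the polygon case introduces no essential new difficulties.
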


\begin{proof}
  The proof of this proposition follows most of the time dilation
  proof of the pairing theorem for
  $\HFa$~\cite[Chapter~\ref*{LOT1:chap:tensor-prod}]{LOT1}.  In words,
  we consider yet another chain complex which counts chord-matched
  polygon pairs where now the matching condition on the chords is
  further perturbed by scaling out by a parameter $T$;
  i.e.,
  $T\ev_{B_1}(u)=\ev_{B_2}(v)$.  There are chain homotopy equivalences between
  these complexes as we vary the parameter
  $T$. (See~\cite[Proposition~\ref*{LOT1:prop:VaryTChain}]{LOT1}.)
  The novelty in the present application of this argument is that now,
  there can be canceling ends in the moduli spaces of polygons which
  correspond to polygons connecting the various $\betas^i$; but this
  came up already in the proof of
  Lemma~\ref{lem:VaryTranslationParameter} above.

  Next, we make the parameter $T$ very large. For large $T$, counts in
  the moduli spaces of $T$-matched polygons stabilize to counts of trimmed
  simple ideal-matched polygon pairs, according to the argument
  from~\cite[Proposition~\ref*{LOT1:prop:LargeTLimit}]{LOT1}.
\end{proof}

Now we put together the above steps to prove
Theorem~\ref{thm:PolygonPairing}:

\begin{proof}[Proof of Theorem~\ref{thm:PolygonPairing}]
  We identify 
  \[\CCFa(\Sigma_1\cup\Sigma_2,\alphas^1\cup\alphas^2,
  (\{\betas^i\}_{i\in \IndI},\{\eta^{i<i'}\}_{i,i'\in \IndI})
  \conn (\{\gammas^{j}\}_{j\in \IndJ},\{\zeta^{j<j'}\}_{j,j'\in \IndJ}),
  z)\] up to homotopy equivalence with the chain complex counting
  trimmed simple ideal-matched polygon pairs $\tsicComplex$, by
  applying, in turn, Proposition~\ref{prop:TakeNeckToInfinity},
  Lemma~\ref{lem:VaryTranslationParameter},
  Lemma~\ref{lem:TransToInfinity}, and
  Proposition~\ref{prop:SendEpsilonToZero}.  The differential in
  $\tsicComplex$ is identified with the differential in the tensor
  product complex
  \[\CCFAa(\Sigma_1,\alphas_1,\{\betas^i\},z)
  \DT \CCFDa(\Sigma_2,\alphas_2,\{\gammas^j\},z).\] This
  follows from the expression for $D^{i\times j<i'\times j'}$ from
  Equation~\eqref{eq:DefineDifferentialOnDT}: the string of operations
  on the type $D$ side counts $k$-story spinal holomorphic polygonal combs
  in $\Sigma_2$ and the node $F^{i\leq i'}$ pairs these with
  corresponding holomorphic polygons in $\Sigma_1$ (with the
  understanding that the terms in $\delta_{j}$ and $F^{i\leq i'}$ when
  $i=i'$ count bigons).
\end{proof}

\subsection{On boundedness}\label{sec:on-boundedness}

Suppose that both $(\Sigma,\alphas^1,\{\betas^i\}_{i\in\IndI},z)$ and
$(\Sigma,\alphas^2,\{\gammas^i\}_{i\in\IndI},z)$ are provincially
admissible, but neither is admissible.  Then, a
variant of Theorem~\ref{thm:PolygonPairing} remains true:

\begin{theorem}
  \label{thm:PolygonPairingDT}
  Let $(\Sigma_1,\alphas^1,z)$ and $(\Sigma_2,\alphas^2,z)$ be
  surfaces-with-boundary, each equipped with complete sets of
  bordered attaching curves $\alphas^1$ and $\alphas^2$ and basepoints
  $z\in\bdy\Sigma_i$ with $\partial(\Sigma_1,\alphas^1)=\PMC$ and
  $\partial(\Sigma_2,\alphas^2)=-\PMC$.  Let
  \[(\Sigma_1,\IndI,\{\betas^i\}_{i\in \IndI},\{\eta^{i<i'}\}_{i,i'\in \IndI})
  ~\text{and}~(\Sigma_2,\IndJ,\{\gammas^{j}\}_{j\in \IndJ},
  \{\zeta^{j<j'}\}_{j,j'\in \IndJ})\] be chain complexes
  of attaching circles in $\Sigma_1$ and $\Sigma_2$. respectively.
  Suppose that both of the multi-diagrams 
  $(\Sigma_1,\alphas^1,\{\betas^i\}_{i\in\IndI},z)$ and
  $(\Sigma_2,\alphas^2,\{\gammas^j\}_{j\in\IndJ},z)$ 
  are provincially admissible.
  Let $M$ be a module with
  \[ M \simeq
  \CCFAa(\Sigma_1,\alphas^1,\{\betas^i\},\{\eta^{i<i'}\},z) \]
  and such that $M$ is bounded.
  Then, there is an $\IndI\times \IndJ$-filtered quasi-isomorphism
  \begin{multline*}
    M\DT
   \CCFDa(\Sigma_2,\alphas^2,\{\gammas^j\},\{\zeta^{j<j'}\},z) \\
  \simeq \CCFa\bigl(\Sigma_1\cup\Sigma_2,\alphas,
  (\{\betas^i\},\{\eta^{i<i'}\})
  \conn (\{\gammas^{j}\},\{\zeta^{j<j'}\}),
  z\bigr),
  \end{multline*}
  where $\alphas$ is an isotopic copy of $\alphas^1\cup\alphas^2$,
  chosen so that the second multi-diagram is admissible.
\end{theorem}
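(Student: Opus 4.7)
The plan is to reduce to Theorem~\ref{thm:PolygonPairing} by perturbing $\alphas^1 \cup \alphas^2$ until the admissibility hypothesis of that theorem holds, and then transferring the pairing statement back to $M$ using its boundedness.

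First, I would construct isotopic attaching curves. Winding $\alpha$-arcs transversely in the interior (as in Lemma~\ref{lem:AdmissibleMultiDiagram} and~\cite[Proposition~\ref*{LOT1:prop:admis-achieve-maintain}]{LOT1}) produces an isotopic $\alphas = \alphas^1_{\mathrm{new}} \cup \alphas^2_{\mathrm{new}}$, with $\alphas^k_{\mathrm{new}}$ isotopic to $\alphas^k$ through an interior isotopy of $\Sigma_k$, such that the glued multi-diagram $(\Sigma_1 \cup \Sigma_2, \alphas, \{\betas^i \conn \gammas^j\}, z)$ is admissible. By a second round of winding if necessary, I can arrange that at least one of $(\Sigma_1, \alphas^1_{\mathrm{new}}, \{\betas^i\}, z)$ or $(\Sigma_2, \alphas^2_{\mathrm{new}}, \{\gammas^j\}, z)$ is admissible; without loss of generality, say the first. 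Set $M' = \CCFAa(\Sigma_1, \alphas^1_{\mathrm{new}}, \{\betas^i\}, \{\eta^{i<i'}\}, z)$ and $N' = \CCFDa(\Sigma_2, \alphas^2_{\mathrm{new}}, \{\gammas^j\}, \{\zeta^{j<j'}\}, z)$; by Proposition~\ref{prop:DefCCFAa}, $M'$ is bounded.

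Next, Theorem~\ref{thm:PolygonPairing} applies to the perturbed diagrams and yields a filtered quasi-isomorphism
\[ M' \DT N' \simeq \CCFa\bigl(\Sigma_1 \cup \Sigma_2, \alphas, (\{\betas^i\},\{\eta^{i<i'}\}) \conn (\{\gammas^j\},\{\zeta^{j<j'}\}), z\bigr). \]
The bordered analogue of Proposition~\ref{prop:IsotopiesInduceEquivalences} (running the same continuation-map construction, with dynamic boundary conditions introduced along the interior $\alpha$-arcs) supplies filtered quasi-isomorphisms $\CCFAa(\Sigma_1, \alphas^1, \cdots) \simeq M'$ and $\CCFDa(\Sigma_2, \alphas^2, \cdots) \simeq N'$. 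Together with the hypothesis $M \simeq \CCFAa(\Sigma_1, \alphas^1, \cdots)$, this gives a filtered quasi-isomorphism $M \simeq M'$ between bounded modules. Tensoring successively with $N'$ and with $\CCFDa(\Sigma_2, \alphas^2, \cdots)$—legitimate on both sides because $M$ and $M'$ are bounded—produces
\[ M \DT \CCFDa(\Sigma_2, \alphas^2, \cdots) \simeq M \DT N' \simeq M' \DT N', \]
which combined with the previous display is the desired conclusion.

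The main obstacle will be the transfer step: both $\CCFDa(\Sigma_2, \alphas^2, \cdots)$ and the isotopy-induced morphism from it to $N'$ are not known to be bounded, so one cannot appeal to a generic tensor-product statement on the type~$D$ side. The resolution is that all sums that occur in $M \DT (\,\cdot\,)$ and in the tensor of the morphism with $M$ are forced to be finite by boundedness of $M$; in the filtered setting this is exactly what underlies Lemma~\ref{lem:FilteredTensorProduct}. Carefully verifying, in the filtered $\Ainf$-module setting of Section~\ref{sec:filtered-alg}, that tensoring a bounded $\IndI$-filtered $\Ainf$-module with a (possibly unbounded) quasi-isomorphism of $\IndJ$-filtered type~$D$ structures preserves filtered quasi-isomorphisms is the only step not already formalized in the excerpt, but it is a routine extension of the formal manipulations of~\cite[Lemma~\ref*{DCov1:lem:DT-descends}]{LOT:DCov1}.
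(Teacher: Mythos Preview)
Your overall strategy matches the paper's: isotope the $\alpha$-curves to gain admissibility, apply Theorem~\ref{thm:PolygonPairing}, and transfer back via isotopy invariance. However, your ``without loss of generality, say the first'' is not actually without loss of generality, and the choice you make introduces a real gap.

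The paper isotopes only on the $\Sigma_2$ side, producing $\xis^2$ with $(\Sigma_2,\xis^2,\{\gammas^j\},z)$ admissible; by Proposition~\ref{prop:GlueAdmissibility} this already makes the glued diagram admissible, so no winding on $\Sigma_1$ is needed. The point of this choice is that it makes the type $D$ structure $\CCFDa(\Sigma_2,\xis^2,\cdots)$ bounded. Then the given (possibly unbounded) equivalence $M\simeq\CCFAa(\Sigma_1,\alphas^1,\cdots)$ can be tensored with this bounded $D$ structure, and the $D$-side isotopy equivalence can be tensored with the bounded $M$. Both tensors are well-defined for the standard reason.

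In your version, $N'$ is not bounded, and the step $M\DT N'\simeq M'\DT N'$ requires tensoring the $\Ainf$-morphism $M\to M'$ with $N'$. Boundedness of the \emph{modules} $M$ and $M'$ does not imply boundedness of the \emph{morphism}: the hypothesis only gives an abstract equivalence $M\simeq\CCFAa(\Sigma_1,\alphas^1,\cdots)$, whose components $f_n$ may be nonzero for arbitrarily large $n$, and your composite factors through the unbounded module $\CCFAa(\Sigma_1,\alphas^1,\cdots)$. With both the morphism and $N'$ unbounded, the sum defining $f\DT\Id_{N'}$ need not be finite. Your last paragraph addresses the $D$-side transfer but not this $A$-side issue.

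The fix is simply to choose the other side: isotope only $\alphas^2$ to make the $\Sigma_2$ diagram admissible. Then $N'$ is bounded, the intermediate step $M\DT N'\simeq\CCFAa(\Sigma_1,\alphas^1,\cdots)\DT N'$ is legitimate regardless of the boundedness of the morphism, and you recover exactly the paper's chain of equivalences.
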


\begin{proof}
  There is an isotopic copy $\xis^2$ of $\alphas^2$ so that
  $(\Sigma,\xis^2,\{\gammas^j\}_{j\in\IndJ},z)$ is admissible
  (see~\cite[Proposition~\ref*{LOT1:prop:admis-achieve-maintain}]{LOT1}).
  The isotopy induces a filtered chain homotopy equivalence
  \[
  \CCFDa(\Sigma_2,\alphas^2,\{\gammas^j\},\{\zeta^{j<j'}\},z)\simeq 
  \CCFDa(\Sigma_2,\xis^2,\{\gammas^j\},\{\zeta^{j<j'}\},z),
  \]
  with $\CCFDa(\Sigma_2,\xis^2,\{\gammas^j\},\{\zeta^{j<j'}\},z)$
  bounded.
  This in turn gives the first of the following homotopy equivalences:
  \begin{align*}
    M\DT 
    \CCFDa(&\Sigma_2,\alphas^2,\{\gammas^j\},\{\zeta^{j<j'}\},z) \\
    & \simeq 
    M\DT\CCFDa(\Sigma_2,\xis^2,\{\gammas^j\},\{\zeta^{j<j'}\},z)\\
    & \simeq 
    \CCFAa(\Sigma_1,\alphas^1,\{\betas^i\},\{\eta^{i<i'}\},z)
    \DT\CCFDa(\Sigma_2,\xis^2,\{\gammas^j\},\{\zeta^{j<j'}\},z)\\
    & \simeq \CCFa\bigl(\Sigma_1\cup\Sigma_2,\alphas^1\cup\xis^2,
    (\{\betas^i\},\{\eta^{i<i'}\})
    \conn (\{\gammas^{j}\},\{\zeta^{j<j'}\}),
    z\bigr) \\
    & \simeq \CCFa\bigl(\Sigma_1\cup\Sigma_2,\alphas,
    (\{\betas^i\},\{\eta^{i<i'}\})
    \conn (\{\gammas^{j}\},\{\zeta^{j<j'}\}),
    z\bigr).
  \end{align*}
  The third homotopy equivalence above is
  Theorem~\ref{thm:PolygonPairing}, and the fourth is induced by the
  isotopies from $\alphas^1\cup\xis^2$ to $\alphas$
  (Proposition~\ref{prop:IsotopiesInduceEquivalences}).
\end{proof}

The bounded models $M$ needed in Theorem~\ref{thm:PolygonPairingDT}
can be constructed either geometrically, by isotoping the $\alphas$
curves (as in the above proof) or by more algebraic
considerations. One more algebraic approach is to form
$\CCFAa(\Sigma_1,\alphas,\{\betas^i\},\{\eta^{i<i'}\},z)\DT
\lsup{\Alg}\Barop^{\Alg}\DT \lsub{\Alg}\Alg_{\Alg}$
(see~\cite[Section~\ref*{LOT2:sec:bar-cobar-modules}]{LOT2}). Another
more finite construction is to use a combinatorially describable,
bounded model for $\CFDAa(\Id)$, for example
using~\cite[Proposition~\ref*{LOT2:prop:DDAA-duality}]{LOT2}.

\subsection{The pairing theorem for bimodules}\label{sec:pairing-bimodules}
We turn next to the bimodule analogue of Theorem~\ref{thm:PolygonPairing}:

\begin{theorem}
  \label{thm:PolygonPairingDA}
  Let $(\Sigma_1,\alphas^1,\arcz)$ (respectively
  $(\Sigma_2,\alphas^2,\arcz)$) be a surface with two boundary
  components, equipped with a complete sets of bordered attaching curves
  $\alphas^1$ (respectively $\alphas^2$), compatible with $\PMC_0$ and
  $\PMC$ (respectively $-\PMC$ and $\PMC_2$), for some $\PMC_0$,
  $\PMC$ and $\PMC_2$.  Let 
  \[ (\Sigma_1,\IndI,\{\betas^i\}_{i\in
    \IndI},\{\eta^{i_1<i_2}\}_{i_1,i_2\in \IndI})~\text{and}~
  (\Sigma_2,\IndJ,\{\gammas^{j}\}_{j\in \IndJ},
  \{\zeta^{j_1<j_2}\}_{j_1,j_2\in \IndJ}) \]
  be chain complexes of
  attaching circles in $\Sigma_1$ and $\Sigma_2$ respectively.
  Suppose that both
  $\HD_1=(\Sigma_1,\alphas,\{\betas^i\}_{i\in\IndI},\arcz)$ and
  $\HD_2=(\Sigma_2,\alphas,\{\gammas^j\}_{j\in\IndJ},\arcz)$ are
  provincially admissible, and either $\HD_1$ is right-admissible or
  $\HD_2$ is left-admissible.  Then, there is an $\IndI\times
  \IndJ$-filtered quasi-isomorphism
  \begin{multline*}
   \lsup{\Alg_L}\CCFDAa(\Sigma_1,\alphas^1,\{\betas^i\},\{\eta^{i_1<i_2}\},\arcz)\DT_{\Alg(\PMC)}
   \CCFDAa(\Sigma_2,\alphas^2,\{\gammas^j\},\{\zeta^{j_1<j_2}\},\arcz)_{\Alg_R}\\
  \simeq \lsup{\Alg_L}\CCFDAa(\Sigma_1\cup\Sigma_2,\alphas^1\cup\alphas^2,
  (\{\betas^i\},\{\eta^{i_1<i_2}\})
  \conn (\{\gammas^{j}\},\{\zeta^{j_1<j_2}\}),
  \arcz)_{\Alg_R},
  \end{multline*}
  where, $\Alg_L=\Alg(-\partial_L\Sigma_1)$ and 
  $\Alg_R=\Alg(\partial_R\Sigma_2)$.
\end{theorem}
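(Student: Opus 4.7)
The proof plan is to reduce Theorem~\ref{thm:PolygonPairingDA} to the module pairing theorem (Theorem~\ref{thm:PolygonPairingDT}) via the drilling construction of Subsection~\ref{subsec:PolygonBimodule}. Using the definition of $\CCFDAa$ in Equation~\eqref{eq:CCFDA-as-DT} together with associativity of $\DT$, I would rewrite
\[\lsup{\Alg_L}\CCFDAa(\Sigma_1, \alphas^1, \{\betas^i\}, \{\eta^{i_1<i_2}\}, \arcz) \simeq \CFDDa(\Id_{\PMC_0}) \DT_{\Alg(\PMC_0)} \CCFAAa((\Sigma_1)_\dr, \alphas^1, \{\betas^i\}, \{\eta^{i_1<i_2}\}),\]
where the drilling is performed at the outer boundary $\partial_L\Sigma_1$, and analogously write $\CCFDAa(\Sigma_2,\ldots)_{\Alg_R}$ as a tensor with $\CFDDa(\Id_{\PMC_2})$ on the outer boundary $\partial_R\Sigma_2$. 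The key geometric observation is that drilling at outer boundaries commutes with gluing at the inner boundary $\PMC$, so the drilled version of the glued diagram $(\Sigma_1\cup_\PMC\Sigma_2)_{\dr}$ agrees with $(\Sigma_1)_\dr\cup_\PMC(\Sigma_2)_\dr$.

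Having reduced the problem to the AA/DA version on diagrams with only a single remaining boundary component (the one being glued along $\PMC$), I would apply Theorem~\ref{thm:PolygonPairingDT} (viewing the $\CCFAAa$ modules, via restriction of scalars, as right modules over $\Alg(\PMC_0)\otimes\Alg(\PMC)$ and $\Alg(\PMC)\otimes\Alg(\PMC_2)$ respectively, as in the construction just before Equation~\eqref{eq:CCFDA-as-DT}). This gives
\[\CCFAAa((\Sigma_1)_\dr,\ldots)\DT_{\Alg(\PMC)}\CCFDAa((\Sigma_2)_\dr,\ldots)\simeq\CCFAAa\bigl((\Sigma_1\cup\Sigma_2)_\dr,\ldots\bigr).\]
Tensoring with $\CFDDa(\Id_{\PMC_0})$ and $\CFDDa(\Id_{\PMC_2})$ on the two outer boundaries converts back to the bimodule form stated in the theorem. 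The filtered nature of the quasi-isomorphism is preserved throughout, since all the tensor products involved are tensor products of filtered objects (Lemma~\ref{lem:FilteredTensorProductDA}).

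Alternatively, one could give a direct geometric proof by repeating, with appropriate modifications, the five-step argument used for Theorem~\ref{thm:PolygonPairing}: neck-stretching at the inner boundary (generalizing Proposition~\ref{prop:TakeNeckToInfinity}), time translation (Lemma~\ref{lem:VaryTranslationParameter}), sending the translation parameter to infinity (Lemma~\ref{lem:TransToInfinity}), letting the approximation parameter $\epsilon$ go to zero (Proposition~\ref{prop:SendEpsilonToZero}), and time dilation (Proposition~\ref{prop:DilateToInfinity}). The only substantive change is that matched polygons would now carry additional Reeb chords at the two outer boundary components; these chords are not subject to any matching condition and simply record the residual type $D$ structure over $\Alg_L$ and type $A$ structure over $\Alg_R$ in the output.

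The main technical obstacle in either approach is verifying that the asymmetric admissibility hypothesis (right-admissibility of $\HD_1$ or left-admissibility of $\HD_2$) suffices to make the tensor product over $\Alg(\PMC)$ well-defined. This comes down to checking that these admissibility conditions translate, via the argument in Proposition~\ref{prop:BimoduleMorphism}, into the right-boundedness or left-boundedness needed to invoke Lemma~\ref{lem:FilteredTensorProductDA}; this parallels the analogous point in the usual (non-filtered) bimodule pairing theorem. Everything else in the argument is essentially bookkeeping beyond what already appears in the proof of Theorem~\ref{thm:PolygonPairing}.
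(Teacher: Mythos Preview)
Your alternative approach---repeating the five-step argument of Theorem~\ref{thm:PolygonPairing} with additional Reeb chords at the two outer boundaries carrying the residual $\Alg_L$- and $\Alg_R$-structure---is exactly what the paper does. The paper gives no algebraic reduction: it proceeds directly through neck-stretching, time translation, the $t\to\infty$ limit, the $\epsilon\to 0$ limit, and time dilation, indicating at each step the (routine) modifications needed to track the outer-boundary chords.

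Your primary (drilling) approach, by contrast, has a genuine gap. The drilling construction of Subsection~\ref{subsec:PolygonBimodule} attaches a one-handle joining the two endpoints of the arc $\arcz$, which lie on the two \emph{different} boundary circles; this merges the two boundary circles into the single circle $\PMC_L\#\PMC_R$. Thus $(\Sigma_1)_\dr$ has a single boundary circle $\PMC_0\#\PMC$ and $(\Sigma_2)_\dr$ has a single boundary circle $(-\PMC)\#\PMC_2$: neither retains a separate $\pm\PMC$ boundary along which to glue, so the expression $(\Sigma_1)_\dr\cup_\PMC(\Sigma_2)_\dr$ is not well-defined, and Theorem~\ref{thm:PolygonPairingDT} (which requires the two pieces to have matching boundary circles $\PMC$ and $-\PMC$) cannot be invoked as you describe. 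For the same reason, the drilled glued surface $(\Sigma_1\cup_\PMC\Sigma_2)_\dr$ does not decompose into two drilled pieces: its drilling one-handle runs from a point on $\PMC_0$ to a point on $\PMC_2$, spanning both $\Sigma_1$ and $\Sigma_2$. Relatedly, in Equation~\eqref{eq:CCFDA-as-DT} the tensor with $\CFDDa(\Id_{\PMC_L})$ is always taken at the \emph{left} boundary; for $\Sigma_2$ that is the inner boundary $-\PMC$, not the outer one $\PMC_2$, so your rewriting of $\CCFDAa(\Sigma_2,\ldots)$ is also mis-stated. One might be able to engineer a reduction along these lines (for instance via trimodules or a partial-gluing pairing theorem), but it would require substantially more than what you have written---whereas the direct argument, as you correctly observe, goes through with only bookkeeping changes.
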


\begin{proof}
  The proof is similar to the proof of
  Theorem~\ref{thm:PolygonPairing}. We review this proof in outline,
  indicating the changes necessary for the bimodule case:
  \begin{enumerate}
  \item In the same vein as Proposition~\ref{prop:GlueAdmissibility}, 
    the admissibility conditions guarantee that the glued diagram is admissible.
    (See~\cite[Lemma~\ref*{LOT2:lem:admiss-glues}]{LOT2}.)
  \item Glue $\Sigma_1$ and $\Sigma_2$, identifying $\partial_R
    \Sigma_1$ and $\partial_L\Sigma_2$.  The straightforward
    generalization of Proposition~\ref{prop:TakeNeckToInfinity}
    identifies holomorphic curves in $\Sigma_1\cup\Sigma_2$ with pairs
    of matched polygons $u$ and $v$ in $\Sigma_1$ and $\Sigma_2$,
    respectively. In the bimodule case, the chords along $\partial_R
    u$ are matched with chords in $\partial_L v$, and some chords in
    $\bdy_R v$ are constrained to lie at the same heights. The
    analogue of $\MatchedComplex$ is a filtered type $\DA$ bimodule.
  \item Consider the analogue of $t$-matched curves, where now the heights
    of the chords in $u$ which map to $\partial_R \Sigma_1$ are
    translated as compared with the heights of chords in $v$
    which map to $\partial_L \Sigma_2$. Counting these curves gives the $\DA$ bimodule
    operations on the generalization of $\tMatchedComplex$.
    The filtered $\DA$ quasi-isomorphism type is independent of $t$.
  \item Send $t$ to infinity as before.  The appropriate
    generalizations of cross-matched polygons which appear in the $t\to
    \infty$ limit are pairs of polygons $u_1*u_2$ and $v_1*v_2$ with
    the following properties:
    \begin{enumerate}
    \item The conformal moduli of the polygons underlying $u_1$ and
      $v_1$ are matched, as are the conformal moduli of $u_2$ and $v_2$.
    \item Relative heights of the chords in $u_1$ along
      $\partial_R\Sigma_1$ are matched with relative heights of the
      chords in $v_2$ along $\partial_L \Sigma_2$.  Rather than
      being required to be provincial, the polygons $u_2$ and $v_1$
      are required to be disjoint from $\partial_R \Sigma_1$ and
      $\partial_L \Sigma_2$. 
    \item Heights of chords in $v_1*v_2$ appearing along $\partial_R\Sigma_2$
      satisfy the constraints dictated by the action of $\Alg(\PMC_R)$.
    \end{enumerate}
  \item Make the approximation parameter $\epsilon$ sufficiently
    small. According to the analogue of
    Proposition~\ref{prop:SendEpsilonToZero}, cross-matched polygons now
    correspond to chord-matched polygon pairs, analogous to
    Definition~\ref{def:ChordMatchedPolygons}, where once again the
    chord matching occurs along the interface between
    $\Sigma_1$ and $\Sigma_2$.
  \item Dilate time along the interface between $\Sigma_1$ and $\Sigma_2$, so chord-matched polygon
    pairs converge to trimmed simple ideal-matched polygonal pairs.
  \item The resulting curve counts are identified now
    with the tensor product of the two filtered $\DA$ bimodules.
  \end{enumerate}
  With these modifications, the proof is now complete.
\end{proof}    

We have the following variant of Theorem~\ref{thm:PolygonPairingDA}
with weaker admissibility hypotheses:

\begin{theorem}
  \label{thm:PolygonPairingDA-DTP}
  Let $(\Sigma_1,\alphas^1,\arcz)$ (respectively
  $(\Sigma_2,\alphas^2,\arcz)$) be a surface with two boundary
  components, equipped with a complete set of bordered attaching curves
  $\alphas^1$ (respectively $\alphas^2$) compatible with $\PMC_0$ and
  $\PMC$ (respectively $-\PMC$ and $\PMC_2$). Let
  \[
  (\Sigma_1,\IndI,\{\betas^i\}_{i\in \IndI},\{\eta^{i<i'}\}_{i,i'\in
    \IndI}) ~\text{and}~(\Sigma_2,\IndJ,\{\gammas^{j}\}_{j\in \IndJ},
  \{\zeta^{j<j'}\}_{j,j'\in \IndJ})
  \] 
  be chain complexes of attaching circles in $\Sigma_1$ and
  $\Sigma_2$, respectively.  Suppose that both of the multi-diagrams
  $(\Sigma_1,\alphas,\{\betas^i\}_{i\in\IndI},\arcz)$
  $(\Sigma_2,\alphas,\{\gammas^j\}_{j\in\IndJ},\arcz)$ are provincially
  admissible.  Let $M$ be a module with
  \[ 
  M \simeq \CCFDAa(\Sigma_1,\alphas^1,\{\betas^i\},\{\eta^{i<i'}\},\arcz)
  \]
  and such that $M$ is bounded.
  Then, there is an $\IndI\times \IndJ$-filtered quasi-isomorphism
  \begin{multline*}
    M\DT
   \CCFDAa(\Sigma_2,\alphas^2,\{\gammas^j\},\{\zeta^{j<j'}\},\arcz) \\
  \simeq \CCFDAa\bigl(\Sigma_1\cup\Sigma_2,\alphas,
  (\{\betas^i\},\{\eta^{i<i'}\})
  \conn (\{\gammas^{j}\},\{\zeta^{j<j'}\}),
  \arcz\bigr),
  \end{multline*}
  where $\alphas$ is an isotopic copy of $\alphas^1\cup\alphas^2$,
  chosen so that the second multi-diagram is admissible.
\end{theorem}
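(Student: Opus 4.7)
The plan is to reduce this to Theorem~\ref{thm:PolygonPairingDA} by the same isotopy trick used in the proof of Theorem~\ref{thm:PolygonPairingDT}, but carried out on bordered Heegaard diagrams with two boundary components. The obstacle is purely admissibility: the hypotheses only guarantee provincial admissibility of the two diagrams and boundedness of the algebraic model $M$, whereas Theorem~\ref{thm:PolygonPairingDA} requires either right-admissibility of $\HD_1$ or left-admissibility of $\HD_2$, as well as admissibility of the glued diagram. We fix this geometrically by winding the $\alpha$-arcs.

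First, I would produce an isotopic copy $\xis^2$ of $\alphas^2$ (through complete sets of bordered attaching curves compatible with $-\PMC\amalg\PMC_2$), obtained by winding the $\alpha$-arcs of $\alphas^2$ transverse to themselves along $\partial_L\Sigma_2$ in the spirit of Lemma~\ref{lem:AdmissibleMultiDiagram} (see also~\cite[Proposition~\ref*{LOT1:prop:admis-achieve-maintain}]{LOT1} and its bimodule extension), so that $(\Sigma_2,\xis^2,\{\gammas^j\}_{j\in\IndJ},\arcz)$ becomes left-admissible. Since any non-zero left-provincial multi-periodic domain on the glued diagram $\Sigma_1\cup\Sigma_2$ restricts to a multi-periodic domain on $\Sigma_2$ with trivial boundary on $\partial_L\Sigma_2$, the winding can be arranged to simultaneously make the glued diagram $(\Sigma_1\cup\Sigma_2,\alphas^1\cup\xis^2,\cdots,\arcz)$ admissible (compare Proposition~\ref{prop:GlueAdmissibility} and its bimodule analogue). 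Let $\alphas$ be a further isotopic copy of $\alphas^1\cup\xis^2$ chosen so the last diagram in the theorem statement is admissible.

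Second, the isotopy from $\alphas^2$ to $\xis^2$ induces, by the bimodule version of Proposition~\ref{prop:IsotopiesInduceEquivalences} (which in turn follows formally from Proposition~\ref{prop:IsotopiesInduceEquivalences} and Lemma~\ref{lem:CCFDA-agree}, since the $\DA$ bimodule is obtained from the $\Ainf$-module on the drilled Heegaard surface by tensoring with $\CFDDa(\Id_{\PMC_L})$), a filtered quasi-isomorphism
\[
\CCFDAa(\Sigma_2,\alphas^2,\{\gammas^j\},\{\zeta^{j<j'}\},\arcz)\simeq
\CCFDAa(\Sigma_2,\xis^2,\{\gammas^j\},\{\zeta^{j<j'}\},\arcz),
\]
where the right-hand side is left-bounded by Proposition~\ref{prop:BimoduleMorphism}. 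A parallel isotopy invariance statement applies on the glued side.

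With these ingredients in place, the conclusion follows by stringing together filtered quasi-isomorphisms, exactly as in the proof of Theorem~\ref{thm:PolygonPairingDT}:
\begin{align*}
  M\DT\CCFDAa(&\Sigma_2,\alphas^2,\{\gammas^j\},\{\zeta^{j<j'}\},\arcz)\\
  &\simeq M\DT\CCFDAa(\Sigma_2,\xis^2,\{\gammas^j\},\{\zeta^{j<j'}\},\arcz)\\
  &\simeq \CCFDAa(\Sigma_1,\alphas^1,\{\betas^i\},\{\eta^{i<i'}\},\arcz)\DT\CCFDAa(\Sigma_2,\xis^2,\{\gammas^j\},\{\zeta^{j<j'}\},\arcz)\\
  &\simeq \CCFDAa\bigl(\Sigma_1\cup\Sigma_2,\alphas^1\cup\xis^2,(\{\betas^i\},\{\eta^{i<i'}\})\conn(\{\gammas^j\},\{\zeta^{j<j'}\}),\arcz\bigr)\\
  &\simeq \CCFDAa\bigl(\Sigma_1\cup\Sigma_2,\alphas,(\{\betas^i\},\{\eta^{i<i'}\})\conn(\{\gammas^j\},\{\zeta^{j<j'}\}),\arcz\bigr).
\end{align*}
The first line uses the invariance described above (together with boundedness of $M$, which guarantees the tensor product on both sides is defined); the second uses the hypothesis $M\simeq\CCFDAa(\Sigma_1,\alphas^1,\ldots)$; the third is Theorem~\ref{thm:PolygonPairingDA} applied with $\HD_2$ now left-admissible; and the fourth is the bimodule isotopy invariance statement again. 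The only place any real work is required is the initial winding step, but since the arguments from Lemma~\ref{lem:AdmissibleMultiDiagram} and~\cite[Lemma~\ref*{LOT2:lem:admiss-glues}]{LOT2} go through in the multi-diagram setting without change, this presents no genuine obstacle.
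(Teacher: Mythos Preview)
Your proposal is correct and is exactly the approach the paper takes: the paper's proof is a one-liner, ``This follows from Theorem~\ref{thm:PolygonPairingDA} exactly as Theorem~\ref{thm:PolygonPairingDT} follows from Theorem~\ref{thm:PolygonPairing},'' and you have faithfully unpacked what that means in the bimodule setting, including the correct adaptation of winding $\alphas^2$ to achieve left-admissibility (rather than full admissibility) so that Theorem~\ref{thm:PolygonPairingDA} applies.
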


\begin{proof}
  This follows from Theorem~\ref{thm:PolygonPairingDA} exactly as
  Theorem~\ref{thm:PolygonPairingDT} follows from
  Theorem~\ref{thm:PolygonPairing}.
\end{proof}

\subsection{The pairing theorem for triangles}\label{sec:pairing-triangles}

There is a somewhat simpler statement of the pairing theorem for triangles.

Let $\HD_L=(\Sigma_L,\alphas_L,\betas_L,\arcz_L)$ be a bordered Heegaard
diagram with two boundary components, and let
$\HD_R=(\Sigma_R,\alphas_R,\betas_R^0,\betas_R^1,\arcz_R) $ be a
bordered Heegaard triple with two boundary components, with $\partial_R\HD_L=\partial_L\HD_R$.
Choose a cycle $\eta^{0<1}\in\CFa(\betas_R^0,\betas_R^1,\arcz_R)$.
Let $\betas_L'$ be an approximation to $\betas_L$, and let
$\Theta\in\CFa(\betas_L,\betas_L',\arcz_L)$ be a cycle generating
the top-dimensional homology group. 
Abbreviate
\begin{align*}
  \HD_L'&=(\Sigma_L,\alphas_L,\betas_L',\arcz_L) &
  \HD_R^0&=(\Sigma_R,\alphas_R,\betas_R^0,\arcz_R) &
  \HD_R^1&=(\Sigma_R,\alphas_R,\betas_R^1,\arcz_R).
\end{align*}
Assume that both of $\HD_L$ and $\HD_R$ are provincially admissible
and that either $\HD_L$ is right-admissible or $\HD_R$ is
left-admissible.

\begin{proposition}
  \label{prop:IdentifyTheMapsDA}
  There is a homotopy-commutative square
  \begin{equation}\label{eq:bimod-tri}
    \mathcenter{
      \begin{tikzpicture}
        \node at (0,0) (T1) {$\CFDAa(\HD_L\cup\HD^0_{R})$};
        \node at (8,0) (T2) {$\CFDAa(\HD_L'\cup\HD^1_{R})$};
        \node at (0,-2) (B1) {$\CFDAa(\HD_L)\DT\CFDAa(\HD^0_R)$};
        \node at (8,-2) (B2) {$\CFDAa(\HD_L')\DT\CFDAa(\HD^{1}_R)$,};
        \draw[->] (T1) to node[above]{\lab{m_2(\Theta\otimes \eta^{0<1},\cdot)}} (T2);
        \draw[->] (T1) to (B1);
        \draw[->] (B1) to node[above]{\lab{m_2(\Theta,\cdot)\DT m_2(\eta^{0<1},\cdot)}} (B2);
        \draw[->] (T2) to (B2);
      \end{tikzpicture}}
  \end{equation}
  where the vertical maps are induced by the pairing theorem for bigons 
  (\cite[Theorem~\ref*{LOT1:thm:TensorPairing}]{LOT1}). Analogous
  statements hold for pairing \AAm\ bimodules with \DA\ or \DD\ bimodules,
  \DA\ bimodules with \DD\ bimodules, and \DD\ bimodules with \AAm\
  bimodules; or for pairing bimodules with modules.
\end{proposition}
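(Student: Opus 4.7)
The plan is to deduce the homotopy-commutative square from the polygon pairing theorem (Theorem~\ref{thm:PolygonPairingDA}, or its bounded variant Theorem~\ref{thm:PolygonPairingDA-DTP}) applied to a $\{0,1\}^2$-filtered chain complex of attaching circles assembled from the given triangle data on each side. I will focus on the $\DA$--$\DA$ case; the other variants are analogous, using the appropriate version of the polygon pairing theorem.

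First, on $\Sigma_L$ I will take the chain complex of attaching circles indexed by $\IndI=\{0<1\}$ with $\betas_L^0=\betas_L$, $\betas_L^1=\betas_L'$, and edge chain $\eta_L^{0<1}=\Theta$; analogously on $\Sigma_R$ with edge chain $\zeta_R^{0<1}=\eta^{0<1}$. By Proposition~\ref{prop:BimoduleMorphism} each of these packages the corresponding triangle-map mapping cone as a $\{0,1\}$-filtered $\DA$ bimodule. Proposition~\ref{prop:GlueChainComplexes} then glues them to a $\{0,1\}^2$-filtered chain complex of attaching circles on $\Sigma_L\cup\Sigma_R$, whose face edge chains are (after choosing close approximations) $\omega^{(0,0)<(0,1)}=\Theta_L^{\mathrm{top}}\otimes\eta^{0<1}$, $\omega^{(0,0)<(1,0)}=\Theta\otimes\Theta_R^{\mathrm{top}}$ (and symmetrically for the opposite faces), with vanishing diagonal chain $\omega^{(0,0)<(1,1)}=0$. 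Theorem~\ref{thm:PolygonPairingDA} (or~\ref{thm:PolygonPairingDA-DTP}) then produces a $\{0,1\}^2$-filtered quasi-isomorphism
\[
\Phi\co \CCFDAa(\HD_L)\DT\CCFDAa(\HD_R) \xrightarrow{\simeq} \CCFDAa(\HD_L\cup\HD_R),
\]
which restricts at each corner to the ordinary bigon pairing equivalence from~\cite[Theorem~\ref*{LOT1:thm:TensorPairing}]{LOT1}, supplying the vertical arrows of the proposition's square.

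The remaining step is to identify the face composition $F^{(0,1)<(1,1)}\circ F^{(0,0)<(0,1)}$ with the appropriate horizontal arrow on each side of $\Phi$. On the tensor-product side, Equation~\eqref{eq:DefineDifferentialOnDT} exhibits this composition directly as $m_2(\Theta,\cdot)\DT m_2(\eta^{0<1},\cdot)$. On the glued side the composition is a double-triangle count: the bordered analogue of Lemma~\ref{lem:FiberedProductTriangles}, combined with the close-approximation property~\ref{cl:Triangles}, gives $m_2(\omega^{(0,1)<(1,1)},\omega^{(0,0)<(0,1)})=\Theta\otimes\eta^{0<1}$, and the $\Ainf$-associativity relation for polygon counts (with the corresponding $m_3$-rectangle term providing the chain homotopy) converts the double-triangle composition into the direct triangle map $m_2(\Theta\otimes\eta^{0<1},\cdot)$. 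Assembling these identifications across $\Phi$ yields the asserted homotopy-commutativity.

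The hard part will be this final identification. The direct triangle map $m_2(\Theta\otimes\eta^{0<1},\cdot)$ never appears as an individual structure map of $\CCFDAa(\HD_L\cup\HD_R)$, since the diagonal chain $\omega^{(0,0)<(1,1)}$ vanishes by construction; it must be recovered via $\Ainf$-associativity from the composition of two face triangle maps, with rectangle components of the diagonal structure map $F^{(0,0)<(1,1)}$ supplying the needed chain homotopies. Once this bookkeeping is set up cleanly on the glued side, the corresponding identification on the tensor-product side is a direct read-off from the $\DT$-construction, and the filtered equivalence $\Phi$ transports the comparison across; the other variants listed in the proposition follow by the same pattern with the analogous pairing theorem.
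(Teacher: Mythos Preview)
Your approach is correct but takes a longer route than the paper. You set up a $2$-step chain complex of attaching circles on \emph{both} sides and then have to extract the desired square from the resulting $\{0,1\}^2$-filtered equivalence via an extra $\Ainf$-associativity argument, reconstituting $m_2(\Theta\otimes\eta^{0<1},\cdot)$ from a composite of two face triangle maps. The paper instead takes a \emph{one}-step complex $(\{\betas_L\})$ on $\Sigma_L$ and the $2$-step complex $(\{\betas_R^0,\betas_R^1\},\eta^{0<1})$ on $\Sigma_R$. The point is that the approximation $\betas_L'$ in the statement can be taken to be exactly the close approximation $\betas_L^{*\times 1}$ supplied by the connect-sum construction (Definition~\ref{def:ConnectedSum}), and then the single edge chain of the glued $\{0,1\}$-filtered complex is already $\Theta\otimes\eta^{0<1}$. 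So the glued side is literally the top row of~\eqref{eq:bimod-tri}, the tensor side is the bottom row (after the harmless identification $\CFDAa(\HD_L)\cong\CFDAa(\HD_L')$ via the nearest-point map, which by~\ref{cl:Triangles} is $m_2(\Theta,\cdot)$), and the $\{0,1\}$-filtered homotopy equivalence from Theorem~\ref{thm:PolygonPairingDA} \emph{is} the homotopy-commutative square: its filtration-shifting component is the diagonal homotopy. Your $\{0,1\}^2$ setup recovers the same conclusion, but the extra face you introduce on the $\Sigma_L$ side is redundant and forces you to do the bookkeeping you flagged as ``the hard part''.
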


\begin{proof}
  This follows from the pairing theorem,
  Theorem~\ref{thm:PolygonPairingDA}.
  Consider the $1$-step chain complex $(\{\betas_L\},)$ in $\Sigma_L$
  and the $2$-step chain complex
  $(\{\betas_R^0,\betas_R^1\},\eta^{0<1})$ in
  $\Sigma_R$. Theorem~\ref{thm:PolygonPairingDA} gives a $\{0,1\}$-filtered homotopy equivalence
  \begin{multline*}
    \CFDAa(\alphas_L,\betas_L,\arcz_L)\DT
    \CCFDAa(\alphas_R,\{\betas_R^0,\betas_R^1\},\{\eta^{0<1}\},\arcz_R)\\
    \simeq
    \CCFDAa(\alphas_L\cup\alphas_R,\{\betas_L\cup\betas_R^0,\betas_L\cup\betas_R^1\},\{\Theta\otimes
    \epsilon_{0<1}\},\arcz_L\cup\arcz_R).
  \end{multline*}
  Unpacking the definitions, the right-hand side is exactly the top
  row of Diagram~\eqref{eq:bimod-tri}, and the left-hand side is the
  bottom row of Diagram~\eqref{eq:bimod-tri}. The homotopy equivalence
  furnishes the vertical arrows, as well as a diagonal arrow
  $\CFDAa(\HD_L\cup \HD_R^0)\to \CFDAa(\HD'_L)\DT\CFDAa(\HD^1_R)$,
  which is the homotopy in ``homotopy-commutative.''
\end{proof}

\subsection{The exact triangles agree}\label{sec:exact-tri}

Let $K$ be a framed knot in a $3$-manifold $Y$, and let $Y_r(K)$
denote $r$-surgery on $K$. In~\cite{OS04:HolDiskProperties}, an exact
triangle 
\begin{equation}\label{eq:surg-tri}
\mathcenter{\begin{tikzpicture}
  \node at (0,0) (infty) {$\HFa(Y_\infty(K))$};
  \node at (4,0) (minus) {$\HFa(Y_{-1}(K))$};
  \node at (2,-1.5) (zero) {$\HFa(Y_{0}(K))$};
  \draw[->] (infty) to (minus);
  \draw[->] (minus) to (zero);
  \draw[->] (zero) to (infty);
\end{tikzpicture}}
\end{equation}
was constructed. The maps in~\eqref{eq:surg-tri} were defined by
counting holomorphic triangles.

In~\cite[Chapter~\ref*{LOT1:chap:TorusBoundary}]{LOT1} we gave another
construction of an exact triangle of the form~\eqref{eq:surg-tri} by
explicitly writing down maps between three different framed solid tori
and invoking the pairing theorem. This was generalized slightly
in~\cite[Theorem~\ref*{DCov1:thm:Dehn-is-MC}]{LOT:DCov1} to the case
that $Y$ is a bordered $3$-manifold with two boundary components (and
$K$ lies in the interior of $Y$).

In fact, both constructions prove slightly more: they give
quasi-isomorphisms 
\begin{align}
  \CFa(Y_{-1}(K))&\simeq \Cone\bigl(\theta\co \CFa(Y_0(K))\to
  \CFa(Y_\infty(K))\bigr)\label{eq:surg-cone-closed}\\
  \shortintertext{or}
  \CFDDa(Y_{-1}(K))&\simeq \Cone\bigl(\theta\co \CFDDa(Y_0(K))\to
  \CFDDa(Y_\infty(K))\bigr)\label{eq:surg-cone-bord}
\end{align}
in the closed or bordered cases, respectively. Again, the map $\theta$
is defined in~\cite{OS04:HolDiskProperties} by counting holomorphic
triangles and in~\cite{LOT1,LOT:DCov1} by an explicit formula together
with the pairing theorem.

The goal of this section is to identify the two surgery exact
triangles. We will prove:

\begin{proposition}\label{prop:surg-tri-counts}
  The map $\theta$ in Formula~(\ref{eq:surg-cone-bord}), as defined
  in~\cite[Theorem~\ref*{DCov1:thm:Dehn-is-MC}]{LOT:DCov1}, is given
  up to homotopy by counting holomorphic triangles in a suitable
  bordered Heegaard triple-diagram. More precisely, with notation as
  in Sections~\ref{sec:const-bord-tri} and~\ref{sec:const-orig-tri},
  there is a homotopy-commutative square
  \[
  \begin{tikzpicture}
    \node at (0,0) (T1) {$\CFDDa(\HD\cup\HD_0)$};
    \node at (8,0) (T2) {$\CFDDa(\HD\cup \HD_\infty)$};
    \node at (0,-2) (B1) {$\CFDDAa(\HD)\DT\CFDa(\HD_0)$};
    \node at (8,-2) (B2) {$\CFDDAa(\HD)\DT\CFDa(\HD_\infty)$.};
    \draw[->] (T1) to node[above]{\lab{\theta_{OS}}} (T2);
    \draw[->] (T1) to node[left]{\lab{\simeq}} (B1);
    \draw[->] (B1) to node[below]{\lab{\theta_{LOT}}} (B2);
    \draw[->] (T2) to node[right]{\lab{\simeq}} (B2);
  \end{tikzpicture}
  \]
  where the vertical arrows are induced by the pairing theorem for
  bigons, the map $\theta_{OS}$ is defined by counting holomorphic
  triangles in a bordered Heegaard triple diagram, and the map
  $\theta_{LOT}=\Id\DT\theta$ with $\theta$ as described in
  Section~\ref{sec:const-bord-tri}.
  
  Analogous statements hold for type \DA\ and \AAm\ bimodules; for
  $D$ and $A$ modules in the one-boundary-component case; and for
  $\CFa$ in the closed case.
\end{proposition}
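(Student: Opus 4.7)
The plan is to apply the pairing theorem for triangles, Proposition~\ref{prop:IdentifyTheMapsDA}, to the natural splitting of the bordered Heegaard triple diagram along the torus that separates $\HD$ from the solid-torus pieces $\HD_0$ and $\HD_\infty$. By construction, the holomorphic triangle count defining $\theta_{OS}$ involves the product of a distinguished cycle $\Theta$ on the $\HD$ side and a distinguished cycle $\eta \in \CFa(\betas_R^0,\betas_R^\infty,\arcz_R)$ on the solid torus side. Proposition~\ref{prop:IdentifyTheMapsDA} then yields a homotopy-commutative square identifying $\theta_{OS}$ with the tensor product $\Id_{\CFDDAa(\HD)} \DT \theta_{OS}^{\mathrm{loc}}$, where $\theta_{OS}^{\mathrm{loc}} = m_2(\eta,\cdot)\co \CFDa(\HD_0) \to \CFDa(\HD_\infty)$ is the local triangle-counting map on the solid-torus piece alone.

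With $\theta_{OS}$ now factored through a local triangle-counting map, the problem reduces to showing $\theta_{OS}^{\mathrm{loc}} \simeq \theta$, where $\theta$ is the explicit morphism constructed in~\cite[Theorem~\ref*{DCov1:thm:Dehn-is-MC}]{LOT:DCov1}. Both maps are morphisms of type~$D$ structures between the explicitly known $\CFDa$'s of the $0$- and $\infty$-framed bordered solid tori, and both have the property that their mapping cones are homotopy equivalent to $\CFDa$ of the $(-1)$-framed solid torus: for $\theta$ this is part of the construction, and for $\theta_{OS}^{\mathrm{loc}}$ it follows from the original surgery exact triangle of~\cite{OS04:HolDiskProperties} together with the pairing theorem applied to a sufficiently rich ``test'' bordered $3$-manifold with torus boundary. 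The uniqueness result proved in~\cite{LOT:DCov1} — the very one used there to pin down the bimodule morphisms $F^\pm$ of Proposition~\ref{prop:TriangleCounting} — then says that any two such morphisms agree up to homotopy, whence $\theta_{OS}^{\mathrm{loc}} \simeq \theta$.

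The main obstacle is verifying the hypothesis that $\theta_{OS}^{\mathrm{loc}}$ satisfies the input of the uniqueness lemma, namely that its mapping cone is homotopy equivalent to $\CFDa$ of the $(-1)$-framed solid torus. This is a bordered version of the original surgery exact triangle, and while it does not follow from that triangle directly, it can be deduced from it by pairing $\theta_{OS}^{\mathrm{loc}}$ with a sufficiently generic closed bordered piece with torus boundary and invoking the pairing theorem for bigons, together with the fact that the resulting closed triangle-counting map is identified with the map in the original surgery exact triangle by Lemma~\ref{lem:tautological}. The analogous statements for \DA\ and \AAm\ bimodules, for $D$ and $A$ modules in the one-boundary-component case, and for the closed case of $\CFa$, all follow by the same argument, replacing Proposition~\ref{prop:IdentifyTheMapsDA} by its routine analogues in those settings and, in the closed case, pairing the bordered result with a trivial cap.
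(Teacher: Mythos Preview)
Your first step---applying Proposition~\ref{prop:IdentifyTheMapsDA} to factor $\theta_{OS}$ as $\Id_{\CFDDAa(\HD)}\DT\theta_{OS}^{\mathrm{loc}}$---is exactly what the paper does, and is the entire content of the paper's proof.

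The second step, however, is an unnecessary detour. In Section~\ref{sec:const-bord-tri} the map $\theta$ is \emph{defined} to be $F^{0<\infty}$, i.e., the count of holomorphic triangles in the bordered triple $({\mathcal T},\alphas,\{\beta^0,\beta^\infty\},z)$ based at $\zeta=\beta^\infty\cap\beta^0$; the explicit formula $\theta(n)=(\rho_1+\rho_3)\otimes r$ in Equation~\eqref{eq:ComputeTheta} is then obtained by direct inspection of the diagram. Thus $\theta_{OS}^{\mathrm{loc}}=\theta$ holds by definition (or, if you prefer, by that elementary computation), and no further argument is required.

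Your uniqueness argument, besides being unnecessary, has two problems. First, the uniqueness results in~\cite{LOT:DCov1} that pin down $F^{\pm}$ concern type~\DD\ bimodule morphisms associated to Dehn twists, not type~$D$ module morphisms between the $\CFDa$'s of solid tori; you would need to formulate and prove a separate uniqueness statement in the latter setting. Second, your proposed verification that $\Cone(\theta_{OS}^{\mathrm{loc}})\simeq\CFDa(\HD_{-1})$---by pairing with a ``sufficiently generic'' test manifold and invoking the original closed surgery triangle---only establishes that the cones become homotopy equivalent after every such pairing, which does not obviously imply they were homotopy equivalent before pairing without an additional Yoneda-type argument. (The reference to Lemma~\ref{lem:tautological} here is also off-target: that lemma concerns the cylindrical/symmetric-product correspondence, not identification of triangle maps.)
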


\begin{corollary}\label{cor:surg-tri-agree}
  The surgery exact triangle constructed
  in~\cite[Chapter~\ref*{LOT1:chap:TorusBoundary}]{LOT1} agrees with
  the original surgery exact triangle constructed
  in~\cite{OS04:HolDiskProperties}. More precisely,
  there is a homotopy-commutative square
  \[
  \begin{tikzpicture}
    \node at (0,0) (T1) {$\CFa(\HD\cup\HD_{0})$};
    \node at (8,0) (T2) {$\CFa(\HD\cup\HD_{\infty})$};
    \node at (0,-2) (B1) {$\CFAa(\HD)\DT\CFDa(\HD_0)$};
    \node at (8,-2) (B2) {$\CFAa(\HD)\DT\CFDa(\HD_{\infty})$,};
    \draw[->] (T1) to node[above]{\lab{m_2(\Theta_{\gamma,\beta},\cdot)}} (T2);
    \draw[->] (T1) to node[left]{\lab{\simeq}} (B1);
    \draw[->] (B1) to node[above]{\lab{\Id_{\CFAa(\HD)}\DT \theta}} (B2);
    \draw[->] (T2) to node[right]{\lab{\simeq}} (B2);
  \end{tikzpicture}
  \]  
  where the vertical maps are quasi-isomorphisms induced by the
  pairing theorem for bigons
  (\cite[Theorem~\ref*{LOT1:thm:TensorPairing}]{LOT1}),
  and $\theta$ is defined as in Equation~\eqref{eq:ComputeTheta}.
\end{corollary}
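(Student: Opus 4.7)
The plan is to deduce this from the closed-manifold / one-boundary-component case of Proposition~\ref{prop:surg-tri-counts}, which is explicitly asserted in its final ``analogous statements'' clause. Concretely, I would write a Heegaard diagram for $Y$ containing the framed knot $K$ as a gluing $\HD\cup\HD_r$, where $\HD$ is a bordered diagram with torus boundary for the knot complement and $\HD_r$ is the standard bordered diagram for the solid torus with framing $r\in\{0,\infty\}$, chosen so that their union gives back the original closed diagram $(\Sigma,\alphas,\betas^r,z)$ for $Y_r(K)$. The original Ozsv\'ath--Szab\'o triangle map inducing the surgery exact triangle of~\cite{OS04:HolDiskProperties} then counts holomorphic triangles in the Heegaard triple $(\Sigma,\alphas,\betas^0,\betas^\infty,z)$ with the top-dimensional cycle $\Theta_{\gamma,\beta}\in\CFa(\betas^0,\betas^\infty,z)$ at the corresponding corner; this is by definition the map $m_2(\Theta_{\gamma,\beta},\cdot)$ appearing as the top arrow of the square.

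The closed-manifold analogue of Proposition~\ref{prop:surg-tri-counts} then supplies, via the pairing theorem for bigons~\cite[Theorem~\ref*{LOT1:thm:TensorPairing}]{LOT1}, a homotopy-commutative square identifying this triangle map with $\Id_{\CFAa(\HD)}\DT \theta_{LOT}$, where $\theta_{LOT}$ is the type $D$ morphism on $\CFDa$ of the solid torus constructed as in Section~\ref{sec:const-bord-tri}. The last thing to check is that this $\theta_{LOT}$ is literally the map $\theta$ appearing in the statement of the corollary and in Equation~\eqref{eq:ComputeTheta}: both are defined by the same combinatorial formula on the canonical generators of $\CFDa$ of the solid torus (as in~\cite[Chapter~\ref*{LOT1:chap:TorusBoundary}]{LOT1} and its extension~\cite[Theorem~\ref*{DCov1:thm:Dehn-is-MC}]{LOT:DCov1}), so this identification is tautological.

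Thus, modulo Proposition~\ref{prop:surg-tri-counts}, the corollary is pure bookkeeping. The actual content is in Proposition~\ref{prop:surg-tri-counts} itself, which in turn reduces to the pairing theorem for triangles (Proposition~\ref{prop:IdentifyTheMapsDA}) applied to the two-step chain complex of attaching circles $(\{\betas^0,\betas^\infty\},\Theta_{\gamma,\beta})$ on the solid-torus side and the trivial one-step chain complex $(\{\betas\})$ on the knot-complement side; no new obstacle arises at the level of the corollary, and the only care needed is to make sure the admissibility hypotheses required by Proposition~\ref{prop:IdentifyTheMapsDA} (provincial admissibility plus one-sided admissibility) can be arranged, which is standard and may be achieved by an isotopy of $\alphas$ as in Lemma~\ref{lem:AdmissibleMultiDiagram}.
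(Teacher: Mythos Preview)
Your proposal is correct and follows essentially the same approach as the paper: the corollary is the special case of Proposition~\ref{prop:surg-tri-counts} with empty boundary, together with the identification of $\theta_{OS}$ with $m_2(\Theta_{\gamma,\beta},\cdot)$. Your additional remarks about admissibility and the reduction to Proposition~\ref{prop:IdentifyTheMapsDA} are accurate but strictly belong to the proof of the proposition rather than the corollary.
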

(This is also~\cite[Proposition~\ref*{DCov1:prop:triangles-agree}]{LOT:DCov1},
the proof of which was deferred to here.)

To prove Proposition~\ref{prop:surg-tri-counts} and
Corollary~\ref{cor:surg-tri-agree}, we start by recalling the two
constructions of the surgery exact triangle, and reformulating them in
the language of chain complexes of attaching circles. The result then
follows quickly from the pairing theorem.

\subsubsection{The bordered construction of the surgery triangle}\label{sec:const-bord-tri}
Let ${\mathcal T}$ be a punctured genus one surface. Fix three linear curves
$\beta^{\infty}$, $\beta^{-1}$, and $\beta^0$ in $T$ of slopes $\infty$,
$-1$, and $0$ respectively. Let $\xi=\beta^{\infty}\cap \beta^{-1}$
and $\eta=\beta^{-1}\cap\beta^0$.  Letting $\IndI=\{\infty,-1,0\}$, we
can view these curves and intersection points as determining an
$\IndI$-filtered chain complex of bordered attaching curves, with
$\eta^{\infty<-1}=\xi$, $\eta^{-1<0}=\eta$, and $\eta^{\infty<0}=0$. (This is in fact
Example~\ref{ex:AttachingCircles} with a little reparameterization. The reader
bothered by the ordering $\infty<-1$ should think of $\infty$ as $-\infty$.)

Choose $\alphas$ to consist of two curves going out to the
puncture in ${\mathcal T}$, one parallel to $\beta^{\infty}$ and
the other parallel to $\beta^{0}$. Place the basepoint $z$ and order the chords at the punctures as illustrated
in Figure~\ref{fig:ExactTriangle}.

\begin{figure}
  \centering
  \input{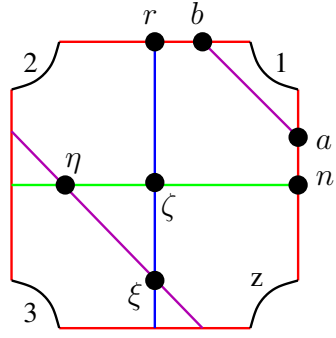}
  \caption{{\bf A bordered Heegaard multi-diagram.}
  \label{fig:ExactTriangle}}
\end{figure}

The associated $\IndI$-filtered type $D$ module
$\CCFDa(\alphas,\{\betas^i\}_{i\in\IndI},\{\eta^{i<j}\}_{i,j\in\IndI},z)$ has three summands:
\begin{align*}
\CFDa(\HD_\infty)&=\CFDa(\alphas,\betas^{\infty},z) &
\CFDa(\HD_{-1})&=\CFDa(\alphas,\betas^{-1},z) &
\CFDa(\HD_{0})&=\CFDa(\alphas,\betas^0,z).
\end{align*}
These have generating sets $\{r\}$, $\{a,b\}$, and $\{n\}$ in filtration levels $\infty$,
$-1$, and $0$ respectively, and differentials
\begin{align*}
 \delta^1 r &= \rho_{23}\otimes r  &
 \delta^1 a &= (\rho_{1}+\rho_{3})\otimes b \\
 \delta^1 b &= 0  &
 \delta^1 n &= \rho_{12}\otimes n.
\end{align*}
The maps changing filtration are:
\begin{align*}
  F^{\infty<-1}(r)&= (\rho_2\otimes a) + b &
  F^{-1<0}(a)&= n \\
  F^{-1<0}(b) &= \rho_2\otimes n &
  F^{\infty<0} &= 0.
\end{align*}
All these maps can be computed by counting holomorphic disks in the torus.
Note that $F^{-\infty<-1}$ and $F^{-1<0}$ are the maps denoted $\varphi$ and $\psi$
in~\cite[Section~\ref*{LOT1:sec:surg-exact-triangle}]{LOT1}.

We can also consider a third map $\theta=F^{0<\infty}$,  the map gotten by reordering $\{-1,0,\infty\}$,
i.e., counting holomorphic triangles based at the intersection point $\zeta=\beta^{\infty}\cap\beta^0$.
This map can be readily computed as
\begin{equation}
  \label{eq:ComputeTheta}
  \theta(n)=(\rho_1+\rho_3)\otimes r.
\end{equation}
There is an explicit isomorphism
\begin{equation}
  \label{eq:HminusOneIsMappingCone}
  \CFDa(\HD_{-1}) 
  \stackrel{\simeq}{\longrightarrow} 
  {\Cone}(\theta\co \CFDa(\HD_0)\to\CFDa(\HD_{\infty}))
\end{equation}
given by
\begin{align*}
  b & \mapsto r+\rho_{2}\otimes n \\
  a & \mapsto n.
\end{align*}
(Isomorphisms
\begin{align*}
\CFDa(\HD_0)&\simeq  \Cone(\phi\co \CFDa(\HD_{\infty})\to\CFDa(\HD_{-1}))  \\
  \CFDa(\HD_{\infty})&\simeq \Cone(\psi\co \CFDa(\HD_{-1})\to\CFDa(\HD_{0}))
\end{align*}
 are even easier to write down.)

 Now, suppose that $Y$ is a $3$-manifold with a framed knot
 $(K,\lambda)$ in it and
let $\HD$
be a bordered diagram for $Y\setminus\nbd(K)$. The identification
\[ 
{\Cone}(\theta\co \CFDa(\HD_0)\to\CFDa(\HD_{\infty}))\simeq
\CFDa(\HD_{-1})
\]
can be tensored with $\CFAa(\HD)$ to give a quasi-isomorphism
\begin{equation}\label{eq:CF-is-cone}
\CFAa(\HD)\DT\CFDa(\HD_{-1})
\simeq
{\Cone}\bigl(\Id_{\CFAa(\HD)}\DT \theta\co \CFAa(\HD)\DT
\CFDa(\HD_0)\to \CFAa(\HD)\DT \CFDa(\HD_{\infty})\bigr). 
\end{equation}
The pairing theorem for
bigons~\cite[Theorem~\ref*{LOT1:thm:TensorPairing}]{LOT1} gives
quasi-isomorphisms
\begin{equation}\label{eq:handlebodies}
  \begin{split}
  \CFAa(\HD)\DT\CFDa(\HD_\infty)&\simeq \CFa(Y) \\
  \CFAa(\HD)\DT\CFDa(\HD_{-1})&\simeq \CFa(Y_{-1}(K)) \\
  \CFAa(\HD)\DT\CFDa(\HD_0)&\simeq \CFa(Y_0(K)).
  \end{split}
\end{equation}
Together, Equations~\eqref{eq:CF-is-cone} and~\eqref{eq:handlebodies}
rise to a long exact sequence relating $\HFa(Y_0(K))$, $\HFa(Y)$, and
$\HFa(Y_{-1}(K))$ as in Equation~(\ref{eq:surg-tri}).

As noted in~\cite{LOT:DCov1}, this approach extends easily to give
surgery triangles for bordered Floer homology as well. Suppose $Y$ is
a bordered $3$-manifold with two boundary components and $(K,\lambda)$
is a framed knot in the interior of $Y$. Choose an arced bordered
Heegaard diagram $\HD=(\Sigma,\alphas,\betas,\arcz)$ with three
boundary components for $Y\setminus\nbd(K)$. (Here, ``arced'' means,
for instance, that there is a component of
$\Sigma\setminus(\alphas\cup\betas)$ adjacent to all three components
of $\bdy\Sigma$, and we place the basepoint $z$ in this region.) There
are bordered trimodules $\CFDDDa(\HD)$, $\CFDDAa(\HD)$ and so on
associated to $\HD$, one for each labeling of the boundary components
by elements of $\{D,A\}$; the definitions of these trimodules are
trivial adaptations of the definitions of the bimodules
in~\cite{LOT2}. Consider in particular the trimodules $\CFDDAa(\HD)$,
$\CFDAAa(\HD)$ and $\CFAAAa(\HD)$ where the boundary component
corresponding to $K$ is labeled by $A$. The pairing theorem gives
quasi-isomorphisms
\begin{equation}\label{eq:bord-handlebodies}
  \begin{split}
  \CFDDAa(\HD)\DT\CFDa(\HD_\infty)&\simeq \CFDDa(Y) \\
  \CFDDAa(\HD)\DT\CFDa(\HD_{-1})&\simeq \CFDDa(Y_{-1}(K)) \\
  \CFDDAa(\HD)\DT\CFDa(\HD_0)&\simeq \CFDDa(Y_0(K)),
  \end{split}
\end{equation}
and similarly with \DD\ replaced by \DA\ or \AAm.
Combining Equations~(\ref{eq:CF-is-cone}) (and its \DA\ and \AAm\ versions)
and Equation~(\ref{eq:bord-handlebodies}) gives
\begin{equation}\label{eq:bord-exact-tri}
  \begin{split}
    \CFDDa(Y_{-1}(K))&\simeq \Cone\bigl(\theta_{LOT}\co \CFDDa(Y_0(K))\to\CFDDa(Y_{\infty}(K))\bigr)\\
    \CFDAa(Y_{-1}(K))&\simeq \Cone\bigl(\theta_{LOT}\co \CFDAa(Y_0(K))\to\CFDAa(Y_{\infty}(K))\bigr)\\
    \CFAAa(Y_{-1}(K))&\simeq \Cone\bigl(\theta_{LOT}\co \CFAAa(Y_0(K))\to\CFAAa(Y_{\infty}(K))\bigr),
  \end{split}
\end{equation}
the most natural analogues of the exact triangle~(\ref{eq:surg-tri})
in this more complicated algebraic setting. In particular, the map $\theta=\theta_{LOT}$ from
Formula~(\ref{eq:surg-cone-bord}) is defined as:
\begin{multline*}
\theta_{LOT}=\Id\DT\theta\co
\CFDDa(Y_0(K))\simeq\CFDDAa(\HD)\DT\CFDa(\HD_0)\\
\to\CFDDAa(\HD)\DT\CFDa(\HD_{\infty})\simeq \CFDDa(Y_{\infty}(K)).
\end{multline*}

Of course, there are also surgery triangles for bordered
$3$-manifolds with connected boundary, via the same argument but with
trimodules replaced by bimodules.

\subsubsection{The original construction of the surgery triangle}\label{sec:const-orig-tri}
The original construction of the surgery exact triangle is somewhat
different.  Again, 
suppose that $Y$ is a three-manifold with a framed knot
$(K,\lambda)$ in it. There is a corresponding Heegaard triple as
in~\cite{OS04:HolDiskProperties} $(\Sigma,\alphas,\gammas,\betas,z)$
representing the two-handle cobordism from the surgery $Y_\lambda(K)$
to $Y$. In particular, $(\Sigma,\gammas,\betas,z)$ represents a
connected sum of $(S^2\times S^1)$'s.  Counting triangles with one input a cycle $\Theta$
representing the top-graded homology class in
$\HFa(\Sigma,\gammas,\betas,z)$ gives a map
\[ 
m_2(\cdot\otimes \Theta_{\gamma,\beta})\co
\CFa(\alphas,\gammas,z)\to \CFa(\alphas,\betas,z).
\]

In the language of chain complexes of attaching circles, 
take a bordered diagram
$\HD=(\Sigma_1,\alphas^1,\betas^1)$ for the complement of $K$ in $Y$.
Consider the Heegaard triple $({\mathcal
  T},\alphas,\{\beta^0,\beta^{\infty}\},z)$ described above, where
we think of $({\mathcal T},\{\beta^0,\beta^{\infty}\},z)$ as a
two-step chain complex of attaching circles, using the cycle
$\eta^{0<\infty}=\beta^0\cap\beta^{\infty}$.  We can glue this to
$\betas^1$ (in the sense of Definition~\ref{def:ConnectedSum}), which
we now think of as a one-step complex of attaching circles in $\Sigma_1$,
to get a two-step chain complex
\[(\Sigma^1,\{\betas^1\},z)\conn({\mathcal
  T},\{\beta^0,\beta^\infty\},z)
=(\Sigma,\{\gammas,\betas\},z),\]  equipped with 
distinguished chain
$\Theta_{\gamma,\beta}\in\CFa(\Sigma_1\conn \Sigma,\gammas,\betas,z)$.
Pairing this two-step complex with $\alphas^1\cup\alphas$ gives the
two-step complex which is the mapping cone of
\[
m_2(\cdot\otimes \Theta_{\gamma,\beta})\co \CFa(\Sigma_1\cup\Sigma,
\alphas^1\cup\alphas,\gammas,z)\to
\CFa(\Sigma_1\cup\Sigma,\alphas^1\cup\alphas,\betas,z).
\]

The proof of the surgery exact triangle
from~\cite{OS04:HolDiskProperties} gives a quasi-isomorphism
\[ 
\CFa(\HD\cup\HD_{-1})\simeq
{\Cone}(m_2(\cdot,\Theta_{\gamma,\beta})\co
\CFa(\Sigma,\alphas,\gammas,z)\to \CFa(\Sigma,\alphas,\betas,z)).
\]
The long exact sequence~(\ref{eq:surg-tri}) relating $\HFa(Y_0)$,
$\HFa(Y)$, and $\HFa(Y_{-1})$ follows at once.

\subsubsection{The two constructions agree}

\begin{proof}[Proof of Proposition~\ref{prop:surg-tri-counts}]
  It is immediate from the definitions that the Heegaard triple
  $(\Sigma,\alphas^1,\betas^1)\conn ({\mathcal
    T},\alphas,\{\beta^0,\beta^{\infty}\},z)$ agrees with the Heegaard
  triple $(\Sigma,\alphas,\gammas,\betas,z)$ associated to the
  two-handle cobordism from $Y_0$ to $Y$.  Further, the chain
  $\Theta_{\gamma,\beta}$ constructed in the gluing of chain complexes
  represents the top-graded generator of
  $\HFa(\gammas,\betas,z)$.
  So, the commutative square is a direct consequence of the pairing
  theorem for triangles, Proposition~\ref{prop:IdentifyTheMapsDA}.
\end{proof}

\begin{proof}[Proof of Corollary~\ref{cor:surg-tri-agree}]
  This is the special case of Proposition~\ref{prop:surg-tri-counts}
  in which both of the boundary components of $Y$ are empty, together
  with the observation that $\theta_{OS}$ is given by $m_2(\Theta_{\gamma,\beta},\cdot)$.
\end{proof}


\section{Identifying the spectral sequences}
\label{subsec:IdentifySS}

\subsection{The multi-diagram for the branched double cover}\label{sec:diag-for-dcov}
Let $L$ be a link in $S^3$. In this section we describe a particular
Heegaard multi-diagram for the branched double cover of the cube of
resolutions of $L$.  Our main reason for interest in this diagram is
that it decomposes as a concatenation of particularly simple bordered
Heegaard diagrams, but it has other nice properties, as well.  In
fact, this diagram is essentially the one studied by
J.~Greene~\cite{Greene:spanning-tree}. (See
Proposition~\ref{prop:IdentifyWithGreene} for a precise statement.)

Draw the plat closure of a $2n$-braid. Rotate it 90$^\circ$ clockwise
(so that the maxima are on the right and the minima are on the
left, rather than top and bottom), as this convention is
best adapted to the bordered setting.  The knot projection is thought of
as gotten from a diagram for the unlink with $n$ maxima (on the
right) and $n$ minima
(on the left) by modifying the picture so as to introduce crossings
between various consecutive strands, as specified by the braid. 
Decompose the knot projection into
three regions: the {\em cap region}, consisting of the $n$ maxima,
the {\em cup region}, consisting of the $n$ minima, and the
{\em braid region}, which contains the braid. 
We will assume the following properties of the knot projection:
\begin{itemize}
\item It is given as a standard plat closure of a braid (i.e., where
  all the cups (respectively caps) happen at the same time).
\item In the braid, the first two strands never cross each other -- i.e.,
  the first strand is stationary throughout the braid.
  (We will think of the ``first two strands'' as the top two strands in the picture.)
\end{itemize}
Both of these properties can be at the cost of possibly
introducing more crossings.

We describe a corresponding Heegaard diagram for the branched
double cover of the $n$-component unlink, and then describe local
changes at the crossings needed to obtain the desired multi-diagram
associated to the projection of $L$. (For an example of the resulting
multi-diagram, see Figure~\ref{fig:DCovDiag}.)

Label the strands in the (trivial) braid from bottom to top $s_1,\dots,s_{2n}$.
We think of these as $2n$ horizontal segments in the unlink
projection.
We first build the part of a Heegaard multi-diagram
associated to the braid region. This part of the Heegaard diagram is
built from an annulus, thought as a rectangle in the plane whose top and bottom edges are identified.
The annulus is equipped with
horizontal arcs which will eventually be
used to build closed curves ($\alpha$-circles) in the Heegaard
diagram, as follows. Each strand $s_i$ for $1<i<2n-1$ is replaced
by a pair of horizontal arcs, one just above and one just below the
horizontal strand $s_i$, while the  strands 
$s_1$ and $s_{2n-1}$ both induce single horizontal arcs in  the Heegaard
diagram (and $s_{2n}$ induces none).  For $1<i<2n-1$, label the arc
in the Heegaard diagram 
just above $s_i$ by $(s_{i-1},s_i)_+$, and the
one just below $s_i$ by $(s_{i},s_{i+1})_-$, replacing $s_1$ by
$(s_1,s_2)_-$ and $s_{2n-1}$ by $(s_{2n-2},s_{2n-1})_+$. 
Thus, coming up
from the bottom, the horizontal arcs in the Heegaard diagram are
labeled as follows:
\begin{align*} 
&(s_1,s_2)_-, (s_2,s_3)_-, (s_1,s_2)_+, \\
&(s_3,s_4)_-, (s_2,s_3)_+,
(s_4,s_5)_-,(s_3,s_4)_+,  \\
& \dots \\
& (s_i,s_{i+1})_-, (s_{i-1},s_i)_+,
(s_{i+1},s_{i+2})_-,(s_i,s_{i+1})_+ \\
&\dots \\
&(s_{2n-2},s_{2n-1})_+.
\end{align*}

So far, we have specified the Heegaard diagram in the braid region,
provided that there are no crossings. We will next describe the cap
and cup regions of the diagram, and finally we turn to the
modifications to the braid region needed in the case where there are
crossings.

At the cap region, we add $(n-1)$ one-handles, after which we close
off the 
pairs of arcs $(s_{i},s_{i+1})_-$ and $(s_{i},s_{i+1})_+$ for $i=1,\dots,2n-1$, 
and draw
$(n-1)$ $\beta$-circles. In more detail, at the cap region
(the right of the diagram) we close off the rightmost endpoints of the
arcs $(s_{2i-1},s_{2i})_-$ and $(s_{2i-1},s_{2i})_+$ for $i=1,\dots,n$,
by arcs denoted $(s_{2i-1},s_{2i})_r$.  Next, we draw $n$
circles labeled $\beta_i^r$ for $i=1,\dots,n-1$, where the $i\th$ $\beta$-circle
encircles the rightmost endpoints of $(s_{2i-2},s_{2i-1})_+$ and
$(s_{2i},s_{2i+1})_-$, except when $i=1$, in which case it encircles
only $(s_1,s_2)_-$. We draw these circles small enough that they are
disjoint from the arcs $(s_{2i-1},s_{2i})_r$.  Next, we stabilize
the picture by attaching one-handles joining up the endpoints of
$(s_{2i},s_{2i+1})_-$ and $(s_{2i},s_{2i+1})_+$; call the resulting arc $(s_{2i},s_{2i+1})_r$. 
Now the arcs $(s_{i},s_{i+1})_-$ and $(s_{i},s_{i+1})_+$ are connected
in the cap region for $i=1,\dots,2n-2$.

The diagram in the cup region is the mirror image of the diagram in
the cap region.
We label the $n-1$ new $\beta$-circles
here $\{\beta^\ell_i\}_{i=1}^{n-1}$, and the arcs joining
$(s_{i},s_{i+1})_-$ and $(s_{i},s_{i+1})_+$ in the left region by
$(s_{i},s_{i+1})_\ell$.

So far, we have a surface of genus $2n-2$, equipped with $2n-2$
$\beta$-circles. For
$i=1,\dots,2n-2$, the closed curves $(s_i,s_{i+1})_\ell\cup
(s_i,s_{i+1})_-\cup (s_i,s_{i+1})_+\cup (s_i,s_{i+1})_r$ are our
$\alpha$-circles. This gives a Heegaard
diagram for $\#_{i=1}^{2n-2}(S^2\times S^1)$, which is the branched
double cover of the plat closure of the trivial braid on $2n$ strands.

We describe now how to modify the Heegaard diagram (in the braid
region), in the presence of crossings.  If the $k\th$ crossing occurs
between the strands $s_i$ and $s_{i+1}$, then choose small
disks $D_k^-$ intersecting $(s_i,s_{i+1})_-$ and $D_k^+$ intersecting
$(s_i,s_{i+1})_+$.  Remove the interiors of these disks and identify
their boundaries via reflection across a horizontal axis.  This has the effect of
attaching a one-handle with two arcs
running through it to the Heegaard multi-diagram. With this one-handle
attachment, we have increased
the number of $\alpha$-circles by one.  There will be four choices for
how to add a corresponding $\beta$-circle. Either we take a 
meridian for the newly attached one-handle, $\mu_k$; or we take a $\beta$-circle
$\lambda_k$ which runs through the handle, meeting only the two arcs
$(s_{i-1},s_i)_+$ and $(s_i,s_{i+1})_-$; or we take a curve which
is one of the two resolutions of $\mu_k\cup \lambda_k$.
For the negative braid generator, we let $\beta_k^0$ be the meridian
$\mu_k$, $\beta_k^1$ be the longitude $\lambda_k$, and
$\beta_k^{\infty}$ be their resolution pictured on the top of
Figure~\ref{fig:LocalChanges}. For the positive braid
generator, we let $\beta_k^0$ be the longitude $\lambda_k$,
$\beta_k^1$ be the meridian $\mu_k$, and $\beta_k^{\infty}$ be the
other resolution, pictured on the bottom of Figure~\ref{fig:LocalChanges}.

\begin{figure}
  \centering
  \input{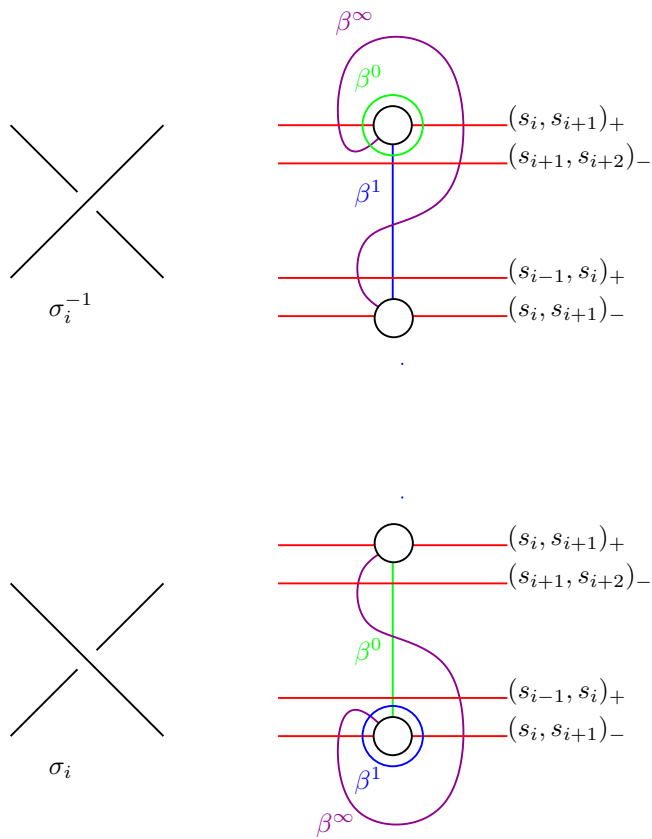}
  \caption{\textbf{Modifications for the Heegaard multi-diagram at each crossing.}
    The two types of braid generator are illustrated in the left column.
    This corresponds to a modification of the Heegaard multi-diagram as
    illustrated on the right.}
    \label{fig:LocalChanges}
\end{figure}

We will see in Lemma~\ref{lem:IdentifyTheDiagrams} that the Heegaard
diagram $(\Sigma,\alphas,\betas^\infty,z)$ represents the branched
double cover $\Sigma(L)$ of $L$. 

We will think of the diagram
$(\Sigma,\alphas,\{\betas^j\}_{j\in\{0,1,\infty\}},z)$ as sliced up
into $c+2$ slices: reading right to left, the first of these corresponds to the (rightmost)
cap region, $c$ of these correspond to the crossings
and the last corresponds to the cup regions.
In more detail, we  cut up the Heegaard multi-diagram along $c+1$ concentric circles
$S_0,\dots,S_c$ based at some central point off to the right of the
diagram. The {\em{$0\th$ slice}} of the Heegaard multi-diagram is the
region encircled by $S_0$ (which is drawn large enough to contain all
of the cap region); for $k=1,\dots,c$, the {\em{$k\th$ slice of
    the Heegaard multi-diagram}} is the region in the annulus between the
circle $S_{k-1}$ and $S_{k}$. The $(c+1)\st$ slice is the region outside
$S_c$ (including a point at infinity). The crossing modifications are
arranged so that both $D_k^-$ and
$D_k^+$ occur in the $k\th$ slice (in the same order as the
crossings appear in the knot projection).

The resulting surface has genus $g=2n-2+c$,
and it is equipped with a
$(2n-2+c)$-tuple of attaching circles $\alphas$.
Moreover, for each $j\in\{0,1,\infty\}^c$, there is also
a $(2n-2+c)$-tuple of curves $\betas^j$, consisting of all the above
$\beta$-circles on the right and the left of the diagram, and 
choosing $\beta^0_k$, $\beta^1_k$, or $\beta^{\infty}_k$ at the
$k\th$ crossing as prescribed by the component $j_k$.
An example is illustrated in Figure~\ref{fig:DCovDiag}.
\begin{figure}
  \centering
  \input{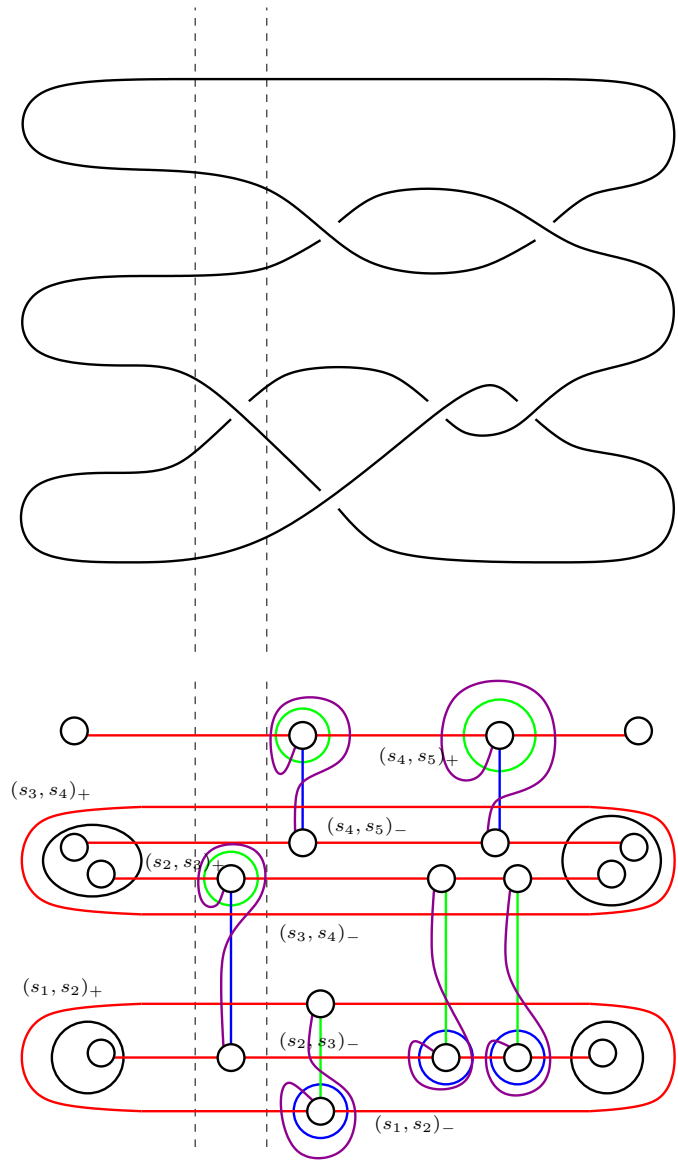}
  \caption{\textbf{Heegaard multi-diagram for a branched
      double cover.}
    Start from the knot projection above, and
    build the diagram below. We have distinguished a vertical slice
    of the knot diagram containing a single crossing. The corresponding
    vertical slice of the Heegaard multi-diagram, after suitable
    stabilizations, is a bordered multi-diagram for a Dehn twist.}
    \label{fig:DCovDiag}
\end{figure}

This diagram is not ideal for our purposes: the slices of this
diagram are not yet bordered multi-diagrams in the sense of
Definition~\ref{def:2Admissible}, as most of the $\alpha$-arcs run
from one boundary component to the other.  Specifically, the $k\th$
slice, involving a crossing between strands $s_i$ and $s_{i+1}$, has
pairs of arcs corresponding to $(s_\ell,s_{\ell+1})_-$ and
$(s_\ell,s_{\ell+1})_+$ for all $\ell\neq i$ which connect the two
boundary components.  We can, however, attach one-handles to reconnect
these pairs of arcs so that each of the new arcs has both endpoints on
one boundary component, as in Figure~\ref{fig:reconnect-arc}.  This
modification has the effect of increasing the genus by another
$c(2n-3)$, and introducing the same number of new $\alpha$-circles and
$\beta$-circles $\betas^j$ (which are the same for all choices of
$j\in\{0,1,\infty\}^{c}$). This additional stabilization does not
increase the total number of Heegaard Floer generators for any of the
diagrams -- it serves only to make sensible the holomorphic curve
counts in the various bordered slices. We refer to the destabilized
version as the {\em small diagram for the branched double cover};
but we will typically consider its stabilized version.

\begin{figure}
  \centering
  \includegraphics{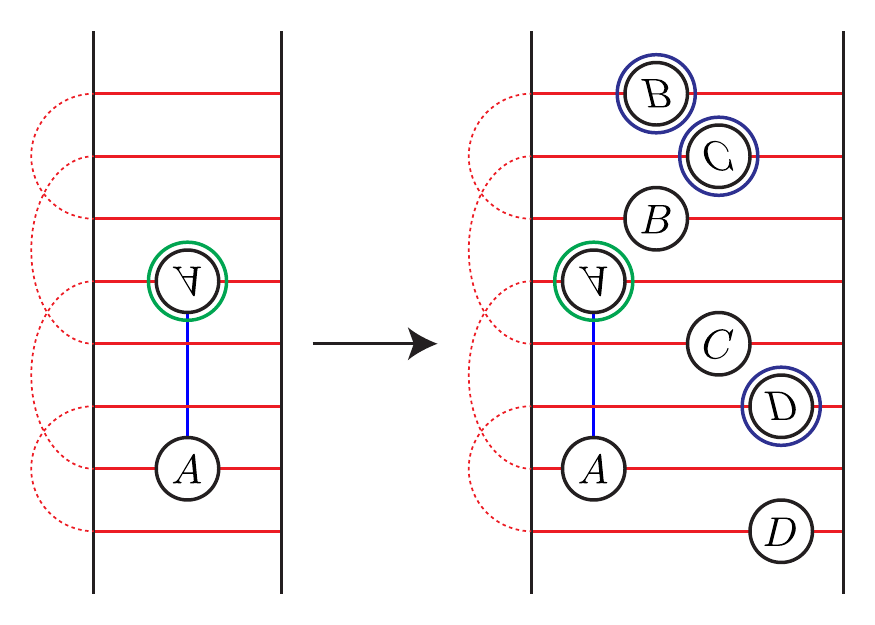}
  \caption{\textbf{Attaching handles and reconnecting arcs.} Each
    letter corresponds to a handle.}
  \label{fig:reconnect-arc}
\end{figure}

Finally, the Heegaard multi-diagram is equipped with a basepoint
corresponding to the point at infinity. This can be extended to an arc
of basepoints crossing all the slices, but disjoint from all attaching
circles, in a straightforward way.

For $k=0,\dots,c+1$, write the part of the Heegaard multi-diagram in the $k\th$ slice as
$(\Sigma_k,\alphas_k,\{\betas^j_k\}_{j\in\{0,1,\infty\}},z)$, so that $k=0$ and $k=c+1$ correspond to the cup and cap regions.

If $Y=\Sigma(L)$
is the branched double cover of a link $L$ with $c$ crossings, then
$Y$ is equipped with a framed link $L'$ whose components are in
one-to-one correspondence with the crossings of $L$.  
(Each crossing $c$ in the diagram for $L$ specifies an arc whose boundary lies in $L$. The branched double cover of this arc gives the link $L'$.
The framing of this link is specified so that $0$-framed surgery gives
the branched double cover of the braid-like or the anti-braid-like resolution,
depending on whether the crossing is of type $\sigma_i$ or $\sigma_i^{-1}$.

\begin{lemma}
  \label{lem:IdentifyTheDiagrams}
  The $\IndI=\{0,1,\infty\}^c$-filtered chain complex of
  attaching circles
  \[
  (\Sigma,\{\betas^j\}_{j\in \{0,1,\infty\}^c},z)
  =
  \#_{k=0}^{c+1} (\Sigma_k,\{\betas^j_k\}_{j\in\{0,1,\infty\}},z) 
  \]
  is a chain complex of attaching circles associated to the framing changes 
  as in Definition~\ref{def:ChangeFramingsComplex}
  on the link $L'\subset Y$ specified by 
  $\alphas$ and $\{\betas^j_k\}$.
\end{lemma}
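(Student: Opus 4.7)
The plan is to verify the two assertions implicit in the statement: first, that the underlying bordered multi-diagrams in each slice represent the correct pieces of the branched double cover, and second, that the three local choices $\betas_k^{0}$, $\betas_k^{1}$, $\betas_k^{\infty}$ at the $k$\th crossing correspond precisely to $0$-, $1$-, and $\infty$-framed surgery on the $k$\th component of the link $L'\subset \Sigma(L)$, with the chains $\eta^{i<i'}$ given by the top-degree generators coming from the framing-change diagram of Definition~\ref{def:ChangeFramingsComplex}.

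First I would analyze each slice in isolation. For $k=0$ and $k=c+1$ (the cap and cup regions), the picture is a standard Heegaard diagram for the $\beta$-handlebody on one side of the branched double cover of the plat closure of the trivial braid on $2n$ strands: this is a genus $n-1$ handlebody with attaching circles $\beta_i^{r}$ (or $\beta_i^{\ell}$) of the type already used in the description of plat presentations of branched double covers. The $(2n-3)$ extra handle attachments of Figure~\ref{fig:reconnect-arc} only perform standard stabilizations of this picture, not affecting the underlying three-manifold. For $1 \le k \le c$, the slice is a bordered multi-diagram with two boundary components; I would identify this (after stabilization) as a bordered multi-diagram representing the mapping cylinder of the half Dehn twist $\sigma_i^{\pm 1}$ (for crossings) lifted to the branched double cover, with the three choices $\beta^0_k$, $\beta^1_k$, $\beta^\infty_k$ corresponding to the meridian, longitude, and a resolution respectively. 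The local picture in Figure~\ref{fig:LocalChanges} is precisely the standard local model for a surgery triple producing the exact sequence, with the framing conventions dictated by which resolution is taken as $\beta^\infty_k$ (braid-like vs.\ anti-braid-like), matching the sign convention of Proposition~\ref{prop:TriangleCounting}.

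Next I would assemble the slices. Each slice is provincially admissible after a small isotopy, and gluing them via Proposition~\ref{prop:GlueChainComplexes} (iterated $c+1$ times) gives an $\IndI$-filtered chain complex of attaching circles on the glued surface $\Sigma = \#_k \Sigma_k$. Since the gluing of the individual three-manifolds represented by each slice is exactly $\Sigma(L)$ equipped with the standard surgery description of $L'$ (where the surgery framing changes are concentrated in the crossing slices), the resulting Heegaard diagram $(\Sigma,\alphas,\betas^\infty,z)$ represents $\Sigma(L)$, and replacing $\beta^\infty_k$ by $\beta^0_k$ or $\beta^1_k$ performs the corresponding surgery modification on the $k$\th component of $L'$. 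This matches Definition~\ref{def:ChangeFramingsComplex} on the nose.

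Finally I would match the chains. Because in our construction each pairwise Heegaard diagram $(\Sigma,\betas^{i},\betas^{i'},z)$ with $i$, $i'$ differing in a single coordinate represents $\#_{i=1}^{g}(S^2\times S^1)$ and we have arranged the approximations so that the differential on $\CFa(\betas^i,\betas^{i'},z)$ vanishes, there is a well-defined top generator $\Theta^{i<i'}$. The compatibility equation \eqref{eq:Compatibility} for these generators is automatic from the close-approximation conditions \ref{cl:ApproximatePoints}--\ref{cl:Polygons} applied inductively across the slices, exactly as in the proof of Proposition~\ref{prop:GlueChainComplexes}. This shows the glued chain complex agrees with the framing-change chain complex of Definition~\ref{def:ChangeFramingsComplex}. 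The main obstacle will be step two, identifying each crossing slice as a bordered multi-diagram whose underlying three bordered three-manifolds correspond to the three standard framings on the surgery curve lifted to the branched double cover; this requires a careful comparison with the local Dehn twist diagrams (and in fact, as remarked after the statement, the resulting small diagram agrees up to stabilization with Greene's spanning tree diagram, which provides an independent consistency check).
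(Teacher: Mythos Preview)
Your proposal is correct and follows essentially the same approach as the paper: a slice-by-slice verification that the three local choices $\betas_k^0,\betas_k^1,\betas_k^\infty$ at each crossing give the three standard surgeries on the component of $L'$ over that crossing, together with the observation that the cup and cap slices are standard handlebody diagrams, after which gluing yields the result.

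The paper's proof is slightly more concrete on the key step you flag as the main obstacle: rather than invoking the local Dehn-twist picture abstractly, it observes directly that (for a positive crossing) $\betas_k^1$ gives the identity cobordism by inspection, that $\betas_k^0$ is a parallel copy of one of the generating curves in $F(\PMC)$ at the mid-level (hence some surgery on the lifted arc, and the surface-framed one by a homological argument), and that $\betas_k^\infty$ is then the $(-1)$-framed resolution giving the Dehn twist. Your step about matching the chains $\eta^{i<i'}$ with the $\Theta^{i<i'}$ of Definition~\ref{def:ChangeFramingsComplex} is left implicit in the paper's proof, since once the curves are identified with a framing-change triple the chains are forced by the construction of the connected sum (Definition~\ref{def:ConnectedSum}).
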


It is natural to see this from the bordered perspective; so we postpone the proof a moment.

\subsection{The bordered decomposition of the multi-diagram}

The slices of the diagram considered above were studied
in~\cite{LOT:DCov1}. As noted above, each slice is a bordered Heegaard
(multi-)diagram with two boundary components. Each boundary component
is parameterized by the {\em linear pointed matched circle}, as
in~\cite[Section~\ref*{DCov1:sec:diagrams}]{LOT:DCov1} or
Figure~\ref{fig:linear-pmc}, which we think of as the $2$-sphere branched at
$2n$ collinear points. Each of the matched pairs in the
linear pointed matched circle $\PMC$ specifies a circle in the surface
$F(\PMC)$ containing the core of the corresponding handle. We call these the
\emph{generating curves} in $F(\PMC)$. 
These generating curves correspond to branched double covers of straight arcs connecting 
two consecutive of the $2n$ collinear points in the $2$-sphere.

\begin{figure}
  \centering
  \input{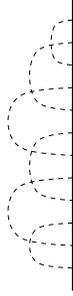}
  \caption{\textbf{The linear pointed matched circle.} The genus $3$
    (i.e., $n=4$) case is shown.}
  \label{fig:linear-pmc}
\end{figure}

\begin{proof}[Proof of Lemma~\ref{lem:IdentifyTheDiagrams}]
  We find this easiest to verify one slice at a time.

  Consider the $k\th$ slice and suppose for definiteness that the
  crossing there is a positive braid generator. By inspection, the
  intersection of $\alphas$ and $\betas^1$ with this slice specifies the
  identity cobordism, which we think of as the branched double cover
  of the sphere times an interval branched at $2n$ non-crossing arcs
  connecting the two boundary components (i.e., the braid-like
  resolution of the positive crossing).  We also claim
  that the intersection of $\alphas$ and $\betas^0$ with this slice
  specifies the branched double cover of a cup followed by a cap
  (i.e., the anti-braid-like resolution of the positive crossing). This
  can be seen by noting that $\beta^0_k$ is a copy of one of the
  generating curves on $F$, supported at the mid-level of the product
  cobordism. It follows that $\alphas$ and $\betas^0$ represents some
  (Morse) surgery on the knot in the branched double cover of the
  trivial braid on $2n$ strands, which is the branched double cover on
  an arc connecting two of the consecutive strands. To see that it is,
  in fact, a cup followed by a cap (which is surgery with respect to
  the surface framing) follows from straightforward homological
  considerations. The positive Dehn twist (branched double cover of
  the positive braid generator) is now obtained as surgery on this
  same curve with a new framing ($-1$ with respect to the surface
  framing): it is specified as a suitable resolution of the sum
  of $\beta^0_k$ and $\beta^1_k$.  Permuting the roles of the ambient
  and the surgered three-manifold, we have that $\beta^0_k$ and
  $\beta^1_k$ denote two different framings on the knot in the
  three-manifold $Y$ corresponding to the $k\th$ crossing.
  
  The bottom and top pieces correspond to standard handlebodies,
  thought of as branched double covers of the $3$-ball branched along $n$ arcs.
  
  Gluing the pieces together, the result follows.
\end{proof}

\begin{lemma}
  \label{lem:EachPiece}
  The bordered diagram $(\Sigma_k,\alphas_k,\betas_k^{\infty},\arcz)$
  represents a Dehn twist along one of the preferred
  generating curves.  Moreover, counting holomorphic triangles in the
  two-step chain complex
  $(\Sigma_k,\alphas_k,\{\betas^j_k\}_{j\in\{0,1\}},\theta^{0<1})$
  gives a map
  \[
  F^-\co \CFDAa(\Id) \to \CFDAa({\check \sigma}_i)~\text{or}~F^+\co
  \CFDAa({\check \sigma}_i) \to \CFDAa(\Id),
  \]
  according to whether the $k\th$ crossing is of the form $\sigma_i$ or $\sigma_i^{-1}$ respectively, whose mapping cone
  is identified with 
  $\CFDAa(\Sigma_i,\alphas_i,\betas_i^{\infty},z)$.
\end{lemma}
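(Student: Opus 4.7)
The plan is to verify the three assertions in sequence: first the geometric identification of the diagram $(\Sigma_k,\alphas_k,\betas_k^{\infty},\arcz)$ with a Dehn twist; then the identification of the triangle-counting map with the canonical bimodule morphism $F^\pm$; and finally the mapping cone identification. The key leverage comes from Proposition~\ref{prop:TriangleCounting} (which was established in \cite{LOT:DCov1}) together with the surgery-triangle-agreement result Corollary~\ref{cor:surg-tri-agree} proved in Section~\ref{sec:exact-tri}.

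For the first assertion, I would argue as follows. By essentially the same inspection as in the proof of Lemma~\ref{lem:IdentifyTheDiagrams}, the bordered diagram $(\Sigma_k,\alphas_k,\betas_k^{1},\arcz)$ (for a positive crossing; swap $0\leftrightarrow 1$ for a negative crossing) represents the identity cobordism and $(\Sigma_k,\alphas_k,\betas_k^{0},\arcz)$ represents the anti-braidlike resolution $\check\sigma_i$. Thinking of $\betas_k^{\infty}$ as a resolution of $\mu_k\cup\lambda_k$, I would verify using the standard homology computation (as in Example~\ref{ex:AttachingCircles}, or equivalently the framing computation in the proof of Lemma~\ref{lem:IdentifyTheDiagrams}) that the associated manifold is $\pm 1$-surgery with respect to the surface framing on the branched double cover of the arc between strands $s_i$ and $s_{i+1}$---and hence is a positive (respectively negative) Dehn twist along the corresponding preferred generating curve.

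For the second assertion, I would assemble the chain complex of attaching circles $(\{\betas^0_k,\betas^1_k\},\theta^{0<1},\arcz)$ in~$\Sigma_k$ and apply the bimodule construction of Subsection~\ref{subsec:PolygonBimodule} to get a filtered type~\DA\ bimodule; by Proposition~\ref{prop:BimoduleMorphism} this filtered bimodule encodes a bimodule morphism
\[
\Phi_k\co \CFDAa(\Sigma_k,\alphas_k,\betas_k^{1},\arcz)\longrightarrow \CFDAa(\Sigma_k,\alphas_k,\betas_k^{0},\arcz)
\]
(for a positive crossing; reverse the direction for a negative crossing), whose underlying homomorphism is the triangle-counting map. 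Now Proposition~\ref{prop:IdentifyTheMapsDA} (the pairing theorem for triangles) identifies this bimodule morphism with the one obtained by counting triangles in the closed Heegaard triple, which by Corollary~\ref{cor:surg-tri-agree} agrees with the bordered surgery triangle map constructed in~\cite[Theorem~\ref*{DCov1:thm:Dehn-is-MC}]{LOT:DCov1}. The latter is precisely $F^\pm$.

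For the third assertion, once $\Phi_k$ has been identified with $F^\pm$, the mapping cone identification is the statement of Proposition~\ref{prop:TriangleCounting} (equivalently, the surgery exact triangle of Equation~\eqref{eq:bord-exact-tri} applied to the pair of framings $(\betas^0_k,\betas^1_k)$ of the surgery curve in the interior of the slice $\Sigma_k$). The main obstacle I anticipate is bookkeeping: making the orientations, signs of crossings, and the role of the $0/1/\infty$ labels mutually consistent across the three sources of the surgery triangle (the bordered construction in Section~\ref{sec:const-bord-tri}, the triple-diagram construction in Section~\ref{sec:const-orig-tri}, and Proposition~\ref{prop:TriangleCounting}). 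Once these conventions are aligned, each step is a direct appeal to a previously established result, with essentially no new holomorphic curve analysis required.
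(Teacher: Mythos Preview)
Your overall strategy---reduce to the bordered surgery exact triangle from~\cite{LOT:DCov1} and the agreement result in Section~\ref{sec:exact-tri}---matches the paper's. The paper's proof is just two sentences: the first assertion is by inspection (as you say), and the mapping cone assertion follows from~\cite[Theorem~\ref*{DCov1:thm:Dehn-is-MC}]{LOT:DCov1} together with Proposition~\ref{prop:surg-tri-counts}.

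However, your decomposition into three steps contains two problems. First, you are over-reading the lemma. The symbols $F^-,F^+$ here are merely names for the triangle-counting maps (recording their source and target depending on the sign of the crossing); the assertion that these agree up to homotopy with the specific combinatorially-defined morphisms $F^\pm$ from~\cite{LOT:DCov1} is precisely the content of the \emph{next} lemma, Lemma~\ref{lem:IdentifyTheMapsInSlices}, and its proof uses a different mechanism (the explicit identification $F^-_{\DD,\mathrm{hol}}=F^-_{\DD,\mathrm{comb}}$ from~\cite{LOT:DCov1} and the fact that $\cdot\DT\CFDDa(\Id)$ is a quasi-equivalence). So your ``second assertion'' is not part of what needs to be proved here.

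Second, and more seriously, your proposed argument for that step does not work as written. Proposition~\ref{prop:IdentifyTheMapsDA} is a gluing statement---it compares the triangle map on $\HD_L\cup\HD_R$ with $m_2(\Theta,\cdot)\DT m_2(\eta,\cdot)$---and does not identify the bordered triangle map on a single slice with anything ``closed''; indeed there is no closed Heegaard triple in this picture, since $\Sigma_k$ has two boundary components. Likewise Corollary~\ref{cor:surg-tri-agree} is the closed special case. The result you actually need is Proposition~\ref{prop:surg-tri-counts} (in its \DA\ version, covered by the last sentence of that proposition), which directly asserts that the map $\theta_{LOT}$ appearing in~\cite[Theorem~\ref*{DCov1:thm:Dehn-is-MC}]{LOT:DCov1} is homotopic to the bordered triangle-counting map $\theta_{OS}$. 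With that single citation, the mapping cone identification is immediate, and your step~3 (via Proposition~\ref{prop:TriangleCounting}) becomes redundant.
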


\begin{proof}
  The first sentence follows from an inspection of the diagram. The
  identification between the mapping cone of the triangle map and the
  Dehn twist follows from the bordered proof of the surgery exact
  triangle~\cite[Theorem~\ref*{DCov1:thm:Dehn-is-MC}]{LOT:DCov1}
  together with Proposition~\ref{prop:surg-tri-counts}, which
  shows the maps in the surgery exact triangle come from counting
  holomorphic triangles.
\end{proof}

\begin{lemma}
  \label{lem:IdentifyTheMapsInSlices}
  The map from Lemma~\ref{lem:EachPiece} induced by counting
  holomorphic triangles in the bordered
  diagram $(\Sigma_k,\alphas_k,\{\betas_k^0,\betas_k^1\},z)$,
  \[
  F^-\co \CFDAa(\Id) \to  \CFDAa({\check \sigma}_i)~\text{or}~F^+\co \CFDAa({\check \sigma}_i) \to \CFDAa(\Id),
  \]
  agrees up to homotopy with the maps (with the same notation)
  from~\cite{LOT:DCov1}.
\end{lemma}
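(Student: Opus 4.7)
The plan is to invoke a uniqueness result from~\cite{LOT:DCov1} that characterizes the maps $F^\pm$ up to bimodule homotopy, and then to verify that the triangle-counting maps produced here satisfy this characterization. Recall from Proposition~\ref{prop:TriangleCounting} and its proof in~\cite{LOT:DCov1} that the distinguished morphisms $F^\pm$ are pinned down by (i) being (nontrivial) bimodule morphisms in the correct $\Mor$-space between $\CFDAa(\Id)$ and $\CFDAa(\check\sigma_i)$, and (ii) having mapping cone identified with $\CFDAa(\sigma_i^{\pm 1})$. In~\cite{LOT:DCov1} it is verified that, after restricting to the appropriate idempotents and gradings, the relevant morphism space is one-dimensional in the homotopy category, so any two nonzero bimodule morphisms with the correct mapping cone agree up to homotopy (and a nonzero scalar, which is trivial over $\Field=\FF_2$).

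First, I would check that the triangle-counting map $F^\pm$ defined here is a bimodule morphism of the expected form. This is immediate from Proposition~\ref{prop:BimoduleMorphism} applied to the two-step chain complex of attaching circles $(\Sigma_k,\alphas_k,\{\betas_k^0,\betas_k^1\},\theta^{0<1},\arcz)$: the $F^{0<1}$ component of the resulting filtered type $\DA$ bimodule is, by unraveling Definition~\ref{def:CCFA} in the bimodule setting, exactly the triangle-counting map. Next, Lemma~\ref{lem:EachPiece} shows that the mapping cone of this triangle-counting map is identified with $\CFDAa(\Sigma_k,\alphas_k,\betas_k^\infty,\arcz)$, which by the first sentence of Lemma~\ref{lem:EachPiece} represents the Dehn twist, i.e., $\sigma_i^{\pm 1}$. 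Thus, item (ii) of the characterization holds.

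To conclude, I would apply the uniqueness statement from~\cite{LOT:DCov1}: the triangle-counting $F^\pm$ and the algebraically-defined $F^\pm$ from~\cite{LOT:DCov1} are both nontrivial bimodule morphisms with the same mapping cone, and hence (by the one-dimensionality of the relevant $\Mor$-space in homology) they are homotopic. Nontriviality of the triangle-counting map follows from the fact that if it were null-homotopic, its mapping cone would be the direct sum $\CFDAa(\Id)\oplus\CFDAa(\check\sigma_i)[1]$, which is easily distinguished from $\CFDAa(\sigma_i^{\pm 1})$ (for example, by the rank of the underlying $\FF_2$-module in a given idempotent).

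The main obstacle I anticipate is ensuring that one has the uniqueness statement in exactly the form needed: the result in~\cite{LOT:DCov1} is stated for the specific $F^\pm$ constructed there, and one needs that nonzero morphisms with the correct mapping cone are unique up to homotopy, not merely that some specific algebraic formula pins them down. Once this is extracted cleanly from~\cite{LOT:DCov1}, everything else is essentially bookkeeping: the identifications of $\CFDAa$ of the relevant bordered diagrams with $\CFDAa(\Id)$ and $\CFDAa(\check\sigma_i)$ are already carried out in~\cite{LOT:DCov1}, so no new holomorphic curve calculations are required beyond those encoded in Lemma~\ref{lem:EachPiece}.
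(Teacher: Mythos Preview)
Your strategy is sound and the obstacle you flag is exactly the right one, but the paper resolves it by a different route. The uniqueness input from \cite{LOT:DCov1} (the Propositions on computing the morphism and its grading) is stated at the $\DD$ level, not the $\DA$ level: it gives $F^-_{\DD,\text{hol}}=F^-_{\DD,\text{comb}}$. Rather than transferring the one-dimensionality of the morphism space across to $\DA$ bimodules and then arguing as you do, the paper instead tensors both candidate $\DA$ maps with $\Id_{\CFDDa(\Id)}$. The pairing theorem for polygons (Theorem~\ref{thm:PolygonPairingDA}) identifies $F^-_{\text{hol}}\DT\Id_{\DD}$ with $F^-_{\DD,\text{hol}}$, while $F^-_{\text{comb}}\DT\Id_{\DD}\sim F^-_{\DD,\text{comb}}$ holds essentially by definition (since $F^-_{\text{comb}}$ was constructed in \cite{LOT:DCov1} as $F^-_{\DD,\text{comb}}\DT\Id_{\CFAAa(\Id)}$). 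Since $\cdot\DT\CFDDa(\Id)$ is a quasi-equivalence of \dg categories, the $\DD$-level equality lifts back to $F^-_{\text{hol}}\sim F^-_{\text{comb}}$. Your approach would work too, once you transfer the one-dimensionality via that same quasi-equivalence; the paper's version has the advantage of invoking the main theorem of the paper directly and of not needing a separate nontriviality argument. On that last point, your proposed check (rank of the underlying $\FF_2$-module in a given idempotent) is not a homotopy invariant; you would want to compare ranks of homology after tensoring with suitable test modules, or simply verify that the map lands in the correct grading and is nonzero there.
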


\begin{proof}
  To keep the notation simple, we will talk about $F^-$; the proof
  for $F^+$ is the same. 

  By~\cite[Propositions~\ref*{DCov1:prop:CalculateNegMorphism} and~\ref*{DCov1:prop:CalculateNegMorphismDeg}]{LOT:DCov1}, for the type \DD\ maps we have
  \[
  F^-_{\DD,\text{hol}}=F^-_{\DD,\text{comb}}\co \CFDDa(\Id) \to \CFDDa({\check \sigma}_i).
  \]
  The map $F^-_{\text{comb}}\co \CFDAa(\Id) \to  \CFDAa({\check
    \sigma}_i)$ from~\cite{LOT:DCov1} is (by definition) gotten by
  tensoring $F^-_{\DD,\text{comb}}$ with the identity map of
  $\CFAAa(\Id)$:
  \[
  \xymatrix{
    \CFDAa(\Id)\ar[rr]^{F^-_{\text{comb}}}\ar[d]_\simeq & &  \CFDAa({\check
      \sigma}_i)\\
    \CFDDa(\Id)\DT\CFAAa(\Id)\ar[rr]^{F^-_{\DD,\text{comb}}\DT\Id} & &  \CFDDa({\check
      \sigma}_i)\DT\CFAAa(\Id).\ar[u]_\simeq
  }
  \]
  To see that $F^-_{\text{comb}}\sim F^-_{\text{hol}}$, tensor both
  sides with the identity map of $\CFDDa(\Id)$.  It follows from the
  pairing theorem (Theorem~\ref{thm:PolygonPairingDA}) that
  $F^-_{\text{hol}}\DT\Id_{\DD}\sim F^-_{\DD,\text{hol}}$, and it
  follows from the fact that $\CFDDa(\Id)\DT\CFAAa(\Id)\simeq \CFDAa(\Id)=[\Id]$ that 
  $F^-_{\text{comb}}\DT\Id_{\DD}\sim F^-_{\DD,\text{comb}}$
  But tensoring with $\CFDDa(\Id)$ is a quasi-equivalence of \dg
  categories, so this implies that $F^-_{\text{hol}}\sim
  F^-_{\text{comb}}$, as desired.
\end{proof}

\subsection{Putting together the pieces}

\begin{proof}[Proof of Theorem~\ref{thm:IdentifySpectralSequences}]
  Using the diagram explained above,
  Lemma~\ref{lem:IdentifyTheDiagrams} identifies the chain complex of
  attaching circles used to construct the spectral sequence
  from~\cite{BrDCov} with the chain complex of attaching circles which
  decomposes as a
  concatenation of bordered diagrams for Dehn twists in the linear
  pointed matched circle. 

  The pairing theorem for polygons (Theorem~\ref{thm:PolygonPairingDA-DTP}) then
  identifies the filtered complex with
  the one gotten by an iterated tensor product of $DA$ bimodule
  morphisms, where the morphisms are defined by counting
  pseudo-holomorphic triangles. 

  Lemma~\ref{lem:IdentifyTheMapsInSlices} then identifies these $DA$
  bimodule morphisms with the combinatorially defined morphisms constructed
  in~\cite{LOT:DCov1}, and the filtered complex gotten from the iterated tensor
  product of these combinatorial models induces the spectral sequence
  from~\cite{LOT1}.
\end{proof}

\subsection{Kauffman states and Greene's diagram}\label{sec:Kauffman}

As mentioned earlier, the Heegaard multi-diagram considered here is a
stabilization of the diagram considered by
Greene~\cite{Greene:spanning-tree}.  As such, if we forget about the
filtration on our chain complex, we end up with a chain complex
for the branched double cover whose generators correspond to Kauffman
states. This correspondence can be seen locally. The $\beta$-circle at
each crossing meets four of the $\alpha$-curves (except if the strand
is one of the two extremal strands). Thus, each generator picks out
one of these four intersection points.  Which of those four is chosen
corresponds to a local choice of Kauffman states, as indicated in
Figure~\ref{fig:LocalKauffman}.  It is easy to see that these local
correspondences piece together to give a correspondence between
Heegaard Floer generators and Kauffman states
(compare~\cite{Greene:spanning-tree};
Proposition~\ref{prop:IdentifyWithGreene}; and
also~\cite{AltKnots}). See Figure~\ref{fig:KauffmanStates} for a
global example.

\begin{figure}
  \centering
  \input{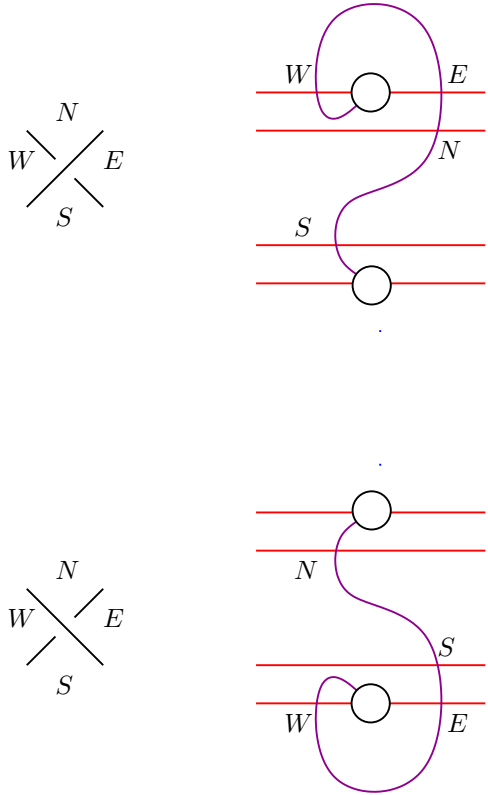}
  \caption{\textbf{Local correspondence between Kauffman states and
      Heegaard Floer generators.}}
    \label{fig:LocalKauffman}
\end{figure}

\begin{figure}
  \centering
  \input{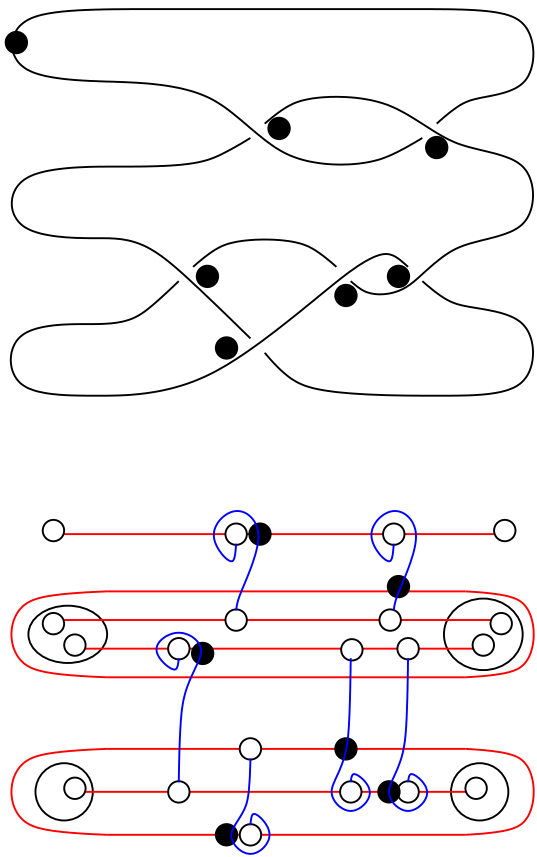}
  \caption{\textbf{A knot, a Kauffman state, the branched double cover
      Heegaard multi-diagram, and corresponding Heegaard Floer generator.}
    For the Kauffman state indicated above, we have drawn components
    of the corresponding Heegaard Floer generator. The Heegaard Floer
    generator has more components which are not indicated; but those
    are all uniquely determined.}
    \label{fig:KauffmanStates}
\end{figure}

This correspondence extends to the resolutions: at each resolution,
there is a correspondence between Kauffman states in the resolved
diagram and Heegaard-Floer generators. 
It is interesting to note that for the complete resolutions, the
Heegaard multi-diagram is typically not admissible: a disconnected,
complete resolution has no Kauffman states, but the Floer homology is
non-trivial. 


Our aim now is to relate our Heegaard diagrams with Greene's. As a first step,
we paraphrase Greene's description.

Let $K$ be a projection of a knot, with one marked edge. For
consistency with Section~\ref{sec:diag-for-dcov} we assume (unlike
Greene) that it is the plat closure of a braid, lying on its
side, with no crossings involving the top strand.  We think of the top strand as the marked
edge.

Take a regular neighborhood $T$ of the knot diagram in the plane of
the knot projection, with the induced orientation from the plane. This
region $T$ will eventually form half the Heegaard surface (the ``top
half'').  The boundary of $T$ is a collection of circles. We label all
of those circles except the top edge as $\alpha$-circles.

We now construct portions of $\beta$-circles corresponding to crossings.
Think of crossings as involving two strands, one of which connects
$SW$ and $NE$, and the other of which connects $SE$ and
$NW$. Correspondingly, there are four corners in $\bdy T$ at each crossing,
which we label $N$, $S$, $E$, and $W$.  We draw pairs of arcs in
$T$, which will eventually close up to form $\beta$-circles
corresponding to the crossings. If the $SW/NE$ strand is an
overcrossing, one arc connects $W$ to $N$ and the other $S$ to $E$;
otherwise, one arc connects $E$ to $N$ and the other connects $S$ to $W$.

There is one additional $\beta$ arc which cuts across the  marked
edge in $T$.

The Heegaard surface now is gotten by doubling $T$ along its
boundary. Let $B$ denote the other half of the double (the ``bottom
half''). In the
bottom half, there is a $\beta$ arc which closes up the special
$\beta$ arc at the marked edge. Also, at each crossing, we draw
pairs of $\beta$ arcs in $B$, consisting of an arc connecting $N$ to
$S$ and another connecting $E$ to $W$.
These arcs are all drawn to be pairwise
disjoint, but are not contained in a neighborhood of the crossing. (This can be done uniquely, up to diffeomorphism.)

This is, essentially, Greene's description. Note that the
distinguished $\beta$-circle corresponding to the marked edge meets a
single $\alpha$-circle; thus, this pair of curves can be destabilized
to construct what we shall call the {\em small Greene diagram}.

A knot can be rotated $180^\circ$ around the $x$-axis, to obtain a
new planar projection. If the knot is the plat closure of a braid,
this has the effect of replacing each $\sigma_i$  with $\sigma_{n-i}$.
We will call the resulting knot diagram the {\em rotated diagram}.

\begin{proposition}
  \label{prop:IdentifyWithGreene}
  For any connected plat braid diagram for~$K$, the standard small
  Heegaard diagram for the branched
  double cover of $K$ (as in this paper) can be destabilized $2n-2$
  times (supported in the cup and cap regions) so that it
  becomes homeomorphic to the small Greene diagram
  associated to the rotated diagram of $K$.
  In particular, there
  is a canonical identification between generators for the standard diagram
  studied here and Greene's diagram (for the rotate of $K$), which identifies absolute
  gradings of generators.
\end{proposition}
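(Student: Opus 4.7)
The plan is to perform an explicit handle-cancellation argument and then match the resulting diagram to Greene's by inspection. First I would identify, in the cap region, the pairs $(\beta_i^r, \alpha_{j(i)})$ which admit handle cancellation. By direct inspection of the description in Section~\ref{sec:diag-for-dcov}, each small $\beta$-circle $\beta_i^r$ (for $i = 2, \dots, n-1$) bounds a disk in $\Sigma$ after the one-handles $(s_{2j},s_{2j+1})_r$ are attached; this disk meets exactly one $\alpha$-circle $\alpha_{j(i)}$ transversely in a single point. The same holds for $i=1$ and, by symmetry, for the cup region circles $\beta_i^\ell$. Handle-canceling all $2(n-1) = 2n-2$ pairs reduces the genus from $2n-2+c$ to $c$, leaves $c+1$ surviving $\alpha$-circles together with the $c$ crossing $\beta$-circles $\beta_k^\infty$, and is supported in the cup and cap regions, as required.

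Next I would identify the resulting closed surface with the double $T \cup_\partial B$ of a regular neighborhood $T$ of the knot projection in the plane. Reading right-to-left, the cap region (after all its cancellations) contributes exactly the right-end closures of the knot strands, the cup region provides the left-end closures, and the braid region contributes a neighborhood of each crossing together with horizontal strips between consecutive strands. These pieces glue to a neighborhood $T$ of the knot projection, and the reflection across the horizontal axis used to attach each crossing one-handle exhibits the complementary hemisphere as the mirror half $B$. Under this identification, the surviving $\alpha$-circles become the boundary components of $T$ other than the one adjacent to the marked (top) strand, which matches Greene's prescription.

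The third step, which I expect to be the main obstacle, is to match the crossing $\beta$-circles $\beta_k^\infty$ with Greene's arcs at each crossing. By construction, $\beta_k^\infty$ is the resolution of $\mu_k \cup \lambda_k$ shown in Figure~\ref{fig:LocalChanges}; in the decomposition $\Sigma = T \cup_\partial B$, it splits as a pair of arcs, one in each hemisphere, and one checks directly that the arc in $B$ connects the $N$--$S$ and $E$--$W$ corners while the arc in $T$ implements the resolution dictated by the sign of the crossing. The sign convention used here is opposite to Greene's, because our reflection identifying the two disk boundaries at a crossing is horizontal rather than vertical; this amounts to conjugating each $\sigma_i$ to $\sigma_{n-i}$, i.e.\ to rotating the link diagram by $180^\circ$ about the $x$-axis, so the diagram produced is Greene's diagram for the rotated projection.

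Finally, the generator and grading correspondence should follow with little further work. The local Kauffman-state correspondence of Figure~\ref{fig:LocalKauffman} exhibits a bijection between generators of our small diagram and Kauffman states of the link projection; Greene's analysis gives the analogous bijection for his diagram, and both bijections are induced by the same local model at each crossing. Since handle cancellation is a standard Heegaard move preserving the absolute grading and $\mathrm{Spin}^c$-decomposition of $\CFa$, and since under both identifications the grading of a generator is read off from the underlying Kauffman state, the generator bijection is grading preserving. The only subtle point is to confirm that Greene's grading convention on Kauffman states agrees with the one induced by our diagram, which can be reduced to comparing domains of Whitney disks between Kauffman states differing at a single crossing, a purely local check.
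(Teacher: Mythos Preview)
Your overall strategy is the same as the paper's: destabilize in the cup and cap regions and then match the result to Greene's diagram by local inspection at crossings, observing that the $N$/$S$ reversal accounts for the $\sigma_i \leftrightarrow \sigma_{n-i}$ rotation. A few details need correction, however. First, your claim that each $\beta_i^r$ (for $i \geq 2$) ``bounds a disk meeting exactly one $\alpha$-circle in a single point'' is not right as stated: $\beta_i^r$ encircles the endpoints of $(s_{2i-2},s_{2i-1})_+$ and $(s_{2i},s_{2i+1})_-$, both of which feed into handles, so $\beta_i^r$ meets two distinct $\alpha$-circles. The destabilizations still go through, but not as a single-step handle cancellation per circle; the paper sidesteps this by simply declaring ``erase the $\beta$-circles in the cup and cap regions and the $\alpha$-circles meeting them, then destabilize the cup/cap handles,'' and you should do the same or else describe the required sequence of slides. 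Second, your count of $c+1$ surviving $\alpha$-circles is off: after $2n-2$ destabilizations the genus is $c$, so there are $c$ $\alpha$-circles and $c$ $\beta$-circles (the crossing circles).

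For the identification with $T \cup_\partial B$, the paper does something more concrete than your piecewise description: it writes down an explicit simple closed curve $\gamma$ in the destabilized diagram (running just below the topmost horizontal arcs, through the extremal handles, and along $(s_2,s_3)_-$) such that $\alphas \cup \gamma$ cuts the surface into two pieces $T'$ and $B'$, and then checks locally at each crossing that $T'$ matches Greene's $T$ with $N$ and $S$ swapped. This device makes the ``main obstacle'' you anticipate into a routine local check (their Figure~\ref{fig:LocalGreene}), and you would benefit from adopting it rather than arguing region-by-region. Your discussion of the grading correspondence is more explicit than the paper's, which simply invokes the diagram homeomorphism; your argument via the local Kauffman-state model is fine, though the reduction to a single-crossing domain comparison is unnecessary once the diagrams are literally homeomorphic.
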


\begin{proof}
  Consider the standard small diagram for the branched double cover of
  a knot.  As a first step, erase all the $\beta$-circles supported
  inside cup and cap regions and the $\alpha$-circles which meet those
  $\beta$-circles.  Destabilize all the handles supported in the cup
  and cap regions. This is the destabilization described at the
  beginning of the proposition; call this diagram $\mathcal D$.

  It is perhaps easiest to see the identification after finding
  regions corresponding to $T$ and $B$ in ${\mathcal D}$.  To this
  end, we will describe a circle $\gamma$
  in ${\mathcal D}$. 

  Draw a portion of $\gamma$
  which is mostly parallel, but below, the topmost horizontal arcs, and then
  enters the handles at the leftmost and rightmost ends of the topmost arcs.
  Next, continue $\gamma$ so that it consists of arcs which connect
  the two handles which are the leftmost ends of the arcs 
  $(s_i,s_{i+1})_\ell$, and similarly so that it consists of arcs which 
  connect the two handles which are the rightmost ends of the arcs
  $(s_i,s_{i+1})_r$ (all this for $i=1,\dots,2n-2$). Finally, run
  an arc nearly parallel and just below $(s_2,s_{3})^-$, running
  through the leftmost and rightmost handles at the two ends of this curve.
  
  All the $\alpha$ curves together with $\gamma$ divide our Heegaard surface
  into two regions, one of which we denote $T'$ and the other $B'$. 
  (We label these so that the point at infinity is contained in $B'$.)
  See Figure~\ref{fig:GreeneDiag}.

\begin{figure}
  \centering
  \input{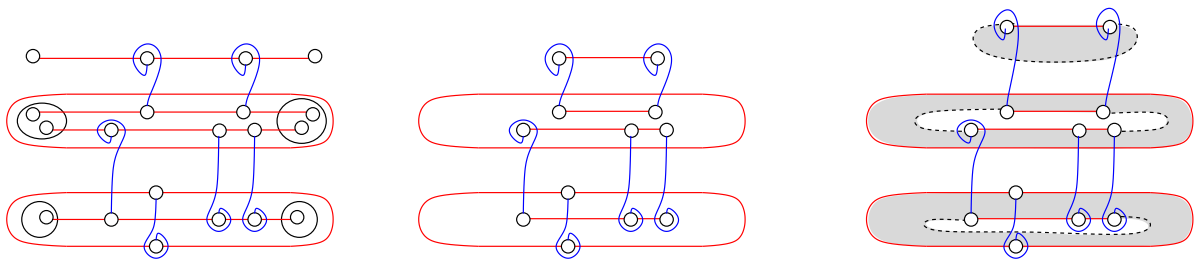}
  \caption{\textbf{ Modifications to the small standard diagram to get
      Greene's diagram.}  Passing from the first to the second picture
    is a sequence of destabilizations at the cup and cap regions. The
    third picture is equipped with the curve $\gamma$ (drawn dashed),
    and the region $T'$ is shaded.}
    \label{fig:GreeneDiag}
\end{figure}

It is now straightforward to see that $T'$ corresponds to $T$ in
Greene's diagram: this follows from local considerations at each
crossing as in Figure~\ref{fig:LocalGreene}.  Note that in the
homeomorphism between $T$ and $T'$, the directions $N$ and $S$ are
reversed, while $E$ and $W$ are not. Locally in $T$ (and in $T'$)
there are four directions which can be connected to the other local pieces associated to the
crossings. Label these directions $NE$, $NW$, $SE$, and $SW$ in the obvious way. The homeomorphism between $T$ and $T'$ also
switches $NE$ and $SE$, and $NW$ and $SW$.  This effectively switches
$\sigma_i$ to $\sigma_{n-i}$. This homeomorphism clearly extends over $B$ and $B'$.
\end{proof}

\begin{figure}
  \centering
  \input{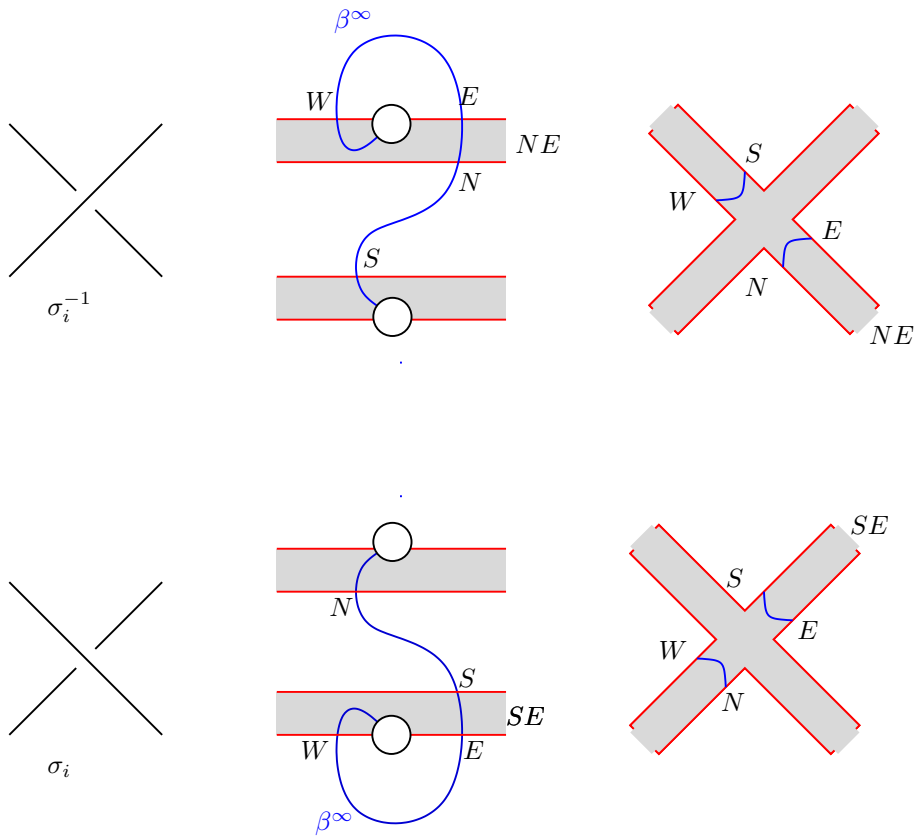}
  \caption{\textbf{Local identification between the small standard
      diagram and Greene's diagram.}  At the left is a crossing; in
    the middle the corresponding portion of the standard diagram (with
    shaded $T'$), and at right the corresponding portion of Greene's
    diagram (with $T$ shaded).}
    \label{fig:LocalGreene}
\end{figure}

Proposition~\ref{prop:IdentifyWithGreene} is particularly useful
because of the thoroughness of Greene's work: for example, he
explicitly computes the gradings of the generators on the branched
double cover, and these computations extend quickly to compute the
gradings of the branched double cover described here.  Moreover, the
proof of Proposition~\ref{prop:IdentifyWithGreene} above can be
adapted easily to give a comparison between our multi-diagram and
Greene's multi-diagram.


\bibliographystyle{hamsalpha}\bibliography{heegaardfloer}
\end{document}